\documentclass[11pt]{article}
\usepackage{xcolor}
\usepackage{graphicx}
\usepackage{tikz} 
\usepackage{amsmath,amsfonts,amssymb,graphics,amsthm}
\usepackage{hyperref}
\usepackage{comment}
\usepackage{tabularx}
\usepackage[protrusion=true,expansion=true]{microtype}
\usepackage{enumerate}
\usepackage{bbm}
\usepackage{mathrsfs}
\usepackage[margin=1in]{geometry}
\usepackage[shortlabels]{enumitem}

\hypersetup{
    colorlinks=false,
    linktocpage,
    }

\numberwithin{equation}{section}

\newtheorem{theorem}{Theorem}[section]
\newtheorem{corollary}[theorem]{Corollary}
\newtheorem{lemma}[theorem]{Lemma}
\newtheorem{proposition}[theorem]{Proposition}

\newtheorem{remark}[theorem]{Remark}
\newtheorem{definition}[theorem]{Definition}

\theoremstyle{remark}

\newcommand{\bfb}{{\mathbf b}}
\newcommand{\bft}{{\mathbf t}}

\newcommand{\C}{\mathbbm{C}}
\newcommand{\D}{\mathbbm{D}}
\newcommand{\E}{\mathbbm{E}}

\newcommand{\N}{\mathbbm{N}}

\newcommand{\R}{\mathbbm{R}}
\renewcommand{\P}{\mathbbm{P}}
\newcommand{\bbH}{\mathbbm{H}}

\newcommand{\disk}{\mathrm{disk}}

\newcommand{\LF}{\mathrm{LF}}

\newcommand{\QD}{\mathrm{QD}}
\newcommand{\QS}{\mathrm{QS}}
\newcommand{\QA}{\mathrm{QA}}
\newcommand{\QP}{\mathrm{QP}}
\newcommand{\QH}{\mathrm{QH}}

\newcommand{\lexp}{{\beta}}

\renewcommand{\wp}{\eta}

\let\Re\undefined
\DeclareMathOperator{\Re}{Re}
\let\Im\undefined
\DeclareMathOperator{\Im}{Im}

\DeclareMathOperator{\CLE}{CLE}

\DeclareMathOperator{\Vol}{Vol}

\def\cX{\mathcal{X}}

\def\cM{\mathcal{M}}

\def\cI{\mathcal{I}}

\def\cF{\mathcal{F}}
\def\cE{\mathcal{E}}

\def\cC{\mathcal{C}}

\def\alb#1\ale{\begin{align*}#1\end{align*}}
\def\allb#1\alle{\begin{align}#1\end{align}}

\newcommand{\aryb}{\begin{eqnarray*}}
\newcommand{\arye}{\end{eqnarray*}}
\def\alb#1\ale{\begin{align*}#1\end{align*}}
\newcommand{\eqb}{\begin{equation}}
\newcommand{\eqe}{\end{equation}}
\newcommand{\eqbn}{\begin{equation*}}
\newcommand{\eqen}{\end{equation*}}
\newcommand{\Weld}{\mathrm{Weld}}

\newcommand{\ol}{\overline}
\newcommand{\ul}{\underline}

\newcommand{\frk}{\mathfrak}

\newcommand{\rta}{\rightarrow}

\newcommand{\wt}{\widetilde}
\newcommand{\wh}{\widehat}

\newcommand{\bdy}{\partial}

\let\originalleft\left
\let\originalright\right
\renewcommand{\left}{\mathopen{}\mathclose\bgroup\originalleft}
\renewcommand{\right}{\aftergroup\egroup\originalright}

\DeclareMathAlphabet{\mathpzc}{OT1}{pzc}{m}{it}

\begin{document}

\title{String equation for conformal loop ensemble on Liouville quantum gravity sphere}
\author{
\begin{tabular}{c}Xin Sun\end{tabular}\; 
\begin{tabular}{c}Baojun Wu \end{tabular}\; 
\begin{tabular}{c}Shengjing Xu  \end{tabular}\;
}

\date{  }

\maketitle

\begin{abstract}
Matrix models express partition functions of random surfaces with prescribed boundary lengths through string equations. We give a probabilistic realization of this structure in Liouville quantum gravity (LQG). For $\gamma^2\in(8/3,4)$, we construct the quantum $n$-hole sphere from a quantum disk decorated by an independent conformal loop ensemble (CLE) and compute its area Laplace transform at fixed boundary lengths. The transform depends only on the total boundary length apart from a simple prefactor and is determined by a hypergeometric string equation. We derive this equation from stable L\'evy excursions using LQG/CLE coupling.  The conformal welding equation yields symmetric polynomials that represent integrated LQG/CLE coupling observables. These polynomials converge to Weil--Petersson volumes of spheres with $n$ geodesic boundaries as $\gamma\downarrow0$.
\end{abstract}

\tableofcontents

\section{Introduction}

A basic problem in two-dimensional quantum gravity is to compute the partition function of a random surface with prescribed topology and boundary lengths. Matrix models, Liouville conformal field theory (Liouville CFT), and topological gravity provide different approaches to this problem. In the matrix-model approach, partition functions arise as scaling limits of generating functions for random maps. In the Liouville approach, initiated by Polyakov~\cite{polyakov-qg1}, they are expressed through a random conformal geometry. In topological gravity, the corresponding quantities are built from intersection numbers on moduli spaces of curves. A striking feature of the matrix model approach is the appearance of closely related string equations. This paper is motivated by understanding these equations directly in terms of random continuum surfaces.

Matrix models describe random surfaces through generating functions for
random maps, whose scaling limits yield partition functions with prescribed
topology and boundary lengths. For a sphere with $n\ge3$ boundary components,
Ambj{\o}rn, Jurkiewicz, and Makeenko~\cite{Ambjorn:1990ji} and Moore, Seiberg,
and Staudacher~\cite{Moore:1991ir} obtained explicit formulas for the
pure-gravity partition function. Apart from a simple prefactor,
the answer depends only on the total boundary length. Its dependence on
$n$ is generated by repeated differentiation with respect to the bulk
cosmological constant, starting from a function determined by the string
equation.
A similar structure appears in Weil--Petersson volumes of moduli spaces
of genus 0 hyperbolic surfaces with prescribed geodesic boundary lengths; see Appendix~A. This parallel
suggests a connection between the boundary amplitudes of random surfaces
and the geometry of moduli space.
The connection with topological gravity comes from the intersection-theoretic
description of Weil--Petersson volumes.
Witten conjectured that this partition function satisfies the KdV
hierarchy~\cite{Witten:1990hr}, and Kontsevich proved the
conjecture~\cite{Kontsevich1992IntersectionTO}. Mirzakhani subsequently
gave a geometric proof through her recursion for Weil--Petersson
volumes~\cite{Mirzakhani:2006eta,Mirzakhani:2006fta}.

Liouville quantum gravity (LQG) provides a continuum description of the
scaling limits of random planar maps. For $\gamma=\sqrt{8/3}$, Miller and
Sheffield identified the LQG sphere with the scaling limit of random planar maps~\cite{lqg-tbm1,lqg-tbm2,lqg-tbm3}, while Holden and Sun proved
convergence of uniform triangulations to the LQG disk under the
Cardy embedding~\cite{hs-cardy-embedding}.  These works provide the Liouville CFT description of the scaling limit of random planar maps, and Liouville CFT was recently constructed and solved by~\cite{dkrv-lqg-sphere,gkrv-bootstrap} via a probabilistic approach.
Random planar maps can also be coupled with statistical-mechanics
models, including the $O(n)$ loop model. At criticality, these
loop-decorated maps are expected to converge to LQG decorated by a
conformal loop ensemble (CLE)~\cite{msw-cle-lqg}. Introduced by
Sheffield~\cite{shef-cle}, CLEs form a one-parameter family of conformally
invariant random loop collections. In the simple-loop regime, their
laws are characterized by conformal invariance and a spatial Markov
property~\cite{shef-werner-cle}.
In this paper, we consider $\gamma$-LQG surfaces decorated by an
independent $\mathrm{CLE}_{\gamma^2}$, with $\gamma^2\in(8/3,4)$.
Using this coupling, ~\cite{Int-CLE} constructed
the quantum annulus and quantum pair of pants and computed their
fixed-boundary-length partition functions. Their work provides the
starting point for our construction with an arbitrary (finite) number of
boundary components.

In this paper, we construct the quantum $n$-hole sphere for $\gamma^2\in(8/3,4)$ and compute its area-length joint law. The answer has the derivative structure described above and is governed by a hypergeometric string equation. The main point is that this equation is derived from the excursions of a stable L\'evy process, rather than imposed as an input from a matrix model. We also prove a conformal welding identity for these surfaces and use their area transform to obtain a family of deformed Weil--Petersson polynomials. 

\subsection{The quantum $n$-hole sphere and its area distribution}
\label{subsec:intro-area}

Throughout the probabilistic construction, we take
$
\kappa=\gamma^2\in(8/3,4)$, and $
Q=\frac\gamma2+\frac2\gamma.$
In this range, the CLE loops are disjoint simple curves. To construct the quantum $n$-hole sphere, start with a quantum disk with one marked bulk point and an independent nested $\CLE_\kappa$. Let $\eta_0$ be the outermost loop surrounding the marked point. Choose an ordered collection of $n-2$ further loops from the original outermost CLE loops, using the counting measure restricted to pairwise distinct loops, all different from $\eta_0$. Removing the interiors of these $n-1$ loops leaves a surface with the topology of a sphere with $n$ boundary components. Disintegrating over their quantum lengths and applying the normalization in Definition~\ref{def: nholes} gives the measure
$\QH_n(\ell_1,\ldots,\ell_n)$ supported on the quantum surface with 
n-connected domain topology.

Here, the boundaries are labeled and $\ell_i>0$. We denote the collection $(\ell_1,\ell_2,...,\ell_n)$ by $\mathbf{\ell}$. This construction includes the quantum annulus and quantum pair of pants when $n=2,3$ which were introduced in~\cite{Int-CLE}.
The measures in this paper are generally $\sigma$-finite. For a nonnegative measurable function $F$, we write $\QH_n(\boldsymbol\ell)[F]$ for its integral. Fixed-length statements are understood in the almost-everywhere sense of Definition~\ref{def: nholes}. To state our main theorem, we recall the following ingredients for the stable L\'evy  process. Given a spectrally positive stable L\'evy process of index
$\beta=\frac4{\gamma^2}+\frac12\in(3/2,2)$,
in the time normalization for which the L\'evy measure is $x^{-1-\beta}\,dx$, let $\underline N'$ be its excursion measure above the running infimum. If $T(e)$ is the duration of an excursion and $J_e$ is its multiset of positive jump sizes, we define
\begin{equation}\label{eq:intro-excursion}
u(s;\gamma)
:=\int\left(e^{sT(e)}\prod_{x\in J_e}g(x)-1\right)\underline N'(de),
\qquad s\le0,
\end{equation}
where $g(x)=\QD(x)^\#[e^{-\mu A}]$ is the area Laplace transform under the normalized quantum-disk measure with boundary length $x$. The FZZ formula~\cite{ARS-FZZ} gives
\begin{equation}\label{eq:intro-disk-transform}
g(x)=\frac2{\Gamma(4/\gamma^2)}
\left(\frac{Mx}{2}\right)^{4/\gamma^2}K_{4/\gamma^2}(Mx),
\end{equation}
with $M=\sqrt{\frac{\mu}{\sin(\pi\gamma^2/4)}}$  and $K_\nu$ the modified Bessel function of the second kind. When the parameter $\gamma$ is clear in the context, we write $u(s)=u(s;\gamma)$.
\begin{theorem}\label{thm: 1}
    Let $A$ be the quantum area, let $\mu>0$, and set
\[
L_n=\sum_{i=1}^n\ell_i,\qquad
M=\sqrt{\frac{\mu}{\sin(\pi\gamma^2/4)}}.
\]
Then
\begin{equation}\label{thm1}
\begin{split}
\QH_n(\boldsymbol\ell)[e^{-\mu A}]
={}&\frac{\cos\!\left(\pi(4/\gamma^2-1)\right)}
{\pi  R_\gamma^{n-2}}
\frac1{\sqrt{\prod_{i=1}^n\ell_i}}\times
\left.\partial_s^{n-3}
\left[u_{}'(s)e^{L_nu_{}(s)}\right]\right|_{s=0},
\qquad n\ge3.
\end{split}
\end{equation}
Here $$R_\gamma=
\frac{(2\pi)^{4/\gamma^2-1}}
{(1-\gamma^2/4)\Gamma(1-\gamma^2/4)^{4/\gamma^2}}.$$ 
\end{theorem}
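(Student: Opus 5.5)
We reduce the quantum $n$-hole sphere to a quantum disk with one marked bulk point and a CLE, explored through the Lévy process encoding of CLE on LQG. The key input is the result of \cite{Int-CLE} (and the Miller--Sheffield--Werner CLE/LQG coupling) that exploring $\CLE_\kappa$ on a quantum disk of boundary length $\ell$ toward the marked point produces a spectrally positive $\beta$-stable Lévy process, with $\beta=\frac4{\gamma^2}+\frac12$. The jumps of this process are the quantum lengths of the discovered loops. Conditionally on these lengths, the regions inside the loops are independent quantum disks, and the region outside them carries the remaining area.

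For fixed boundary length $\ell$, the process runs from $\ell$ until it first hits $0$. Using the excursion decomposition above the running infimum, the outermost loops not surrounding the marked point are exactly the jumps inside the excursions, while $\eta_0$ corresponds to the final jump. Area weighting by $e^{-\mu A}$ contributes a factor $g(x)$ for each unselected loop of length $x$. The area of the gasket is measured by the local time/duration: after the time change used in \cite{Int-CLE}, the gasket area Laplace transform shows up as exponential weighting in the duration, which is the variable $s$.

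Choosing $n-2$ distinct outermost loops with fixed lengths $\ell_3,\dots,\ell_n$ amounts to marking jumps. By the Palm formula for the Poisson point process of excursions, marking a jump of size $x_i$ removes the factor $g(x_i)$ and inserts a Lévy-measure density $x_i^{-1-\beta}$. This is later normalized, through Definition~\ref{def: nholes}, into the $\ell_i^{-1/2}$ factors and the powers of $R_\gamma$.

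The product of the weights over all excursions, indexed by infimum level in $[0,L]$, gives $\exp(L\,u(s))$ by the exponential formula for Poisson processes, with $u$ defined in \eqref{eq:intro-excursion}. Each marked jump contributes a first-order perturbation. After the fixed-length disintegrations, the total length of the marked boundaries is added to the starting level, which turns $\ell_1$ into $L_n$. The marked jumps then appear as successive $s$-derivatives, using the identity that $\partial_s$ of the excursion functional inserts the duration. This produces the structure $\partial_s^{n-3}[u'(s)e^{L_nu(s)}]$ evaluated at $s=0$: the factor $u'$ comes from the distinguished loop $\eta_0$, and each extra hole produces one derivative.

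The base cases $n=3$ (the pair of pants) and $n=2$ of \cite{Int-CLE} fix the constants $\cos(\pi(4/\gamma^2-1))/\pi$ and $R_\gamma$. I would then argue by induction on $n$: adding a hole corresponds to marking one more loop, which acts as $R_\gamma^{-1}\ell^{-1/2}\partial_s$ applied to the total-length exponential.

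The main obstacle will be the step converting the fixed-length marking of several jumps into $s$-derivatives that depend only on $L_n$. This requires justifying that the total boundary length can be absorbed into the starting level, via the fact that a Lévy process started at $\ell$ and conditioned on its jumps is shift invariant in level. It also requires handling the divergences at $s=0$, where $u$ is defined through the compensated integral with the $-1$ term. I would first verify the identity at the level of Laplace transforms in the boundary lengths, where the stable scaling makes everything explicit, and then invert.
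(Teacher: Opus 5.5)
There is a genuine gap, at exactly the step you flag as the main obstacle, and the mechanism you sketch for it is not the right one. Your one-jump bookkeeping is fine. Palm marking gives $\ell_i^{-\beta-1}$, which against $\ell_i|\QD(\ell_i)|=R_\gamma\ell_i^{-1-4/\gamma^2}$ becomes $R_\gamma^{-1}\ell_i^{-1/2}$. The exponential formula gives $\E[e^{s\tau_{-L}}\prod_ig(x_i)]=e^{Lu(s)}$.

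The problem starts with the second marked jump. By Lemma~\ref{lem:palm}, marking a jump of size $b$ integrates its time $\bft$ against Lebesgue measure. Deleting it replaces $\tau_{-a}$ by $\widetilde\tau_{-(a+b)}$ only at those levels $a$ with $\bft<\tau_{-a}$. Start from the weight $\tau_{-L_2}$ for one marked loop (Proposition~\ref{lem:disint QH3}). A second marked loop already produces $P_4=\tau_{-L_2}\tau_{-L_3}+\tau_{-L_2}\tau_{-(L_2+\ell_4)}-\tau_{-L_2}^2$. In general you get a signed polynomial $P_n$ in hitting times at intermediate levels $L_2+\sum_{i\in A}\ell_i$, and these levels depend on the time order of the marked jumps. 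This is not $\tau_{-L_2}\tau_{-L_n}^{n-3}$ pathwise. It is also not $R_\gamma^{-1}\ell^{-1/2}\partial_s$ applied to anything that depends only on the total length. Level-stationarity of the excursions is needed, but you do not say how it would collapse these terms, and Laplace transforms in the $\ell_i$ do not do it either.

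The paper collapses $P_n$ in three steps.
\begin{enumerate}
\item It tilts by $\prod_eF(e)e^{-\lambda T(e)}$, so that $a\mapsto\tau_{-a}$ has stationary independent increments.
\item It proves the formal identity $h(P_n)=L_2L_n^{n-3}$ (Lemma~\ref{combin}). This gives $\E[P_n\prod_ig(x_i)]=\E[\tau_{-L_2}\tau_{-L_n}^{n-3}\prod_ig(x_i)]$.
\item It uses that excursion levels are conditionally i.i.d.\ uniform, so $\E_{L_n}[\tau_{-L_2}]=\frac{L_2}{L_n}\tau_{-L_n}$.
\end{enumerate}
The result is $\frac{L_2}{L_n}\partial_s^{n-2}e^{L_nu(s)}|_{s=0}$, and the factor $L_2$ cancels the $1/(\ell_1+\ell_2)$ in $|\QA(\ell_1,\ell_2)|$. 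This is the true origin of $u'$. All $n-2$ derivatives come from the $n-2$ marked loops, and the division by $L_n$ turns $\partial_s^{n-2}e^{L_nu}$ into $\partial_s^{n-3}[u'e^{L_nu}]$. The loop $\eta_0$ contributes no derivative.

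Your induction can be rescued, but only with a path-level hypothesis rather than a statement about the final scalar. Suppose $P_n\,d\P^\beta$ and $\tau_{-L_2}\tau_{-L_n}^{n-3}\,d\P^\beta$ induce the same law on the jumps retained before $\tau_{-L_n}$; this is trivial for $n=3$. Marking one more retained jump depends only on that configuration, so you may mark under the second weight instead. Splitting $\bft$ over $[0,\tau_{-L_2}]$ and $[\tau_{-L_2},\tau_{-L_n}]$ gives $\tau_{-L_2}\tau_{-(L_2+\ell_{n+1})}\tau_{-L_{n+1}}^{n-3}+(\tau_{-L_n}-\tau_{-L_2})\tau_{-L_2}\tau_{-L_{n+1}}^{n-3}$. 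Now tilt by $e^{-\lambda\tau_{-L_{n+1}}}\prod_i\widetilde g(x_i)$ for arbitrary $0\le\widetilde g\le1$. Independence and stationarity of increments then show that this weight induces the same law on retained jumps as $\tau_{-L_2}\tau_{-L_{n+1}}^{n-2}$. Some such argument must be supplied; as written, the proposal asserts the conclusion.

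Two further corrections.
\begin{itemize}
\item The variable $s$ is not a gasket-area Laplace variable. The CLE carpet has zero quantum area, and the area of the $n$-hole surface is the total area of the unselected disks. Durations enter only because marked jump times are integrated in Lebesgue time.
\item \cite{Int-CLE} determines $\QH_3$ only up to the unknown constant $C_3(\gamma)$, so $n=3$ cannot be the base case that fixes constants. The prefactor comes from the explicit mass $|\QA(\ell_1,\ell_2)|$ and from $|\QD(\ell)|$.
\end{itemize}
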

 All derivatives appearing in~\eqref{thm1} are determined explicitly by Lagrange inversion.
Formula~\eqref{thm1} shows that, apart from $\prod_i\ell_i^{-1/2}$, the formula depends only on the total boundary length $L_n$. Thus one excursion functional determines the area transforms for every number of holes. The symmetry in the boundary lengths is also visible in this formula, although the definition initially distinguishes the outer boundary and the loop surrounding the marked point.

\subsection{The hypergeometric string equation}
\label{subsec:intro-string}

A Campbell-type identity for L\'evy excursions converts~\eqref{eq:intro-excursion} into an implicit equation involving the one-dimensional integral of $g$. Evaluating this integral gives Proposition~\ref{hypergeo}. After the change of variables, 
$$u_{\rm st}(x;\gamma):=-\frac{16}{\gamma^4\pi^2}\frac{M+u(-x L_\mu(\gamma))}{2M},\qquad \text{with}\qquad L_\mu(\gamma):=\frac{2^{1-{\frac{4}{\gamma^2}}}}{\Gamma({\frac{4}{\gamma^2}})}\sqrt{2\pi}M^{{\frac{4}{\gamma^2}}+\frac{1}{2}} \frac{\pi}{\cos(\frac{4\pi}{\gamma^2})}\frac{\gamma^4\pi^2}{8}.$$
Again, when $\gamma$ is clear in the context, we write $u_{\rm st}(\lambda;\gamma)=u_{\rm st}(\lambda).$
\begin{theorem}
For $\gamma\in(\sqrt{\frac{8}{3}},2)$,  and $x\geq 0$
\begin{equation}\label{eq:intro-string}
-x=\frac{u_{\rm st}(x;\gamma)}2\,
{}_2F_1\!\left(
\frac12-\frac4{\gamma^2},\frac12+\frac4{\gamma^2};2;
-\frac{\gamma^4\pi^2}{16}u_{\rm st}(x;\gamma)
\right),
\qquad u_{\rm st}(0)=0.
\end{equation}
\end{theorem}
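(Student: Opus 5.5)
We derive the string equation from the excursion definition of $u(s)$ in \eqref{eq:intro-excursion}. The first step is a Campbell-type identity for the spectrally positive $\beta$-stable process. Under $\underline N'$, an excursion above the running infimum consists of an initial jump of size $x$, distributed according to $x^{-1-\beta}dx$, followed by the process started at $x$ and run until it first hits $0$. Inside that excursion, the jumps decompose along the successive sub-excursions. The standard fluctuation identity
\[
\E_x\Big[e^{sT_0}\prod_{\text{jumps } y}g(y)\Big]=e^{-x\,\Phi(s)},
\]
with $T_0$ the hitting time of $0$, then defines a ``dressed'' exponent $\Phi(s)$. Applying the compensation formula to the jumps, and using that the process has Laplace exponent $\psi(\lambda)=c_\beta\lambda^\beta$ in the normalization with L\'evy measure $x^{-1-\beta}dx$, we expect $\Phi(s)$ to satisfy the implicit equation
\begin{equation*}
s+\int_0^\infty\big(g(y)e^{-y\Phi(s)}-1+y\Phi(s)\big)\,y^{-1-\beta}\,dy=0.
\end{equation*}
This equation says that $\Phi$ is the root of a modified Laplace exponent, in which each jump carries the weight $g(y)$. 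We then identify $u(s)=\int\big(g(x)e^{-x\Phi(s)}-1\big)x^{-1-\beta}dx$, up to an additive compensator term. A cleaner route is to note that $u$ is itself the root: we expect $u(s)=-\Phi(s)$ after matching normalizations. This identification is what produces the $e^{L_nu}$ structure in Theorem~\ref{thm: 1}.

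The second step evaluates the one-dimensional integral, with $g$ given by the FZZ formula \eqref{eq:intro-disk-transform}. We must compute
\[
I(\lambda)=\int_0^\infty\big((My)^{\nu}K_\nu(My)\,c\,e^{-\lambda y}-1+\lambda y\big)y^{-1-\beta}\,dy,\qquad \nu=4/\gamma^2,\ \beta=\nu+\tfrac12.
\]
The key input is the classical Laplace transform
\[
\int_0^\infty y^{\mu-1}e^{-\lambda y}K_\nu(My)\,dy,
\]
which is expressible through a Gauss hypergeometric function in $(\lambda-M)/(\lambda+M)$, or equivalently in $1-\lambda^2/M^2$. Here $\mu=\nu-\beta=-\tfrac12$ is negative, so the integral diverges at $0$, and the subtractions $-1+\lambda y$ exactly cure this divergence. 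We handle this by analytic continuation in $\mu$ from $\Re\mu>\nu$ to the required value: the subtracted terms contribute only pole terms that cancel. After continuation, and using the Euler and Pfaff transformations, we expect $I(\lambda)$ to take the form
\[
C\,(\lambda+M)\,{}_2F_1\!\big(\tfrac12-\nu,\tfrac12+\nu;2;\tfrac{M+\lambda}{2M}\cdot\text{const}\big),
\]
possibly up to a linear polynomial in $\lambda$. The combination $(M+u)/(2M)$ in the definition of $u_{\rm st}$ strongly suggests that the hypergeometric argument is $(M-\Phi)/(2M)$, which becomes $-\frac{\gamma^4\pi^2}{16}u_{\rm st}$.

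The third step is normalization bookkeeping. We substitute $s=-xL_\mu(\gamma)$ and absorb $\Gamma(\nu)$, $2^{1-\nu}$, $\sqrt{2\pi}$, $M^{\nu+1/2}$ and the factor $\pi/\cos(4\pi/\gamma^2)$ coming from $\Gamma(-\beta)$ and reflection formulas into $L_\mu(\gamma)$. Checking the normalization at $x=0$ is straightforward: $g(y)\to1$ forces $u(0)=0$ after accounting for $g$, hence $u_{\rm st}(0)=0$, since $u(0)=-M$ is what makes $u_{\rm st}(0)=0$. The main obstacle is the second step: carrying out the regularized Bessel–Laplace integral at the singular exponent $\mu=-\tfrac12$ and recognizing the specific parameters $(\tfrac12-\nu,\tfrac12+\nu;2)$. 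We would first try writing $K_\nu$ via its integral representation $K_\nu(My)=\int_0^\infty e^{-My\cosh t}\cosh(\nu t)\,dt$. This reduces the $y$-integral to a power $(\lambda+M\cosh t)^{\beta}$, minus Taylor terms, and the resulting $t$-integral is a standard Euler-type representation of ${}_2F_1$ with $c=2$.
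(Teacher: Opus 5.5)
You follow the paper's route: a Campbell/Palm identity makes $-u$ the root of a regularized L\'evy--Khintchine equation. That equation is then evaluated through the Laplace--Mellin transform of $K_\nu$, continued in the exponent, followed by hypergeometric identities and bookkeeping. Your implicit equation is correct: it is \eqref{equ11}, with the paper's $\lambda$ equal to your $-s$ and the paper's $s(\lambda)$ equal to your $\Phi(s)$. Its derivation, however, is not. For $\beta\in(\frac32,2)$ the process has unbounded variation and $\ul N'$-excursions leave $0$ continuously, so the decomposition ``first jump with law $x^{-1-\beta}dx$, then $\P_x$ until $T_0$'' is only valid in bounded variation. Instead, set $\psi_g(\Phi):=\int_0^\infty(e^{-\Phi y}g(y)-1+\Phi y)y^{-1-\beta}\,dy$ and stop the martingale $\exp(-\Phi\zeta'_t-t\psi_g(\Phi))\prod_{r\le t}g(\Delta\zeta'_r)$ at $\tau_{-1}$; it is bounded on $[0,\tau_{-1}]$ when $\psi_g(\Phi)\ge0$. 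Combine this with the exponential formula $\E[e^{s\tau_{-1}}\prod_{t<\tau_{-1}}g(\Delta\zeta'_t)]=e^{u(s)}$ for the excursion point process. This is the content of Proposition~\ref{lem:palm-levy-excursion}, and it gives $u=-\Phi$ exactly, with no compensator term. Beyond this, there are two genuine gaps.

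First, you leave the evaluation as an expectation ``possibly up to a linear polynomial'', with prefactor $\lambda+M$ (your $\lambda$ being $\Phi$). That hedge is exactly where the content of the theorem lies. The continuation past the poles at $\mu=\nu$ and $\mu=\nu-1$ reproduces the doubly subtracted integral with no extra term; the paper checks this in \eqref{mellin-regularization}--\eqref{mellin-finite-part}, and the next pole $\mu=\nu-2$ lies below $-\frac12$. The answer is proportional to $M-\Phi$, not $M+\Phi$: $\psi_g(\Phi)=C\,w\,{}_2F_1(\frac12-\nu,\frac12+\nu;2;w)$ with $w=\frac{M-\Phi}{2M}$ and $L_\mu(\gamma)=\frac{\gamma^4\pi^2}{8}C$. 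To obtain it:
\begin{itemize}
\item Apply Pfaff's transformation to \eqref{bessel int formula} to pass to the variable $w$.
\item The parameter $2$ then comes from the degeneration $c=\mu+\frac12\to0$, because ${}_2F_1(a,b;c;w)/\Gamma(c)\to ab\,w\,{}_2F_1(a+1,b+1;2;w)$.
\item With $A=\frac12+\nu$ and $B=\frac12-\nu$, the factor $M+\Phi=2M(1-w)$ is absorbed by Euler's identity $(1-w)\,{}_2F_1(A+1,B+1;2;w)={}_2F_1(A,B;2;w)$, which holds since $A+B=1$.
\item $\Gamma(A)\Gamma(B)=\pi/\cos(\pi\nu)$ supplies the cosine in $L_\mu(\gamma)$; no $\Gamma(-\beta)$ survives in the normalization $x^{-1-\beta}dx$.
\end{itemize}
Your $\cosh$ route fails as stated. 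Since $g(y)y^{-1-\beta}\propto y^{-3/2}K_\nu(My)$, the $y$-integral gives $(\Phi+M\cosh t)^{1/2}$, not a $\beta$-th power. At $\mu=-\frac12$ the interchange breaks down because $\int_0^\infty(\Phi+M\cosh t)^{1/2}\cosh(\nu t)\,dt=\infty$. The swap is legitimate only for $\Re\mu>\nu$, after which you must continue in $\mu$ anyway.

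Second, $u_{\rm st}(0)=0$ means $u(0)=-M$, and your reason (``$g\to1$ forces $u(0)=0$'') is false. In fact $u(0)=\log\E[\prod_{t<\tau_{-1}}g(\Delta\zeta'_t)]$, and its value $-M$ is a nontrivial identity, which the paper imports from \cite[Proposition 6.7]{Int-CLE} in Proposition~\ref{same of u}. Your framework can prove it directly. At $\Phi=M$ the hypergeometric argument in \eqref{bessel int formula} is $0$, so the continued transform equals $\sqrt\pi(2M)^{-\mu}\Gamma(\mu+\nu)\Gamma(\mu-\nu)/\Gamma(\mu+\frac12)$. This vanishes at $\mu=-\frac12$ because $\Gamma(\nu-\frac12)\Gamma(-\nu-\frac12)$ is finite for $\nu\in(1,\frac32)$. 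Hence $\psi_g(M)=0$, and optional stopping for the martingale with $\Phi=M$ (bounded by $e^{M}$ on $[0,\tau_{-1}]$) gives $\E[\prod_{t<\tau_{-1}}g(\Delta\zeta'_t)]=e^{-M}$.

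Finally, since $L_\mu(\gamma)<0$, the substitution $s=-xL_\mu(\gamma)$ with $s\le0$ covers only $x\le0$. For $x\ge0$ the statement concerns the analytic continuation of $u_{\rm st}$ given by the implicit function theorem at $(x,u_{\rm st})=(0,0)$, as in Theorem~\ref{thm:string equ}, and your sketch should say so.
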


For every fixed $\gamma\neq0$, the analytic implicit-function theorem
shows that \eqref{eq:intro-string} determines a unique holomorphic germ
$u_{\rm st}(\,\cdot\,;\gamma)$ at $0$ satisfying
$u_{\rm st}(0;\gamma)=0$.

Although the change of variables defining $u_{\rm st}$ is singular at
the endpoint $\gamma^2=8/3$, the string equation itself has the
well-defined algebraic specialization
\[
x+\frac12u_{\rm st}(x)
+\frac{2\pi^2}{9}u_{\rm st}(x)^2=0.
\]
This is the genus-zero pure-gravity string equation familiar from the
matrix-model literature~\cite{Ambjorn:1990ji,Moore:1991ir}. 
\subsection{Conformal welding and deformed Weil--Petersson volumes}
\label{subsec:intro-wp}

The quantum $n$-hole sphere can be placed back into a quantum sphere by welding a one-point quantum disk to each of its boundary component. See Theorem~\ref{thm: welding QSn}.

Let $\Gamma$ be sampled from the full-plane nested
$\CLE_\kappa^{\C}$.  Fix $n$ distinct points 
$\boldsymbol z=(z_1,\ldots,z_n)$ in $\C$.
For each $i$, let $\eta_i=\eta_i(\Gamma,\boldsymbol z)$ be the
almost surely unique outermost loop of $\Gamma$ that surrounds $z_i$
and none of the other marked points.  Write
$
\boldsymbol\eta=(\eta_1,\ldots,\eta_n),$
and let
$
D_{\boldsymbol\eta}$ be the multiply connected component of
$
\widehat{\C}\setminus\bigcup_{i=1}^n\eta_i$.
Let $\mathcal C_n(\Gamma,\boldsymbol z)$ be the event that every loop
of $\Gamma$ contained in $D_{\boldsymbol\eta}$ is contractible in
$D_{\boldsymbol\eta}$.  We define the finite kernel
$\mathsf m_n(d\boldsymbol\eta\mid\boldsymbol z)$ by
\begin{align}\label{def:mn}
\int F(\boldsymbol\eta)\,
\mathsf m_n(d\boldsymbol\eta\mid\boldsymbol z)
:=
\mathbb E_{\CLE_\kappa^{\C}}\left[
F\bigl(\boldsymbol\eta(\Gamma,\boldsymbol z)\bigr)
\mathbf 1_{\mathcal C_n(\Gamma,\boldsymbol z)}
\right]
\end{align}
for every nonnegative measurable function $F$.
For insertion weights $\alpha_1,\ldots,\alpha_n$, put
\[
\Delta_\alpha=\frac\alpha2\left(Q-\frac\alpha2\right),\qquad
\mathsf m_n^{(\alpha_i)_i}(d\boldsymbol\eta\mid\boldsymbol z)
=\prod_{i=1}^n
\left(\frac{\mathrm{CR}(\eta_i,z_i)}2\right)^{2\Delta_{\alpha_i}-2}
\mathsf m_n(d\boldsymbol\eta\mid\boldsymbol z),
\]
where $\mathrm{CR}(\eta_i,z_i)$ is the conformal radius at $z_i$ of the disk bounded by $\eta_i$. The quantity of interest is the Liouville correlation function weighted by the total mass of this loop kernel and integrated over the remaining marked points.

Fix $z_1=0$, $z_2=1$, and $z_3=e^{i\pi/3}$. Let $\alpha_i\in (Q-\frac{\gamma}{4},Q)$ with $1\leq i\leq n$ and $n\geq 4$.
For $\boldsymbol{\lambda}=(\lambda_1,\lambda_2,..,\lambda_n):=(-2i\frac{Q-\alpha_1}{\gamma},..,-2i\frac{Q-\alpha_n}{\gamma})$, we define the $\gamma$-deformed Weil-Petersson volume $V_{0,n}^{\rm WP}(\boldsymbol{\lambda})$ as
    \begin{align}
   \int_{\mathcal{U}_n} \LF_\C^{{(\alpha_i, z_i)}_{i}}[e^{-\mu A}] \times \left|\mathsf m_n^{(\alpha_i)_i}(\cdot\mid\boldsymbol z)\right| \prod_{i=4}^{n} d^2z_i:=C_{\rm WP}(\gamma,\boldsymbol\alpha,\mu,n) \prod_i \frac{1}{\cosh\pi \lambda_i} V_{0,n}^{\rm WP}(\boldsymbol{\lambda})
    \end{align}
The normalization $C_{\rm WP}$ is explicit in~\eqref{cwp} and 
$$\mathcal{U}_n:=\{(z_4,..,z_n)\in \hat{\C}^{n-3}\mid z_i\neq z_j \text{ for }4\leq i < j\leq n, z_k\neq z_1,z_2,z_3 \text{ for } 4\leq k\leq n\}$$
\begin{theorem}
\begin{align}\label{eq:legendre wp}
    V_{0,n}^{\rm WP}(\boldsymbol{\lambda})=\lim _{x \rightarrow 0}-\frac{1}{2}\left(\frac{\partial}{\partial x}\right)^{n-3} u_{\mathrm{\rm st}}^{(1)}(x) \prod_{i=1}^n P_{-\frac{1}{2}-i \lambda_i}\left(1+ \frac{\gamma^4\pi^2}{8} u_{\mathrm{st}}(x)\right) 
 \end{align} 
 Here $P_\nu$ denotes the Legendre function of the first kind, with the
branch analytic at $1$ and normalized by $P_\nu(1)=1$.
\begin{align}\label{eq:wplimit}
    \lim_{\gamma\to 0}V_{0,n}^{\rm WP}(\frac{2}{\pi\gamma^2}b_1,\frac{2}{\pi\gamma^2}b_2,...,\frac{2}{\pi\gamma^2}b_n)=\Vol_{\rm WP}(\mathcal{M}_{0,n}(b_1,..,b_n))
\end{align}
      Here $\Vol_{\rm WP}(\mathcal{M}_{0,n}(b_1,..,b_n))$ is the Weil-Petersson volume of n-hole sphere with boundary length $b_i$.
\end{theorem}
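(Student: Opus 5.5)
We will prove the two assertions in turn: first the Legendre-function formula \eqref{eq:legendre wp}, then the $\gamma\to0$ limit \eqref{eq:wplimit}.

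\textbf{Step 1: reduce the integrated correlation to $\QH_n$.} The starting point is the conformal welding identity (Theorem~\ref{thm: welding QSn}). Welding a one-point quantum disk of weight $\alpha_i$ to each boundary of $\QH_n(\boldsymbol\ell)$ yields a quantum sphere with $n$ marked points decorated by the loops $\boldsymbol\eta$, restricted to the event $\mathcal C_n$. Its embedding, with $z_1,z_2,z_3$ fixed and $z_4,\dots,z_n$ integrated over $\mathcal U_n$, equals
\[
\LF_\C^{(\alpha_i,z_i)_i}\times\mathsf m_n^{(\alpha_i)_i}(d\boldsymbol\eta\mid\boldsymbol z)
\]
up to an explicit constant. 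The conformal radius factors $(\mathrm{CR}/2)^{2\Delta_{\alpha_i}-2}$ are exactly the ones produced by the uniformization of the one-point disks.

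Taking $e^{-\mu A}$ and integrating over $\boldsymbol\ell$, the left-hand side of the definition becomes
\[
\int \QH_n(\boldsymbol\ell)[e^{-\mu A}]\prod_i \ell_i\,\mathcal D_{\alpha_i}(\ell_i)\,d\ell_i ,
\]
where $\mathcal D_{\alpha}(\ell)$ is the area transform of the one-point disk of weight $\alpha$ and length $\ell$. By the FZZ formula of~\cite{ARS-FZZ}, $\mathcal D_\alpha(\ell)$ is a multiple of $\ell^{-1}K_{i\lambda}(M\ell)$ when $\lambda=-2i(Q-\alpha)/\gamma$. The range $\alpha_i\in(Q-\gamma/4,Q)$ should guarantee convergence of the integrals.

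\textbf{Step 2: carry out the length integrals.} Insert Theorem~\ref{thm: 1}. Because $\QH_n$ depends on $\boldsymbol\ell$ only through $\prod\ell_i^{-1/2}$ and $e^{L_n u(s)}$, the integral factorizes after interchanging it with $\partial_s^{n-3}$. It becomes
\[
\partial_s^{n-3}\Big[u'(s)\prod_i \int_0^\infty \ell^{-1/2}K_{i\lambda_i}(M\ell)\,e^{\ell u(s)}\,d\ell\Big]_{s=0}.
\]
The classical Laplace transform
\[
\int_0^\infty \ell^{-1/2}K_{\nu}(M\ell)e^{-p\ell}\,d\ell=\sqrt{\pi/(2M)}\,\Gamma(\tfrac12+\nu)\Gamma(\tfrac12-\nu)\,P_{\nu-1/2}(p/M)
\]
produces $\frac{\pi}{\cosh\pi\lambda_i}P_{-1/2-i\lambda_i}(-u(s)/M)$. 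This identity is valid for $p>-M$, which needs $u(s)<M$; we use $u(0)=0$ together with monotonicity in $s\le0$.

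Finally, change variables $s=-xL_\mu$ and express $-u/M$ through $u_{\rm st}$ by its definition; this gives $-u/M=1+\frac{\gamma^4\pi^2}{8}u_{\rm st}$. The chain-rule factors $L_\mu^{n-2}$, the constants $\pi$, $R_\gamma$, and $\sqrt{\pi/2M}$, and the disk normalizations are all absorbed into $C_{\rm WP}$. We also need to justify the interchange of derivatives, limits and integrals. For this we use the analyticity of $u$ near $0$ from Proposition~\ref{hypergeo}, with the derivative at $s=0$ understood as the one-sided limit $x\to0$.

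\textbf{Step 3: the $\gamma\to0$ limit.} Set $\lambda_i=\frac{2}{\pi\gamma^2}b_i$ and $\nu=4/\gamma^2\to\infty$. The hypergeometric string equation \eqref{eq:intro-string} has argument $-\frac{\gamma^4\pi^2}{16}u_{\rm st}$. Rescaling $u_{\rm st}=\frac{16}{\gamma^4\pi^2}w$-type variables is wrong here; instead we keep $y=\frac{\gamma^4\pi^2}{16}u_{\rm st}$ together with $x$ appropriately rescaled. The large-parameter asymptotic ${}_2F_1(\tfrac12-\nu,\tfrac12+\nu;2;-y/\nu^2)\to \frac{2}{\sqrt{z}}J_1$-type Bessel limit (here a Bessel function $I_1$ or $J_1$ in $\sqrt y$) turns the string equation into the Zograf/Itzykson--Zuber genus-zero string equation $\sqrt u\,J_1(2\sqrt u)$-type relation for Weil--Petersson volumes. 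Likewise, the Mehler--Heine type limit sends $P_{-1/2-i\lambda}(1+2t/\nu^2)$ to $I_0(b\sqrt{\cdot})$ or $J_0$. Substituting these limits into \eqref{eq:legendre wp} reproduces the known generating formula
\[
V_{0,n}(\mathbf b)=-\tfrac12\,\partial_x^{n-3}\Big[u'\prod_i J_0\big(b_i\sqrt{u}\big)\Big]\Big|_{x=0},
\]
recalled in Appendix~A, which completes the identification.

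\textbf{Main obstacle.} The hardest part is making this limit rigorous and the scalings consistent. The Taylor coefficients of the implicit germ must converge uniformly in $\gamma$, via Lagrange inversion, to those of the limiting germ. The Legendre function asymptotics must hold uniformly on a neighborhood of $x=0$. We would handle this by working with analytic germs on a fixed disk and applying Cauchy estimates.
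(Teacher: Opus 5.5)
Your Steps~1--2 follow the paper's proof of Proposition~\ref{prop:gamma-wp}: reweighted welding, FZZ, Theorem~\ref{thm: 1}, interchange with $\partial_s^{n-3}$, and the $K_\nu$--Legendre Laplace transform. Step~3 has the same architecture as the proof of Theorem~\ref{gamma 0 wp}. There is one slip in Step~2: $u(0)=-M$ by Proposition~\ref{same of u}, not $0$. This value is what makes your own substitution $-u/M=1+\frac{\gamma^4\pi^2}{8}u_{\rm st}$ hold at $x=0$. Convergence of the Laplace integrals then comes from $-u(s)\ge M$ for $s\le 0$, since $u'(s)=\int T(e)e^{sT(e)}F(e)\,\ul N'(de)\ge 0$.

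Step~3 is where the proposal falls short. The normalization, which is the entire content of \eqref{eq:wplimit}, is left undetermined and partly misstated.

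\textbf{No rescaling.} Rescaling $x$ by a $\gamma$-dependent factor $c$ puts a factor $c^{\pm(n-2)}$ in front of the derivative of order $n-3$, which destroys the identity. Also, $y=\frac{\gamma^4\pi^2}{16}u_{\rm st}$ is incompatible with writing the hypergeometric argument as $-y/\nu^2$, which forces $y=\pi^2u_{\rm st}$. Keep $x$ and $u_{\rm st}$ fixed.

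\textbf{The limits.} With $q=4/\gamma^2$,
\[
(\tfrac12-q)_j(\tfrac12+q)_j(-\gamma^4/16)^j=\prod_{k<j}\bigl(1-(k+\tfrac12)^2/q^2\bigr)\to1 .
\]
So the coefficient of $u_{\rm st}^{j+1}$ in \eqref{simplehyp} tends to $\pi^{2j}/(2\,(j+1)!\,j!)$, and the limit germ solves
\[
-x=\frac{\sqrt{U_0}}{2\pi}\,I_1\bigl(2\pi\sqrt{U_0}\bigr).
\]
Likewise, for $\lambda=2b/(\pi\gamma^2)$ the coefficient of $U^r$ in $P_{-1/2-i\lambda}(1+\frac{\gamma^4\pi^2}{8}U)$ tends to $(-b^2/4)^r/(r!)^2$. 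This is the coefficient of $U^r$ in $J_0(b\sqrt U)$.

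\textbf{Matching with Appendix~A.} The resulting limit $-\frac12\partial_x^{n-3}[U_0'\prod_iJ_0(b_i\sqrt{U_0})]|_{x=0}$ is not literally the formula of Appendix~A. You must set $u_0=-U_0/2$ and check two things:
\begin{enumerate}
\item the limiting equation becomes exactly \eqref{eq:wp-string};
\item $-\frac12U_0'\prod_iJ_0(b_i\sqrt{U_0})=u_0'\prod_i\mathcal I_0(b_i\sqrt{2u_0})$.
\end{enumerate}
Then invoke \eqref{eq:wp-volume-bessel}. Leaving ``$I_1$ or $J_1$'' and ``$I_0$ or $J_0$'' open skips precisely this check.

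\textbf{The anticipated obstacle.} It does not arise. Only Taylor coefficients of order at most $n-2$ enter. Lagrange inversion expresses them polynomially in finitely many coefficients of \eqref{simplehyp}, and the linear coefficient is independent of $\gamma$. So coefficientwise convergence suffices, with no Cauchy estimates. The limit is understood for this explicit finite-jet expression continued in $\gamma$ and $\lambda_i$. This is necessary because $V_{0,n}^{\rm WP}$ is defined probabilistically only for $\gamma^2\in(8/3,4)$ and $|\Im\lambda_i|<\frac12$.
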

 Since only a finite Taylor expansion contributes, the right-hand side of Equation~\eqref{eq:legendre wp} is a symmetric polynomial in $\lambda_1^2,\ldots,\lambda_n^2$ of total degree at most $n-3$. 
For example,
\[
V_{0,4}^{\rm WP}\!\left(
\frac{2b_1}{\pi\gamma^2},\ldots,\frac{2b_4}{\pi\gamma^2}
\right)
=2\pi^2+\frac{3\pi^2\gamma^4}{32}
+\frac12\sum_{i=1}^4b_i^2.
\]
The terminology deformed Weil--Petersson volume is motivated by its $\gamma\downarrow 0$ limit~\eqref{eq:wplimit}.

\subsection{Outlook and perspectives}

\paragraph{1. Ising correlations coupled to LQG.}
A natural next step is to extend our construction and area formula to
$\mathrm{CLE}_\kappa$ with $\kappa\in(4,8)$ and $\gamma=4/\sqrt\kappa$.
The generalized quantum annuli and pairs of pants of~\cite{Int-CLE}
provide a starting point. 
At $\kappa=16/3$, suitable combinations of multipoint gasket Green
functions should recover Ising spin correlations through the FK
representation. Integrating these against Liouville correlations at
$\gamma=\sqrt3$ would yield Ising-gravity correlation numbers, providing
a probabilistic approach to the correlation-number problem studied by
Belavin and Zamolodchikov~\cite{Belavin:2006ex}.

\paragraph{2. A probabilistic construction for small $\gamma$.}
Can one construct a probabilistic counterpart of the quantum $n$-hole
sphere for small $\gamma$? Our CLE construction is restricted to
$\gamma^2>8/3$, below which no nontrivial $\mathrm{CLE}_{\gamma^2}$
exists~\cite{shef-werner-cle}. An alternative construction could give
a geometric interpretation of the Weil--Petersson limit as $\gamma\to0$.

\paragraph{3. Conformal welding and topological recursion.}
Can conformal welding yield topological recursion for the
fixed-boundary-length partition functions of quantum surfaces?
A recursive cutting and welding construction, including in positive
genus, could provide a probabilistic interpretation of matrix-model
loop equations and recover Mirzakhani's recursion in the
$\gamma\to0$ limit.

\paragraph{Organization of the paper.}
Section~\ref{sec:prelim} fixes our conventions for quantum surfaces, Liouville fields, CLE, and conformal welding. Section~\ref{sec: quantum n hole} constructs the quantum $n$-hole sphere, proves the welding identities, and reduces its area transform to L\'evy excursion functionals. Section~\ref{string equation} derives the string equation and computes the required derivatives. Section~5 treats the disk-welding transform and the deformed Weil--Petersson polynomials, including their conditional LQG/CLE interpretation and their algebraic limit.  Appendix~A recalls the genus-zero intersection-number and Weil--Petersson formulas, and Appendix~B proves the Bessel integral identity used in the computation.

\paragraph{Acknowledgement:} We thank Zhuo Wu for helpful discussion on Lemma 3.15.  X.S. and B.W. were supported by National Key R$\&$D Program
of China (No. 2023YFA1010700). X.S. was also partially supported by the NSF grant DMS-2027986,
the NSF Career grant DMS-2046514, and a start-up grant from the University of Pennsylvania. S.X. was partially supported by NSF Grant
DMS-2331096.

\section{Preliminaries}\label{sec:prelim}
{We assume familiarity with CLE in the simple-loop regime; see~\cite{shef-werner-cle,shef-cle}. This section fixes the measure-theoretic, LQG, LCFT, and CLE conventions used below.}

\subsection{Measure theoretic background}\label{subsec:measure-pre}
{We use probabilistic terminology for sigma-finite measures. Let $M$ be a sigma-finite measure on a standard Borel space $(\Omega,\cF)$ and let $X:(\Omega,\cF)\to(E,\cE)$ be measurable. For every nonnegative measurable function $f$, write}
\[
M[f]:=\int_\Omega f\,dM.
\]
We call the pushforward measure $M_X:=X_*M$ on $(E,\cE)$ the law of $X$; thus $M_X[f]=M[f(X)]$.
If $0<|M|<\infty$, then $M^\#:=|M|^{-1}M$ denotes the corresponding probability measure. The next Lemma is from~\cite[Lemma 2.2]{Int-CLE}\label{def:disint}.

\begin{lemma}[Disintegration]
Assume that $X:\Omega\to\R^n$ is measurable and that $M_X$ is absolutely continuous with respect to Lebesgue measure. A measurable family $\{M_x:x\in\R^n\}$ of measures on $(\Omega,\cF)$ is a \emph{disintegration of $M$ over $X$} if, for every $A\in\cF$ and every nonnegative Borel function $f$,

\begin{equation}\label{eq:disint}
\int_A f(X)\,dM=\int_{\R^n}f(x)M_x(A)\,d^nx.
\end{equation}
Such a disintegration exists and is unique for Lebesgue-almost-every $x$.
\end{lemma}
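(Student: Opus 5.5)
We prove existence by reducing to regular conditional probabilities, and uniqueness by a monotone class argument. Since $M$ is $\sigma$-finite, choose a measurable $h:\Omega\to(0,\infty)$ with $M[h]<\infty$. Set $P:=h\,M$, a finite measure on the standard Borel space $(\Omega,\cF)$. Its pushforward $P_X$ is absolutely continuous with respect to $M_X$, and hence with respect to Lebesgue measure; write $P_X(dx)=\rho(x)\,d^nx$ with $\rho\ge0$ measurable. Because $\Omega$ is standard Borel, $P$ admits a regular conditional distribution given $X$: a probability kernel $x\mapsto P(\cdot\mid X=x)$ with
\[
\int_A f(X)\,dP=\int_{\R^n} f(x)\,P(A\mid X=x)\,\rho(x)\,d^nx .
\]

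Next we undo the density $h$. Define
\[
M_x(A):=\rho(x)\int_A h^{-1}\,dP(\cdot\mid X=x).
\]
This is a measurable family of measures, because $x\mapsto\int \phi\, dP(\cdot\mid X=x)$ is measurable for every nonnegative measurable $\phi$. Apply the conditional identity above with $f(X)\mathbf 1_A$ replaced by $f(X)\mathbf 1_A h^{-1}$. This is legitimate after extending from indicators to nonnegative functions of $\omega$ by monotone convergence. It gives
\[
\int_A f(X)\,dM=\int_A f(X)h^{-1}\,dP=\int_{\R^n} f(x)M_x(A)\,d^nx ,
\]
which is exactly \eqref{eq:disint}.

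For uniqueness, suppose $\{M_x\}$ and $\{M'_x\}$ both satisfy \eqref{eq:disint}. Fix $A$ in a countable $\pi$-system $\mathcal A$ generating $\cF$; such a system exists because the space is standard Borel. Intersect $A$ further with the sets $\{h\ge 1/k\}$, on which $M$ is finite. Taking $f=\mathbf 1_B$ for Borel $B\subseteq\R^n$ shows that $x\mapsto M_x(A\cap\{h\ge1/k\})$ and $x\mapsto M'_x(A\cap\{h\ge1/k\})$ have the same integral over every Borel $B$. These functions are integrable, since the integrals are at most $kM[h]<\infty$, so they agree for a.e. $x$. Collecting the countably many null sets, there is one full-measure set of $x$ on which $M_x$ and $M'_x$ agree on the $\pi$-system $\{A\cap\{h\ge1/k\}\}$. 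Both measures are finite on each $\{h\ge 1/k\}$. Dynkin's $\pi$–$\lambda$ theorem then gives agreement on $\cF\cap\{h\ge1/k\}$ for each $k$, and letting $k\to\infty$ gives $M_x=M'_x$ on $\cF$.

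The main obstacle is the measure-theoretic bookkeeping, chiefly ensuring measurability of $x\mapsto M_x(A)$ jointly with the $\sigma$-finite renormalization. This is handled by working through the finite measure $P$ and the standard Borel assumption. The statement is quoted from \cite[Lemma 2.2]{Int-CLE}, and one may simply cite it.
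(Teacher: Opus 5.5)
Your proof is correct. The paper gives no argument here: it quotes the lemma from \cite[Lemma 2.2]{Int-CLE}, using the standing assumptions at the start of Section~\ref{subsec:measure-pre} that $M$ is $\sigma$-finite on a standard Borel space.

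Your route is the standard proof behind such a citation. You pass to the finite measure $P=hM$, take a regular conditional distribution given $X$ (this is where the standard Borel hypothesis enters), and undo the density via $M_x(A)=\rho(x)\int_A h^{-1}\,dP(\cdot\mid X=x)$. Its merit is that it shows exactly where $\sigma$-finiteness and countable generation of $\cF$ are used; the citation buys brevity.

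One small repair is needed in the uniqueness step. Dynkin's uniqueness theorem for finite measures requires the two measures to agree on the whole space. So put $\Omega$ into your countable $\pi$-system $\mathcal A$, so that $\Omega_k=\{h\ge 1/k\}$ itself lies in the traced system $\{A\cap\Omega_k: A\in\mathcal A\}$. Then note that $M_x(\Omega_k)=M'_x(\Omega_k)<\infty$ for a.e.\ $x$: taking $f\equiv 1$ and $A=\Omega_k$ gives $\int M_x(\Omega_k)\,d^nx=M(\Omega_k)\le kM[h]$. With these two points, your conclusion that $M_x=M'_x$ on all of $\cF$ for a.e.\ $x$ follows by letting $k\to\infty$.
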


\subsection{Liouville quantum gravity and Liouville conformal field  theory}\label{subsec:Liouville-CFT}
In this section, we review the precise definitions of some $\gamma$-LQG surfaces and Liouville fields.
For more background, we refer to~\cite{ghs-mating-survey,vargas-dozz-notes} and references  therein, as well as the preliminary sections in \cite{AHS-SLE-integrability, ARS-FZZ}.
 
\subsubsection{Gaussian free field, Liouville field, and the DOZZ formula}\label{subsubsec:GFF}

Let $\cX$ be either $\C$ or the upper half-plane $\bbH$, equipped with
a smooth conformal metric whose metric completion is compact.  Let
$H^1(\cX)$ be the corresponding first-order Sobolev space and let
$H^{-1}(\cX)$ be its continuous dual.  Put $|z|_+:=\max\{|z|,1\}$ and
\begin{align*}
G_\bbH(z,w)
&=-\log|z-w|-\log|z-\overline w|
  +2\log|z|_++2\log|w|_+,\qquad z,w\in\bbH,\\
G_\C(z,w)
&=-\log|z-w|+\log|z|_++\log|w|_+,\qquad z,w\in\C.
\end{align*}
Let $h_\cX$ be the centered Gaussian random distribution with covariance
kernel $G_\cX$; equivalently, for smooth test functions $f_1,f_2$,
\[
\E[(h_\cX,f_1)(h_\cX,f_2)]
=\iint_{\cX\times\cX}f_1(z)G_\cX(z,w)f_2(w)\,d^2z\,d^2w.
\]
Thus $h_\C$ is the whole-plane GFF and $h_\bbH$ is the free-boundary
GFF on $\bbH$, in each case normalized to have mean zero on the unit
semicircle or circle, as appropriate.  We denote its law by $P_\cX$.

We now review the Liouville fields on $ \C$  and $\bbH$ following~\cite[Section 2.2]{AHS-SLE-integrability}.
\begin{definition}
	\label{def-LF-sphere}
	Suppose $(h, \mathbf c)$ is sampled from $P_\C \times [e^{-2Qc}dc]$ and set $\phi(z):=h(z)-2Q\log|z|_++\mathbf c$. 
	Then we write  $\LF_{\C}$ as the law of $\phi$ and call a sample from  $\LF_{\C}$  a Liouville field on $\C$.
	
	Suppose $(h, \mathbf c)$ is sampled from $P_\bbH \times [e^{-Qc}dc]$ and set $\phi(z):=h(z)-2Q\log|z|_++\mathbf c$. 
	Then we write  $\LF_{\bbH}$ as the law of $\phi$ and call a sample from  $\LF_{\bbH}$  a \emph{Liouville field on $\bbH$}.
\end{definition}
We also need the following Liouville fields  with   insertions.  
\begin{definition}\label{def-RV-sph}
	Let $(\alpha_i,z_i) \in  \R \times \C$ for $i = 1, \dots, m$, where $m \ge 1$ and the $z_i$'s are distinct. 
	Let $(h, \mathbf c)$ be sampled from $ C_\C^{(\alpha_i,z_i)_i}  P_\C \times [e^{(\sum_i \alpha_i  - 2Q)c}dc]$ where
	\[C_{  \C}^{(\alpha_i,z_i)_i}=\prod_{i=1}^m |z_i|_+^{-\alpha_i(2Q -\alpha_i)} e^{\sum_{i < j} \alpha_i \alpha_j G_\C(z_i, z_j)}.\]
	Let \(\phi(z) = h(z) -2Q \log |z|_+  + \sum_{i=1}^m \alpha_i G_\C(z, z_i) + \mathbf c\).
	We write  $\LF_{ \C}^{(\alpha_i,z_i)_i}$ for the law of $\phi$ and call a sample from  $\LF_{ \C}^{(\alpha_i,z_i)_i}$ 
	a \emph{Liouville field on $ \C$ with insertions $(\alpha_i,z_i)_{1\le i\le m}$}. 
\end{definition}
\begin{definition}\label{def-1pt-H}
	For $\alpha \in \R$ and $z_0 \in \bbH$, let $(h, \mathbf c)$ be sampled from $(2\Im z_0)^{-\alpha^2/2} |z_0|_+^{-2\alpha (Q-\alpha)}P_\bbH\times [e^{(\alpha-Q)c}dc]$. Let $\phi(z) = h(z) - 2Q \log |z|_+ + \alpha G_\bbH(z, z_0) + \mathbf c$. We write $\LF_\bbH^{(\alpha, z_0)}$ for the law of $\phi$ and call a sample from $\LF_\bbH^{(\alpha, z_0)}$ a \emph{Liouville field on $\bbH$ with insertion $(\alpha, z_0)$}. 
\end{definition}

Fix $\gamma\in(0,2)$ and let $h$ have law $P_\cX$, where
$\cX\in\{\C,\bbH\}$.  Write $h_\varepsilon(z)$ for the usual
circle average in the interior and semicircle average on $\partial\bbH$.
The regularized measures
\[
\mu_h^\varepsilon(d^2z)
:=\varepsilon^{\gamma^2/2}e^{\gamma h_\varepsilon(z)}\,d^2z
\]
converge in the local weak topology to the quantum area measure $\mu_h$.
For $\cX=\bbH$, the boundary regularizations
\[
\nu_h^\varepsilon(dx)
:=\varepsilon^{\gamma^2/4}e^{\gamma h_\varepsilon(x)/2}\,dx
\]
converge to the quantum boundary-length measure $\nu_h$; see
\cite{shef-kpz,shef-wang-lqg-coord}.  The same GMC construction is used
for the Liouville fields above, including insertions whenever the relevant
local integrability conditions hold.

\subsubsection{Quantum  sphere and Quantum disk}\label{subsub:quantum-surface}

A \emph{quantum surface} is an equivalence class of pairs $(D, h)$ where {$D\subset\widehat\C$ is a domain} and $h$ is a generalized function on $D$.  
For $\gamma\in (0,2)$, we say that 
$(D, h) \sim_\gamma (\wt D, \wt h)$ if there is a conformal map $\psi: \wt D \to D$ such that 
\eqb\label{eq-QS}
\wt h = h \circ \psi + Q \log |\psi'|. 
\eqe
We write $ (D, h)/{\sim_\gamma}$ as the quantum surface corresponding to $(D,h)$. 
An \emph{embedding} of a quantum surface is a choice of its representative.
Both the notions of quantum area and quantum length measures are  intrinsic to the quantum surface~\cite{shef-kpz,shef-wang-lqg-coord}.

{We equip the genus-zero quantum-surface spaces used below with
the standard Borel structures obtained from normalized embeddings, as
in~\cite[Section 2.3]{Int-CLE}; consequently, the disintegrations in
Lemma~\ref{def:disint} apply to these quotient spaces.}

We can also consider quantum surfaces decorated with other structures. For example, 
let {$n\in\mathbb Z_{\ge0}$} and $\cI$ be an at most countable index set, consider tuples $(D, h, (\eta_i)_{i\in \cI}, z_1,\cdots,z_n)$ such that $D$ is a domain, 
$h$ is a distribution on $D$, $\eta_i$ are {unparameterized loops (continuous maps from $S^1$ modulo reparameterization) in $\overline D$} and
$z_i \in D\cup \bdy D$. We say that
{
\[
(D,h,(\eta_i)_{i\in\cI},z_1,\ldots,z_n)
\sim_\gamma
(\wt D,\wt h,(\wt\eta_i)_{i\in\cI},\wt z_1,\ldots,\wt z_n).
\]
}
if there is a conformal map $\psi: \wt D \to D$ such that~\eqref{eq-QS} holds, $\psi(\wt z_i) = z_i$ for all $1\le i\le n$, and  $\psi\circ \wt \eta_i=\eta_i$ for all $i\in \cI$.
We call an equivalence class defined through ${\sim_\gamma}$ a \emph{decorated quantum surface}, and  call  a choice of its representative an embedding.

The quantum sphere is a class of quantum surfaces with the sphere topology introduced in~\cite{wedges}. Let $\QS_n$ be the law of the quantum sphere with $n$ marked points, and $\QS = \QS_0$. We will need $\QS, \QS_2$ and $\QS_3$.
Originally, $\QS_2$ was  defined first as in \cite[Section 4.5]{wedges}, and $\QS$ and $\QS_2$ were defined by adding or removing marked points from $\QS_2$. It is convenient to use the following definition. The equivalence with the original definition is explained in \cite[Remark 2.30]{AHS-SLE-integrability}.
\begin{definition}\label{def-QS-2}
Let $(u_1, u_2, u_3) = (0, 1, e^{\mathbf{i}\pi/3})$,  and sample $\phi$ from $\frac{\pi \gamma}{2(Q-\gamma)^2}\LF_\C^{(\gamma, u_1),(\gamma, u_2),(\gamma, u_3)}$. 
Let $\QS_3$ be the law of the decorated quantum surface $(\wh\C, \phi,u_1,u_2,u_3)/{\sim_\gamma}$. We call a sample from $\QS_3$ a \emph{quantum sphere with three marked points}. 
\end{definition}
\begin{definition}\label{def-QS}
For $(\wh\C,h,u_1,u_2,u_3)/{\sim_\gamma}$ sampled from $\mathcal{A}_h(\C)^{-1} \QS_3$,
let $\QS_2$ be the law of the decorated quantum surface $(\wh\C, h,u_1,u_2)/{\sim_\gamma}$. 	
For $(\wh\C,h,u_1,u_2,u_3)/{\sim_\gamma}$ sampled from $\mathcal{A}_h(\C)^{-3}\QS_3$, let $\QS$ be the law of the quantum surface $(\wh\C,h)/{\sim_\gamma}$. For {$n\geq 4$}, let $(\cC,h)$ be a sample from $\mu_h(\cC)^n
\QS$, and then independently sample $z_i$ ($1\leq i\leq n$)  from the probability measure proportional to $\mu_h$.
 Then we let $\QS_n$ be 
 the law of $(\cC,h,z_1,z_2,{\ldots},z_n)/{\sim_\gamma}$.
\end{definition}

The quantum disk is the most canonical quantum surface with the disk topology. Let $\QD_{m,n}$ be the law of the quantum disk with $m$ marked bulk points and $n$ marked boundary points. We will need $\QD$, $\QD_{1,0}$, $\QD_{1,1}$ $\QD_{0,2}$, and $\QD_{0,3}$.
Historically, $\QD_{0,2}$ was defined first in \cite{wedges}, and other $\QD_{m,n}$ were defined in terms of $\QD_{0,2}$ by adding and removing points. For our purpose it is convenient use
the equivalent definition of $\QD_{1,0}$ using the Liouville field \cite[Theorem 3.4]{ARS-FZZ}, then define $\QD$, $\QD_{1,1}$ and $\QD_{0,3}$ in terms of $\QD_{1,0}$.

Let $\{\LF_\bbH^{(\alpha, \mathbf{i})}(\ell): \ell >0 \}$ be the disintegration of $\LF_\bbH^{(\alpha,\mathbf{i})}$  over the quantum boundary length. That is, each measure $\LF_\bbH^{(\alpha, \mathbf{i})}(\ell)$ is supported on the set of fields having quantum boundary length $\ell$, and $\LF_\bbH^{(\alpha, \mathbf{i})} = \int_0^\infty \LF_\bbH^{(\alpha, \mathbf{i})}(\ell)\, d\ell$. 
See \cite[Lemma 4.3]{ARS-FZZ} for an explicit construction of $\LF_\bbH^{(\alpha, \mathbf{i})}(\ell)$. Since the quantum boundary length is intrinsic to quantum surfaces, we can now define a quantum surface with quantum boundary length $\ell$ and having a log singularity with coefficient $\alpha$.

\begin{definition}\label{def-QD-alpha}
For $\ell>0$,
we let $\cM_{1,0}^\disk(\alpha;\ell )$ be the law of the quantum surface $(\bbH, \phi, \mathbf{i})/{\sim_\gamma}$ 
with $\phi$ sampled from $\LF_\bbH^{(\alpha, \mathbf{i})} (\ell)$. Define $\QD_{1,0}(\ell) := \frac\gamma{2\pi (Q-\gamma)^2}\cM_{1,0}^\disk(\gamma;\ell )$.
\end{definition}
\begin{definition}\label{def-QD}\label{def-QD0203}
Let $\ell>0$. Sample $(D,h,z)/{\sim_\gamma}$ from the weighted measure  $\mathcal{A}_h(D)^{-1}\QD_{1,0}(\ell)$, and let $\QD$ be the law of $(D,h)/{\sim_\gamma}$. For $(D,h,z)/{\sim_\gamma}$ sampled from $\ell\QD_{1,0}(\ell)$, sample $p\in \bdy D$ from the probability measure proportional to quantum boundary length measure. Let $\QD_{1,1}(\ell)$ be the law of $(D,h,z, p)/{\sim_\gamma}$. Let $n\in\N$. For $(D,h)/{\sim_\gamma}$ sampled from $\ell^n\QD(\ell)$, independently sample $(p_i)_{1\le i\le n}\in\partial D$ from the probability measure proportional to quantum boundary measure. Let $\QD_{0,n}(\ell)$ be the law of $(D,h,(p_i)_{1\le i\le n})/{\sim_\gamma}$.
\end{definition}

One can fix the embedding of $\cM^{\disk}_{1,0}(\alpha;\ell)$ as follows:
\begin{proposition}[{\cite[Proposition 2.21]{Int-CLE}}]\label{lem:har}
	For $\alpha > \frac\gamma2$ and $\ell > 0$, let $(D,h,z)$ be an embedding of a sample from $\cM^{\disk}_{1,0}(\alpha;\ell)$. Given $(D,h,z)$, let $p$ be a point sampled from the harmonic measure on $\bdy D$ viewed from $z$, then the law of  $(D,h,z,p)/{\sim_\gamma}$ equals that of $(\bbH, X, \mathbf{i},0)/{\sim_\gamma}$ where $X$ is sampled from $\LF_{\bbH}^{(\alpha,{\mathbf{i}})}(\ell)$.
\end{proposition}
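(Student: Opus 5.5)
The idea is to compare the two decorated surfaces directly in the fixed embedding $(\bbH,\cdot,\mathbf i)$. Both are obtained from $\LF_\bbH^{(\alpha,\mathbf i)}(\ell)$ by adding a boundary point: on one side a point sampled from harmonic measure seen from $\mathbf i$, on the other the fixed point $0$. The bridge between them is the group of conformal automorphisms of $\bbH$ fixing $\mathbf i$. For $\theta\in[0,2\pi)$, let $\psi_\theta:\bbH\to\bbH$ be the M\"obius map fixing $\mathbf i$ that acts as rotation by $\theta$ in disk coordinates. Then $|\psi_\theta'(\mathbf i)|=1$. If $\theta$ is uniform on $[0,2\pi)$, the point $\psi_\theta(0)$ has exactly the law of harmonic measure on $\partial\bbH=\R$ seen from $\mathbf i$, namely the Cauchy law.

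\textbf{Step 1 (rotation invariance of the Liouville field).} I would show that for each fixed $\theta$, the pushforward of $\LF_\bbH^{(\alpha,\mathbf i)}$ under $\phi\mapsto\phi\circ\psi_\theta^{-1}+Q\log|(\psi_\theta^{-1})'|$ is again $\LF_\bbH^{(\alpha,\mathbf i)}$. This is the conformal covariance of Liouville fields with one bulk insertion, as in \cite{AHS-SLE-integrability}. Under a M\"obius map $\psi$ of $\bbH$, the field $\LF_\bbH^{(\alpha,z_0)}$ is sent to $|\psi'(z_0)|^{-2\Delta_\alpha}\LF_\bbH^{(\alpha,\psi(z_0))}$, and here the factor equals $1$.

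If I had to prove this from Definition~\ref{def-1pt-H}, I would argue as follows.
\begin{itemize}
\item The free-boundary GFF is M\"obius invariant modulo additive constants.
\item The function $-2Q\log|z|_+$ transforms correctly together with the $Q\log|\psi'|$ term, up to constants.
\item The Green's function term $\alpha G_\bbH(\cdot,\mathbf i)$ maps to itself, up to constants.
\item The Lebesgue measure $e^{(\alpha-Q)c}dc$ on the constant absorbs all these shifts. The accumulated prefactor is $|\psi'(\mathbf i)|^{-2\Delta_\alpha}=1$.
\end{itemize}
Quantum boundary length is intrinsic, so it is preserved by this map. The uniqueness part of Lemma~\ref{def:disint} then transfers the invariance to the disintegration $\LF_\bbH^{(\alpha,\mathbf i)}(\ell)$, for almost every $\ell$ (and for every $\ell$ using the explicit construction of \cite[Lemma 4.3]{ARS-FZZ}).

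\textbf{Step 2 (randomized coordinate change).} Sample $X$ from $\LF_\bbH^{(\alpha,\mathbf i)}(\ell)$ and, independently, $\theta$ uniform. Set $Y=X\circ\psi_\theta^{-1}+Q\log|(\psi_\theta^{-1})'|$ and $p=\psi_\theta(0)$. By \eqref{eq-QS}, $(\bbH,X,\mathbf i,0)\sim_\gamma(\bbH,Y,\mathbf i,p)$. By Step 1, conditionally on $\theta$ the field $Y$ has law $\LF_\bbH^{(\alpha,\mathbf i)}(\ell)$, which does not depend on $\theta$. Hence $Y$ and $\theta$ are independent, and therefore $Y$ and $p$ are independent.

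It follows that $(Y,p)$ has law $\LF_\bbH^{(\alpha,\mathbf i)}(\ell)\times\mathrm{Harm}_{\mathbf i}$. This is precisely the statement: an embedding of a sample of $\cM^\disk_{1,0}(\alpha;\ell)$ in $(\bbH,\cdot,\mathbf i)$, with an independent harmonic-measure point $p$, gives the same decorated surface as $(\bbH,X,\mathbf i,0)$. Since harmonic measure is conformally invariant, the choice of embedding $(D,h,z)$ does not matter.

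\textbf{Main obstacle.} The only real work is Step 1, namely tracking the additive constants from $|z|_+$ and $G_\bbH$ and verifying that no extra multiplicative factor appears. I would first try to quote the general M\"obius covariance of $\LF_\bbH^{(\alpha,z)}$. The assumption $\alpha>\gamma/2$ enters only to guarantee that the boundary length is finite and the disintegration is well defined.
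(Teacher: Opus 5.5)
Your proof is correct and follows essentially the same route as the argument behind the cited \cite[Proposition 2.21]{Int-CLE}; the present paper only quotes that result. The argument is invariance of $\LF_\bbH^{(\alpha,\mathbf i)}$ (and hence of its boundary-length disintegration) under automorphisms of $\bbH$ fixing $\mathbf i$, via M\"obius covariance with $|\psi'(\mathbf i)|=1$, combined with a uniform rotation that carries the fixed point $0$ to a harmonic-measure point independent of the field. Rely on the quoted covariance result rather than your bullet-point sketch: the Green-function, $\log|z|_+$ and GFF-normalization terms do not each change by constants, and their non-constant parts cancel only jointly, through the Girsanov argument of \cite{AHS-SLE-integrability}.
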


\subsubsection{{Some integrability results in LCFT}}\label{subsub:integrable}
We now review the DOZZ formula for the structure constant of LCFT.
\begin{theorem}[{\cite{krv-dozz}}]\label{prop-DOZZ}
	Suppose $\alpha_1, \alpha_2, \alpha_3$ satisfy the \emph{Seiberg bounds}
	\eqb\label{eq-seiberg}
	\sum_{i=1}^3 \alpha_i > 2Q, \qquad \textrm{and}\qquad  \alpha_i < Q\text{ for }i=1,2,3.
	\eqe
	Let $(z_1, z_2, z_3) = (0, 1, {e^{\mathbf i\pi/3}})$. Then with $\mu>0$ and $C^\mathrm{DOZZ}_\gamma(\alpha_1,\alpha_2,\alpha_3)$ defined in \cite[Formula 1.12]{krv-dozz}, we have
	\[\LF_\C^{(\alpha_i, z_i)_i}[ e^{-\mu \mu_\phi(\C)}] = \frac12 C^\mathrm{DOZZ}_\gamma(\alpha_1,\alpha_2,\alpha_3)
	\mu^{\frac{2Q-\alpha_1-\alpha_2-\alpha_3}{\gamma}}. \]
\end{theorem}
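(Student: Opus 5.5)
The DOZZ statement is cited from~\cite{krv-dozz}, so I would not re-derive it; instead I sketch how its proof goes and which step carries the weight.

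\textbf{Step 1: reduce to a GMC moment.} I would start from Definition~\ref{def-RV-sph}. Write $\phi=h+\mathbf c+(\text{deterministic part})$ and integrate out the zero mode $\mathbf c$. Since $\mu_\phi(\C)=e^{\gamma \mathbf c}\mu_{\tilde h}(\C)$, where $\tilde h$ is the field without the constant, the substitution $t=\mu e^{\gamma\mathbf c}\mu_{\tilde h}(\C)$ gives
\[
\int_\R e^{(\sum_i\alpha_i-2Q)c}e^{-\mu e^{\gamma c}Y}\,dc=\frac1\gamma\,\Gamma\!\left(\tfrac{\sum_i\alpha_i-2Q}{\gamma}\right)(\mu Y)^{\frac{2Q-\sum_i\alpha_i}{\gamma}},
\qquad Y:=\mu_{\tilde h}(\C).
\]
The first Seiberg bound makes this integral converge at $c\to-\infty$. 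So the left-hand side equals an explicit prefactor times $\mu^{(2Q-\sum\alpha_i)/\gamma}$ times the negative moment $\E[Y^{-s}]$, with $s=(\sum\alpha_i-2Q)/\gamma$. The second bound $\alpha_i<Q$ makes $Y$ finite almost surely, since the singularities $|z-z_i|^{-\gamma\alpha_i}$ are integrable against GMC. This already gives the $\mu$-scaling, and the problem becomes computing this GMC moment.

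\textbf{Step 2: identify the constant.} Here I would follow Kupiainen--Rhodes--Vargas. First, prove BPZ equations for four-point functions carrying a degenerate insertion $-\gamma/2$ or $-2/\gamma$, using Gaussian integration by parts and the rigorous Ward identities. Second, solve these hypergeometric ODEs and compare their asymptotic expansions as the degenerate point approaches $z_1$. This yields shift relations $C(\alpha_1+\gamma,\dots)/C(\alpha_1,\dots)$ and $C(\alpha_1+4/\gamma,\dots)/C(\alpha_1,\dots)$. The reflection coefficient, computed from tail asymptotics of GMC near an insertion, is what lets one handle the second relation.

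\textbf{Step 3: the main obstacle.} The hard step is justifying the operator-product asymptotics in Step 2, in particular the reflection term that appears when $\alpha_1$ lies near $Q$. One must also extend the correlation function analytically in the $\alpha$'s so that both shift equations hold on a common domain. Once this is in place, the two shift relations with incommensurable periods $\gamma$ and $4/\gamma$ (for $\gamma^2$ irrational, then by continuity for all $\gamma$) determine the answer up to normalization. The normalization is then fixed by a known value, for example the two-point reflection, and the result matches~\cite[Formula 1.12]{krv-dozz}; the $\tfrac12$ comes from the normalization conventions of $\LF_\C$ used here.
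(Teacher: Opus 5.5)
Your proposal is correct and takes the same approach as the paper, which also does not prove this statement and imports it directly from \cite{krv-dozz}. Your Step~1 zero-mode computation correctly produces the factor $\frac1\gamma\Gamma(s)\mu^{-s}\E[Y^{-s}]$, explains the role of both Seiberg bounds, and is consistent with the absence of a prefactor here (for these equilateral points $C_\C^{(\alpha_i,z_i)_i}=1$), while Steps~2--3 are a fair high-level outline of the Kupiainen--Rhodes--Vargas argument.
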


We first recall the law of the quantum boundary length  under $\LF_\bbH^{(\alpha, i)}$ obtained in~\cite{remy-fb-formula},  following the presentation of~\cite[Proposition 2.8]{ARS-FZZ}.
\begin{proposition}[{\cite{remy-fb-formula}}]\label{prop-remy-U}
For $\alpha > \frac{\gamma}{2}$,  the law of the quantum length $\nu_\phi(\R)$ under $\LF_\bbH^{(\alpha, i)}$ is 
{
\[
\mathbf1_{\ell>0}\frac2\gamma 2^{-\alpha^2/2}\ol U(\alpha)
\ell^{\frac2\gamma(\alpha-Q)-1}\,d\ell,
\]
}
where
	\eqb\label{eq:U0-explicit}
	\ol U(\alpha) = \left( \frac{2^{-\frac{\gamma\alpha}2} 2\pi}{\Gamma(1-\frac{\gamma^2}4)} \right)^{\frac2\gamma(Q-\alpha)} 
	\Gamma( \frac{\gamma\alpha}2-\frac{\gamma^2}4).
	\eqe
\end{proposition}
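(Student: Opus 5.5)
The plan is to integrate out the zero mode $\mathbf c$ explicitly, which reduces the proposition to a single moment of a boundary GMC mass. That moment is then identified with the Fyodorov--Bouchaud moment formula proved by Remy~\cite{remy-fb-formula}.

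\textbf{Step 1: scaling out the zero mode.} Take $z_0=\mathbf i$ in Definition~\ref{def-1pt-H}, so $\Im z_0=1$ and $|z_0|_+=1$, and the prefactor is $2^{-\alpha^2/2}$. Write $\phi=\tilde h+\mathbf c$ with $\tilde h:=h-2Q\log|z|_++\alpha G_\bbH(z,\mathbf i)$. Then $\nu_\phi(\R)=e^{\gamma\mathbf c/2}Y$ with $Y:=\nu_{\tilde h}(\R)$. For a nonnegative test function $f$,
\[
\LF_\bbH^{(\alpha,\mathbf i)}[f(\nu_\phi(\R))]=2^{-\alpha^2/2}\,\E\Big[\int_\R f(e^{\gamma c/2}Y)\,e^{(\alpha-Q)c}\,dc\Big].
\]
Substituting $\ell=e^{\gamma c/2}Y$ gives $dc=\frac{2}{\gamma}\frac{d\ell}{\ell}$ and $e^{(\alpha-Q)c}=(\ell/Y)^{\frac2\gamma(\alpha-Q)}$. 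By Tonelli, the law of $\nu_\phi(\R)$ is
\[
\mathbf 1_{\ell>0}\,\tfrac2\gamma\,2^{-\alpha^2/2}\,\E\big[Y^{p}\big]\,\ell^{\frac2\gamma(\alpha-Q)-1}\,d\ell,\qquad p:=\tfrac2\gamma(Q-\alpha).
\]
It therefore remains to show $\E[Y^p]=\ol U(\alpha)$. The condition $\alpha>\gamma/2$ gives $p<\frac2\gamma(Q-\frac\gamma2)=\frac4{\gamma^2}$, which is exactly the range in which the moment is finite.

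\textbf{Step 2: transferring to the unit disk.} Apply the Möbius map $\psi(z)=\frac{z-\mathbf i}{z+\mathbf i}$ from $\bbH$ to $\D$, which sends $\mathbf i\mapsto0$. Using the coordinate change rule~\eqref{eq-QS}, $\tilde h\circ\psi^{-1}+Q\log|(\psi^{-1})'|$ is a free-boundary GFF on $\D$ plus the log singularity $-\alpha\log|w|$ (times the normalization). That singularity vanishes on $\partial\D$, plus deterministic terms. Expanding $G_\bbH(\cdot,\mathbf i)$ and $\log|z|_+$ on $\R$, these terms should combine into a constant. The resulting field restricted to the circle is a log-correlated field with covariance $-2\log|e^{i\theta}-e^{i\theta'}|$, up to an additive Gaussian constant and a deterministic shift.

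So $Y$ should equal an explicit constant times $e^{\frac\gamma2 N}\int_0^{2\pi}e^{\frac\gamma2 X(\theta)}d\theta$, where $N$ is an independent centred Gaussian coming from the normalization of $h$. Taking the $p$-th moment produces powers of $2\pi$ and of $2^{-\gamma\alpha/2}$, plus a Gaussian factor $e^{\gamma^2p^2\mathrm{Var}(N)/8}$. The latter should cancel against part of the $2^{-\alpha^2/2}$ bookkeeping.

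\textbf{Step 3: the Fyodorov--Bouchaud moment.} Remy's theorem gives, for $p<4/\gamma^2$,
\[
\E\Big[\Big(\tfrac1{2\pi}\int_0^{2\pi}e^{\frac\gamma2X(\theta)}d\theta\Big)^p\Big]=\frac{\Gamma(1-p\gamma^2/4)}{\Gamma(1-\gamma^2/4)^p}.
\]
Since $1-p\gamma^2/4=\frac{\gamma\alpha}2-\frac{\gamma^2}4$, this produces both the factor $\Gamma(\frac{\gamma\alpha}2-\frac{\gamma^2}4)$ and the factor $\Gamma(1-\frac{\gamma^2}4)^{-\frac2\gamma(Q-\alpha)}$ in~\eqref{eq:U0-explicit}.

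\textbf{Main obstacle.} The hard part is the constant bookkeeping in Step 2. One must check that the deterministic terms from $-2Q\log|z|_+$, $\alpha G_\bbH(\cdot,\mathbf i)$ and $Q\log|(\psi^{-1})'|$ collapse on $\partial\D$ to a constant, and that this constant, together with the mean-zero-on-semicircle normalization, gives exactly the factor $(2^{-\gamma\alpha/2}2\pi)^{p}$. I would carry this out by computing the covariance of $\tilde h$ restricted to $\R$ in the pulled-back angle coordinate, rather than by tracking each term separately. As a consistency check, I would compare the final constant with the case $\alpha=\gamma$, where $\QD_{1,0}$ is known.
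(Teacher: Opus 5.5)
Your Step 1 is correct, and it is the argument behind the cited result; the paper gives no proof beyond citing \cite{remy-fb-formula} and \cite[Proposition 2.8]{ARS-FZZ}. Integrating out $\mathbf c$ gives the density $\frac2\gamma2^{-\alpha^2/2}\E[Y^p]\,\ell^{-p-1}$ with $p=\frac2\gamma(Q-\alpha)$. In the cited sources $\ol U(\alpha)$ is defined as this moment, and its closed form is Remy's theorem. Step 3 is also the right input.

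\textbf{The gap is in Step 2: the mechanism you propose fails.} On $\R$ one has $\alpha G_\bbH(x,\mathbf i)=-2\alpha\log|x-\mathbf i|+2\alpha\log|x|_+$ and $\E[h_\varepsilon(x)^2]=-2\log\varepsilon+4\log|x|_+$. Hence $Y=\int_\R|x|_+^{\gamma\alpha-2}|x-\mathbf i|^{-\gamma\alpha}e^{\frac\gamma2h(x)}\,dx$, with the exponential Wick-normalized. With $x=-\cot(\theta/2)$ one has $dx=\frac12|x-\mathbf i|^2d\theta$. So the angular weight is $\frac12(|x|_+/|x-\mathbf i|)^{\gamma\alpha-2}$, which is not constant unless $\gamma\alpha=2$.

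Nor is the field a circle field plus an independent constant. In the angle variable
\[
\E[h(x)h(y)]=-2\log|e^{i\theta}-e^{i\theta'}|+2\log2+b(x)+b(y),\qquad b(x):=2\log|x|_+-2\log|x-\mathbf i|,
\]
and $b$ is non-constant. Concretely, $h=Z\circ\psi-m$, where $Z$ is the free-boundary GFF on $\D$ with mean zero on $\partial\D$, and $m$ is its average over $\psi(\{e^{it}:0<t<\pi\})=[-\mathbf i,\mathbf i]$. Then $\mathrm{Cov}(Z(e^{i\theta}),m)=c_0-b(x)$ for a constant $c_0$, so $m$ is correlated with the boundary field. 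Thus $Y$ is not in law a constant times $e^{\frac\gamma2N}\int_0^{2\pi}e^{\frac\gamma2X}d\theta$ with $N$ independent. The check you single out as the main obstacle cannot succeed.

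\textbf{The missing idea is a Cameron--Martin tilt that works only for this particular moment.} Write the Wick exponential of $h$ as that of $Z$ times $e^{-\frac\gamma2m}e^{-\frac{\gamma^2}8(\mathrm{Var}(m)-2c_0+2b)}$. In $\E[Y^p]$, the factor $e^{-\frac{\gamma p}2m}$ shifts $Z$ by $-\frac{\gamma p}2\mathrm{Cov}(Z(\cdot),m)$. The total weight becomes a constant times $(|x|_+/|x-\mathbf i|)^{\gamma\alpha-2-\frac{\gamma^2}2+\frac{\gamma^2p}2}$, and this exponent vanishes exactly because $p=\frac2\gamma(Q-\alpha)$. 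Using $\mathrm{Var}(m)-2c_0=2\log2$, the constants reduce to $2^{-\frac{\gamma Qp}2+\frac{\gamma^2p^2}4}=(2^{-\gamma\alpha/2})^p$, and $c_0$ drops out. The circle Fyodorov--Bouchaud formula then gives $\ol U(\alpha)$.

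In particular, the Gaussian factor quadratic in $p$ does not cancel against $2^{-\alpha^2/2}$, which Step 1 already factored out. It is what converts $2^{-\gamma Qp/2}$ into $(2^{-\gamma\alpha/2})^p$.

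Equivalently, you can invoke M\"obius covariance of Liouville fields. Under it, $\psi_*\LF_\bbH^{(\alpha,\mathbf i)}$ equals $|\psi'(\mathbf i)|^{2\Delta_\alpha}$ times the law of the disk Liouville field $h_\D-\alpha\log|\cdot|+\mathbf c$, with $\mathbf c\sim e^{(\alpha-Q)c}dc$ and $|\psi'(\mathbf i)|=\frac12$. This packages the same Girsanov computation, and consistently $2^{-\alpha^2/2}(2^{-\gamma\alpha/2})^p=2^{-2\Delta_\alpha}$.
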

Let $\{\LF_\bbH^{(\alpha, i)}(\ell): {\ell>0} \}$ be the disintegration of $\LF_\bbH^{(\alpha, i)}$  over $\nu_\phi(\R)$.
Namely, for each {pair of nonnegative measurable functions $f$ on $(0,\infty)$ and $g$ on $H^{-1}(\bbH)$},
\begin{equation}\label{eq:field-dis}
\LF_\bbH^{(\alpha, i)} [f(\nu_\phi(\R))g(\phi) ]  =   \int_0^\infty f(\ell) \LF_\bbH^{(\alpha, i)}(\ell) [g(\phi)] \, d\ell.
\end{equation} 
Although the general theory of disintegration  only defines $ \LF_\bbH^{(\alpha, i)}(\ell)$ for almost every $\ell\in (0,\infty)$, 
the following lemma describes a canonical version of $\LF_\bbH^{(\alpha, i)} (\ell)$ for every $\ell>0$.
\begin{lemma}[{\cite[Lemma 4.3]{ARS-FZZ}}]\label{lem:field-disk}
		Let $h$ be a sample from $P_\bbH$ and $\hat h(\cdot)=  h(\cdot) -2Q \log \left|\cdot\right|_+ +\alpha G_\bbH(\cdot,i)$.  
		Then the law of $\hat h+\frac{2}{\gamma}\log \frac\ell{\nu_{{\hat h}}(\R)}$ under the reweighted measure $2^{-\alpha^2/2} \frac2\gamma \ell^{-1} \left(\frac\ell{\nu_{{\hat h}}(\R)}\right)^{\frac2\gamma(\alpha-Q)}  P_\bbH$ is a version of the disintegration $\{\LF_\bbH^{(\alpha, i)}(\ell): \ell >0 \}$.
\end{lemma}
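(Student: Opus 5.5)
The plan is to prove Lemma~\ref{lem:field-disk} by computing both sides against a product test function $f(\nu_\phi(\R))\,g(\phi)$ and matching them through a change of variables in the zero mode $\mathbf c$. Under Definition~\ref{def-1pt-H} with $z_0=i$, the field is $\phi=\hat h+\mathbf c$, where $(h,\mathbf c)\sim 2^{-\alpha^2/2}P_\bbH\times e^{(\alpha-Q)c}\,dc$. The prefactor $(2\Im z_0)^{-\alpha^2/2}|z_0|_+^{-2\alpha(Q-\alpha)}$ reduces to $2^{-\alpha^2/2}$ because $|i|_+=1$. Since $\nu_{\hat h+c}(\R)=e^{\gamma c/2}\nu_{\hat h}(\R)$, the boundary length is a deterministic monotone function of $c$ once $h$ is fixed. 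This reduces the whole problem to a one-dimensional substitution.

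\textbf{Step 1 (substitution).} For fixed $h$, I will change variables from $c$ to $\ell=e^{\gamma c/2}\nu_{\hat h}(\R)$. This gives $c=\frac2\gamma\log\frac{\ell}{\nu_{\hat h}(\R)}$ and $dc=\frac{2}{\gamma}\ell^{-1}\,d\ell$. It also gives
\[
e^{(\alpha-Q)c}=\Bigl(\tfrac{\ell}{\nu_{\hat h}(\R)}\Bigr)^{\frac2\gamma(\alpha-Q)}.
\]
Plugging in and applying Fubini (all integrands are nonnegative) yields
\[
\LF_\bbH^{(\alpha,i)}[f(\nu_\phi(\R))g(\phi)]=\int_0^\infty f(\ell)\,\E_{P_\bbH}\Bigl[2^{-\alpha^2/2}\tfrac2\gamma\ell^{-1}\Bigl(\tfrac{\ell}{\nu_{\hat h}(\R)}\Bigr)^{\frac2\gamma(\alpha-Q)} g\Bigl(\hat h+\tfrac2\gamma\log\tfrac{\ell}{\nu_{\hat h}(\R)}\Bigr)\Bigr]\,d\ell .
\]
This is exactly the identity~\eqref{eq:field-dis} for the proposed family. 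Measurability in $\ell$ is clear, because the integrand is jointly measurable in $(h,\ell)$.

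\textbf{Step 2 (support and well-definedness).} I must check that $0<\nu_{\hat h}(\R)<\infty$ holds $P_\bbH$-a.s. Finiteness near $\infty$ comes from the $-2Q\log|z|_+$ term together with standard boundary GMC moment bounds. Near the bulk insertion $i$ there is no boundary issue, so the condition $\alpha>\gamma/2$ is needed only for the total-mass statement in Proposition~\ref{prop-remy-U}, not for this step. Positivity is standard for GMC. Given this, each measure in the family is supported on $\{\nu_\phi(\R)=\ell\}$, since $\nu$ of the shifted field equals $\ell$ identically.

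\textbf{Step 3 (extension to all sets).} The identity for product functions $f(\nu)g(\phi)$ already matches the form of~\eqref{eq:disint} with $A$ replaced by a general $g$, so the family is a disintegration. By the uniqueness in Lemma~\ref{def:disint} it agrees with any other version for almost every $\ell$, which is what ``a version'' means. As a sanity check, taking $g=1$ must reproduce Proposition~\ref{prop-remy-U}. It does, because the scaling $\ell^{\frac2\gamma(\alpha-Q)-1}$ appears directly. The constant then identifies $\E[\nu_{\hat h}(\R)^{\frac2\gamma(Q-\alpha)}]=\ol U(\alpha)$, which is consistent with Remy's formula.

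The main obstacle, to the extent there is one, is Step 2: the integrability of $\nu_{\hat h}(\R)$ and the fact that it is a.s.\ finite and positive. I would cite the standard boundary GMC results \cite{shef-kpz,remy-fb-formula} for this rather than reprove it. The rest is a formal Fubini argument.
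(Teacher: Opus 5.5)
Your argument is correct, and it is the standard proof from \cite{ARS-FZZ}, which this paper cites without reproducing. That proof writes $\phi=\hat h+\mathbf c$ with $(h,\mathbf c)\sim 2^{-\alpha^2/2}P_\bbH\times e^{(\alpha-Q)c}\,dc$, substitutes $\ell=e^{\gamma c/2}\nu_{\hat h}(\R)$ for fixed $h$, and applies Tonelli to obtain the disintegration identity for product test functions. The hypothesis $\alpha>\frac\gamma2$ is not needed for the change of variables itself; its role is to make $\E[\nu_{\hat h}(\R)^{\frac2\gamma(Q-\alpha)}]$ finite, so that the length pushforward is $\sigma$-finite and each $\LF_\bbH^{(\alpha,i)}(\ell)$ has finite mass.
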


We now recall the quantum disk with one generic bulk insertion.
\begin{definition}[{\cite{ARS-FZZ}}]\label{def-QD-alpha}
For 
$\ell>0$,
we let $\cM_1^\disk(\alpha;\ell )$ be the law of $(\bbH, \phi, i)/{\sim_\gamma}$ 
where $\phi$ is sampled from $\LF_\bbH^{(\alpha, i)} (\ell)$.
\end{definition}

The FZZ formula is the {analogue} of the DOZZ formula for  $\LF^{ (\alpha,i)}_\bbH$~proposed in~\cite{FZZ}  and proved in~\cite{ARS-FZZ}.	
We record the most convenient form for our purpose, which uses the modified Bessel function of the second kind $K_\nu(x)$ \cite[Section 10.25]{NIST:DLMF}. 
One concrete representation of $K_\nu(x)$ {in the range of interest} is the following \cite[(10.32.9)]{NIST:DLMF}:
\eqb\label{eq-Kv}
K_\nu(x) := \int_0^\infty e^{-x \cosh t} \cosh(\nu t) \, dt \quad \text{ for } x > 0 \text{ and } \nu \in \R.
\eqe
\begin{theorem}[{\cite[Theorem 1.2, Proposition 4.20]{ARS-FZZ}}]\label{thm-FZZ}
	For $\alpha \in (\frac\gamma2, Q)$ and $\ell>0$,  let $A$ be the quantum area of a sample from $\cM_1^\disk(\alpha; \ell)$. 
	{Recall $\ol U(\alpha)$ from
	Proposition~\ref{prop-remy-U}.  For $\mu>0$,}
	\begin{align}\label{eq:malpha}
	\cM_1^\disk(\alpha;\ell)[e^{-\mu A}]
	={}\frac2\gamma 2^{-\alpha^2/2}\ol U(\alpha)\ell^{-1}
	\frac2{\Gamma(\frac2\gamma(Q-\alpha))}\times\left(\frac12
\sqrt{\frac\mu{\sin(\pi\gamma^2/4)}}\right)^{\frac2\gamma(Q-\alpha)}
	K_{\frac2\gamma(Q-\alpha)}\!\left(
	\ell\sqrt{\frac\mu{\sin(\pi\gamma^2/4)}}\right).
	\end{align}
In particular,
\begin{align}
\QD(\ell)[e^{-\mu A}]
&=|\QD(\ell)|\,\QD(\ell)^\#[e^{-\mu A}],\nonumber\\
\QD(\ell)^\#[e^{-\mu A}]
&=\frac2{\Gamma(4/\gamma^2)}
\left(\frac\ell2\sqrt{\frac\mu{\sin(\pi\gamma^2/4)}}\right)^{4/\gamma^2}
K_{4/\gamma^2}\!\left(
\ell\sqrt{\frac\mu{\sin(\pi\gamma^2/4)}}\right),\label{eq:unitarea}\\
|\QD(\ell)|&=R_\gamma\ell^{-2-4/\gamma^2},
\qquad
R_\gamma:=
\frac{(2\pi)^{4/\gamma^2-1}}
{(1-\gamma^2/4)\Gamma(1-\gamma^2/4)^{4/\gamma^2}}.\label{eq:Rgamma}
\end{align}
\end{theorem}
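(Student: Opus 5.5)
The general formula \eqref{eq:malpha} is exactly \cite[Theorem 1.2, Proposition 4.20]{ARS-FZZ}, so I will take it as given. The work is to derive the three ``in particular'' identities for the unmarked disk $\QD(\ell)$ by specializing to $\alpha=\gamma$ and then forgetting the bulk point.

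\emph{Step 1: express $\QD(\ell)$ through $\cM_1^\disk(\gamma;\ell)$.} By Definitions~\ref{def-QD-alpha} and~\ref{def-QD}, $\QD_{1,0}(\ell)=\frac{\gamma}{2\pi(Q-\gamma)^2}\cM^\disk_{1,0}(\gamma;\ell)$. Also $\QD(\ell)$ is the image of $A^{-1}\QD_{1,0}(\ell)$ after forgetting the marked point. Hence
\[
\QD(\ell)[e^{-\mu A}]=\frac{\gamma}{2\pi(Q-\gamma)^2}\,\cM_1^\disk(\gamma;\ell)\bigl[A^{-1}e^{-\mu A}\bigr].
\]
Writing $A^{-1}e^{-\mu A}=\int_\mu^\infty e^{-tA}\,dt$ and using Tonelli, this equals
\[
\frac{\gamma}{2\pi(Q-\gamma)^2}\int_\mu^\infty \cM_1^\disk(\gamma;\ell)[e^{-tA}]\,dt .
\]

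\emph{Step 2: evaluate the $t$-integral.} At $\alpha=\gamma$ the Bessel order is $\nu=\frac2\gamma(Q-\gamma)=\frac4{\gamma^2}-1$. Put $c=\sin(\pi\gamma^2/4)^{-1/2}$. By \eqref{eq:malpha}, the integrand is a constant times $t^{\nu/2}K_\nu(\ell c\sqrt t)$. Substitute $y=\ell c\sqrt t$, so that $dt=2y\,dy/(\ell c)^2$; the integral becomes a multiple of $\int_{\ell M}^\infty y^{\nu+1}K_\nu(y)\,dy$. The standard identity $\frac{d}{dy}\bigl[y^{\nu+1}K_{\nu+1}(y)\bigr]=-y^{\nu+1}K_\nu(y)$, together with decay at infinity, evaluates this as $(\ell M)^{\nu+1}K_{\nu+1}(\ell M)$. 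Since $\nu+1=4/\gamma^2$, we obtain
\[
\QD(\ell)[e^{-\mu A}]=C(\gamma)\,\ell^{a}\,(\ell M)^{4/\gamma^2}K_{4/\gamma^2}(\ell M)
\]
for an explicit constant $C(\gamma)$ and an explicit power $a$ of $\ell$.

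\emph{Step 3: normalize.} Let $\mu\downarrow0$, using $K_\rho(x)\sim\Gamma(\rho)2^{\rho-1}x^{-\rho}$ as $x\to0$. This gives $|\QD(\ell)|$, finite and of the form $\mathrm{const}\cdot\ell^{-2-4/\gamma^2}$; the exponent of $\ell$ comes from tracking the powers $\ell^{-1}$ and $(\ell c)^{-2}$ above. Dividing by this total mass yields \eqref{eq:unitarea} with prefactor $2/\Gamma(4/\gamma^2)$ and argument $M\ell/2$, as claimed. It remains to show that the constant equals $R_\gamma$.

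\emph{Main obstacle: identifying $R_\gamma$.} I expect the only delicate point to be this constant bookkeeping. The mass is
\[
\frac{\gamma}{2\pi(Q-\gamma)^2}\cdot\frac2\gamma\,2^{-\gamma^2/2}\,\ol U(\gamma)\cdot\frac{2}{\Gamma(\nu)}\cdot 2^{-\nu}c^{\nu}\cdot\frac{2}{(\ell c)^{\nu+2}}\cdot\Gamma(\nu+1)\,2^{\nu}\,\ell^{-1},
\]
with
\[
\ol U(\gamma)=\Bigl(\frac{2^{-\gamma^2/2}\,2\pi}{\Gamma(1-\gamma^2/4)}\Bigr)^{4/\gamma^2-1}\Gamma\!\left(\frac{\gamma^2}{4}\right).
\]
Then:
\begin{itemize}
\item the powers of $c$ leave a factor $\sin(\pi\gamma^2/4)$;
\item the reflection formula $\Gamma(\gamma^2/4)\Gamma(1-\gamma^2/4)=\pi/\sin(\pi\gamma^2/4)$ cancels this sine against $\Gamma(\gamma^2/4)$;
\item the identity $\Gamma(\nu+1)/\Gamma(\nu)=\nu=(4-\gamma^2)/\gamma^2$, combined with $(Q-\gamma)^2=(4-\gamma^2)^2/(4\gamma^2)$, produces the factor $(1-\gamma^2/4)^{-1}$;
\item the powers of $2$ cancel.
\end{itemize}
What remains should be $(2\pi)^{4/\gamma^2-1}\Gamma(1-\gamma^2/4)^{-4/\gamma^2}$, i.e.\ $R_\gamma$ as in \eqref{eq:Rgamma}. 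I would carry this out carefully, checking that no stray power of $2$ or $\pi$ survives. Finally, $\QD(\ell)[e^{-\mu A}]=|\QD(\ell)|\,\QD(\ell)^\#[e^{-\mu A}]$ holds by the definition of $\QD(\ell)^\#$.
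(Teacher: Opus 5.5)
Your proposal is correct. It is also more self-contained than the paper, which gives no argument for this statement and simply imports both the general formula \eqref{eq:malpha} and the disk specializations \eqref{eq:unitarea}--\eqref{eq:Rgamma} from \cite{ARS-FZZ}.

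You import only \eqref{eq:malpha} and obtain the rest from the definitions: $\QD_{1,0}(\ell)=\frac{\gamma}{2\pi(Q-\gamma)^2}\cM_{1,0}^\disk(\gamma;\ell)$, and $\QD(\ell)$ is the point-forgetting image of $A^{-1}\QD_{1,0}(\ell)$. This is legitimate because $\alpha=\gamma$ lies in $(\frac\gamma2,Q)$ for $\gamma<2$.

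The constant bookkeeping you flag does close exactly:
\begin{itemize}
\item the pure numbers collapse to $\frac{4\nu}{\pi(Q-\gamma)^2}=\frac{4}{\pi(1-\gamma^2/4)}$;
\item the reflection formula turns $\Gamma(\frac{\gamma^2}{4})\sin(\frac{\pi\gamma^2}{4})$ into $\pi/\Gamma(1-\frac{\gamma^2}{4})$, which cancels the $1/\pi$;
\item $4\cdot2^{-\gamma^2/2}\cdot2^{-\gamma^2\nu/2}=4\cdot2^{-2}=1$.
\end{itemize}
What remains is precisely $R_\gamma\ell^{-2-4/\gamma^2}$.

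Your Steps~2--3 can be packaged more briefly. \eqref{eq:malpha} says that under $\cM_1^\disk(\alpha;\ell)^\#$ the area is inverse gamma with shape $\frac2\gamma(Q-\alpha)$ and scale $\ell^2/(4\sin(\pi\gamma^2/4))$. Weighting an inverse gamma law by $A^{-1}$ raises the shape by one and multiplies the total mass by shape$/$scale. This gives \eqref{eq:unitarea} and $|\QD(\ell)|=4\nu\sin(\pi\gamma^2/4)\,\ell^{-2}\,|\QD_{1,0}(\ell)|$ without the Bessel recurrence.

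What your route buys over the bare citation is an internal check. It confirms that the normalization of $\QD_{1,0}$, Remy's $\ol U(\gamma)$, and the constant $R_\gamma$ used throughout the paper are mutually consistent.
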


\subsection{Random loops and conformal welding}

Let $(S_i,B_i,\nu_i)$, $i=1,2$, be oriented bordered Riemann surfaces
with distinguished boundary components carrying finite measures with the
same positive total mass.  A conformal welding identifies $B_1$ and $B_2$ by their
boundary-length coordinates and produces an oriented surface $S$ with a
distinguished interface $\eta$.  Equivalently, the two components of
$S\setminus\eta$ are conformally equivalent to $S_1$ and $S_2$, and the
two pushforward boundary measures agree on $\eta$.  When boundary roots are
needed to remove rotational ambiguity, we sample them independently from
$\nu_i/|\nu_i|$ before welding; this is the \emph{uniform welding}.

For $\kappa\in(8/3,4)$, the loops of $\CLE_\kappa$ are disjoint Jordan
curves~\cite{shef-werner-cle}.  We use $\CLE_\kappa^\D$ for nested
CLE in a simply connected domain and $\CLE_\kappa^\C$ for its full-plane,
M\"obius-invariant version~\cite{Kemppainen2014TheNS}.  If two independent
samples from $\QD(\ell)$ are given independent quantum-typical boundary
roots, their conformal welding is almost surely unique modulo conformal
automorphisms~\cite{shef-zipper}.  We write
$\Weld(\QD(\ell),\QD(\ell))$ for the resulting loop-decorated law.

We now recall the sphere decomposition obtained from
\cite[Theorem~7.1 and Proposition~9.3]{ACSW24a}.
Let $\CLE_\kappa^\C(d\Gamma)$ be full-plane CLE, let
$\mathrm{Count}_\Gamma(d\eta)$ be counting measure on its loops, and let
$\mathbb F$ be an embedding measure whose pushforward to quantum
surfaces is $\QS_2$.  Write $E$ for the event
that $\eta$ separates $0$ from $\infty$.
\begin{proposition}\label{disk-welding}
{The pushforward of
$\mathbf1_E\mathbb F(dh)\mathrm{Count}_\Gamma(d\eta)
\CLE_\kappa^\C(d\Gamma)$ to loop-decorated quantum surfaces is}
\begin{align*}
C(\gamma)\int_0^\infty \ell\,
\Weld\!\left(
\QD_{1,0}(\ell)\otimes\CLE_\kappa^\D,
\QD_{1,0}(\ell)\otimes\CLE_\kappa^\D
\right)d\ell,
\end{align*}
where
\[
C(\gamma)=\frac1{4\pi}
\frac{\Gamma(\gamma^2/4)\Gamma(1-\gamma^2/4)}{(Q-\gamma)^2}
\tan\!\left(\pi\left(\frac4{\gamma^2}-1\right)\right).
\]
The welding is uniform with respect to the quantum boundary-length
measures and retains the welded loop, the marked points corresponding
to $0$ and $\infty$ on their respective sides, and the two CLE
decorations.
\end{proposition}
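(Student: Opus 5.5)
The plan is to obtain the statement by combining the two cited results of \cite{ACSW24a}: a welding identity for the loop-decorated two-pointed sphere, and an explicit evaluation of its constant. I would argue in three steps.

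\textbf{Step 1: the bare welding identity.} Start from $\mathbb F(dh)\,\CLE_\kappa^\C(d\Gamma)$ and select, via $\mathrm{Count}_\Gamma(d\eta)\mathbf 1_E$, a loop $\eta$ separating the two marked points $0$ and $\infty$. Since $\eta$ is a Jordan curve for $\kappa\in(8/3,4)$, cutting $\widehat\C$ along $\eta$ gives two simply connected components $D_0\ni 0$ and $D_\infty\ni\infty$. Each inherits a field, and hence a quantum surface with one bulk marked point.

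\cite[Theorem~7.1]{ACSW24a} states that the joint law of these two one-pointed disks, after forgetting the CLE loops other than $\eta$, is
\[
C\int_0^\infty \ell\,\QD_{1,0}(\ell)\otimes\QD_{1,0}(\ell)\,d\ell
\]
for some constant $C$. Moreover, $\eta$ is recovered as the interface of their uniform conformal welding. The factor $\ell$ comes from the uniformly chosen boundary root, i.e.\ one rotation parameter of length $\ell$. The welding is a.s.\ unique by conformal removability \cite{shef-zipper}, so the pushforward to welded surfaces is well defined.

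\textbf{Step 2: restoring the CLE decorations.} Condition on $\eta$. The spatial Markov property of full-plane nested CLE \cite{Kemppainen2014TheNS}, combined with conformal invariance, shows that the loops of $\Gamma$ inside $D_0$ and inside $D_\infty$ are independent nested $\CLE_\kappa$ in those domains. They are also independent of $h$, since $h$ is independent of $\Gamma$. Conformal invariance lets us transport each of them to $\CLE_\kappa^\D$ on the corresponding disk. Together with Step 1, this yields
\[
\Weld\bigl(\QD_{1,0}(\ell)\otimes\CLE_\kappa^\D,\ \QD_{1,0}(\ell)\otimes\CLE_\kappa^\D\bigr),
\]
with the marked points $0$ and $\infty$ on their respective sides.

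\textbf{Step 3: identifying the constant.} It remains to show $C=C(\gamma)$, and I expect this to be the main obstacle. I would follow \cite[Proposition~9.3]{ACSW24a} and compare a single integrable observable computed in two ways.

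\emph{Sphere side.} Take the law of the quantum area, or its Laplace transform $e^{-\mu A}$. Under $\QS_2$ it is known from DOZZ (Theorem~\ref{prop-DOZZ} with $\gamma$-insertions). The CLE side contributes $\E[\#\{\text{loops separating }0,\infty\}]$, which must be computed using a scale cut-off, e.g.\ weighting by the area of a fixed annulus, since the raw count is infinite.

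\emph{Welding side.} The Laplace transform factorizes as $\int \ell\,\bigl(\QD_{1,0}(\ell)[e^{-\mu A}]\bigr)^2\,d\ell$. Each factor is given in closed form by the FZZ formula, Theorem~\ref{thm-FZZ}, and the $\ell$-integral reduces to a Bessel integral of the type $\int \ell\,K_\nu(M\ell)^2\,d\ell$.

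The CLE factor enters through the conformal radius law of the separating loop, whose moment generating function involves $\cos(\pi(4/\gamma^2-1))$. This is the source of the $\tan(\pi(4/\gamma^2-1))$ in $C(\gamma)$, while the Gamma factors come from $\ol U(\gamma)$ and the normalization $\frac{\gamma}{2\pi(Q-\gamma)^2}$ in the definition of $\QD_{1,0}$. The delicate part is keeping the normalization conventions consistent: those of $\QS_2$ (the factor $\frac{\pi\gamma}{2(Q-\gamma)^2}$ together with the area reweighting), of $\QD_{1,0}$, and of the uniform welding. To pin these down I would match the $\mu$-exponents first and then the constant.
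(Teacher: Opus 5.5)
Your Steps 1 and 2, and your decision to take $C(\gamma)$ from \cite[Proposition~9.3]{ACSW24a}, agree with the paper. The paper gives no independent argument here: it imports both the welding identity and the constant from \cite[Theorem~7.1 and Proposition~9.3]{ACSW24a}. One minor point on Step 2: the law of the loops in the unbounded component $D_\infty$ comes from the inversion invariance of full-plane nested CLE in \cite{Kemppainen2014TheNS}, not from the interior Markov property alone.

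\textbf{The gap is in Step 3.} The comparison you sketch reads $\infty=C\cdot\infty$, so it cannot determine $C$.

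\emph{Sphere side.} $\QS_2$ has only two $\gamma$-insertions, so Theorem~\ref{prop-DOZZ} does not apply to it. Going through $\QS_3$ gives $\QS_3[e^{-\mu A}]\propto\mu^{4/\gamma^2-2}$, hence $\QS_2[A\in da]\propto a^{-4/\gamma^2}\,da$. This is not integrable at $0$ since $4/\gamma^2>1$, so $\QS_2[e^{-\mu A}]=\infty$. It is then multiplied by an almost surely infinite number of separating loops, so switching to the area density at fixed $A$ does not help either.

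\emph{Welding side.} Theorem~\ref{thm-FZZ} with $\alpha=\gamma$ gives $\QD_{1,0}(\ell)[e^{-\mu A}]\propto\ell^{-1}K_\nu(M\ell)$ with $\nu=4/\gamma^2-1>0$. You dropped the factors $\ell^{-1}$. The correct integrand is $\ell^{-1}K_\nu(M\ell)^2\sim c\,\ell^{-1-2\nu}$, which is not integrable at $\ell=0$.

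Both divergences come from the scaling symmetry of the two-pointed sphere: separating loops accumulate at both marked points, producing arbitrarily small $\ell$. Your proposed cut-offs do not cure this:
\begin{itemize}
\item A Euclidean cut-off such as a fixed annulus depends on the embedding and is not a functional of the two welded disks. It also leaves $\QS_2[e^{-\mu A}]$ infinite.
\item A cut-off in quantum length couples $h$ and $\Gamma$. This destroys the factorization into a Liouville quantity times a CLE loop count, on which your sphere-side computation relies.
\end{itemize}

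Determining $C(\gamma)$ therefore needs a different device that removes the scaling symmetry while keeping the sphere side computable. One possibility is an $\alpha$-insertion/conformal-radius reweighting of the type in Proposition~\ref{reweightwelding}, combined with the explicit CLE conformal-radius law. This is the nontrivial content of \cite[Proposition~9.3]{ACSW24a}. As written, Step 3 does not establish the constant. You should either quote that proposition, as the paper does, or supply such a regularized comparison.
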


If $(D,h)/{\sim_\gamma}$ is sampled from $\QD$ and an independent
$\CLE_\kappa^\D$ is drawn in an embedding, conformal invariance makes the law
of $(D,h,\Gamma)/{\sim_\gamma}$ independent of that embedding.  We denote
the resulting measure by $\QD\otimes\CLE_\kappa$.  The notation
$\QD(a)^\#\otimes\CLE_\kappa$, $\QD_{1,0}\otimes\CLE_\kappa$, and its
fixed-length variants has the analogous meaning.

Fix $a>0$ and embed a sample from
$\QD(a)^\#\otimes\CLE_\kappa$.  For $\wp\in\Gamma$, let $D_\wp$ be the
component of $D\setminus\wp$ which does not contain $\partial D$.  The loop
$\wp$ is \emph{outermost} if $D_\wp$ is not contained in $D_{\eta'}$ for
any other $\eta'\in\Gamma$.  List the quantum lengths of the outermost loops
in non-increasing order as $(\ell_i)_{i\ge1}$.  The next two propositions describe the law of the outermost-loop
lengths and, conditionally on these lengths, the laws of the
encircled quantum surfaces.

\begin{proposition}[\cite{msw-cle-lqg,bbck-growth-frag,ccm-perimeter-cascade}]\label{prop-ccm}
{Set $\beta:=\frac4\kappa+\frac12\in(\frac32,2)$. Let $(\zeta'_t)_{t\ge0}$ be a spectrally positive $\beta$-stable L\'evy process whose L\'evy measure is $\mathbf1_{\{x>0\}}x^{-\beta-1}\,dx$, and denote its law by $\P^\beta$.  Put
\[
\tau_{-a}:=\inf\{t\ge0:\zeta'_t=-a\},
\]
and let $(x_i)_{i\ge1}$ be the non-increasing sequence of jump sizes of
$\zeta'$ on $[0,\tau_{-a}]$.  Then $(\ell_i)_{i\ge1}$ has the law of
$(x_i)_{i\ge1}$ under the probability measure
\[
\frac{\tau_{-a}^{-1}}{\E^\beta[\tau_{-a}^{-1}]}\,\P^\beta.
\]}
\end{proposition}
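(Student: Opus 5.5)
The plan is to realize the outermost loops of $\QD(a)^\#\otimes\CLE_\kappa$ as the loops discovered by a conformal percolation (CPI) exploration on the quantum disk. I will read off the boundary-length process of that exploration from the mating-of-trees analysis of~\cite{msw-cle-lqg}. The bias $\tau_{-a}^{-1}$ should then appear as the cost of forgetting the extra marking that the exploration needs. This is essentially the argument of~\cite{msw-cle-lqg,bbck-growth-frag,ccm-perimeter-cascade}, and I would organize it in four steps.

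\emph{Step 1 (rooting and exploring).} Add a boundary root $p$, sampled from the normalized boundary-length measure. This does not change the law of the unrooted loop-decorated surface, so it suffices to work with $\QD_{0,1}(a)^\#\otimes\CLE_\kappa$. From $p$, run the $\mathrm{SLE}_\kappa(\kappa-6)$ branch of the CLE exploration tree (the CPI of~\cite{msw-cle-lqg}), parametrized by quantum natural time. By~\cite{msw-cle-lqg}, the unexplored boundary length evolves as a spectrally positive $\beta$-stable process with $\beta=\frac4\kappa+\frac12$, after fixing the L\'evy measure to be $x^{-\beta-1}dx$ by a linear time change. Each upward jump is the quantum length of a CLE loop discovered at that time, and the loops discovered before the exploration exhausts the disk are exactly the outermost loops. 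The regions they enclose are conditionally independent quantum disks, which is the content of the companion proposition.

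\emph{Step 2 (identifying the law of the process).} The exploration does not produce $\P^\beta$ directly. The natural statement in~\cite{msw-cle-lqg} is an identity of measures: a quantum disk with one extra marked point on the exploration, chosen according to quantum natural time, corresponds to the stable process started at $a$ and killed at $\tau_{-a}$, with a uniform time in $[0,\tau_{-a}]$ marked. I would derive this from the Markov property of the exploration together with the scaling $|\QD(\ell)|=R_\gamma\ell^{-2-4/\gamma^2}$. Specifically, conditionally on the explored part, the unexplored region is an independent quantum disk of the current boundary length. This identifies the disintegration of the marked-time measure at fixed duration. Forgetting the marked time multiplies by $\tau_{-a}$. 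Equivalently, the unmarked disk law is $\P^\beta$ reweighted by $\tau_{-a}^{-1}$, up to normalization.

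\emph{Step 3 (normalization).} Since $\QD(a)^\#$ is a probability measure, the constant is forced to be $\E^\beta[\tau_{-a}^{-1}]^{-1}$. This expectation is finite because $\tau_{-a}$ has a stable law with index $1/\beta$ and the density decays fast near $0$. Pushing forward to the ordered jump sizes gives the claim.

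The main obstacle is Step 2: pinning down exactly which marking of the exploration corresponds to the unbiased stable process, and hence that the bias is precisely $\tau_{-a}^{-1}$ and not some other power or a function of the terminal length. I would first try the re-rooting invariance of the quantum disk under the exploration, as in~\cite{bbck-growth-frag}. As a consistency check, I would verify that integrating the $\tau_{-a}^{-1}$-biased law over $a$ against $|\QD(a)|$ reproduces the $\ell^{-\beta-1}$ intensity of loops predicted by scaling.
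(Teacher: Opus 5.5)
The paper does not prove this statement; it imports it from \cite{msw-cle-lqg,bbck-growth-frag,ccm-perimeter-cascade}. Judged as a proof, your outline has a genuine gap, and it is where you suspect: Step~2 is the entire content of the proposition, and it is asserted rather than derived.

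Step~1 needs repair before Step~2 can even be set up. A single branch of the $\mathrm{SLE}_\kappa(\kappa-6)$ tree from $p$ does not discover all outermost loops. Regions it disconnects from its target contain outermost loops that only other branches visit. The boundary length of its target component also jumps down at disconnection times, so it is not spectrally positive. To see all of $(\ell_i)$ as jumps, you would have to follow the total length of all pending components along a traversal of the whole tree. (Also, the CPI of \cite{msw-cle-lqg} is an $\mathrm{SLE}_{16/\kappa}$-type interface in the carpet, not this branch.)

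The route you propose for Step~2 cannot produce the bias.
\begin{itemize}
\item Scaling is blind to the exponent. Since $\tau_{-a}$ scales like $a^\beta$, the normalized measure $\tau_{-a}^{-c}\P^\beta$ is compatible with $|\QD(\ell)|=R_\gamma\ell^{-2-4/\gamma^2}$ for every $c$.
\item The domain Markov property at a marked time only expresses the future as the exploration of another disk of length $X_t$, i.e.\ of the unknown law itself. At best it yields a Doob-type transform of the half-plane process by a function of the current length.
\item A bias by $\tau_{-a}^{-1}$ is of a different nature. Under it, the conditional law of the future increments given the path up to time $t$ has density proportional to $(t+\tau')^{-1}$ with respect to the stable law, where $\tau'$ is the remaining duration. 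So it depends on the elapsed time, which is the signature of forgetting a global marked point.
\end{itemize}

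Relatedly, your bookkeeping only yields $\tau_{-a}^{-1}$ if the disk marked by quantum natural time corresponds to $\P^\beta$ with mass one per path. That means the mark must be the distinguished point from which the encoding process is built. If the process side instead carries a Lebesgue-marked time in $[0,\tau_{-a}]$, as your wording suggests, forgetting the mark on both sides gives no bias at all.

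The missing ingredient is a pointed object whose encoding process is exactly the unbiased stable process up to $\tau_{-a}$, and whose pointing has total mass proportional to $\tau_{-a}$. In \cite{bbck-growth-frag,ccm-perimeter-cascade} this is the Bouttier--Di Francesco--Guitter/Janson--Stef\'ansson encoding of a pointed gasket. There the \L{}ukasiewicz path is an unbiased walk killed at $\tau_{-p}$, and unpointing divides by the number of vertices, which is asymptotically proportional to $\tau_{-p}$. In the continuum, this identification is exactly the nontrivial input taken from the cited works. Without it, Step~3 only normalizes an unjustified density.
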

\begin{proposition}[\cite{msw-cle-lqg}]\label{prop-msw}
{Conditionally on $(\ell_i)_{i\ge1}$, the quantum surfaces
$(D_\wp,h|_{D_\wp})/{\sim_\gamma}$ encircled by the outermost loops are
independent, and the surface whose boundary length is $\ell_i$ has law
$\QD(\ell_i)^\#$.}
\end{proposition}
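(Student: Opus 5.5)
The plan is to derive this from the CLE exploration on a quantum disk of \cite{msw-cle-lqg}, in the same framework that gives Proposition~\ref{prop-ccm}. We embed a sample of $\QD(a)^\#\otimes\CLE_\kappa$ and explore the outermost loops with a boundary-touching exploration: the $\mathrm{SLE}_\kappa(\kappa-6)$ branching process, or equivalently the conformal percolation interface of \cite{msw-cle-lqg}, run from a quantum-typical boundary root. The key input is the mating-of-trees / quantum-zipper description of this exploration~\cite{wedges,shef-zipper}. As the exploration runs, the unexplored region is a quantum disk with a generalized boundary-length process. Each discovered outermost loop is traced completely and cut out, and the boundary-length process records this as a positive jump $x$ equal to the loop's quantum length. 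This gives the L\'evy description with jump sizes $(x_i)$ in Proposition~\ref{prop-ccm}.

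Step one is the Markov property at a single loop discovery. Given the past of the exploration and the discovered length $x$, the surface $(D_\wp,h|_{D_\wp})/{\sim_\gamma}$ enclosed by the new loop should be independent of the remaining unexplored surface and of the past, with law $\QD(x)^\#$. I would obtain this from the welding statement that cutting a quantum disk along an $\mathrm{SLE}_\kappa$-type loop produces independent quantum disks given their boundary lengths. Concretely, I would use the single-loop case of the disk welding in Proposition~\ref{disk-welding} (via \cite{ACSW24a}), or directly the quantum disk decomposition of \cite{msw-cle-lqg}, together with the fact that the CLE inside $D_\wp$ is an independent $\CLE_\kappa$ there. The latter follows from the CLE Markov property~\cite{shef-werner-cle}.

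Step two is to iterate the Markov property along the exploration. Applying it at the stopping times of successive macroscopic jumps, say jumps exceeding $\varepsilon$ ordered by discovery time, shows that, conditionally on the whole boundary-length process, the enclosed surfaces for these jumps are independent with laws $\QD(x_i)^\#$. Letting $\varepsilon\downarrow0$ and using a monotone class argument extends the statement to all outermost loops. Since all $\ell_i$ are a.s.\ distinct, relabeling in non-increasing order does not affect the conditional independence. Finally, the enclosed surfaces are independent of the boundary-length process beyond its jump sizes, so we may condition only on $(\ell_i)$.

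The main obstacle is step one: identifying the precise law $\QD(x)^\#$ for the region inside a loop. This needs the weighting in Proposition~\ref{prop-ccm}, namely the factor $\tau_{-a}^{-1}$ coming from the choice of root. I would check that this reweighting involves only the length process, so it does not bias the enclosed surfaces. This is exactly the content established in \cite{msw-cle-lqg}, which we invoke.
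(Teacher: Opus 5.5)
Your proposal follows the paper's route. The paper gives no argument of its own and simply imports this statement from \cite{msw-cle-lqg}, and your step one, which carries all of the content, is delegated to the same reference; step two and the passage from conditioning on the whole length process to conditioning on $(\ell_i)$ are routine. Drop the alternative of obtaining step one from Proposition~\ref{disk-welding}, since the sphere decomposition of \cite{ACSW24a} is itself derived from this disk result and the argument would be circular. Also, the CPI and the branching $\mathrm{SLE}_\kappa(\kappa-6)$ exploration are different explorations, not equivalent ones, and the $\tau_{-a}^{-1}$ weight of Proposition~\ref{prop-ccm} cannot come from a length-typical boundary root, whose total mass is the constant $a$.
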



\section{Quantum $n$-hole {surfaces}}\label{sec: quantum n hole}

In this section, we introduce the quantum $n$-hole surface. In Section~\ref{subsec:def-QHn}, we first present its definition and give conformal welding results. Then, in Section~\ref{subsec:partition}, we compute the area distribution of {a quantum $n$-hole surface}.

\subsection{Definitions and the statements of conformal welding results}\label{subsec:def-QHn}

Let us start with the definition. We assume $\kappa\in (\frac83,4)$ and $\gamma=\sqrt{\kappa}$ throughout this section. The probability measure for the disk ({resp. sphere}) nested $\CLE_\kappa$ is denoted by $\CLE^{\D}_\kappa$ ({resp.} $\CLE^{\C}_\kappa$). We {denote by} $\frk l_\eta$ the quantum length of $\eta$. Given a sample $\Gamma$ from $\CLE^{\C}_\kappa$ ({resp.} $\CLE^{\D}_\kappa$), we {use} $\mathrm{Count}_{\Gamma}(d\eta)$ ({resp.} $\mathrm{Count}^{'}_{\Gamma}(d\eta)$) {for} the counting measure on loops ({resp. outermost loops}) in $\Gamma$.

To construct the quantum $n$-hole surface $\QH_n(\ell_1,{\ldots},\ell_n)$, we follow {the same idea as for} the annulus and pair of pants. We first need the following disintegration lemma.
{When $n=2$, every family and product indexed by $1\le i\le n-2$ below is understood to be empty.}

\begin{lemma}
{
Fix an integer $n\ge2$ and $\ell_1>0$. Sample
$(D,h,\Gamma,z)/{\sim_\gamma}$ from
$\QD_{1,0}(\ell_1)\otimes\CLE_\kappa$, let $\eta_0$ be the outermost
loop surrounding $z$, and, with respect to
\[
\mathbf1_{\{\eta_0,\eta_1,\ldots,\eta_{n-2}\text{ are pairwise distinct}\}}
\prod_{i=1}^{n-2}\mathrm{Count}'_\Gamma(d\eta_i),
\]
set
$(\ell_2,\ldots,\ell_n):=(\mathfrak l_{\eta_0},\mathfrak l_{\eta_1},
\ldots,\mathfrak l_{\eta_{n-2}})$. For every Lebesgue-null Borel set
$E\subset(0,\infty)^{n-1}$,
\[
\QD_{1,0}(\ell_1)\otimes\CLE_\kappa
 [ (\ell_2,\ldots,\ell_n)\in E]=0.
\]
}
\end{lemma}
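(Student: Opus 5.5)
The plan is to reduce the claim to absolute continuity of the joint law of finitely many outermost-loop lengths. The tool is Proposition~\ref{prop-ccm}, transported from the unmarked disk $\QD(a)^\#$ to $\QD_{1,0}(\ell_1)$.

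First, I remove the marked point. By Definition~\ref{def-QD}, $\QD_{1,0}(\ell_1)$ is obtained from $\QD(\ell_1)$ by weighting by the area $A$ and sampling $z$ from the quantum area measure. The loop $\eta_0$ is the outermost loop whose interior $D_{\eta_0}$ contains $z$; off a null set, $z$ lies in some $D_\eta$. Since $\mathrm{Count}'_\Gamma$ is a counting measure, the event $\{(\ell_2,\ldots,\ell_n)\in E\}$ has $\QD_{1,0}(\ell_1)\otimes\CLE_\kappa$-mass at most
\[
\QD(\ell_1)\otimes\CLE_\kappa\Big[\sum_{\eta_0,\ldots,\eta_{n-2}\ \mathrm{distinct}}\mu_h(D_{\eta_0})\,\mathbf 1_{(\mathfrak l_{\eta_0},\ldots,\mathfrak l_{\eta_{n-2}})\in E}\Big],
\]
where the sum is over distinct outermost loops. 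Conditionally on the outermost lengths, Proposition~\ref{prop-msw} gives $\E[\mu_h(D_{\eta_0})\mid (\ell_i)]=c\,\mathfrak l_{\eta_0}^2$, by scaling of $\QD(x)^\#$. So it suffices to show that
\[
\E\Big[\sum_{\text{distinct}} \mathfrak l_{\eta_0}^2\,\mathbf 1_{E}(\mathfrak l_{\eta_0},\ldots,\mathfrak l_{\eta_{n-2}})\Big]=0
\]
under $\QD(\ell_1)^\#\otimes\CLE_\kappa$.

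Second, I compute this via the Lévy description. By Proposition~\ref{prop-ccm}, the outermost lengths are the jumps of $\zeta'$ on $[0,\tau_{-\ell_1}]$ under the law $\P^\beta$ reweighted by $\tau^{-1}$. The quantity above is then
\[
\E^\beta\Big[\tau_{-\ell_1}^{-1}\sum_{\text{distinct jumps}} x_0^2\,\mathbf 1_E(x_0,\ldots,x_{n-2})\Big],
\]
up to a constant. The jumps of $\zeta'$ form a Poisson point process $N(dt\,dx)$ with intensity $dt\otimes x^{-1-\beta}dx$, and the weight $\tau^{-1}\mathbf 1_{t_i\le\tau}$ is a functional of the whole path. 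I apply the multivariate Mecke (Slivnyak) formula for sums over distinct $(n-1)$-tuples of atoms. This rewrites the expression as
\[
\int_E\int_{[0,\infty)^{n-1}}\E^\beta\Big[F\big(N+\textstyle\sum_i\delta_{(t_i,x_i)}\big)\Big]\prod_i dt_i\,x_i^{-1-\beta}dx_i,
\]
where $F$ is the nonnegative functional $\tau^{-1}\prod_i\mathbf 1_{t_i\le\tau}\,x_0^2$. The measure is absolutely continuous in $(x_0,\ldots,x_{n-2})$, and $E$ is Lebesgue-null, so the integral vanishes. Measurability is routine because $\tau$ is a measurable functional of the point process.

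The main obstacle is the first step: justifying that the area weighting and the choice of $\eta_0$ reduce cleanly to conditional expectations. The issue is that $\QD_{1,0}(\ell_1)$ is only a disintegration, hence defined for a.e.\ $\ell_1$. I would handle this either with the canonical version given by Definition~\ref{def-QD-alpha} and Lemma~\ref{lem:field-disk}, or by integrating over $\ell_1\in(a,b)$ and proving the statement for a.e.\ $\ell_1$, and I would check the paper's convention. No finiteness is needed anywhere, since every quantity involved is nonnegative and we only prove that it equals zero.
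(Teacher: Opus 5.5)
Your proposal is correct and follows essentially the same route as the paper. Area-pointing via Definition~\ref{def-QD} turns the choice of $\eta_0$ into a weight $\mu_h(D_{\eta_0})$ on the ordered factorial counting measure, and Proposition~\ref{prop-ccm} together with an iterated Campbell--Mecke formula for the Poisson jump measure gives absolute continuity, with the $\tau_{-\ell_1}^{-1}$ reweighting harmless. The only difference is cosmetic: you average out $\mu_h(D_{\eta_0})$ using Proposition~\ref{prop-msw}, whereas the paper simply notes that multiplying by a nonnegative weight preserves null sets. Your concern about $\QD_{1,0}(\ell_1)$ being defined only for a.e.\ $\ell_1$ is moot, since a canonical version exists for every $\ell_1$.
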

\begin{proof}
{
Use the area-pointing identity from Definition~\ref{def-QD}. After the bulk
point is integrated out, choosing $\eta_0$ amounts to weighting the ordered
factorial counting measure on pairwise distinct original outermost loops by
the non-negative random factor $\mu_h(D_{\eta_0})$; the CLE carpet has zero
quantum area almost surely. By Proposition~\ref{prop-ccm}, the
outermost-loop lengths are the jump sizes of the stated L\'evy process, under
an absolutely continuous reweighting of its law. Iterated Campbell--Mecke
for its Poisson jump measure shows that the ordered factorial measure of
every finite tuple of distinct jump sizes is absolutely continuous with
respect to Lebesgue measure. Stopping at $\tau_{-\ell_1}$, reweighting by
$\tau_{-\ell_1}^{-1}$, and multiplying by $\mu_h(D_{\eta_0})$ preserve null
sets. The claim follows.
}
\end{proof}
{To give a brief overview, our starting point is a quantum disk with one marked point and boundary length $\ell$. We remove the outermost loop that surrounds the marked point and then select $(n-2)$ additional, pairwise distinct outermost loops in the annular region under the counting measure. This yields a surface topologically equivalent to a sphere with $n$ holes. More specifically, we have the following definition and conformal welding formula.}

\begin{definition}\label{def: nholes}
{Fix an integer $n\ge2$.} For $\ell>0$, let $(\D,h,\Gamma,z)/{\sim_\gamma}$ be a sample from $\QD_{1,0}(\ell)\otimes\CLE^{\D}_\kappa(d\Gamma)$. Let $\eta_0$ be the outermost loop of $\Gamma$ surrounding $z$. {With respect to
\[
\mathbf1_{\{\eta_0,\eta_1,\ldots,\eta_{n-2}\,\text{are pairwise distinct}\}}
\prod_{i=1}^{n-2}\mathrm{Count}'_\Gamma(d\eta_i),
\]
choose the distinguished original outermost loops $\eta_1,\ldots,\eta_{n-2}$.} Let $H_{\eta_0, \eta_{1}, \textcolor{red}{\ldots} ,\eta_{n-2}}$ be the non-simply-connected component of $\D \setminus (\cup_{i=0}^{n-2} \eta_i)$. Let $\wt \QH_n(\ell,\ell_0,\ell_1,\textcolor{red}{\ldots},\ell_{n-2})$ be the law of the quantum surface $(H_{\eta_0, \eta_{1}, \textcolor{red}{\ldots} ,\eta_{n-2}},h)/{\sim_\gamma}$ disintegrated over $\frk l_{\eta_0}=\ell_0, \frk l_{\eta_1}=\ell_1,\textcolor{red}{\ldots},\frk l_{\eta_{n-2}}=\ell_{n-2}$. Let

\begin{equation}\label{equ:factor}
\QH_n(\ell,\ell_0,\ell_1,\textcolor{red}{\ldots},\ell_{n-2}) = {\ell_0}^{-1}{|\QD_{1,0}(\ell_0)|}^{-1}\left(\prod_{i=1}^{n-2} \ell_i |\QD(\ell_i)| \right)^{-1} \wt \QH_n(\ell,\ell_0,\ell_1,{\ldots},\ell_{n-2}).
\end{equation}

We call a sample from $\QH_n(\ell,\ell_0,\ell_1,{\ldots},\ell_{n-2})$ a quantum {$n$-hole} sphere.
\end{definition}

\begin{remark}
    In \cite{Int-CLE}, the authors introduce {the} quantum annulus $\QA(a,b)$ and {the} quantum pair of pants $\QP(a,b,c)$. We have $\QA(a,b)=\QH_2(a,b)$ and $\QP(a,b,c)=C(\gamma)\cdot \QH_3(a,b,c)$, where $C(\gamma)$ is given in Proposition{~}\ref{disk-welding}. 
\end{remark}
\begin{proposition}[Area distribution for $\QH_2$ and $\QH_3$ \cite{Int-CLE}]\label{thm-QA2-area}
\begin{itemize}
	\item[1.]For $a>0,b>0$, let $A$ be the quantum area of a sample from $\QA(a, b)$. Then
	\[\QH_2(a, b)[e^{-\mu A}] = \frac{\cos(\pi (\frac4{\gamma^2}-1))}\pi  \cdot \frac {e^{-(a+b)\sqrt{\mu/\sin(\pi\gamma^2/4)}} }{\sqrt{ab} (a+b)} \quad \textrm{for }\mu\ge 0.\]
\item[2.]For $\ell_1, \ell_2, \ell_3>0$, let $A$ be the total quantum area of a sample from $\QH_3(\ell_1, \ell_2, \ell_3)$.
Then there is a constant {$C_3(\gamma)\in(0,\infty)$, depending only on $\gamma$,} such that
\[\QH_3(\ell_1, \ell_2, \ell_3) [e^{-\mu A}] = C_3(\gamma)\mu^{\frac14 - \frac2{\gamma^2}}\frac{1}{\sqrt{\ell_1\ell_2\ell_3}} e^{-(\ell_1+ \ell_2+ \ell_3)\sqrt{\mu/\sin(\pi \gamma^2/4)}} \quad \textrm{for }\mu>0. \]
\end{itemize}

\end{proposition}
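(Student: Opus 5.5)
The plan is to reduce both formulas to an exponential functional of the stable L\'evy process of Proposition~\ref{prop-ccm}, using Propositions~\ref{prop-ccm} and~\ref{prop-msw}, and then to use excursion theory. Start from $\QD_{1,0}(a)\otimes\CLE_\kappa$. By Definition~\ref{def-QD}, $\QD_{1,0}(a)$ is $\QD(a)$ weighted by total area, with $z$ sampled from the area measure. Since the carpet has zero area, marking $z$ amounts to choosing one outermost loop $\eta_0$ with weight $\mu_h(D_{\eta_0})$. Conditionally on the outermost lengths $(\ell_i)$, the interiors are independent $\QD(\ell_i)^\#$ surfaces by Proposition~\ref{prop-msw}. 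Hence, for any further chosen outermost loops $\eta_1,\dots,\eta_{n-2}$, the $e^{-\mu A}$-functional of the region $H_{\eta_0,\dots,\eta_{n-2}}$ equals
\[
|\QD(a)|\,\E\Big[\sum \QD(\ell_{i_0})^\#[A]\,\prod_{j\notin\{i_0,\dots,i_{n-2}\}} g(\ell_j)\Big],
\qquad g(x)=\QD(x)^\#[e^{-\mu A}],
\]
where the sum runs over ordered tuples of distinct indices. The factor $\QD(\ell_0)^\#[A]$ equals $\ell_0|\QD_{1,0}(\ell_0)|/|\QD(\ell_0)|$, up to the definitions. Likewise, the removed holes $\eta_1,\dots,\eta_{n-2}$ carry no $g$-factor. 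Once disintegrated over their lengths, these prefactors are exactly what the normalization~\eqref{equ:factor} cancels, leaving only powers of the lengths and $|\QD(\cdot)|=R_\gamma\ell^{-2-4/\gamma^2}$.

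Next I pass to the L\'evy process. The law in Proposition~\ref{prop-ccm} carries the weight $\tau_{-a}^{-1}/\E^\beta[\tau_{-a}^{-1}]$. I write $\tau^{-1}=\int_{-\infty}^0 e^{s\tau}\,ds$, so the quantity becomes
\[
\int_{-\infty}^0 \E^\beta\Big[e^{s\tau_{-a}}\sum_{\text{distinct jumps}}\ \prod_{\text{other jumps}}g(x_j)\Big]ds .
\]
I then remove the distinguished jumps one at a time using the Mecke/Palm formula for the Poisson jump measure: a jump of size $b$ at level $-y$ contributes $b^{-1-\beta}\,db$ times the time-density. Thanks to the strong Markov property and the first-passage structure of a spectrally positive process, the process after the jump must descend from $-y+b$ to $-a$. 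By the exponential formula for excursions above the running infimum, $\E[e^{s\tau_{-x}}\prod g]=e^{x u(s)}$, with $u$ as in~\eqref{eq:intro-excursion}. Integrating over $y$ and the jump time collapses the Palm insertion into the derivative structure. One inserted jump ($n=2$) should produce $u'(s)e^{(a+b)u(s)}$. Since $n=2$ has no extra holes, the $s$-integral then gives
\[
\int_{-\infty}^0 u'(s)e^{(a+b)u(s)}\,ds=\frac{e^{(a+b)u(0)}}{a+b},
\]
using $u(-\infty)=-\infty$. A further insertion ($n=3$) produces one more $s$-derivative, which cancels the $s$-integral and leaves $u'(0)e^{(\ell_1+\ell_2+\ell_3)u(0)}$. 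The factors $\ell_i^{-1/2}$ come from combining $b^{-1-\beta}$, $\beta=4/\gamma^2+1/2$, with the $|\QD|$ and $|\QD_{1,0}|$ normalizations.

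Finally I identify the constants. I need to show $u(0)=-\sqrt{\mu/\sin(\pi\gamma^2/4)}=-M$. This follows from the L\'evy--Khintchine equation for the killed first-passage exponent: $u(0)$ solves $\int_0^\infty(g(x)e^{-ux}\text{-type terms})x^{-1-\beta}dx=0$, evaluated with the Bessel form~\eqref{eq:unitarea}. Alternatively, and more robustly, one can observe that $g(x)$ is itself a first-passage Laplace transform for the disk, because the CLE exploration of a quantum disk is self-similar; this forces $e^{-xM}$ to be the total-disk functional. For $n=3$, scaling $\mu\mapsto\lambda\mu$ together with $\ell\mapsto\lambda^{-1/2}\ell$ shows $u'(0)\propto\mu^{1/4-2/\gamma^2}$, which yields the stated power. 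The prefactor $\cos(\pi(4/\gamma^2-1))/\pi$ comes from $\E^\beta[\tau_{-a}^{-1}]$ and $R_\gamma$, via the Gamma reflection formula. I expect the main obstacle to be this identification of $u(0)$ (equivalently, the Bessel integral against the L\'evy measure) together with a rigorous Palm decomposition under the $\tau^{-1}$ reweighting. The rest is bookkeeping of normalizations.
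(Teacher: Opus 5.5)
\textbf{Verdict: correct, by a different route.} Your approach works and differs from the paper's. The paper does not prove this proposition; it imports it from \cite{Int-CLE}. Its own machinery for the analogous computation (Lemma~\ref{lem-QHn-translate}, Proposition~\ref{prop:combinatorics}, Theorem~\ref{area-ufunc}) proceeds as follows:
\begin{enumerate}
\item It takes $|\QA(\ell_1,\ell_2)|$ as an input.
\item It reduces $P_n$ combinatorially to $\tau_{-L_2}\tau_{-L_n}^{n-3}$.
\item It conditions on the excursion $\sigma$-field $\mathcal F_{L_n}$, using uniformity of the labels, so that $\tau_{-L_2}$ is replaced by $\frac{L_2}{L_n}\tau_{-L_n}$.
\item It imports $\E^\beta[\prod_i g(x_i)]=e^{-L_nM}$ from \cite{Int-CLE}.
\end{enumerate}
You instead write $\tau_{-a}^{-1}=\int_{-\infty}^0e^{s\tau_{-a}}\,ds$ and use $\E^\beta[e^{s\tau_{-x}}\prod g]=e^{xu(s)}$, so each Palm deletion becomes one $s$-derivative and the $s$-integral collapses. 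This buys two things. First, you derive $|\QA(a,b)|$ and its constant rather than assuming them. Second, iterating the same mechanism would give the general $\partial_s^{n-3}[u'e^{L_nu}]$ structure without the $P_n$ structure-number induction. The cost is that you must identify $u(0)$ yourself.

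\textbf{Bookkeeping slips to fix.} There are two errors, and each would change the exponents.
\begin{itemize}
\item Deleting the $\eta_0$-jump gives the density $\widetilde\tau_{-a}e^{s\widetilde\tau_{-(a+b)}}$. Its expectation is $a\,u'(s)e^{(a+b)u(s)}$, not $u'(s)e^{(a+b)u(s)}$.
\item $\QD(b)^\#[A]=|\QD_{1,0}(b)|/|\QD(b)|$, with no factor $b$. So the $b^{-1}$ in \eqref{equ:factor} is not cancelled, and it is needed to produce $b^{-1/2}$.
\end{itemize}
With $\E^\beta[\tau_{-a}^{-1}]=\Gamma(1+\beta)\Gamma(-\beta)a^{-\beta}$ you then get
\[
\QH_2(a,b)[e^{-\mu A}]=\frac{1}{\Gamma(1+\beta)\Gamma(-\beta)}\cdot\frac{e^{(a+b)u(0)}}{\sqrt{ab}\,(a+b)} .
\]
Here $R_\gamma$ cancels, and reflection gives $1/(\Gamma(1+\beta)\Gamma(-\beta))=-\sin(\pi\beta)/\pi=\cos(\pi(4/\gamma^2-1))/\pi$.

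\textbf{The $n=3$ step.} Your claim of one more derivative is true but should be checked. Split the second deletion according to whether the jump falls before or after $\widetilde\tau_{-a}$. This gives the density $a(u''+L_3u'^2)e^{L_3u}=a\,\partial_s(u'e^{L_3u})$, hence $C_3(\gamma)=\frac{\cos(\pi(4/\gamma^2-1))}{\pi R_\gamma}\,u'(0)\big|_{\mu=1}$. You still need $u'(0)<\infty$. This follows either from a.e.\ finiteness of the disintegrated measure, or from the implicit relation, because $\int_0^\infty h^{-\beta}(1-e^{-Mh}g(h))\,dh\in(0,\infty)$.

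\textbf{The identification $u(0)=-M$.} Your first route is correct. The value $-u(0)$ is the unique zero of the strictly increasing map $\rho\mapsto\int_0^\infty(e^{-\rho h}g(h)-1+\rho h)h^{-1-\beta}\,dh$. The point $\rho=M$ is a zero because it is the $w=0$ case of Proposition~\ref{hypergeo}: the factor $1/\Gamma(\alpha+\frac12)$ vanishes at $\alpha=-\frac12$.

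Your ``self-similarity'' alternative is too vague as written, but it can be made precise. Propositions~\ref{prop-ccm} and~\ref{prop-msw} give
\[
g(a)=\frac{a^\beta}{\Gamma(1+\beta)\Gamma(-\beta)}\int_{-\infty}^0e^{au(s)}\,ds .
\]
Since $u$ is increasing and left-continuous at $0$, the exponential rate of the right side as $a\to\infty$ is $u(0)$. Meanwhile $K_{4/\gamma^2}(Ma)\sim\sqrt{\pi/(2Ma)}\,e^{-Ma}$, so the rate of $g(a)$ is $-M$.
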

We will compute the constant $C_3(\gamma)$ in Theorem \ref{area-ufunc}.

\begin{proposition}\label{welding1 n-holes}
{Suppose we are in the setting of Definition~\ref{def: nholes}. The law of the decorated quantum surface $(\D,h,z,\eta_0, \eta_{1}, \dots ,\eta_{n-2})/{\sim_\gamma}$ equals}
\begin{align}
    \int_{{(0,\infty)^{n-1}}} \mathrm{Weld}(\QH_n(\ell,\ell_0,\ell_1,{\ldots},\ell_{n-2}), \QD_{1,0}(\ell_0), \QD(\ell_1),{\ldots},\QD(\ell_{n-2}))\prod_{j=0}^{n-2}\ell_j d\ell_j.
\end{align}

{In particular, the welding constant in this identity is $1$.}
\end{proposition}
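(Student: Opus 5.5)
The strategy is to use the spatial Markov property of the loop-decorated quantum disk, namely Proposition~\ref{prop-msw}, to identify the conditional laws of the surfaces enclosed by $\eta_0,\ldots,\eta_{n-2}$. Then match normalizations using the definition~\eqref{equ:factor}.

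\emph{Step 1: remove the marked point.} By Definition~\ref{def-QD}, $\QD_{1,0}(\ell)$ is $\QD(\ell)$ weighted by the total area $\mu_h(D)$, with $z$ sampled from $\mu_h^\#$. The CLE carpet has zero quantum area, so $z$ almost surely lies in $D_\wp$ for a unique outermost loop $\wp$. Hence sampling $z$ and then taking $\eta_0$ is equivalent to the following: under $\QD(\ell)\otimes\CLE_\kappa$, pick an outermost loop $\eta_0$ with weight $\mu_h(D_{\eta_0})$, and then pick $z$ in $D_{\eta_0}$ from $\mu_h|_{D_{\eta_0}}^\#$. The remaining loops $\eta_1,\ldots,\eta_{n-2}$ are chosen by factorial counting measure on the outermost loops distinct from $\eta_0$. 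All $n-1$ chosen loops are therefore outermost loops treated symmetrically under ordered factorial counting, with an extra area weight on $\eta_0$.

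\emph{Step 2: condition on the outermost loops.} We condition on the outer surface $(H_{\eta_0,\ldots,\eta_{n-2}},h)$ together with the lengths. Proposition~\ref{prop-msw} applies to all outermost loops at once: conditionally on the lengths, the enclosed surfaces are independent $\QD(\ell_i)^\#$. It therefore also holds conditionally on the complement of the chosen loops, with the $\CLE$ inside each loop independent nested $\CLE_\kappa$. To be careful, I would use the Markov property in its stronger form from \cite{msw-cle-lqg,Int-CLE}: the enclosed surfaces are independent of the carpet surface given the lengths. The factorial counting only selects indices, so it commutes with this conditioning.

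With the area weighting from Step 1, the disk inside $\eta_0$ becomes $\mu_h(D)\,\QD(\ell_0)^\#$ with a marked point, that is, $|\QD(\ell_0)|^{-1}\QD_{1,0}(\ell_0)$. Each $\eta_i$ with $i\ge1$ carries $\QD(\ell_i)^\# = |\QD(\ell_i)|^{-1}\QD(\ell_i)$.

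\emph{Step 3: disintegrate and reassemble.} Disintegrating over $(\mathfrak l_{\eta_j})_j$ is justified by the preceding null-set lemma, and gives the outer surface law $\wt\QH_n(\ell,\ell_0,\ldots,\ell_{n-2})$. The full decorated surface is then the uniform conformal welding of $\wt\QH_n$ with the inner disks. The welding is almost surely unique, by \cite{shef-zipper} together with conformal removability of $\mathrm{SLE}_\kappa$-type loops.

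For the normalization, rooting the uniform welding along a boundary of length $\ell_j$ introduces a factor $\ell_j$; this matches the measure $\prod\ell_j\,d\ell_j$. I would check the bookkeeping against the case $n=2$ in \cite{Int-CLE}, where $\QA$ arises in the same way.

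Combining the factors gives
\[
\wt\QH_n \;|\QD_{1,0}(\ell_0)|^{-1}\prod_{i\ge1}|\QD(\ell_i)|^{-1}\,\QD_{1,0}(\ell_0)\prod_{i\ge1}\QD(\ell_i).
\]
By~\eqref{equ:factor}, $\wt\QH_n$ is $\QH_n$ times $\ell_0|\QD_{1,0}(\ell_0)|\prod_{i\ge1}\ell_i|\QD(\ell_i)|$. The partition functions cancel and exactly $\prod_j\ell_j$ remains, so the welding constant is $1$.

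\emph{Main obstacle.} The delicate point is Step 2, passing the Markov property through the area reweighting and the marked point. A convenient way is to test against functionals $F_0(\text{outer surface})\prod_j F_j(\text{inner surface }j)$. Applying Proposition~\ref{prop-msw} to $\mathbb E[\,\cdot\mid(\ell_i)]$, and then the Palm/Campbell formula for the factorial sum over outermost loops, yields the product form.
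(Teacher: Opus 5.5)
Your proposal is correct and follows essentially the paper's route. You disintegrate over $(\mathfrak l_{\eta_j})_j$, use Proposition~\ref{prop-msw} with the Markov property to get the conditional laws $\QD_{1,0}(\ell_0)^\#$ and $\QD(\ell_i)^\#$ given the central component, and cancel the total masses via~\eqref{equ:factor}; your Step~1 area-reweighting just makes explicit why the $\eta_0$-component is $\QD_{1,0}(\ell_0)^\#$, which the paper asserts directly.

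Two bookkeeping points. First, the measure $|\QD(\ell_0)|^{-1}\QD_{1,0}(\ell_0)$ in your Step~2 is taken relative to the \emph{unweighted} outer law; it becomes $\QD_{1,0}(\ell_0)^\#$ only after the conditional mean area $|\QD_{1,0}(\ell_0)|/|\QD(\ell_0)|$ is absorbed into $\wt\QH_n$. Second, the factor $\prod_j\ell_j$ comes solely from~\eqref{equ:factor}: the uniform welding contributes no extra $\ell_j$, so your rooting heuristic must not be counted on top of it.
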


\begin{theorem}\label{thm: welding QSn}
Assume $n\ge3$. Let 
	       $${\mathfrak C_n}=\QS_n\otimes\CLE_\kappa^{\C}\mathrm{1}_{\eta_i \text{ is the outermost loop surrounding } z_i \text{ and there is no noncontractible loop in } D_{\boldsymbol{\eta}}} .$$
        Then
        \begin{align}
\mathfrak C_n=C(\gamma)\int_{{(0,\infty)^n}}\mathrm{Weld}(\QH_n(\ell_1,\ell_2,{\ldots},\ell_n),\QD_{1,0}(\ell_1),{\ldots},\QD_{1,0}(\ell_n))\prod_{j=1}^n\ell_j d\ell_j,
        \end{align}
 where $C(\gamma)$ is given in Proposition~\ref{disk-welding}. 
\end{theorem}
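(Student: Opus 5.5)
We derive Theorem~\ref{thm: welding QSn} from the sphere decomposition of Proposition~\ref{disk-welding} together with the disk welding of Proposition~\ref{welding1 n-holes}, followed by a symmetrization over the marked points.

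\textbf{Step 1: split the sphere into two disks.} Start from $\QS_2\otimes\CLE_\kappa^{\C}$ with marked points $z_1$ (at $0$) and $z_n$ (at $\infty$). Let $\wp$ be the outermost loop surrounding $z_n$ that does not surround $z_1$. Viewed from $z_1$, this is the loop bounding the complementary region containing $z_1$. Proposition~\ref{disk-welding} applies to the counting measure over loops separating $0$ from $\infty$, weighted by $\mathbf 1_E$. Restricting to this particular loop $\wp$ and applying the domain Markov property, we obtain
\[
C(\gamma)\int_0^\infty \ell\, \Weld\bigl(\QD_{1,0}(\ell)\otimes\CLE_\kappa^\D,\ \QD_{1,0}(\ell)\otimes\CLE_\kappa^\D\bigr)\,d\ell .
\]
The extra requirement that no separating loop lies between $\wp$ and $z_n$ becomes, on the $z_n$-side disk, the condition that $\wp$ is the boundary, which is automatic. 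On the $z_1$-side disk it becomes a condition on the outermost loops, and that condition is handled next.

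\textbf{Step 2: add the remaining marked points.} Pass from $\QS_2$ to $\QS_n$ by weighting by $\mu_h(\C)^{n-2}$ and sampling $z_2,\ldots,z_{n-1}$ from the area measure; this matches Definition~\ref{def-QS}, possibly after relabeling. Restrict to the event in $\mathfrak C_n$: each $z_i$ lies in a distinct outermost loop $\eta_i$, and there are no noncontractible loops. Inside the $z_1$-side disk $\QD_{1,0}(\ell_n)\otimes\CLE_\kappa$, the event that there is no noncontractible loop in $D_{\boldsymbol\eta}$ forces two things. First, $\eta_1$ is the outermost loop surrounding $z_1$, which is $\eta_0$ in the notation of Definition~\ref{def: nholes}. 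Second, $\eta_2,\ldots,\eta_{n-1}$ are distinct outermost loops of the disk CLE, each containing exactly one area-sampled point.

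Since the CLE carpet has zero area, sampling $z_i$ from area and then recording its outermost loop is equivalent to choosing loops $\eta_i$ by $\mathrm{Count}'_\Gamma$ with weight $\mu_h(D_{\eta_i})$ and then sampling $z_i$ from area inside $D_{\eta_i}$.

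\textbf{Step 3: weld and normalize.} Apply Proposition~\ref{welding1 n-holes} to the $z_1$-side disk. This writes it as a welding of $\QH_n(\ell_n,\ell_1,\ell_2,\ldots,\ell_{n-1})$ with $\QD_{1,0}(\ell_1)$ and $\QD(\ell_i)$ for $2\le i\le n-1$, with measure $\prod\ell_j\,d\ell_j$. The area weight $\mu_h(D_{\eta_i})$ together with a uniform area point converts $\QD(\ell_i)$ into $\QD_{1,0}(\ell_i)$, by Definition~\ref{def-QD}. Combining with Step~1 gives the claimed formula, except that the boundary of $\QH_n$ that was the outer disk boundary carries the role of $\ell_n$.

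We must also check that the $\CLE$ decorations inside the welded disks are independent nested CLEs. This follows from the Markov property of the nested CLE in Proposition~\ref{disk-welding} and from Proposition~\ref{prop-msw}.

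\textbf{Main obstacle.} The delicate step is the bookkeeping in Steps~2--3. We must show that the restriction defining $\mathfrak C_n$ corresponds exactly to the pairwise-distinct outermost-loop selection of Definition~\ref{def: nholes}, with no overcounting. We must also show that the $\QS_2\to\QS_n$ reweighting produces precisely $\QD_{1,0}$ on every hole, with constant $1$, so that the only constant left is $C(\gamma)$.

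I would first verify that the loop $\wp$ in Step~1 is almost surely unique under the restriction. I would then write everything as an identity of measures on the embedding $(\C,h,\Gamma,z_1,\ldots,z_n)$. The labels are handled because $\QH_n$ is defined with labeled boundaries, so no symmetry factor appears.
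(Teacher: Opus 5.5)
Your overall architecture is the paper's. Proposition~\ref{disk-welding} along the loop separating $z_1$ from $z_n$ gives $C(\gamma)$, Proposition~\ref{welding1 n-holes} inside the disk containing $z_1$ gives constant $1$, and the area-sampled points turn each $\QD(\ell_i)$ into $\QD_{1,0}(\ell_i)$. But Step~1 fails as written. On $\QS_2\otimes\CLE_\kappa^{\C}$ the loop you call $\wp$ does not exist. The loops of full-plane $\CLE_\kappa$ separating $0$ from $\infty$ form a bi-infinite nested family accumulating at both points, so none of them is outermost or innermost. For the same reason, your requirement that no separating loop lie between $\wp$ and $z_n$ is a null event on the $z_n$-side disk, which carries infinitely many loops around $z_n$; it is not automatic. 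Proposition~\ref{disk-welding} is an identity for the full counting measure $\mathbf 1_E\,\mathrm{Count}_\Gamma(d\eta)$. The indicator of a distinguished loop can only be transported to the two disks once that loop is defined, and the relevant loop $\eta_n$ exists only after $z_2,\dots,z_{n-1}$ have been sampled.

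The repair is to reverse Steps~1 and~2. Apply Proposition~\ref{disk-welding} to $\mathbf 1_E\prod_{i=2}^{n-1}\mu_h(dz_i)\,\mathbb F(dh)\,\mathrm{Count}_\Gamma(d\eta)\,\CLE_\kappa^{\C}(d\Gamma)$, with the area measure splitting over the two welded disks. Only then restrict to $\{\eta=\eta_n\}$ intersected with the event defining $\mathfrak C_n$. Since $\eta_n$ is a.s.\ unique once the points are present, this restriction reproduces $\mathfrak C_n$ with multiplicity one. What must then be checked is that the event is exactly the following: all $z_i$, $2\le i\le n-1$, lie in $D_\eta$, in pairwise distinct outermost loops of $\Gamma|_{D_\eta}$ other than the one around $z_1$. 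Your Step~2 gives only the forward implication. The converse holds because then every loop of $\Gamma|_{D_\eta}$ is one of the selected loops, lies inside one, or encloses no marked point.

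The paper proves the same identity from the other side. It starts from the full counting measure over separating $\eta$ and performs the $n$-hole selection of Definition~\ref{def: nholes} inside $D_\eta$ with area points. Lemma~\ref{symmetric point of QHn} then shows that each configuration of $\mathfrak C_n$ arises from exactly one separating loop, namely $\eta_n$. This is the ``no overcounting'' step you list as the main obstacle. With this reordering, the rest of your argument agrees with the paper's.
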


{
We postpone the proofs of Proposition~\ref{welding1 n-holes} and Theorem~\ref{thm: welding QSn} until after Lemmas~\ref{symmetric point of QHn} and~\ref{removing points}, since they provide a new perspective on the quantum $n$-hole sphere. Recall the notation in Proposition~\ref{disk-welding}. We sample $(\C,h,\Gamma,0,\infty)/{\sim_\gamma}$ from $\QS_2 \otimes \CLE^{\C}_\kappa$. Sample $\eta$ from $1_E\mathrm{Count}_{\Gamma}(d \eta)$, where $E:=\{\eta\text{ separates }0\text{ and }\infty\}$. Let $D_{\eta}$ and $D^c_{\eta}$ be the two components of $\widehat\C\setminus\eta$, where $D_{\eta}$ contains $0$. 
Let $\Gamma|_{D_\eta}$ (resp. $\Gamma|_{D^c_\eta}$) be the subset of $\Gamma$ comprising loops contained in $D_\eta$ (resp. $D^c_\eta$). Let $\eta_0$ be the outermost loop in $\Gamma|_{D_\eta}$ surrounding $0$, and let $A_{\eta,\eta_0}$ be the annular region bounded by $\eta$ and $\eta_0$. With respect to $\prod_{i=1}^{n-2}\mathrm{Count}'_{\Gamma|_{A_{\eta,\eta_0}}}(d\eta_i)$, choose $\eta_1,\ldots,\eta_{n-2}$ and restrict to $F:=\{\eta_1,\ldots,\eta_{n-2}\text{ are pairwise distinct}\}$. For each $i$, let $D_{\eta_i}$ be the simply connected component of $\widehat\C\setminus\eta_i$ contained in $A_{\eta,\eta_0}$. Conditioning on $\{\eta_1,\ldots,\eta_{n-2}\}$, sample $z_i$ according to $\mu_h(dz_i)|_{D_{\eta_i}}$.
}

\begin{lemma}\label{symmetric point of QHn}
 {Following the notation and procedure above, let $\mathbb F$ be a measure on $H^{-1}(\C)$ such that the law of $(\C,h,0,\infty)/{\sim_\gamma}$ is $\QS_2$ if $h$ is sampled from $\mathbb F$. Under the measure}
    \begin{equation}\label{eq:measure QSn}
   {\mathbf1_F\mathbf1_E} \prod_{i=1}^{n-2}\left[ \mu_h(dz_i)|_{D_{\eta_i}}{\mathrm{Count}^{'}_{\Gamma|_{A_{\eta,\eta_0}}}(d\eta_i)} \right]\, \mathbb F(dh)\,\mathrm{Count}_{\Gamma}(d \eta)\CLE_\kappa^\C(d\Gamma).
    \end{equation}
    The law of decorated quantum surface 
    $$
    (\C,h,0,\infty,z_{1}, \dots, z_{n-2}, \eta_0, \eta, \eta_{1}, \dots, \eta_{n-2})/{\sim_\gamma}
    $$
    is ${\mathfrak C_n}$.
\end{lemma}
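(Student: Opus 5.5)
The strategy is to rewrite the measure \eqref{eq:measure QSn} as $\QS_n\otimes\CLE_\kappa^\C$ restricted to the event in the definition of $\mathfrak C_n$, with the points $0,\infty,z_1,\dots,z_{n-2}$ relabelled as $z_1,\dots,z_n$. The comparison will be done by a bijection between configurations, so that only Radon–Nikodym factors coming from area-weighting need to be tracked.

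\textbf{Step 1: choose the reference measure.} By Definition~\ref{def-QS}, sampling $(\C,h,0,\infty)$ from $\QS_2$ and then adding $n-2$ points according to $\prod_{i=1}^{n-2}\mu_h(dz_i)$ produces $\QS_n$ as a measure on quantum surfaces with $n$ marked points. The two definitions normalise differently, but $\QS_n$ and $\QS_2$ are both $\QS$ reweighted by area powers with uniformly sampled points, so this identification is exact. Hence $\prod_{i}\mu_h(dz_i)\,\mathbb F(dh)\,\CLE^\C_\kappa(d\Gamma)$ pushes forward to $\QS_n\otimes\CLE_\kappa^\C$. Here we use that the CLE is independent of the field and M\"obius invariant, so it can be drawn after the embedding is fixed.

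\textbf{Step 2: build the bijection with the loop data.} Starting from a sample of $\QS_n\otimes\CLE^\C_\kappa$ that lies on the event defining $\mathfrak C_n$, we recover $\eta,\eta_0,\eta_1,\dots,\eta_{n-2}$ deterministically.
\begin{itemize}
\item $\eta_i$ is the outermost loop surrounding $z_i$ and no other marked point.
\item $\eta_0$ is the loop for the point $0$.
\item $\eta$ is the loop for the point $\infty$, seen from $\infty$.
\end{itemize}
Conversely, in \eqref{eq:measure QSn} the restriction $z_i\in D_{\eta_i}$, together with the requirement that the $\eta_i$ be outermost in $\Gamma|_{A_{\eta,\eta_0}}$, forces each $\eta_i$ to be exactly the outermost loop around $z_i$ separating it from the other marked points. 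The $\eta_i$ are distinct outermost loops in the annulus, so each one surrounds only its own point.

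Both counting measures, $\mathrm{Count}_\Gamma(d\eta)$ and the $\mathrm{Count}'$ factors, therefore collapse to Dirac masses once the points are fixed. What must be checked is that the conditions on the two sides coincide:
\begin{itemize}
\item the event $E$, with $\eta$ the outermost loop in the sense required by $\mathfrak C_n$;
\item the condition that the remaining loops in $D_{\boldsymbol\eta}$ are all contractible.
\end{itemize}
Given $\eta$ separating $0$ from $\infty$, the loop $\eta_0$ is the first loop inside around $0$, and the $\eta_i$ are outermost loops of the annulus $A_{\eta,\eta_0}$. A loop of $\Gamma$ in $D_{\boldsymbol\eta}$ that is noncontractible would separate some marked points from others, and it would then contradict either the choice of $\eta_0$ or the outermost property of the $\eta_i$. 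The remaining point is the choice of $\eta$ itself. Here I plan to use the inversion symmetry $z\mapsto 1/z$ of full-plane CLE and of $\QS_2$, so that $\eta$ plays for $\infty$ the same role that $\eta_0$ plays for $0$. Concretely, $\eta$ should be the outermost loop around $\infty$ not surrounding the other points, which amounts to interpreting $\mathrm{Count}_\Gamma$ on $E$ together with the constraint that all $n-2$ chosen loops lie in $A_{\eta,\eta_0}$.

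\textbf{Step 3: the main obstacle.} I expect the difficulty to be the ordering issue hidden in Step 2. For a general separating loop $\eta$, the annulus $A_{\eta,\eta_0}$ may contain nested separating loops, and then configurations would be counted more than once. I would resolve this with the convention that $\mathrm{Count}_\Gamma(d\eta)$ on $E$, combined with the requirement that $z_1,\dots,z_{n-2}$ lie in outermost loops of $A_{\eta,\eta_0}$, selects $\eta$ uniquely. The key claim is that if some separating loop $\eta'$ lay strictly between $\eta$ and $\eta_0$, then no outermost loop of $A_{\eta,\eta_0}$ would lie inside $\eta'$ except $\eta'$ itself, which is not among the chosen $\eta_i$. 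This would violate the contractibility condition on the $\mathfrak C_n$ side, and on the other side it shows that every admissible $\eta$ yields the same labelled configuration. With this established, the map is a measurable bijection between the supports, preserving the field, the points and the loops. The two measures therefore agree as pushforwards, which gives the lemma.
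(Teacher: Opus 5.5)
Your strategy is the paper's: integrate out the marked points first, observe that $\prod_{i}\mu_h(dz_i)\,\mathbb F(dh)$ pushes forward to $\QS_n$, and show that the loop tuple is a deterministic function of the points, so that each admissible configuration is counted exactly once. Step~1 is correct, since $\QS_2$ and $\QS_n$ are both $\QS$ with area-sampled points, and Step~2 is essentially right.

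The problem is Step~3, which is exactly where the paper's proof puts its content (``each configuration occurs with multiplicity one''). The situation you analyse there cannot occur. By definition, $\eta_0$ is the outermost loop of $\Gamma|_{D_\eta}$ surrounding $0$, so no loop in $A_{\eta,\eta_0}$ separates $0$ from $\infty$. You already used this in Step~2, when you said a noncontractible loop would contradict the choice of $\eta_0$. Your ``key claim'' is therefore vacuous. It also does not address the real double-counting question: whether two distinct separating loops $\eta\neq\eta'$ could both be admissible for the same $(h,\Gamma,z_1,\ldots,z_{n-2})$. This cannot be settled by a ``convention''; it has to be a property of the configuration.

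The missing argument is short and pathwise, so no inversion symmetry of the law is needed. Write $D_{\eta'}$ and $D_{\eta_0}$ for the complementary components containing $0$. Suppose $\eta'\subset D_\eta$ is another loop separating $0$ from $\infty$. By the definition of $\eta_0$, either $\eta'=\eta_0$ or $\eta'\subset D_{\eta_0}$. Hence $A_{\eta',\eta'_0}\subset D_{\eta'}\subseteq D_{\eta_0}$, which is disjoint from $A_{\eta,\eta_0}$.

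Since $n\ge3$, the point $z_1$ lies in exactly one such annulus. This pins down $\eta$, hence $\eta_0$, and each $\eta_i$ is then the unique outermost loop of the annulus whose interior contains $z_i$. Equivalently, $\eta$ is the outermost loop separating $\infty$ from all other marked points. Indeed, any such loop separates $0$ from $\infty$, and the separating loops on the $0$-side of $\eta$ are $\eta_0$ and those inside it, all of which leave $z_1$ on the $\infty$-side.

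Finally, record (as the paper does) that CLE loops and the carpet of $A_{\eta,\eta_0}$ have zero $\mu_h$-area almost surely. Then summing the restricted area measures over ordered tuples of distinct outermost loops of $A_{\eta,\eta_0}$, and then over $\eta$, recovers $\prod_i\mu_h(dz_i)$ restricted to the event defining $\mathfrak C_n$ with nothing left over.
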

\begin{proof}
    {The main idea is to interchange the order of integration in~\eqref{eq:measure QSn}. For fixed $(h,\Gamma,0,\infty)$ and $\eta$, summing over the ordered, pairwise distinct original outermost loops in $A_{\eta,\eta_0}$ and then integrating the $z_i$ gives the product area measure restricted to the event that the points lie in distinct outermost-loop interiors. Summing next over the adjacent separating pair $(\eta,\eta_0)$ partitions exactly the configurations for which all retained loops are contractible relative to $(0,\infty,z_1,\ldots,z_{n-2})$. Indeed, the labelled retained-loop tuple uniquely recovers $(\eta,\eta_0,\eta_1,\ldots,\eta_{n-2})$, so each configuration occurs with multiplicity one in the factorial counting measure. Boundaries and the CLE carpet have zero $\mu_h$-area almost surely, so no residual area term appears. Thus the marginal point measure is the corresponding restriction of $\prod_{i=1}^{n-2}\mu_h(dz_i)$, and, conditionally on the marked points, $\eta,\eta_0,\eta_1,\ldots,\eta_{n-2}$ are precisely their associated outermost loops. This is the claimed restricted pushforward of $\QS_n\otimes\CLE_\kappa^\C$.}
\end{proof}

{We also need a variant of Lemma~\ref{symmetric point of QHn}, obtained by removing one marked point.}

\begin{lemma}\label{removing points}
    Retain the setting of Lemma~\ref{symmetric point of QHn}. {For fixed $i\in\{1,\ldots,n-2\}$, let $M$ be the measure obtained from~\eqref{eq:measure QSn} by deleting the factor $\mu_h(dz_i)|_{D_{\eta_i}}$ and forgetting $z_i$. Thus $M$ describes the decorated quantum surface}
    $$
    (\C,h,0,\infty,z_{1}, \dots,\hat{z_i},\dots, z_{n-2}, \eta_0, \eta, \eta_{1}, \dots, \eta_{n-2})/{\sim_\gamma}.
    $$
  {Then, conditionally on the exterior decorated surface $(D^{c}_{\eta_i}, h,0,\infty, z_1,\dots,\widehat z_i,\dots, z_{n-2})/{\sim_\gamma}$, the law of $(D_{\eta_i},h)/{\sim_\gamma}$ is $\QD(\mathfrak l_{\eta_i})^\#$.}
\end{lemma}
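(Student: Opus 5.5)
The plan is to reduce the statement to the Markov property of Proposition~\ref{prop-msw} applied inside the region $D_\eta$, combined with the disk decomposition of Proposition~\ref{disk-welding}.

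First I apply Proposition~\ref{disk-welding} to the measure $\mathbf1_E\,\mathbb F(dh)\,\mathrm{Count}_\Gamma(d\eta)\,\CLE_\kappa^\C(d\Gamma)$. This shows that, conditionally on $\mathfrak l_\eta=\ell$, the surface $(D_\eta,h,0)/{\sim_\gamma}$ decorated by $\Gamma|_{D_\eta}$ has law $\QD_{1,0}(\ell)\otimes\CLE_\kappa^\D$. It is independent of the exterior surface $(D^c_\eta,h,\infty)$ and its loops.

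All remaining decorations in $M$ are measurable with respect to the interior $(D_\eta,h,0,\Gamma|_{D_\eta})$. These are $\eta_0$, the loops $\eta_j$ chosen by $\mathrm{Count}'$, and the points $z_j$ for $j\neq i$. The exterior surface is untouched. So it suffices to prove the following claim inside a sample $(D,h,\Gamma,0)$ from $\QD_{1,0}(\ell)\otimes\CLE_\kappa$ in which $\eta_0,\eta_1,\ldots,\eta_{n-2}$ are chosen as in Definition~\ref{def: nholes}, with $z_j$ sampled from $\mu_h|_{D_{\eta_j}}$ for $j\ne i$. The claim is that, conditionally on everything outside $D_{\eta_i}$, the law of $(D_{\eta_i},h)$ is $\QD(\mathfrak l_{\eta_i})^\#$.

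To prove the claim, I remove the bulk point $0$ using the area-weighting identity of Definition~\ref{def-QD}. The measure $\QD_{1,0}(\ell)$ is $\QD(\ell)$ weighted by $\mu_h(D)$ with $0$ sampled from normalized area. Because $\eta_0$ is the outermost loop around $0$, the joint measure becomes a $\QD(\ell)\otimes\CLE_\kappa$ sample together with the following data:
\begin{itemize}
\item an ordered family of distinct outermost loops $\eta_0,\eta_1,\ldots,\eta_{n-2}$ under factorial counting measure;
\item point measures $\mu_h(dz_0)|_{D_{\eta_0}}$ and $\mu_h(dz_j)|_{D_{\eta_j}}$ for $j\ne i$.
\end{itemize}
Here $z_0$ plays the role of $0$. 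I then disintegrate over all outermost-loop lengths. By Proposition~\ref{prop-msw}, given these lengths the encircled surfaces are independent with laws $\QD(\ell_k)^\#$, independent of the CLE carpet region.

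The weights $\mu_h|_{D_{\eta_j}}$ for $j\neq i$ are functionals of surfaces other than the one inside $\eta_i$, so reweighting by them does not change the conditional law inside $\eta_i$. The factorial counting measure only selects which loop is labelled $\eta_i$. A Campbell–Mecke/Palm decomposition, which is harmless because the selection is a deterministic function of the loop configuration, then shows that the interior of the selected loop $\eta_i$ has law $\QD(\mathfrak l_{\eta_i})^\#$ conditionally on all other data.

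Finally I reattach the point $0$ and the exterior. This gives the statement conditionally on $(D^c_{\eta_i},h,0,\infty,z_j)$ together with the remaining loops. The statement then follows a fortiori for the coarser conditioning on the exterior decorated surface alone.

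The main obstacle is the measure-theoretic bookkeeping in the Palm step. The measure $\mathrm{Count}'$ is infinite and the loops are only selected in a $\sigma$-finite sense. Conditioning on the exterior decorated surface must also be understood via disintegration over $\mathfrak l_{\eta_i}$, as in Lemma~\ref{def:disint}. To handle this, I would first fix the ranks of the chosen loops in the non-increasing length ordering of Proposition~\ref{prop-ccm}. Each $\eta_j$ is then a measurable function of the ordered list, and Proposition~\ref{prop-msw} applies directly. Summing over ranks then yields the counting-measure statement.
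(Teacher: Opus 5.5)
Your argument is correct, but it takes a different route from the paper's proof.

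\textbf{The paper's route.} The paper never works inside $D_\eta$. It uses Lemma~\ref{symmetric point of QHn} to identify the measure with the restricted $\QS_n\otimes\CLE_\kappa^\C$ measure. Exchangeability of the marked points then lets $z_i$ play the role of the distinguished point. Next it applies the sphere welding of Proposition~\ref{disk-welding}, together with resampling of marked points, directly at $\eta_i$. This gives that $(D_{\eta_i},h,z_i)/{\sim_\gamma}$ has law $\QD_{1,0}(\mathfrak l_{\eta_i})^\#$, independently of the exterior. Finally, deleting $\mu_h(dz_i)|_{D_{\eta_i}}$ unbiases by area, and Definition~\ref{def-QD} yields $\QD(\mathfrak l_{\eta_i})^\#$. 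This is short, and independence of the interior from everything outside $\eta_i$ (carpet included) comes for free from the welding identity. The cost is that it leans on the symmetric identification in Lemma~\ref{symmetric point of QHn} and on transferring the two-point welding to $z_i$.

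\textbf{Your route.} You weld only along the loop $\eta$ from which the measure is built. You strip the bulk point with the area identity, then use Proposition~\ref{prop-msw} plus a ranking/Palm argument for the counting-measure selection. This is essentially the mechanism of the proof of Proposition~\ref{welding1 n-holes}. It needs no exchangeability, and it gives the finer statement conditional also on the loops and on $0$.

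\textbf{Points you should state explicitly.} First, you need Proposition~\ref{prop-msw} in a stronger form: given all lengths, the encircled disks are independent of the complementary carpet surface, not only of one another. The version quoted in the paper asserts only the latter, so cite the strengthening. The paper itself uses it, via the CLE domain Markov property, in the proof of Proposition~\ref{welding1 n-holes}. Second, your rank-based selection needs the outermost-loop lengths to be almost surely distinct, which follows from Proposition~\ref{prop-ccm}.
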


\begin{proof}
{By Lemma~\ref{symmetric point of QHn}, the marked points are exchangeable under the restricted $\QS_n$ measure. Proposition~\ref{disk-welding} and the quantum-sphere resampling property imply that, conditionally on the exterior data and the boundary length $\mathfrak l_{\eta_i}$, the pointed component $(D_{\eta_i},h,z_i)/{\sim_\gamma}$ has law $\QD_{1,0}(\mathfrak l_{\eta_i})^\#$. Deleting the factor $\mu_h(dz_i)|_{D_{\eta_i}}$ exactly unbiases by quantum area and forgets the bulk-typical point. By Definition~\ref{def-QD}, the resulting conditional law is $\QD(\mathfrak l_{\eta_i})^\#$, as claimed.}
\end{proof}

{We now prove Proposition~\ref{welding1 n-holes} and Theorem~\ref{thm: welding QSn}.}

\begin{proof}[Proof of Proposition~\ref{welding1 n-holes}]
{Disintegrate the sampled configuration of Definition~\ref{def: nholes} over the boundary lengths $\mathfrak l_{\eta_0},\ldots,\mathfrak l_{\eta_{n-2}}$. By Proposition~\ref{prop-msw} and the CLE domain Markov property, conditionally on these lengths and on the central component, the components cut out by $\eta_0,\eta_1,\ldots,\eta_{n-2}$ are independent and have respective laws $\QD_{1,0}(\ell_0)^\#$ and $\QD(\ell_i)^\#$, $1\le i\le n-2$. Hence the disintegrated measure is
\[
\wt\QH_n(\ell,\boldsymbol\ell)
\otimes\QD_{1,0}(\ell_0)^\#
\otimes\bigotimes_{i=1}^{n-2}\QD(\ell_i)^\#.
\]
Substituting the definition~\eqref{equ:factor}, together with $\QD_{1,0}(\ell_0)^\#=|\QD_{1,0}(\ell_0)|^{-1}\QD_{1,0}(\ell_0)$ and $\QD(\ell_i)^\#=|\QD(\ell_i)|^{-1}\QD(\ell_i)$, cancels all total-mass factors and leaves $\prod_{j=0}^{n-2}\ell_j\,d\ell_j$. This is exactly the displayed welding identity, with constant $1$.}
\end{proof}

\begin{proof}[Proof of Theorem~\ref{thm: welding QSn}]
 {Apply Proposition~\ref{disk-welding} to the separating loop $\eta$, then apply Proposition~\ref{welding1 n-holes} inside the component containing $0$. For $1\le i\le n-2$, the factor $\mu_h(dz_i)|_{D_{\eta_i}}$ converts $\QD(\ell_i)$ exactly into $\QD_{1,0}(\ell_i)$ by Definition~\ref{def-QD}, so no further constant occurs. Lemma~\ref{symmetric point of QHn} identifies the resulting retained-loop configuration with the restricted $\QS_n\otimes\CLE_\kappa^\C$ measure. The first welding contributes $C(\gamma)$, while Proposition~\ref{welding1 n-holes} has constant $1$, giving the stated formula.}
\end{proof}

\subsection{Area distribution of {a} quantum $n$-{hole} sphere}\label{subsec:partition}

The goal of this section is to compute $\QH_n(\ell_1,\ell_2,{\ldots},\ell_n)[e^{-\mu A}]$. {For $2\leq j\leq n$, set $L_j:=\sum_{q=1}^j\ell_q$.} We first recall some background in L\'evy processes. Let  
\begin{equation}
\lexp= \frac4{\kappa} + \frac12=\frac{4}{\gamma^2}+\frac12 \in (\frac32,2).
\end{equation}
Let $\P^{\beta}$ be the probability measure on 
c\`adl\`ag processes on $[0,\infty)$ describing the law of a $\lexp$-stable L\'evy process  
with L\'evy measure $1_{x>0} x^{-\beta-1} \, dx$.
Let $(\zeta'_t)_{t\geq 0}$ be a sample from $\P^{\beta}$.
Let $J:=\{(x,t): t\ge 0 \textrm{ and }\zeta'_t-\zeta'_{t^-}=x>0 \}$ be the set of jumps of $\zeta'$. Given  $J$, let $(\bfb, \bft)\in J$ be sampled from the counting measure on $J$.
Let $M^{\beta}$ be the law of  $(\zeta', \bfb,\bft)$.
Namely, $M^{\beta}$ is the infinite measure such that for non-negative measurable functions $f,g$ we have
\[
M^{\beta}[f(\zeta')g(\bfb,\bft)] =\int \big(f(\zeta')\sum_{(x,t)\in J} g(x,t) \big)d\P^{\beta}.
\]
For each $a>0$, let $\tau_{-a}=\inf\{ t: \zeta'_t=-a\}$ and $J_a=\{(x,t)\in J: t\le \tau_{-a}\}$.



{In order to prove Lemma~\ref{lem:disint QHn}, we use the following Campbell--Palm description of $M^\beta$.}

\begin{lemma}\label{lem:palm}
	Let $\wt\zeta'_t=\zeta'_t- 1_{t\ge \bft} \bfb$. Then the $M^\beta$-law of $(\bfb,\bft)$ is $1_{b>0,t>0} b^{-\beta-1} \, db \, dt$, and 
   {conditionally} on $(\bfb,\bft)$, the conditional law of $\wt\zeta'_t$ under $M^\beta$ is ${\P^\beta}$. 
	Equivalently, the joint law of $(\bfb,\bft)$ and $(\wt\zeta'_t)_{t\ge 0}$ under $M^\beta$ is the product measure $(1_{b>0,t>0} b^{-\beta-1} \, db \, dt) \times {\P^\beta}$.  
\end{lemma}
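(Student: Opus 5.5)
This is the Palm formula (Mecke equation) for the Poisson point process of jumps of $\zeta'$. The plan is to write $\zeta'$ as a deterministic measurable functional of its jump point process and then apply the Mecke identity.

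\textbf{Step 1: the jumps form a Poisson process.} By the Lévy--Itô decomposition, under $\P^\beta$ the set $J$ is a Poisson point process on $(0,\infty)\times[0,\infty)$ with intensity $\nu(db)\,dt$, where $\nu(db)=1_{b>0}b^{-\beta-1}db$. Since $\beta\in(3/2,2)$, there is no Brownian part. So $\zeta'=\Phi(J)$ for a measurable map $\Phi$, given by the compensated sum
\[
\zeta'_t=\lim_{\varepsilon\downarrow0}\Bigl(\sum_{(x,s)\in J,\,s\le t,\,x>\varepsilon}x-t\int_\varepsilon^\infty x\,\nu(dx)\Bigr)
\]
(plus the drift fixed by the normalization). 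The limit holds a.s., locally uniformly along a fixed sequence $\varepsilon_k\downarrow0$.

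\textbf{Step 2: apply Mecke.} For a Poisson process $\Pi$ with intensity $\Lambda$ and any nonnegative measurable $G$, the Mecke equation reads
\[
\E\Bigl[\sum_{p\in\Pi}G(p,\Pi)\Bigr]=\int\E\bigl[G(p,\Pi\cup\{p\})\bigr]\,\Lambda(dp).
\]
Take $G((b,t),\Pi)=f(\Phi(\Pi))\,g(b,t)$. The left side is exactly $M^\beta[f(\zeta')g(\bfb,\bft)]$. Hence
\[
M^\beta[f(\zeta')g(\bfb,\bft)]=\int g(b,t)\,\E^\beta\bigl[f(\Phi(J\cup\{(b,t)\}))\bigr]\,b^{-\beta-1}db\,dt .
\]

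\textbf{Step 3: the added atom.} For fixed $(b,t)$, the a.s. limit defining $\Phi$ is unaffected by adding one atom: for $\varepsilon<b$ the atom enters the truncated sum exactly once, and the compensator does not depend on the configuration. Therefore
\[
\Phi(J\cup\{(b,t)\})_s=\Phi(J)_s+b\,1_{s\ge t}.
\]
Now apply Step 2 with the test function $f(\zeta'-1_{\cdot\ge\bft}\bfb)$, a measurable function of $(\zeta',\bfb,\bft)$; Mecke allows $G$ to depend on the point $p$ as well. The shifted path becomes $\Phi(J)$, so
\[
M^\beta[F(\wt\zeta')g(\bfb,\bft)]=\int g(b,t)\,b^{-\beta-1}\,db\,dt\cdot\E^\beta[F(\zeta')].
\]
This is the claimed product structure, and taking $F\equiv1$ gives the marginal law of $(\bfb,\bft)$. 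Since $\zeta'$ has jumps at all positive times a.s., the restriction $t>0$ costs nothing: $\{t=0\}$ is Lebesgue-null.

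\textbf{Main obstacle.} The only delicate point is the measurability of $\Phi$ and its identity under adding an atom, i.e. that the compensated limit is defined on a measurable set of configurations that is stable under adding one finite atom. I will handle this by defining $\Phi$ as the limit along $\varepsilon_k\downarrow0$ wherever that limit exists locally uniformly, and $0$ otherwise. Adding a single atom changes each truncated sum by at most one deterministic step, so it preserves existence of the limit and gives the formula in Step 3.
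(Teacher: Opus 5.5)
Your proposal is correct and is essentially the paper's argument: the paper also writes $\zeta'$ through the compensated L\'evy--It\^o representation of its Poisson jump measure and applies the Slivnyak--Mecke theorem, and your Step~3 spells out the pathwise identity $\Phi(J\cup\{(b,t)\})=\Phi(J)+b\,1_{\cdot\ge t}$ together with the measurability of $\Phi$, both of which the paper leaves implicit. One small slip: $\zeta'$ does not jump at all positive times (its jump times are countable), but the only fact you need there is that $\{t=0\}$ is Lebesgue-null, which you also state.
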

\begin{proof}
{By the definition of the L\'evy measure, the atoms driving the jumps of $\zeta'$ form a Poisson random measure on $(0,\infty)^2$ with intensity $1_{\{x>0,t>0\}}x^{-\beta-1}\,dx\,dt$. Campbell's formula gives the stated $M^\beta$-law of $(\bfb,\bft)$. By the compensated L\'evy--It\^o representation and the Slivnyak--Mecke theorem, deleting the marked atom from the compensated Poisson random measure leaves a Poisson random measure with the original intensity, independent of $(\bfb,\bft)$. Hence the full process $(\widetilde\zeta'_s)_{s\ge0}$ has law $\P^\beta$ and is independent of $(\bfb,\bft)$.}
\end{proof}

{Recall that Proposition~\ref{lem:disint QH3} connects the L\'evy process with the disintegration law of the outermost-loop lengths. We denote the boundary of $\QD_{1,0}(\ell_1)$ by $\eta_1$, with $\ell_h(\eta_1)=\ell_1$.}

\begin{proposition}[Lemma 7.11 \cite{Int-CLE}]\label{lem:disint QH3}
    For $\ell_1>0$, let $(D,h,\Gamma,z)/{\sim_\gamma}$ be a sample from $\QD_{1,0}(\ell_1)\otimes\CLE_\kappa${, and let} $\eta_2$ be the outermost loop surrounding $z$. 
Let $\eta_3$ be chosen from the counting measure on the outermost loops of $\Gamma$ except $\eta_2$. 
{Let $(x^{(3)}_i)_{i\ge 1}$ be the lengths of the original outermost loops of $\Gamma$ other than $\eta_2$ and $\eta_3$, ranked in decreasing order.}
Then for $\ell_2,\ell_3>0$, the disintegration of the law of $\{ x^{(3)}_i: i\ge 1 \}$ over $\{\ell_h(\eta_2)=\ell_2,\ell_h(\eta_3)=\ell_3\}$ is the law of 
$\{x: (x,t)\in  J_{\ell_1+\ell_2+\ell_3}  \textrm{ for some }t  \}$ under
\[
\ell_2|\QA (\ell_1,\ell_2) | |\QD_{1,0}(\ell_2)|   {\ell_3}^{-\beta-1} \tau_{-\ell_1-\ell_2}  \P^\beta(d\zeta').
\]
\end{proposition}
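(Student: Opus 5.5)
The plan is to realise the outermost-loop lengths of $\QD_{1,0}(\ell_1)\otimes\CLE_\kappa$ as the jumps of $\zeta'$ up to $\tau_{-\ell_1}$. I would then strip off the two distinguished jumps one at a time using the Palm description of Lemma~\ref{lem:palm}. Each removal shifts the hitting level down by the removed jump size and turns the time-integral over the jump location into a factor of $\tau$.

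\emph{Step 1: the base measure.} By Definition~\ref{def-QD}, $\QD_{1,0}(\ell_1)$ is $\QD(\ell_1)$ weighted by quantum area, with a bulk point sampled from normalised area. Since the CLE carpet has zero area, choosing $z$ and then $\eta_2$ amounts to summing over outermost loops $\eta$ with weight $\mu_h(D_\eta)$. By Propositions~\ref{prop-ccm} and~\ref{prop-msw}, conditionally on the lengths the interiors are independent with laws $\QD(x_i)^\#$. Hence the conditional mean of $\mu_h(D_\eta)$ given $\mathfrak l_\eta=x$ is $|\QD_{1,0}(x)|/|\QD(x)|$. The law of the length configuration with $\eta_2$ marked is therefore
\[
\frac{|\QD(\ell_1)|}{\E^\beta[\tau_{-\ell_1}^{-1}]}\,\tau_{-\ell_1}^{-1}\sum_{(b,t)\in J_{\ell_1}}\frac{|\QD_{1,0}(b)|}{|\QD(b)|}\,\P^\beta(d\zeta').
\]

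\emph{Step 2: first Palm removal.} Apply Lemma~\ref{lem:palm} to the marked jump $(b,t)=(\ell_2,t_2)$. After deleting it, the process $\wt\zeta'$ has law $\P^\beta$ and is independent of $(\ell_2,t_2)$. The condition $t_2\le\tau_{-\ell_1}(\zeta')$ becomes $t_2\le\tau_{-\ell_1-\ell_2}(\wt\zeta')$, and at that time $\tau_{-\ell_1}(\zeta')=\tau_{-\ell_1-\ell_2}(\wt\zeta')$. The remaining jumps are exactly $J_{\ell_1+\ell_2}(\wt\zeta')$. Integrating $t_2$ over $[0,\tau_{-\ell_1-\ell_2}]$ produces a factor $\tau$ which cancels $\tau^{-1}$. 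The disintegrated measure over $\ell_2$ is then a $\gamma$-dependent constant times $\ell_2^{-\beta-1}|\QD_{1,0}(\ell_2)|/|\QD(\ell_2)|$ times $\P^\beta$, with remaining jumps $J_{\ell_1+\ell_2}$. Comparing with the welding of Proposition~\ref{welding1 n-holes} for $n=2$ identifies this prefactor as $\ell_2|\QA(\ell_1,\ell_2)||\QD_{1,0}(\ell_2)|$. This is the consistency check that fixes the constant, so one never needs $\E^\beta[\tau^{-1}]$ explicitly.

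\emph{Step 3: second Palm removal.} Now choose $\eta_3$ by counting measure among the jumps in $J_{\ell_1+\ell_2}$. Apply Lemma~\ref{lem:palm} again to the process from Step~2, call it $\wt\zeta'$, at the marked jump $(\ell_3,t_3)$, and let $\wt\zeta''$ denote the process after deleting this jump as well. The weight is $\ell_3^{-\beta-1}\,d\ell_3\,dt_3$, and $t_3$ ranges over $[0,\tau_{-\ell_1-\ell_2}(\wt\zeta')]$. This interval is exactly $[0,\tau_{-\ell_1-\ell_2-\ell_3}(\wt\zeta'')]$, because in the final process $\wt\zeta''$ the remaining jumps are $J_{\ell_1+\ell_2+\ell_3}$. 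No compensating $\tau^{-1}$ is left, so integrating $t_3$ leaves the factor $\tau$. Written in the notation of the statement this is $\tau_{-\ell_1-\ell_2}\P^\beta$, and the displayed formula follows.

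\emph{Main obstacle.} I expect the main difficulty to be bookkeeping rather than any new input. First, one must make sure the hitting-time identity $\tau_{-a}(\zeta')=\tau_{-a-b}(\wt\zeta')$ holds for spectrally positive paths when the removed jump occurs before $\tau_{-a}$; this uses that there are no downward jumps, so levels are hit continuously. Second, one must check that the area weighting in Step~1 and the counting measure in Step~3 act on distinct jumps, which is the restriction to pairwise distinct loops. Third, the constant in Step~2 must match the normalisation \eqref{equ:factor} exactly.
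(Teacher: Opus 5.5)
Your architecture is the right one: realize the outermost lengths as jumps of $\zeta'$ before $\tau_{-\ell_1}$, then peel off the distinguished jumps with Lemma~\ref{lem:palm}. For the $\eta_3$ step this is exactly what the paper does in the proof of Lemma~\ref{lem:disint QHn}. However, both of your Palm steps mistranslate the constraint on the marked jump time, and this is a genuine error, not bookkeeping.

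Let $\widetilde\zeta$ be the path after deleting a marked jump $(b,t)$. Then $\widetilde\zeta$ agrees with the original path on $[0,t)$. So the condition ``$t<\tau_{-a}$ of the original path'' becomes $t<\tau_{-a}(\widetilde\zeta)$, with the \emph{same} level $a$; only the terminal time shifts, $\tau_{-a}\mapsto\tau_{-a-b}(\widetilde\zeta)$. The undeleted hitting time depends on $t$ itself, so it cannot serve as the limit of the $dt$-integral under the product measure of Lemma~\ref{lem:palm}.

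\textbf{Step 2.} The $dt_2$-integral therefore yields $\tau_{-\ell_1}(\widetilde\zeta')$, not $\tau_{-\ell_1-\ell_2}(\widetilde\zeta')$. The path measure is a constant times $\ell_2^{1-\beta}\,\tau_{-\ell_1}/\tau_{-\ell_1-\ell_2}\,\P^\beta$, and nothing cancels pathwise. To reduce this to a constant multiple of $\P^\beta$ you need an idea missing from your proposal. Conditionally on the excursions of $\zeta'-I$ up to $\tau_{-L_2}$ (which determine the jump multiset), their local-time labels are i.i.d.\ uniform on $[0,L_2]$, so $\E[\tau_{-\ell_1}/\tau_{-L_2}\mid\cF_{L_2}]=\ell_1/L_2$. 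Consequently the reduction holds only for the law of the jump sizes, not on path space.

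This factor $\ell_1/(\ell_1+\ell_2)$ is precisely the $(\ell_1+\ell_2)^{-1}$ in $|\QA(\ell_1,\ell_2)|$ from Proposition~\ref{thm-QA2-area}. Your version would instead force $|\QA(\ell_1,\ell_2)|\propto\ell_1^{-3/2}\ell_2^{-1/2}$. Your welding check cannot catch this: by \eqref{equ:factor}, the total mass of the $\eta_2$-disintegration \emph{is} $\ell_2|\QD_{1,0}(\ell_2)||\QA(\ell_1,\ell_2)|$ by definition. The check names the prefactor but does not verify that the measure is a multiple of $\P^\beta$.

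\textbf{Step 3.} The same slip recurs. You integrate $t_3$ over $[0,\tau_{-L_3}(\widetilde\zeta'')]$, which gives the factor $\tau_{-L_3}$ of the final process, and then assert this is $\tau_{-\ell_1-\ell_2}$ ``in the notation of the statement''. It is not. In the statement, $\tau_{-\ell_1-\ell_2}$ and $J_{L_3}$ refer to the same $\P^\beta$-distributed process $\widetilde\zeta''$. By the uniform-label argument, weights $\tau_{-L_3}$ and $\tau_{-L_2}$ produce jump-size laws differing by the factor $L_2/L_3$.

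The correct constraint is $t_3<\tau_{-L_2}(\widetilde\zeta'')$. Integrating $t_3$ then gives $\ell_3^{-\beta-1}\tau_{-L_2}(\widetilde\zeta'')\,\P^\beta(d\widetilde\zeta'')$, with retained jumps those before $\tau_{-L_3}(\widetilde\zeta'')$.

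You must also justify applying the second deletion to $c(\ell_1,\ell_2)\P^\beta$ rather than to the actual path measure from Step~2. The reason is that marking one remaining loop by counting measure and recording the other lengths is a functional of the jump multiset of $J_{L_2}$ alone. So only the jump-size law from Step~2 enters.

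The paper avoids your Step~2 altogether. It imports the $\eta_2$-disintegration from \cite{Int-CLE} as a statement about jump-size laws, then performs exactly this corrected Palm step.
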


{Proposition~\ref{lem:disint QH3} extends to the multiple-loop case. For a sequence $(\ell_i)_{i\ge1}$ of positive numbers, we use the following notation.}

Let $\{P_k, k \ge 3\}$ denote the quantities {defined by the following recursion}: 

Starting at $k=3$, $P_3=\tau_{-\ell_1-\ell_2},$
\begin{align*}
P_4=&\tau_{-\ell_1-\ell_2-\ell_4}\cdot\tau_{-\ell_1-\ell_2}+\tau_{-\ell_1-\ell_2}\cdot(\tau_{-\ell_1-\ell_2-\ell_3}-\tau_{-\ell_1-\ell_2})\\
=&\tau_{-\ell_1-\ell_2}\cdot\tau_{-\ell_1-\ell_2-\ell_3}+\tau_{-\ell_1-\ell_2}\cdot\tau_{-\ell_1-\ell_2-\ell_4}-\tau_{-\ell_1-\ell_2}\cdot\tau_{-\ell_1-\ell_2}.
\end{align*}

{For $n\ge5$,} to define $P_n$, we focus on each monotone item of $P_{n-1}$. {Every such item has the form
$\tau_{-s_1}\tau_{-s_2}\cdots\tau_{-s_{n-3}}$, where
\[
s_j=L_2+\sum_{i\in A_j}\ell_i,\qquad
\varnothing=A_1\subseteq A_2\subseteq\cdots\subseteq A_{n-3}
\subseteq\{3,\ldots,n-1\}.
\]
Equal consecutive subsets, and hence equal consecutive thresholds, are allowed.}

{Extend the following transformation linearly, preserving the sign and multiplicity of each input occurrence.} A monotone item of the form $\tau_{-s_1}\cdot\tau_{-s_2}\cdots\tau_{-s_{n-3}}$ transforms to the following combination:
\begin{align*}
    &\quad\tau_{-s_1}\cdot\tau_{-s_1-{\ell_n}}\cdot\tau_{-s_2-{\ell_n}}{\cdots}\tau_{-s_{n-3}-{\ell_n}}\\
    &+(\tau_{-s_2}-\tau_{-s_1})\cdot\tau_{-s_1}\cdot\tau_{-s_2-{\ell_n}}{\cdots}\tau_{-s_{n-3}-{\ell_n}}\\
    &+\cdots\\
    &+(\tau_{-s_{n-3}}-\tau_{-s_{n-4}})\cdot\tau_{-s_1}\cdot\tau_{-s_2}{\cdots}\tau_{-s_{n-3}-{\ell_n}}\\
    &+(\tau_{-L_{n-1}}-{\tau_{-s_{n-3}}})\cdot\tau_{-s_1}\cdot\tau_{-s_2}{\cdots}\tau_{-s_{n-3}}.
\end{align*}
{After expanding the brackets and reordering the factors, the result is still a linear combination of monotone terms.}
{For $n=5$, the second and penultimate displayed lines denote the same term and are included only once.}

Therefore, the {sequence} $\{P_k:k\ge3\}$ is well defined. {Let $\mathcal P_n$ be the multiset of signed monomial occurrences in the fully expanded expression for $P_n$, before collecting equal terms.}

\begin{lemma}\label{lem:disint QHn}
  {Let $n\ge3$ and $\ell_1,\ldots,\ell_n>0$.} Let $(D,h,\Gamma,z)/{\sim_\gamma}$ be a sample from $\QD_{1,0}(\ell_1)\otimes\CLE^{\D}_\kappa$, and set {$\eta_1:=\partial D$}. Let $\eta_2$ be the outermost loop surrounding $z$. {With respect to the product counting measure restricted to pairwise distinct loops, choose $\eta_3,\ldots,\eta_n$ among the original outermost loops of $\Gamma$ other than $\eta_2$.}
Let $(x^{(n)}_i)_{i\ge 1}$ be the lengths of the {original outermost loops of $\Gamma$ other than $\eta_2,\ldots,\eta_n$}, ranked in decreasing order. Then the disintegration of the law of $\{ x^{(n)}_i: i\ge 1 \}$ over $\{\ell_h(\eta_i)=\ell_i, {i=2,\ldots,n}\}$ is the law of 
$\{x: (x,t)\in  J_{L_n}  \textrm{ for some }t  \}$ under
\[
\ell_2|\QA (\ell_1,\ell_2) | |\QD_{1,0}(\ell_2)|   \prod_{i=3}^{n}{\ell_i}^{-\beta-1} P_n  \P^\beta(d\zeta').
\]
\end{lemma}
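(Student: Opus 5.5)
Induction on $n$, with the base case $n=3$ given by Proposition~\ref{lem:disint QH3}, since $P_3=\tau_{-\ell_1-\ell_2}$. Assume the statement holds for $n-1$. Then, disintegrated over $\ell_h(\eta_i)=\ell_i$ for $2\le i\le n-1$, the multiset of lengths of the remaining original outermost loops has the law of the jump sizes of $\zeta'$ on $[0,\tau_{-L_{n-1}}]$ under
\[
\ell_2|\QA(\ell_1,\ell_2)||\QD_{1,0}(\ell_2)|\prod_{i=3}^{n-1}\ell_i^{-\beta-1}P_{n-1}\,\P^\beta(d\zeta').
\]
Choosing $\eta_n$ from the counting measure on these remaining loops is the same as marking one jump $(\mathbf b,\mathbf t)\in J_{L_{n-1}}$, that is, passing to $M^\beta$ restricted to $\{\mathbf t\le\tau_{-L_{n-1}}\}$ and weighted by the functional $P_{n-1}(\zeta')$. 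Disintegrating over $\mathbf b=\ell_n$ and applying Lemma~\ref{lem:palm}, this becomes $\ell_n^{-\beta-1}\,d\mathbf t\,\P^\beta(d\wt\zeta')$. Here $\wt\zeta'$ is the process with the marked jump removed, and the remaining jumps of $\zeta'$ up to its hitting time are the jumps of $\wt\zeta'$ up to its own hitting time.

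The key relation is between hitting times of $\zeta'$ and of $\wt\zeta'$. Since $\zeta'=\wt\zeta'+\ell_n\mathbf 1_{t\ge\mathbf t}$ and the process is spectrally positive, we have
\[
\tau_{-s}(\zeta')=
\begin{cases}
\tau_{-s}(\wt\zeta'), & \tau_{-s}(\wt\zeta')<\mathbf t,\\
\tau_{-s-\ell_n}(\wt\zeta'), & \tau_{-s}(\wt\zeta')\ge\mathbf t.
\end{cases}
\]
In particular $\tau_{-L_{n-1}}(\zeta')\ge\mathbf t$ forces $\tau_{-L_{n-1}}(\zeta')=\tau_{-L_n}(\wt\zeta')$, so the remaining jumps are those of $\wt\zeta'$ on $J_{L_n}$, as the statement requires. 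Now take a monotone item $\tau_{-s_1}\cdots\tau_{-s_{n-3}}$ of $P_{n-1}$ with $s_1\le\dots\le s_{n-3}\le L_{n-1}$, and integrate over $\mathbf t\in(0,\tau_{-L_{n-1}}(\zeta')]$. We split according to which interval $(\tau_{-s_{j}}(\wt\zeta'),\tau_{-s_{j+1}}(\wt\zeta')]$ of the $\wt\zeta'$ hitting times contains $\mathbf t$, with $\tau_{-s_0}:=0$ and $s_{n-2}:=L_{n-1}$. On the $j$-th interval the factors with index $\le j$ stay unshifted and those with index $>j$ become $\tau_{-s_i-\ell_n}(\wt\zeta')$, while the $d\mathbf t$ integral contributes the interval length $\tau_{-s_{j+1}}-\tau_{-s_j}$. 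The result is exactly the displayed transformation defining $P_n$, with the first line coming from $j=0$, where the length is $\tau_{-s_1}$. Summing over items by linearity gives $P_n(\wt\zeta')$ and the extra factor $\ell_n^{-\beta-1}$, which closes the induction.

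The main obstacle is the bookkeeping that makes this rigorous. First, the conditioning on $P_{n-1}$, which is a functional of the whole path, has to commute with the Palm decomposition; this is fine because Lemma~\ref{lem:palm} gives the product law jointly, so any nonnegative functional of $(\zeta',\mathbf b,\mathbf t)$ can be rewritten in terms of $(\wt\zeta',\mathbf b,\mathbf t)$. Second, the pairwise-distinctness constraint has to match excluding $\eta_2,\ldots,\eta_{n-1}$ from the counting measure: by the inductive hypothesis the remaining multiset already excludes those loops, so marking one of its jumps gives exactly distinct loops. Third, ties $\tau_{-s_j}=\mathbf t$ and equal consecutive thresholds occur only on null sets. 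Finally, the signed expansion of $P_{n-1}$ needs care with nonnegativity, so I would argue per monomial occurrence in $\mathcal P_n$, using that each interval length is nonnegative before the brackets are expanded.
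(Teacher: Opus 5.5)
Your proposal is correct and follows the paper's proof. You mark a jump via the Campbell--Palm Lemma~\ref{lem:palm}, observe that deleting the marked jump of size $\ell_n$ leaves hitting times before $\mathbf t$ unchanged and turns later ones into $\wt\tau_{-(a+\ell_n)}$, and split $\mathbf t$ according to the successive thresholds of each monotone item, which reproduces the recursion defining $P_n$. The differences are cosmetic: you take $n=3$ directly from Proposition~\ref{lem:disint QH3} instead of rederiving it from the $n=2$ disintegration, and you get nonnegativity from the unexpanded products of interval lengths, whereas the paper deduces $P_n\ge0$ almost surely by identifying $P_n\,\P^\beta$ with the positive disintegration measure.
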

\begin{proof}
  We first let $(x^{(2)}_i)_{i\ge 1}$ be the lengths of the outermost loops of $\Gamma$ except $\eta_2$, ranked in decreasing order. 
 The disintegration law of {$\{x_i^{(2)}:i\ge1\}$} over $\{\ell_h(\eta_2)=\ell_2\}$ is the law of 
$\{x: (x,t)\in  J_{L_2}  \textrm{ for some }t  \}$ under
\[
\ell_2|\QA (\ell_1,\ell_2)||\QD_{1,0}(\ell_2)|\P^\beta(d\zeta').
\]
  {Write $\wt\tau_{-a}:=\inf\{t\geq0:\wt\zeta'_t=-a\}$. Under $M^\beta$ restricted to $\{(\bfb,\bft)\in J_{L_2}\}$, Lemma~\ref{lem:palm} gives the joint measure
  \[
  1_{\{b>0\}}b^{-\beta-1}\,db\,
  1_{\{0<t<\wt\tau_{-L_2}\}}\,dt\,
  \P^\beta(d\wt\zeta').
  \]
  After disintegrating at $b=\ell_3$, the unmarked jumps are those before $\wt\tau_{-L_3}$. Under deletion of the marked jump, $\{\bft<\tau_{-a}\}$ for $\zeta'$ becomes $\{\bft<\wt\tau_{-a}\}$ for $\wt\zeta'$, and $\tau_{-a}$ becomes $\wt\tau_{-(a+b)}$.}

  {Therefore, for $n=3$, the disintegration law of
  $\{x_i^{(3)}:i\geq1\}$, the lengths of the outermost loops of $\Gamma$
  other than $\eta_2$ and $\eta_3$, is the law of the jumps under the measure}
  \begin{align*}
   \ell_2|\QA (\ell_1,\ell_2) | |\QD_{1,0}(\ell_2)|{\ell_3}^{-\beta-1}P_3\P^\beta(d\zeta'),
  \end{align*}
  {where $P_3=\tau_{-\ell_1-\ell_2}$ and only the jumps before
  $\tau_{-L_3}$ are retained.}
  
{For $n=4$, we use the same argument to remove $\eta_4$.}
{We first sample $\ell_4$ under the measure $1_{\{\ell_4>0\}}\ell_4^{-\beta-1}\,d\ell_4$ and then fix $\ell_4$. On $\{\bft<\tau_{-L_2}\}$, deleting the marked jump and disintegrating at $\bfb=\ell_4$ gives the density $\ell_4^{-\beta-1}\tau_{-L_2}\tau_{-\ell_1-\ell_2-\ell_4}\P^\beta(d\zeta')$. On $\{\tau_{-L_2}<\bft<\tau_{-L_3}\}$ it gives $\ell_4^{-\beta-1}(\tau_{-L_3}-\tau_{-L_2})\tau_{-\ell_1-\ell_2}\P^\beta(d\zeta')$. Summing the two contributions, we get the $n=4$ disintegration formula}
  \begin{align*}
      \ell_2|\QA (\ell_1,\ell_2) | |\QD_{1,0}(\ell_2)|   {\ell_3}^{-\beta-1}{\ell_4}^{-\beta-1} P_4  \P^\beta(d\zeta'),
  \end{align*}
  
 {In both displays, only the jumps before the indicated terminal level $L_k$ are retained. For general $n$, assume the disintegration law for $n-1$ and consider $P_{n-1}\P^\beta(d\zeta')$. For each monotone item of the form $\tau_{-s_1}\cdots\tau_{-s_{n-3}}$, split according to $\{\bft<\tau_{-s_1}\}$, $\{\tau_{-s_1}<\bft<\tau_{-s_2}\}$, $\ldots$, and $\{\tau_{-s_{n-3}}<\bft<\tau_{-L_{n-1}}\}$. Applying the same argument as for $n=4$ gives precisely the recursion defining $P_n$, which concludes the proof.}

{More precisely, the successive Campbell--Palm argument
  above remains valid after multiplication by an arbitrary bounded
  non-negative measurable functional $H(\zeta')$. It therefore identifies
  $P_n(\zeta')\,\P^\beta(d\zeta')$ itself, and not merely its ranked-jump
  pushforward, with the positive disintegration measure on L\'evy path
  space. Consequently, $P_n\geq0$ $\P^\beta$-almost surely.}

\end{proof}

Now we can formulate the following theorem, which describes the area distribution of a quantum {$n$-hole} sphere.

\begin{theorem}\label{thm:area QHn}
{Let $n\ge3$, $\ell_1>0$, and $\mu>0$, and let $A$ denote quantum area. Define $g:(0,\infty)\to(0,1]$ by $g(x)=\E[e^{-\mu x^2S_1}]$, where $S_1$ is the quantum area of a sample from $\QD(1)^{\#}$. Then, for Lebesgue-almost-every $(\ell_2,\ldots,\ell_n)\in(0,\infty)^{n-1}$,}
\begin{align} \label{areacompute}
\QH_n(\ell_1,\ell_2,{\ldots},\ell_n)[e^{-\mu A}]=&\frac{1}{R_\gamma^{n-2}}|\QA(\ell_1,\ell_2)|\prod_{j=3}^n \ell_j^{-\frac{1}{2}}\E^{\beta}[\tau_{-L_2}\tau_{-L_n}^{n-3}\prod_{i\geq 1}g(x_i)]
\end{align}
{Here $(x_i)_{i\geq1}$ is the sequence of positive jump sizes before $\tau_{-L_n}$.}
\end{theorem}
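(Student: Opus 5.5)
The plan is to compute the left-hand side of \eqref{areacompute} by undoing the normalization \eqref{equ:factor}, conditioning on the outermost-loop lengths, and using the L\'evy description from Lemma~\ref{lem:disint QHn}. First observe what the area of the $n$-hole surface is. In the setting of Definition~\ref{def: nholes}, write $\ell_1$ for the outer length and $\ell_2,\ldots,\ell_n$ for $\mathfrak l_{\eta_0},\mathfrak l_{\eta_1},\ldots,\mathfrak l_{\eta_{n-2}}$. The CLE carpet has zero quantum area almost surely, so the area of $H_{\eta_0,\ldots,\eta_{n-2}}$ is the sum of the areas of the regions $D_\wp$ over the remaining outermost loops $\wp$. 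By Proposition~\ref{prop-msw} and the CLE Markov property, conditionally on all outermost-loop lengths these regions are independent samples from $\QD(x_i)^\#$. Brownian-type scaling of the quantum disk gives $\QD(x)^\#[e^{-\mu A}]=\E[e^{-\mu x^2S_1}]=g(x)$. Hence the conditional Laplace transform of the area of $H$ is $\prod_i g(x_i)$, where $(x_i)$ are the lengths of the outermost loops other than $\eta_0,\ldots,\eta_{n-2}$.

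Next, insert the disintegration of Lemma~\ref{lem:disint QHn}. It gives
\[
\wt\QH_n(\boldsymbol\ell)[e^{-\mu A}]=\ell_2|\QA(\ell_1,\ell_2)||\QD_{1,0}(\ell_2)|\prod_{i=3}^n\ell_i^{-\beta-1}\,\E^\beta\Bigl[P_n\prod_{x\in J_{L_n}}g(x)\Bigr].
\]
The last remark in that proof matters here: the identification holds on path space, so one may integrate the path functional $\prod g$ against $P_n\,\P^\beta$ directly. Now divide by the prefactors in \eqref{equ:factor}. The factor $\ell_2|\QD_{1,0}(\ell_2)|$ cancels. By \eqref{eq:Rgamma}, each $\ell_i|\QD(\ell_i)|$ equals $R_\gamma\ell_i^{-1-4/\gamma^2}$, so its quotient with $\ell_i^{-\beta-1}$ is $R_\gamma^{-1}\ell_i^{-1/2}$, using $\beta=4/\gamma^2+\tfrac12$. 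This yields exactly the prefactor $R_\gamma^{-(n-2)}|\QA(\ell_1,\ell_2)|\prod_{j\ge3}\ell_j^{-1/2}$. It remains to prove
\[
\E^\beta\Bigl[P_nF\Bigr]=\E^\beta\Bigl[\tau_{-L_2}\tau_{-L_n}^{n-3}F\Bigr],\qquad F:=\prod_{x\in J_{L_n}}g(x).
\]

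This identity is the main obstacle, and I would attack it through the regenerative structure of $\zeta'$. By the strong Markov property at the hitting times of successive levels, for levels $0<a_1<\cdots<a_k$ the pairs $(\tau_{-a_j}-\tau_{-a_{j-1}},F_j)$ are independent, where $F_j$ is the product of $g$ over jumps in the $j$th block. Each pair has joint law depending only on the level difference $a_j-a_{j-1}$. With $u(s)$ defined as in \eqref{eq:intro-excursion}, this gives $\E[e^{s\tau_{-a}}\prod_{J_a}g]=e^{au(s)}$. Consequently every mixed moment $\E[\prod_j\tau_{-s_j}F]$ is an explicit polynomial in $u'(0),u''(0),\ldots$ times $e^{L_nu(0)}$, obtained by differentiating $\prod_j e^{(a_j-a_{j-1})u(s_j+\cdots)}$ in separate variables.

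The induction on $n$ then runs as follows. Assume $\E[P_{n-1}F']=\E[\tau_{-L_2}\tau_{-L_{n-1}}^{n-4}F']$ for all multiplicative functionals $F'$ of this type. Each monotone item $\tau_{-s_1}\cdots\tau_{-s_{n-4}}$ in $P_{n-1}$ is replaced by the sum over the position where the new length $\ell_n$ is inserted. I would check that this sum equals, in expectation against $F$, the original item with the levels above the insertion point shifted by $\ell_n$, multiplied by a new factor $\tau_{-L_n}$. The telescoping differences $\tau_{-s_{j}}-\tau_{-s_{j-1}}$ integrate the insertion time over $[0,\tau_{-L_{n-1}}]$, which is the Palm computation of Lemma~\ref{lem:palm} read backwards. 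Level shift invariance then allows all $\ell_i$, $i\ge3$, to be moved above $L_2$, collapsing every item to $\tau_{-L_2}\tau_{-L_n}^{n-3}$. If this direct bookkeeping becomes unwieldy, the fallback is to compare both sides as generating functions: write each as a polynomial in $u^{(k)}(0)$ and verify equality by the one-variable derivative formula $\partial_s^{n-3}[u'(s)e^{L_nu(s)}]|_{s=0}$. This formula is the shape expected in Theorem~\ref{thm: 1}, and the comparison would be proved by induction using the recursion defining $P_n$.
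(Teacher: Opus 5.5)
Your reduction of $\QH_n(\boldsymbol\ell)[e^{-\mu A}]$ to $\E^\beta[P_nF]$ is correct. It is the paper's Lemma~\ref{lem-QHn-translate}, including the cancellation $\ell_i^{-\beta-1}/(\ell_i|\QD(\ell_i)|)=R_\gamma^{-1}\ell_i^{-1/2}$. Your block-independence observation is also the same input the paper extracts from its tilted measure. Write $\widehat\E$ for expectation under the law tilted by $F$ (the paper's $\widehat\E_\lambda$). Then each monotone item $\prod_j\tau_{-s_j}$ with $s_j=L_2+\sum_{i\in A_j}\ell_i$ has $\widehat\E$-expectation $\E[\prod_j(Y_0+\sum_{i\in A_j}Y_i)]$, where $Y_0,Y_3,\ldots,Y_n$ are independent block increments over level intervals of lengths $L_2,\ell_3,\ldots,\ell_n$.

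The gap is the identity $\E^\beta[P_nF]=\E^\beta[\tau_{-L_2}\tau_{-L_n}^{n-3}F]$ itself, because the per-item induction step you propose is false. Take $n=5$ and the item $M=\tau_{-L_2}\tau_{-L_3}$ of $P_4$. The recursion sends it to
\[
\tau_{-L_2}\tau_{-L_2-\ell_5}\tau_{-L_3-\ell_5}+(\tau_{-L_3}-\tau_{-L_2})\tau_{-L_2}\tau_{-L_3-\ell_5}+(\tau_{-L_4}-\tau_{-L_3})\tau_{-L_2}\tau_{-L_3}.
\]
Its $\widehat\E$-expectation is $\E[Y_0(Y_0+Y_3+Y_5)^2+Y_0Y_4(Y_0+Y_3)]$. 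By contrast, $M\tau_{-L_5}$ gives $\E[Y_0(Y_0+Y_3)(Y_0+Y_3+Y_4+Y_5)]$. The difference is $\E[Y_0Y_5(Y_0+Y_3+Y_5)]>0$. Even granting such a step, summing it over $P_4$ would give $\widehat\E[P_4\tau_{-L_5}]=\E[Y_0(Y_0+Y_3+Y_4)(Y_0+Y_3+Y_4+Y_5)]$. This is strictly less than the target $\E[Y_0(Y_0+Y_3+Y_4+Y_5)^2]$. Nor can level-shift invariance ``collapse every item''. Stationarity only reorders blocks inside a single item and cannot move a factor off level $L_2$. For example, the item $-\tau_{-L_2}^2$ of $P_4$ has $\E[Y_0^2]\neq\E[Y_0(Y_0+Y_3+Y_4)]$. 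The identity holds only for the whole signed sum, through cancellation between items of opposite sign.

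The missing ingredient is the pure-drift case of the statement, namely the polynomial identity of Lemma~\ref{combin}. Let $h$ replace every $\tau_{-s}$ by $s$; then $h(P_n)=L_2L_n^{n-3}$ as polynomials in $L_2,\ell_3,\ldots,\ell_n$. Given this identity, your block independence finishes the argument immediately. Substitute $L_2=Y_0$ and $\ell_i=Y_i$, then take expectations; this automatically accounts for several marks landing in the same excursion. The paper proves the identity by grouping the signed monomials of $P_n$ into the $L/R$ tree of structure numbers and telescoping the resulting binomial sums.

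Your fallback of comparing expansions in $u^{(k)}(0)$ does not avoid this step. In the moment--cumulant expansion, the coefficient of $u'(0)^{n-2}$ on the $P_n$ side is exactly $h(P_n)$. So that comparison already presupposes the same identity, and you give no argument for it.
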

{In particular, the theorem asserts that the right-hand side is finite.}

Before the proof, we first give {an} explicit formula for $\QH_n(\ell_1,\ell_2,{\ldots},\ell_n)[e^{-\mu A}]$ {in terms of} $P_n$ in Lemma~\ref{lem-QHn-translate}.


\begin{lemma}\label{lem-QHn-translate}
{Let $n\ge3$, $\ell_1>0$, and $\mu>0$. For
Lebesgue-almost-every $(\ell_2,\ldots,\ell_n)\in(0,\infty)^{n-1}$, with
$g$ as in Theorem~\ref{thm:area QHn}, we have}
	\begin{equation}\label{eq:QHn levy}
    \QH_n(\ell_1,\ell_2,{\ldots},\ell_n)[e^{-\mu A}]=\frac{1}{R_\gamma^{n-2}}|\QA(\ell_1,\ell_2)|\prod_{j=3}^n \ell_j^{-\frac{1}{2}}\E^{\beta}[P_n\prod_{i\geq 1}g(x_i)]
	\end{equation}
{Here the expectation $\E^\beta$ is with respect to $\P^\beta$ and $(x_i)_{i\ge1}$ is the sequence of positive jump sizes before $\tau_{-L_n}$.}
\end{lemma}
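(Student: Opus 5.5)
We combine Definition~\ref{def: nholes}, Proposition~\ref{welding1 n-holes} and Lemma~\ref{lem:disint QHn}, conditioning on the outermost-loop lengths and using Proposition~\ref{prop-msw} to integrate out the unselected disks. The labeling needs care: in Definition~\ref{def: nholes} the outer boundary has length $\ell=\ell_1$, the loop around $z$ has length $\ell_0=\ell_2$, and the remaining selected loops have lengths $\ell_3,\ldots,\ell_n$, matching Lemma~\ref{lem:disint QHn}.

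\textbf{Step 1: area decomposition.} On a sample of $\QD_{1,0}(\ell_1)\otimes\CLE_\kappa$ with $\eta_2,\ldots,\eta_n$ selected, the CLE carpet has zero quantum area. So the area of the central component $H$ equals the sum of the areas of all the \emph{unselected} original outermost-loop interiors, since the region inside $\eta_2,\ldots,\eta_n$ is excluded. By Proposition~\ref{prop-msw} and the domain Markov property, conditionally on all outermost lengths these interiors are independent $\QD(x_i)^\#$. Hence $\wt\QH_n(\ell_1,\ldots,\ell_n)[e^{-\mu A}]$ is the disintegrated measure of Lemma~\ref{lem:disint QHn} integrated against $\prod_i g(x_i)$. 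Here $g(x)=\QD(x)^\#[e^{-\mu A}]=\E[e^{-\mu x^2S_1}]$ by the scaling $A\mapsto x^2A$ of quantum disks. Concretely,
\[
\wt\QH_n[e^{-\mu A}]=\ell_2|\QA(\ell_1,\ell_2)||\QD_{1,0}(\ell_2)|\prod_{i=3}^n\ell_i^{-\beta-1}\,\E^\beta\Bigl[P_n\prod_{i\ge1}g(x_i)\Bigr],
\]
with the jumps taken before $\tau_{-L_n}$. To justify this, apply the disintegration identity to the functional $\prod g(x_i)$ of the ranked jump lengths. The last paragraph of the proof of Lemma~\ref{lem:disint QHn} ensures that this is legitimate as a positive measure on path space.

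\textbf{Step 2: normalization.} Divide by the factor in \eqref{equ:factor}, namely $\ell_2|\QD_{1,0}(\ell_2)|\prod_{i=3}^n\ell_i|\QD(\ell_i)|$. The terms $\ell_2|\QD_{1,0}(\ell_2)|$ cancel. By \eqref{eq:Rgamma}, $\ell_i|\QD(\ell_i)|=R_\gamma\ell_i^{-1-4/\gamma^2}$. Since $\beta+1=4/\gamma^2+3/2$, we get
\[
\ell_i^{-\beta-1}/(R_\gamma\ell_i^{-1-4/\gamma^2})=R_\gamma^{-1}\ell_i^{-1/2}.
\]
This produces exactly $R_\gamma^{-(n-2)}|\QA(\ell_1,\ell_2)|\prod_{j\ge3}\ell_j^{-1/2}$, which gives \eqref{eq:QHn levy}. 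The fixed-length identity holds for Lebesgue-a.e.\ $(\ell_2,\ldots,\ell_n)$ by uniqueness of disintegrations (Lemma~\ref{def:disint}). The absolute-continuity lemma preceding Definition~\ref{def: nholes} ensures that nothing is lost on null sets.

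\textbf{Main obstacle.} The delicate point is the interchange in Step 1. The disintegration over $\{\ell_h(\eta_i)=\ell_i\}$ must commute with conditioning on the unselected lengths and taking the product of the $\QD(x_i)^\#$ area transforms. A related issue is that the central surface's area involves only unselected loops, not $\eta_2,\ldots,\eta_n$. My approach is to test both sides against $f(\ell_2,\ldots,\ell_n)$, apply Proposition~\ref{prop-msw} before disintegrating, and use Tonelli; all integrands are nonnegative since $P_n\ge0$ a.s. Finiteness is not needed for the identity itself, because both sides may be compared as $[0,\infty]$-valued quantities.
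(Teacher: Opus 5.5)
Your proposal is correct and matches the paper's proof. Both use Lemma~\ref{lem:disint QHn} and Proposition~\ref{prop-msw} to write the integral of $e^{-\mu\,\mathrm{Area}(H_n)}$ as the disintegrated L\'evy expectation of $P_n\prod_i g(x_i)$, using that the carpet is null so only unselected disks contribute, and then normalize with $|\QD(\ell)|=R_\gamma\ell^{-2-4/\gamma^2}$. The only cosmetic difference is that you divide directly by the factor in~\eqref{equ:factor}, while the paper routes that step through Proposition~\ref{welding1 n-holes}, which is the same definition rewritten.
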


\begin{proof}
{Sample $(D,h,\Gamma,z)/{\sim_\gamma}$ from 
$\QD_{1,0}(\ell_1)\otimes\CLE^{\D}_\kappa$, set $\eta_1:=\partial D$, let $\eta_2$ be the outermost loop surrounding $z$, and then choose the pairwise distinct original outermost loops $\eta_3,\ldots,\eta_n$, all different from $\eta_2$, under the product counting measure. Let $H_n=H_{\eta_1,\ldots,\eta_n}$ be the region bounded by $\partial D=\eta_1$ and $\eta_2,\ldots,\eta_n$. Lemma~\ref{lem:disint QHn} gives the law of the quantum lengths. By Proposition~\ref{prop-msw}, conditionally on the lengths, the components surrounded by the original outermost loops are independent quantum disks. Since the CLE carpet has zero $\mu_h$-area almost surely, the area of $H_n$ is the sum of the areas of the unselected disk components.}
Therefore the integral of $e^{-\mu {\operatorname{Area}}(H_{n})}$ over this sample space is 
\begin{equation}\label{eq:case1}
{\int_{(0,\infty)^{n-1}}}\ell_2|\QA (\ell_1,\ell_2) | |\QD_{1,0}(\ell_2)|    \E^{\beta}[P_n\prod_{i\geq 1}g(x_i)]\prod_{i=3}^{n}{\ell_i}^{-\beta-1} {d\ell_2\cdots d\ell_n}.
\end{equation}
By Proposition~\ref{welding1 n-holes}, {the integral in~\eqref{eq:case1} equals}
\[
{\int_{(0,\infty)^{n-1}} \QH_n(\ell_1,\ell_2,\ldots,\ell_n)[e^{-\mu\operatorname{Area}(H_n)}]\, |\QD_{1,0}(\ell_2)|\prod_{k=3}^n |\QD(\ell_k)|\prod_{j=2}^n\ell_j\,d\ell_2\cdots d\ell_n.}
\]
{Since $|\QD(\ell)|=R_\gamma\ell^{-2-\frac{4}{\gamma^2}}$, we obtain~\eqref{eq:QHn levy} after disintegrating over $\{\ell_h({\eta_i})=\ell_i, i=2, \ldots, n\}$.}

\end{proof}

{To compute the L\'evy-process expectation $\E^{\beta}[P_n\prod_{i\geq 1}g(x_i)]$, we use a combinatorial argument to reduce it to the symmetric form $\E^{\beta}[\tau_{-L_2}\tau_{-L_n}^{n-3}\prod_{i\geq 1}g(x_i)]$, which depends only on $L_2$ and $L_n$.}

Now, we recall the excursion theory for the L\'evy process $\zeta'$.
Let $I_t= \inf\{\textcolor{red}{\zeta'_s}: s\in [0,t] \}$. It is well-known that $(\zeta'_t-I_t)_{t\ge 0}$ is a Markov process and we can consider its excursions away from 0.
See e.g.~\cite[Section 1]{duquesne-legall}, ~\cite[Section 4]{nested-stat}. We write $\ul N'$ for the excursion measure, which is a measure on non-negative functions of the form  
$e:[0,T] \rta [0,\infty)$ with $e(0)=e(T)=0$. 

{More specifically, let $N_1'$ be a probability measure on excursions of duration 1. Let $N_\ell'$ be the law of $(\ell^{1/\beta}b_{t/\ell}:0\le t\le\ell)$, extended by $0$ after time $\ell$, where $b$ is sampled from $N_1'$; thus $N_\ell'$ is the probability law of an excursion of duration $\ell$. Writing $G_\beta:=\Gamma(-\beta)$, the Laplace exponent of $\zeta'$ is $G_\beta q^\beta$. Since $y\mapsto\tau_{-y}$ is therefore a stable subordinator with Laplace exponent $G_\beta^{-1/\beta}q^{1/\beta}$,}
\begin{align}
		{\ul N' = \frac{G_\beta^{-1/\beta}}{\beta\Gamma(1-1/\beta)} \int_0^\infty \ell^{-1-1/\beta}\,N_{\ell}'\,d\ell.}
\end{align}
{Choose a c\`adl\`ag modification $y\mapsto\widehat\tau_y$ of the inverse local-time process $y\mapsto\tau_{-y}$.  Thus, for every fixed deterministic $y$, $\widehat\tau_y=\tau_{-y}$ almost surely; simultaneous equality for every $y$ is not asserted.  Then}
\begin{align*}
		{\mathcal E:=\sum_{y>0\colon \widehat\tau_y > \widehat\tau_{y-}}
	\delta_{(y,\zeta'_{(\widehat\tau_{y-}+\cdot)\wedge \widehat\tau_y}
	-\zeta'_{\widehat\tau_{y-}})}
	\quad\text{is a P.P.P.\ with intensity}\quad
	\lambda|_{(0,\infty)}\otimes \ul N'}
\end{align*}
on $(0,\infty)\times D([0,\infty))$ where $\lambda$ is the Lebesgue measure on $\R$ and $D([0,\infty))$ is the space of càdlàg functions. We refer to \cite[Section 3.1.1]{curien-kortchemski-looptree-def} for more information on these constructions.

{
Recall that $(x_i)_{i\ge1}$ is the sequence of positive jump sizes of $\zeta'$ before $\tau_{-L_n}$. The excursions of $(\zeta'_t-I_t)_{t\in[0,\tau_{-L_n}]}$ form a Poisson point process $\mathrm{Exc}_{L_n}=\{(e,s):s\le L_n\}$ with intensity $\ul N'(de)1_{\{s\in[0,L_n]\}}\,ds$. For $(e,s)\in\mathrm{Exc}_{L_n}$, let $\sigma$ be the starting time of the excursion $e$. Then $s=-I_\sigma$. Let $T(e)$ be the duration of $e$. Then, simultaneously for $0\le\ell\le L_n$,
 \[\widehat\tau_{\ell} = \sum_{ (e,s)\in \mathrm{Exc}_{L_n}}  T(e)1_{\{s\le \ell\}}.\]
In particular, for every deterministic $\ell$ this identity also holds with $\widehat\tau_\ell$ replaced by $\tau_{-\ell}$, almost surely.
}
 
 As a general property of Poisson point processes, conditioning on $\{ e: (e,s)\in \mathrm{Exc}_{L_n} \}$, the conditional law of 
		the time set $\{ s: (e,s)\in \mathrm{Exc}_{L_n} \}$ is given by a collection of independent uniform random variables in $(0,L_n)$. 
		Let $\cF_{L_n}$ be the {$\sigma$-algebra} generated by $\{ e: (e,s)\in \mathrm{Exc}_{L_n} \}$ and denote the conditional expectation over $\cF_{L_n}$ by $\E_{L_n}$.{Let $J_e$ be the multiset of positive jump sizes of the excursion $e$, and set $F(e):=\prod_{x\in J_e}g(x)$.} Then $\prod_{i\ge 1}g(x_i)= \prod_{e\in \mathrm{Exc}_{L_n} }F(e)$. {Here $F(e)$ is the decreasing limit of the products over jumps of $e$ of size at least $\delta$ as $\delta\downarrow0$; products over infinitely many excursions are defined analogously by restricting first to excursions with $T(e)\geq\delta$.}

\begin{proposition}\label{prop:combinatorics}
For any $n\geq 3$,
    $$
    \E^{\beta}[P_n\prod_{i\geq 1}g(x_i)]=\E^{\beta}[\tau_{-L_2}\tau_{-L_n}^{n-3}\prod_{i\geq 1}g(x_i)]
    $$
\end{proposition}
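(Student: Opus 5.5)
The plan is to prove the identity conditionally on the excursion $\sigma$-algebra $\cF_{L_n}$. Two facts make this work. First, $\prod_{i\ge1}g(x_i)=\prod_{e\in\mathrm{Exc}_{L_n}}F(e)$ is $\cF_{L_n}$-measurable. Second, every $\tau_{-s}$ with $s\le L_n$ that appears in $P_n$ or in $\tau_{-L_2}\tau_{-L_n}^{n-3}$ equals $\sum_{(e,u)\in\mathrm{Exc}_{L_n}}T(e)\mathbf 1_{\{u\le s\}}$ almost surely, because the levels $s$ are deterministic. It therefore suffices to prove, almost surely,
\[
\E_{L_n}[P_n]=\E_{L_n}\bigl[\tau_{-L_2}\tau_{-L_n}^{n-3}\bigr].
\]
Multiplying by $\prod_e F(e)$ and taking expectations then gives the proposition. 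Integrability is handled by monotone convergence, truncating to excursions with $T(e)\ge\delta$. The signed terms of $P_n$ are controlled because $P_n\ge0$ (Lemma~\ref{lem:disint QHn}) and every signed occurrence is bounded by a polynomial in $\tau_{-L_n}$.

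Next, compute $\E_{L_n}$ of a single monotone monomial $\tau_{-s_1}\cdots\tau_{-s_{k}}$ with $k=n-3$ and $L_2=s_1\le\cdots\le s_k\le L_n$. Expanding each factor as a sum over excursions, group the ordered $k$-tuples of excursions according to the set partition $\pi$ of $\{1,\dots,k\}$ recording which indices pick the same excursion. Given $\cF_{L_n}$, the local times $u$ of distinct excursions are i.i.d.\ uniform on $(0,L_n)$. Hence a block $B$ contributes the factor $s_{\min B}/L_n$, using the monotonicity of the $s_j$. This gives
\[
\E_{L_n}\Bigl[\prod_{j}\tau_{-s_j}\Bigr]=\sum_{\pi}S_\pi\prod_{B\in\pi}\frac{s_{\min B}}{L_n},
\qquad S_\pi:=\sum_{\text{distinct }(e_B)_{B\in\pi}}\ \prod_{B\in\pi}T(e_B)^{|B|},
\]
where $S_\pi$ is $\cF_{L_n}$-measurable and does not depend on the levels $s_j$. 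For the target monomial $\tau_{-L_2}\tau_{-L_n}^{k-1}$, the same formula gives the coefficient $L_2/L_n$ for each partition $\pi$, since only the block containing index $1$ has minimum level $L_2<L_n$. The problem thus reduces to a purely combinatorial identity. For every set partition $\pi$ of $\{1,\dots,n-3\}$, the $\mathcal P_n$-weighted sum of the polynomials $\prod_{B}s_{\min B}$, taken over the signed monomials of $P_n$, must equal $L_2L_n^{|\pi|-1}$.

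I would prove this identity by induction on $n$, following the recursion that defines $P_n$. For $n=3,4$ it is a direct check. For example, for $P_4$ with the two-block partition, one gets $L_2(L_2+\ell_4)+L_2\ell_3=L_2L_4$, and with the one-block partition one gets $L_2$. For the inductive step, apply the transformation to a monomial $\tau_{-s_1}\cdots\tau_{-s_{n-4}}$ of $P_{n-1}$. The new factor is either inserted at the front as $\tau_{-s_1}$ while all later thresholds are shifted by $\ell_n$, or it is a telescoping increment $(\tau_{-s_{j+1}}-\tau_{-s_j})$ that leaves the later thresholds $s_{j+1},\dots$ shifted by $\ell_n$. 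I would track, for each partition $\pi'$ of the new index set, whether the new index is a singleton block or lies in an old block. In the singleton case the telescoping increments produce $(s_{j+1}-s_j)$, and together with the shifts by $\ell_n$ the sum over $j$ telescopes: the old total $L_{n-1}$ becomes $L_n$, which supplies the extra factor $L_n$. When the new index joins an old block, its level never lowers a block minimum except in configurations whose contributions cancel pairwise between adjacent lines of the recursion.

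I expect this inductive bookkeeping to be the main obstacle. The difficulty is the interaction between the shifts $s_j\mapsto s_j+\ell_n$ and the block minima, together with the special convention for $n=5$. If the direct induction becomes unwieldy, my fallback is a Palm-type argument. By the proof of Lemma~\ref{lem:disint QHn}, $P_n\,d\P^\beta$ is the law obtained by marking $n-2$ jumps and deleting them. Summing over the orderings in which the marked jumps are removed, and using the exchangeability of the labels $3,\dots,n$, one can instead insert the marked jumps directly as uniform points in local time on $[0,L_2]$ and $[0,L_n]$, which yields $\tau_{-L_2}\tau_{-L_n}^{n-3}$ up to the common factor $\prod_iF(e)$.
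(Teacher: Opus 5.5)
Your conditioning step is sound. $\prod_e F(e)$ is $\cF_{L_n}$-measurable, every monomial is nonnegative and bounded by $\tau_{-L_n}^{n-2}$, and your set-partition formula for $\E_{L_n}$ of a monotone monomial (block factor $s_{\min B}/L_n$) is correct. This is a genuinely different route from the paper's. The paper tilts by $\prod_eF(e)e^{-\lambda\tau_{-L_n}}$ so that $\widehat\tau$ has stationary independent increments. It then writes each monomial as one polynomial in independent increments $Y_0,Y_3,\dots,Y_n$, uses only the formal identity $h(P_n)=L_2L_n^{n-3}$ of Lemma~\ref{combin}, and lets $\lambda\downarrow0$.

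There is a minor indexing slip: monomials of $P_n$ have $n-2$ factors, so $k=n-2$ and your partitions are of $\{1,\dots,n-2\}$. Your $P_4$ check already uses this count. Because repeated excursions produce $\prod_B s_{\min B}$, your route needs the following for every set $S\ni1$ of block minima:
\[
E_S(P_n):=\sum_{M\in\mathcal P_n}\varepsilon_M\prod_{p\in S}s_{M,p}=L_2L_n^{|S|-1}.
\]
Lemma~\ref{combin} is only the case $S=\{1,\dots,n-2\}$.

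The gap is that you never prove this identity, and it is the heart of the argument. The induction you sketch would not work as described, for two reasons.
\begin{itemize}
\item There is no fixed ``new index''. Line $j$ of the recursion puts the new factor at position $j+1$ and shifts the later positions, so one partition at level $n$ corresponds to different partitions at level $n-1$ depending on $j$.
\item The cancellation happens within a line, not between adjacent lines. The $+\tau_{-s_{j+1}}$ and $-\tau_{-s_j}$ parts of line $j$ evaluate identically unless $j+1\in S$.
\end{itemize}
Let $\mathcal T$ be the recursion map, $M$ a monomial of $P_{n-1}$ of length $m=n-3$, $s_0:=0$, $s_{m+1}:=L_{n-1}$, and $S=\{1=p_1<\dots<p_r\}$. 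Carrying out the within-line cancellation gives
\[
E_S(\mathcal TM)=\sum_{i=1}^r(s_{p_i}-s_{p_i-1})\prod_{a<i}s_{p_a}\prod_{a>i}(s_{p_a-1}+\ell_n)=\prod_{a=1}^rs_{p_a}+\ell_n\sum_{i=2}^r\prod_{a<i}s_{p_a}\prod_{a>i}(s_{p_a-1}+\ell_n).
\]
The right side involves the positions $p_a-1$, so the statement for a single $S$ does not close on itself. You must induct on all $S$ simultaneously. Expanding $\prod_{a>i}(s_{p_a-1}+\ell_n)$ over subsets $T\subseteq\{i+1,\dots,r\}$ produces position sets $\{p_a\}_{a<i}\cup\{p_a-1\}_{a\in T}$. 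These consist of distinct positions and contain $1$. The hypothesis at level $n-1$ then gives
\[
E_S(P_n)=L_2\bigl[L_{n-1}^{r-1}+\ell_n\sum_{i=2}^rL_{n-1}^{i-2}L_n^{r-i}\bigr]=L_2L_n^{r-1}.
\]
With this lemma added, your argument is complete and even avoids the structure-number tree. The Palm-type fallback, as stated, is too vague to replace it.
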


\begin{proof}
Consider the excursion theory above. {$(x_i)_{i\ge1}$ is the sequence of positive jump sizes of $\zeta'$ before $\tau_{-L_n}$.} Let $F(e)=\prod_{x\in J_e} g(x) $, so  $\prod_{i\ge 1}g(x_i)= \prod_{e\in \mathrm{Exc}_{L_n} }F(e)$.
	 Since $\prod_{i\ge 1}g(x_i) $ is measurable with respect to $\cF_{L_n}$, we have 
	\begin{equation}\label{eq: stopping simple}
	    \E_{L_n}\big[P_n \prod_{i\ge 1}g(x_i) \big]= \E_{L_n}\big[P_n \prod_{e\in \mathrm{Exc}_{L_n} }F(e) \big]= \E_{L_n}\big[P_n\big]
	      \prod_{e\in \mathrm{Exc}_{L_n} }F(e).
	\end{equation}
	{Here the conditional expectations are understood in the
extended non-negative sense; the required non-negativity of $P_n$ was
established at the end of the proof of Lemma~\ref{lem:disint QHn}.}
	
	{
For the independent-increment comparison below, fix $\lambda>0$ and set
\[
Z_\lambda:=\E^\beta\!\left[\prod_{e\in\mathrm{Exc}_{L_n}}F(e)
 e^{-\lambda\tau_{-L_n}}\right],\qquad
\widehat\E_\lambda[H]
:=Z_\lambda^{-1}\E^\beta\!\left[H\prod_{e\in\mathrm{Exc}_{L_n}}F(e)
 e^{-\lambda\tau_{-L_n}}\right].
\]
The Poisson exponential formula, together with~{\cite[Proposition 6.7]{Int-CLE}}, gives
$0<Z_\lambda<\infty$. Let $\widehat\P_\lambda$ denote the probability measure
corresponding to $\widehat\E_\lambda$. By the multiplicative formula for
Poisson point processes, under $\widehat\P_\lambda$ the process
$\mathrm{Exc}_{L_n}$ is a Poisson point process with intensity
\[
1_{(0,L_n]}(s)\,ds\,F(e)e^{-\lambda T(e)}\ul N'(de).
\]
Hence $(\widehat\tau_a)_{0\le a\le L_n}$ has stationary independent increments under
$\widehat\P_\lambda$. Moreover, since $0\leq F(e)\leq1$ and the image of
$\ul N'$ under $e\mapsto T(e)$ has density $c_\beta t^{-1-1/\beta}\,dt$
for a constant $c_\beta>0$, for every integer $r\geq1$,
\[
\int T(e)^rF(e)e^{-\lambda T(e)}\ul N'(de)
\leq c_\beta\int_0^\infty t^{r-1-1/\beta}e^{-\lambda t}\,dt<\infty.
\]
Thus all polynomial moments used below are finite. It is enough to prove
\[
\widehat\E_\lambda[P_n]
=\widehat\E_\lambda[\tau_{-L_2}\tau_{-L_n}^{n-3}]
\]
for each $\lambda>0$. After multiplication by $Z_\lambda$, this is the equality
of the corresponding expectations with the additional factor
$e^{-\lambda\tau_{-L_n}}$. Letting $\lambda\downarrow0$ then proves the
proposition by monotone convergence, since
$e^{-\lambda\tau_{-L_n}}\uparrow1$ and both integrands are non-negative;
here $0\leq F(e)\leq1$, $\tau_{-L_2}\tau_{-L_n}^{n-3}\geq0$, and
$P_n\geq0$ by Lemma~\ref{lem:disint QHn}.
}
	
{
Define $f$ on every signed monomial occurrence
$M=\varepsilon\prod_{j=1}^{n-2}\tau_{-s_j}\in\mathcal P_n$, where
$s_1=L_2$, by
\[
\begin{aligned}
f&:\mathcal P_n\longrightarrow\{\pm1\}\times\N^{n-2},\\
f(M)&:=(\varepsilon;a_1,\ldots,a_{n-2}),
\end{aligned}
\]
where $a_j:=\#\{\text{summands in }s_j\}$ for $1\le j\le n-2$.
Since $s_1=L_2=\ell_1+\ell_2$, we have $a_1=2$.
We also write $(\varepsilon;a_1,\ldots,a_{n-2})$ as
$\varepsilon(a_1,\ldots,a_{n-2})$ and call $f(M)$ the structure number of
$M$.
}
	
 For instance, when $n=4$\textcolor{red}{,} the structure number of $\tau_{-(\ell_1+\ell_2)}\cdot\tau_{-(\ell_1+\ell_2+\ell_4)}$ is $(2,3)$, since $f(+\tau_{-(\ell_1+\ell_2)}\cdot\tau_{-(\ell_1+\ell_2+\ell_4)})= +(2,3)\textcolor{red}{.}$ 
	
Now let us state some properties about monotone items:
\begin{itemize}
	\item {First, let $\sigma$ be a permutation of $\{3,\ldots,n\}$. Define its action on $\tau_{-s_1}\cdots\tau_{-s_{n-2}}$ by replacing $\ell_i$ with $\ell_{\sigma(i)}$ for $i\ge3$ and fixing $\ell_1,\ell_2$. We extend this action linearly to the span of $\mathcal P_n$. Then, for every $M\in\mathcal P_n$, $f(M)=f(\sigma M)$ and $\sigma(\mathcal P_n)=\mathcal P_n$. This follows by induction.}
    \item {Second, the sum of these monotone items satisfies a recursion described by the binary tree in Figure~\ref{fig:structure-number-tree}, where $+$ and $-$ are the signs of the corresponding structure classes in $P_n$. Let $L$ and $R$ be the two operations that map each parent node to its left and right child nodes. For a signed structure number $\pm(2,a_2,\ldots,a_k)$ at level $P_{k+2}$, its left child at level $P_{k+3}$ is $L(\pm(2,a_2,\ldots,a_k))=\pm(2,3,a_2+1,\ldots,a_k+1)$, whereas its right child is $R(\pm(2,a_2,\ldots,a_k))=\mp(2,2,a_2',\ldots,a_k')$, where
	$$
	a_{j}'=
	\begin{cases}
	2 & a_{j}=2\\
	a_{j}+1 & a_{j}>2
	\end{cases}
	$$
	}
	{
To verify this whole-level regrouping, write $b_j=a_j-2$ and let
$c_d(0,b_2,\ldots,b_d)$ denote the coefficient of a nested subset chain
with these cardinalities in $P_{d+2}$; the following endpoint calculation
also shows inductively that this coefficient depends only on the
cardinalities. If the new label $u=d+3$ first occurs at the $p$th entry of
a child chain $\varnothing=C_1\subseteq\cdots\subseteq C_{d+1}$, the defining recursion
has only two possible sources: the upper endpoint, which requires
$C_p=C_{p-1}\cup\{u\}$, and the lower endpoint, which requires $p\geq3$
and $C_{p-1}=C_{p-2}$. These sources have opposite signs and cancel when
both conditions hold. If $u$ does not occur, the same calculation applies
to the two final endpoints. Consequently, with $\rho(0)=0$ and
$\rho(q)=q-1$ for $q\geq2$,
\[
c_{d+1}(0,b_2,\ldots,b_{d+1})=
\begin{cases}
c_d(0,b_3-1,\ldots,b_{d+1}-1),& b_2=1,\\
-c_d(0,\rho(b_3),\ldots,\rho(b_{d+1})),
 & b_2=0\ \text{and}\ b_j\neq1\ (j\geq3),\\
0,&\text{otherwise}.
\end{cases}
\]
The first line is the $L$-case, the second is the $R$-case, and in the
remaining cases the endpoint contributions either cancel or have no
admissible parent. This is exactly the recursion encoded by $L$ and $R$.
Thus, the two child
classes describe the result after expanding the recursion for the whole
level, collecting equal structure numbers, and canceling the opposite
interior-endpoint contributions; they are not a term-by-term offspring
statement for one individual monomial.
}
\end{itemize}

\begin{figure}[htbp]
\centering
\resizebox{\textwidth}{!}{%
\begin{tikzpicture}[
  sn/.style={draw=black,text=black,rounded corners=1pt,inner xsep=5pt,inner ysep=3pt,font=\small},
  ed/.style={->,>=stealth,draw=black,thick},
  el/.style={text=black,font=\scriptsize,fill=white,inner sep=1pt}
]
\node[text=black,font=\bfseries] at (-12,0) {$P_3$};
\node[text=black,font=\bfseries] at (-12,-1.6) {$P_4$};
\node[text=black,font=\bfseries] at (-12,-3.2) {$P_5$};
\node[text=black,font=\bfseries] at (-12,-4.8) {$P_6$};
\node[sn] (p3) at (0,0) {$+(2)$};
\node[sn] (p4l) at (-6,-1.6) {$+(2,3)$};
\node[sn] (p4r) at (6,-1.6) {$-(2,2)$};
\node[sn] (p5ll) at (-9,-3.2) {$+(2,3,4)$};
\node[sn] (p5lr) at (-3,-3.2) {$-(2,2,4)$};
\node[sn] (p5rl) at (3,-3.2) {$-(2,3,3)$};
\node[sn] (p5rr) at (9,-3.2) {$+(2,2,2)$};
\node[sn] (p6a) at (-10.5,-4.8) {$+(2,3,4,5)$};
\node[sn] (p6b) at (-7.5,-4.8) {$-(2,2,4,5)$};
\node[sn] (p6c) at (-4.5,-4.8) {$-(2,3,3,5)$};
\node[sn] (p6d) at (-1.5,-4.8) {$+(2,2,2,5)$};
\node[sn] (p6e) at (1.5,-4.8) {$-(2,3,4,4)$};
\node[sn] (p6f) at (4.5,-4.8) {$+(2,2,4,4)$};
\node[sn] (p6g) at (7.5,-4.8) {$+(2,3,3,3)$};
\node[sn] (p6h) at (10.5,-4.8) {$-(2,2,2,2)$};
\draw[ed] (p3) -- node[el,pos=.55,above left] {$L$} (p4l);
\draw[ed] (p3) -- node[el,pos=.55,above right] {$R$} (p4r);
\draw[ed] (p4l) -- node[el,pos=.55,above left] {$L$} (p5ll);
\draw[ed] (p4l) -- node[el,pos=.55,above right] {$R$} (p5lr);
\draw[ed] (p4r) -- node[el,pos=.55,above left] {$L$} (p5rl);
\draw[ed] (p4r) -- node[el,pos=.55,above right] {$R$} (p5rr);
\draw[ed] (p5ll) -- node[el,pos=.52,above left] {$L$} (p6a);
\draw[ed] (p5ll) -- node[el,pos=.52,above right] {$R$} (p6b);
\draw[ed] (p5lr) -- node[el,pos=.52,above left] {$L$} (p6c);
\draw[ed] (p5lr) -- node[el,pos=.52,above right] {$R$} (p6d);
\draw[ed] (p5rl) -- node[el,pos=.52,above left] {$L$} (p6e);
\draw[ed] (p5rl) -- node[el,pos=.52,above right] {$R$} (p6f);
\draw[ed] (p5rr) -- node[el,pos=.52,above left] {$L$} (p6g);
\draw[ed] (p5rr) -- node[el,pos=.52,above right] {$R$} (p6h);
\end{tikzpicture}%
}
{
\caption{The first four levels of the binary tree of signed structure numbers after expanding the recursion for $P_n$, collecting equal structure numbers, and cancelling opposite terms. The arrows agree with the rules for $L$ and $R$ stated above. The nodes record signed structure classes, not the multiplicities inside their fibers; those multiplicities are included in $h(f^{-1}(\nu))$. In particular, the all-right node at level $P_{k+1}$ is $(-1)^{k-2}(2,\ldots,2)$, and its $h$-evaluation is $(-1)^{k-2}L_2^{k-1}$.}
\label{fig:structure-number-tree}
}
\end{figure}

{Define the evaluation function, for every $d\geq1$, by
\[
h\!\left(\varepsilon\prod_{j=1}^{d}\tau_{-s_j}\right)
:=\varepsilon\prod_{j=1}^{d}s_j,
\]
and extend $h$ linearly.} {For a signed structure number $\nu=\varepsilon(2,a_2,\ldots,a_{n-2})$, define}
{
\[
h\big(f^{-1}(\nu)\big)
:=\sum_{\substack{M\in\mathcal P_n\\ f(M)=\nu}}h(M).
\]
}
{Thus}
{
\[
S_n(\ell_1,\ldots,\ell_n):=h(P_n)
=\sum_{\nu\in f(\mathcal P_n)}h\big(f^{-1}(\nu)\big).
\]
}

{
On the free vector space whose basis consists of the tagged expressions
$h(f^{-1}(\nu))$ (with the level included in the tag), define
\[
\wt L\big(h(f^{-1}(\nu))\big):=h\big(f^{-1}(L\nu)\big),
\qquad
\wt R\big(h(f^{-1}(\nu))\big):=h\big(f^{-1}(R\nu)\big),
\]
and extend $\wt L$ and $\wt R$ linearly. Thus they act on
$S_n(\ell_1,\ldots,\ell_n)$ without ambiguity.
}
Now we prove the following lemma\textcolor{red}{.}
   \begin{lemma}\label{combin}
       For any $n\geq 3$, $S_{n}(\ell_1,{\ldots},\ell_n)=h(P_n)=h(\tau_{-L_2}\cdot\tau_{-L_n}^{n-3})$.
   \end{lemma}

\begin{proof}
{First, it is easy to check that the statement is true for $n=3,4$.}

{Assume that the statement is true for all $3\le n\le k$. For $n=k+1$, observe that}
{
every path in Figure~\ref{fig:structure-number-tree} is either the all-right path or has a unique last left edge followed by $i$ right edges, for some $0\leq i\leq k-3$. Consequently,
\begin{equation}\label{eq: totalexp}
	S_{k+1}(\ell_1,\cdots,\ell_{k+1})=\sum_{i=0}^{k-3}\wt R^{i}\circ \wt L(S_{k-i}(\ell_1,\cdots,\ell_{k-i}))+(-1)^{k-2}L_2^{k-1}.
\end{equation}
}
{By the induction hypothesis, $S_k=h(\tau_{-L_2}\tau_{-L_k}^{k-3})$. The last term in~\eqref{eq: totalexp} depends only on $L_2$.}
	
{
For $0\leq i\leq k-3$, the branch whose last left edge is followed by $i$ right edges satisfies, by the induction hypothesis,
\begin{align*}
\wt R^{i}\circ \wt L(S_{k-i})
=&(-1)^iL_2^{i+1}L_{k+1}^{k-i-3}
\sum_{\substack{I\subseteq\{3,\ldots,k+1\}\\ |I|=i+1}}
\left(L_2+\sum_{j\in I}\ell_j\right).
\end{align*}
Indeed, a chain in this branch has $i+1$ initial empty subsets and then a
fixed subset $I$ of cardinality $i+1$. After removing $I$ from all later
subsets, the remaining fiber is the binary-tree sum at level $P_{k-i}$ on
the complement of $I$, with base length
$L_2+\sum_{j\in I}\ell_j$ and total length $L_{k+1}$. The first $i+1$
factors contribute $L_2^{i+1}$, the $i$ right edges contribute the sign
$(-1)^i$, and the induction hypothesis gives the displayed formula.
Here each subset $I$ occurs exactly once; summing over all permutations would introduce an incorrect factorial multiplicity. Furthermore,
\begin{align*}
\sum_{|I|=i+1}\left(L_2+\sum_{j\in I}\ell_j\right)
=&\binom{k-1}{i+1}L_2+\binom{k-2}{i}(L_{k+1}-L_2),
\end{align*}
which depends only on $L_2$ and $L_{k+1}$.
}

{
Using Pascal's identity, the last displayed sum can be rewritten as
\[
\sum_{|I|=i+1}\left(L_2+\sum_{j\in I}\ell_j\right)
=\binom{k-2}{i}L_{k+1}+\binom{k-2}{i+1}L_2.
\]
Substituting this into~\eqref{eq: totalexp}, the adjacent terms telescope:
\begin{align*}
S_{k+1}
={}&\sum_{i=0}^{k-3}(-1)^iL_2^{i+1}L_{k+1}^{k-i-3}
\left[\binom{k-2}{i}L_{k+1}+\binom{k-2}{i+1}L_2\right]
+(-1)^{k-2}L_2^{k-1}\\
={}&L_2L_{k+1}^{k-2}.
\end{align*}
This completes the induction.
}
\end{proof}
 {
We now complete the proof under $\widehat\P_\lambda$. Let
$Y_0,Y_3,\ldots,Y_n$ be independent random variables having the laws of the
increments of $(\widehat\tau_a)_{0\le a\le L_n}$ over intervals of lengths
$L_2,\ell_3,\ldots,\ell_n$, respectively. For a signed monotone item
\[
M=\varepsilon_M\prod_{j=1}^{n-2}\tau_{-s_{M,j}},
\]
write
\[
s_{M,j}=L_2+\sum_{i\in A_{M,j}}\ell_i,
\qquad
\varnothing=A_{M,1}\subseteq A_{M,2}\subseteq\cdots\subseteq A_{M,n-2}
\subseteq\{3,\ldots,n\}.
\]
Since the sets $A_{M,j}$ are nested, each increment between two successive
levels can be split into independent increments of lengths
$\ell_i$, $i\in A_{M,j}\setminus A_{M,j-1}$. Different monotone items
arrange these deterministic intervals in different orders, but by
stationarity the required equality in law holds termwise, so the same
auxiliary product law for $(Y_0,Y_3,\ldots,Y_n)$ can be used before applying
linearity. Stationarity and independence of the increments therefore give
\[
\widehat\E_\lambda[M]
=\varepsilon_M\E\!\left[
\prod_{j=1}^{n-2}\left(Y_0+\sum_{i\in A_{M,j}}Y_i\right)
\right].
\]
Lemma~\ref{combin} is an identity of formal polynomials in
$L_2,\ell_3,\ldots,\ell_n$. Substituting $L_2=Y_0$ and $\ell_i=Y_i$, summing
over all signed monotone items, and taking expectations gives
\begin{align*}
\widehat\E_\lambda[P_n]
=&\E\!\left[Y_0\left(Y_0+\sum_{i=3}^nY_i\right)^{n-3}\right]=\widehat\E_\lambda[\tau_{-L_2}\tau_{-L_n}^{n-3}].
\end{align*}
This proves the required identity under $\widehat\P_\lambda$, and hence the
proposition. Powers such as $Y_i^r$ already include the terms in which the
same excursion is selected more than once, so the diagonal terms are not
omitted.
}
\end{proof}	
	
\begin{proof}[Proof of Theorem~\ref{thm:area QHn}]
        {Combining Lemma~\ref{lem-QHn-translate} with Proposition~\ref{prop:combinatorics} proves~\eqref{areacompute}. For later use, conditioning the latter expectation on $\cF_{L_n}$ gives}
	\begin{align*}
	    \E^\beta\big[P_n\prod_{i\ge 1}g(x_i) \big]=&\E^{\beta}[\tau_{-L_2}\tau_{-L_n}^{n-3}\prod_{i\geq 1}g(x_i)]\\
	    =& \E^\beta\big[\E_{L_n}\big[\tau_{-L_2}\tau_{-L_n}^{n-3}\big]
	      \prod_{e\in \mathrm{Exc}_{L_n} }F(e)\big].
	\end{align*}
		 {We have $\tau_{-L_n}=\sum_{(e,s)\in \mathrm{Exc}_{L_n}}T(e)$ and $\tau_{-L_2}=\sum_{(e,s)\in \mathrm{Exc}_{L_n}}T(e)1_{\{s\le L_2\}}$. Writing $n-2=\sum_{i=1}^{n-2}ik_i$, we obtain}
    \begin{align*}
        \E_{L_n}\big[\tau_{-L_2}\tau_{-L_n}^{n-3}\big]= \frac{L_2}{L_n} {\sum_{\substack{k_1,\ldots,k_{n-2}\ge0\\\sum_{i=1}^{n-2}ik_i=n-2}}} A((k_i)_i)\times {\Phi[(k_i)_i,T]},
    \end{align*}
{where}
    \begin{align*}
        {A((k_i)_i):=\frac{(n-2)!}{\prod_{i=1}^{n-2}k_i!(i!)^{k_i}},} \qquad
{\Phi[(k_i)_i,T]:=}\sum_{\substack{
		      e_{{i,j}} \in \mathrm{Exc}_{L_n}\\
		    {e_{{i,j}} \ne e_{{i',j'}}\ \text{whenever }}
		    (i,j)\ne (i',j')\\
		    1\le i,i'\le n-2,\ 1\le j\le k_i,\ 1\le j'\le k_{i'}}
		    }
		    \prod_{i=1}^{n-2} \prod_{j=1}^{k_i}  {T(e_{{i,j}})}^{i}\textcolor{red}{.}
    \end{align*}

By Palm's theorem for Poisson point processes (see, e.g., \cite[Page 5]{kallenberg-random-measures}), we have

\begin{align}\label{midstep}
 &\E^\beta \big[\E_{L_n}\big[\tau_{-L_2}\tau_{-L_n}^{n-3}\big]\prod_{e\in \mathrm{Exc}_{L_n} }F(e)\big] \\
 =& \frac{L_2}{L_n} \cdot \E^{\beta}\big[ \prod_{e\in \mathrm{Exc}_{L_n} }F(e)\big] \cdot {\sum_{\substack{k_1,\ldots,k_{n-2}\ge0\\\sum_{i=1}^{n-2}ik_i=n-2}}} A((k_i)_i) \cdot {L_n}^{\sum k_i} \cdot \prod_{i=1}^{n-2} {\big[ \int {T(e)}^{i} F(e) \ul N' (d e)\big]}^{k_i}.
\end{align}

Note that 
\begin{align}\label{mid2step}
\E^{\beta}\big[ \prod_{e\in \mathrm{Exc}_{L_n} }F(e)\big] = \E^{\beta}\big[\prod_{i\ge 1} g(x_i)\big] = e^{-L_n \cdot \sqrt{\mu/{\sin(\pi{\gamma}^2/4)}}},
\end{align}
by the L\'evy-process identity {\cite[Proposition 6.7]{Int-CLE}}.

{These formulas will be combined below with Proposition~\ref{thm-QA2-area} to obtain the derivative form of the answer.}

\end{proof}

{For computational convenience, we now use the stable L\'evy process $\zeta$ with L\'evy measure $\Pi(dx)=\Gamma(-\beta)^{-1}1_{\{x>0\}}x^{-\beta-1}\,dx$. Writing $G_\beta=\Gamma(-\beta)$, we have $(\zeta_{G_\beta t})_{t\ge0}\stackrel d=(\zeta'_t)_{t\ge0}$. Let $N_\ell$ be the probability law of a $\zeta$-excursion of duration $\ell$. Its excursion measure is}

\begin{align}\label{excursion1}
	\ul N = \frac{1}{\beta\cdot \Gamma(1-1/\beta)} \cdot  \int_0^\infty d\ell\,\ell^{-1-1/\beta}\,N_\ell\;{.}
\end{align}
{In accordance with the time change above, $N'_{\ell/G_\beta}$, not $N'_{G_\beta\ell}$, corresponds to $N_\ell$. If $e'$ and $e$ are corresponding excursions, then $T(e')=T(e)/G_\beta$ and their jump multisets agree. Hence, for every $k\ge1$,}
\[
{\int T(e')^kF(e')\,\ul N'(de')
=G_\beta^{-k}\int T(e)^kF(e)\,\ul N(de).}
\]
{Before proceeding to Section~\ref{string equation},}
we introduce a function $\Bar{u}(s)$ corresponding to the excursion measure $\ul N $:
\begin{align}
    \Bar{u}(s)=\int (e^{\frac{T(e)}{\Gamma(-\beta)}s}-1) \prod_{x\in J_e} g(x) \ul N (d e)-\sqrt{\frac{\mu}{\sin(\frac{\pi\gamma^2}{4})}}.
\end{align}
{Thus $\Bar{u}(0)=-\sqrt{\frac{\mu}{\sin(\frac{\pi\gamma^2}{4})}}$ and, for $k\ge1$,
$\Bar{u}^{(k)}(0)={\left(\frac{1}{\Gamma(-\beta)}\right)}^{k}
\int T(e)^k \prod_{x\in J_e} g(x) \ul N (d e)$.}

\begin{lemma}
For $\beta=\frac{4}{\gamma^2}+\frac{1}{2}$, {$\mu>0$, and every integer $k\ge0$},
\begin{align}
    \Bar{u}^{(k)}(0)=C^{(k)}_{\gamma}\times \mu^{\frac{1-\beta k}{2}},
\end{align}  
where $C^{(k)}_\gamma$ {depends only} on $\gamma$.
\end{lemma}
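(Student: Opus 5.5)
The plan is a scaling argument in $\mu$. For $k=0$ the claim is immediate: $\Bar u(0)=-\sqrt{\mu/\sin(\pi\gamma^2/4)}$, so we may take $C^{(0)}_\gamma=-\sin(\pi\gamma^2/4)^{-1/2}$ and the exponent is $\frac{1-0}{2}=\frac12$.

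For $k\ge1$ we use the formula recorded just before the lemma,
\[
\Bar u^{(k)}(0)=\Gamma(-\beta)^{-k}\int T(e)^k\prod_{x\in J_e}g_\mu(x)\,\ul N(de).
\]
Here the dependence on $\mu$ is written explicitly. By Theorem~\ref{thm-FZZ}, $g_\mu(x)=\phi(Mx)$ with $M=\sqrt{\mu/\sin(\pi\gamma^2/4)}$ and
\[
\phi(y)=\frac2{\Gamma(4/\gamma^2)}(y/2)^{4/\gamma^2}K_{4/\gamma^2}(y).
\]
Equivalently, $g_\mu(x)=\E[e^{-\mu x^2S_1}]$, which depends only on $\mu x^2$. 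So we need to know how $\ul N$ transforms under a spatial rescaling of excursions.

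\textbf{Scaling step.} For $c>0$, consider the map $\Psi_c:e\mapsto (c\,e(t/c^{\beta}))_{t\le c^\beta T(e)}$. Under $\Psi_c$, jump sizes are multiplied by $c$ and durations by $c^\beta$. From~\eqref{excursion1} and $N_\ell=$ law of $(\ell^{1/\beta}b_{t/\ell})$ with $b\sim N_1$, the pushforward $N_\ell\circ\Psi_c^{-1}$ equals $N_{c^\beta\ell}$. Substituting $\ell'=c^\beta\ell$ in~\eqref{excursion1} gives
\[
\ul N\circ\Psi_c^{-1}=c^{-1}\,\ul N,
\]
since $\ell^{-1-1/\beta}d\ell=c^{\beta(1+1/\beta)}c^{-\beta}\ell'^{-1-1/\beta}d\ell'=c\,\ell'^{-1-1/\beta}d\ell'$, which in turn gives $\ul N(\Psi_c^{-1}\cdot)=c\,\ul N(\cdot)$ on the image, i.e.\ $\ul N\circ\Psi_c^{-1}=c^{-1}\ul N$. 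I will double-check this exponent against the known identity $\E^\beta[\prod g(x_i)]=e^{-L_nM}$ from~\eqref{mid2step}. That identity says $\int(1-\prod_{J_e}\phi(Mx))\,\ul N'(de)=M$, which is linear in $M$, consistent with a factor $c^{-1}$ under jump scaling by $c$ (taking $c=M^{-1}$).

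\textbf{Applying the scaling.} Take $c=M$ and change variables $e=\Psi_{1/M}(\tilde e)$. Then
\[
\int T(e)^k\prod_{J_e}\phi(Mx)\,\ul N(de)=M^{-\beta k}\cdot M\int T(\tilde e)^k\prod_{J_{\tilde e}}\phi(y)\,\ul N(d\tilde e).
\]
The prefactor is $M^{1-\beta k}\propto \mu^{(1-\beta k)/2}$, so the remaining integral, together with $\Gamma(-\beta)^{-k}$ and the power of $\sin(\pi\gamma^2/4)$, defines $C^{(k)}_\gamma$. This constant depends only on $\gamma$, through $\beta$ and $\phi$.

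\textbf{Main obstacle.} The only real issue is finiteness of $\int T^k\prod\phi\,\ul N$. The bound used in the proof of Proposition~\ref{prop:combinatorics} needs the damping $e^{-\lambda T}$ and does not apply directly. I would instead deduce finiteness from Theorem~\ref{thm:area QHn}, which asserts the right side of~\eqref{areacompute} is finite. Through~\eqref{midstep} with $n=k+2$, that expectation contains the term with $k_k=1$, namely a positive multiple of $\int T^kF\,\ul N'$; since all terms are nonnegative, this integral is finite. Note that the scaling identity holds in $[0,\infty]$ regardless, so finiteness is needed only to make $C^{(k)}_\gamma$ a genuine real constant.
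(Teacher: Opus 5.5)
Your proposal is correct and follows the paper's proof. The paper likewise substitutes \eqref{excursion1}, uses the scaling of $N_\ell$ with the substitution $r=\mu^{\beta/2}\ell$ to extract $\mu^{(1-\beta k)/2}$, and proves finiteness exactly as you do: via \eqref{midstep} with $n=k+2$, the $k_k=1$ Bell term, Theorem~\ref{thm:area QHn}, and the $\ul N'\leftrightarrow\ul N$ time change (which you should cite explicitly). One cosmetic fix: the pushforward identity should read $\ul N\circ\Psi_c^{-1}=c\,\ul N$, not $c^{-1}\ul N$, as your own substitution and your linear-in-$M$ check both show; your final factor $M^{1-\beta k}$ already uses the correct version.
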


\begin{proof}
{The case $k=0$ follows directly from the definition. For $k\ge1$, substitute~\eqref{excursion1} in the formula for $\Bar u^{(k)}(0)$ and use the scaling of $N_\ell$. Writing the result as a double integral and making the change of variables $r=\mu^{\beta/2}\ell$ factors out $\mu^{(1-\beta k)/2}$; the remaining integral depends only on $\gamma$. It is finite: apply~\eqref{midstep} with $n=k+2$. The expectation there is finite by Theorem~\ref{thm:area QHn}; all terms in the Bell sum are non-negative, and the term $k_k=1$ contributes $L_n\int T(e)^kF(e)\,\ul N'(de)$. The time-change identity above therefore gives the corresponding finiteness for $\ul N$. Denote the resulting finite constant by $C_\gamma^{(k)}$.}
\end{proof}

{Using $\Bar u$, we obtain another form of the main result of this subsection.}
\begin{theorem}\label{area-ufunc}
{{By~\eqref{areacompute}, the area transform can be written in terms of the derivatives of $\Bar u$ at $0$:}
\begin{align}
    \QH_n(\ell_1,\ell_2,\ldots,\ell_n)[e^{-\mu A}]=&\frac{\cos(\pi (\frac4{\gamma^2}-1))}{\pi(\bar{R}(\gamma ; 1,1))^{n-2}} \cdot \frac{1}{\sqrt{\prod_{k=1}^n\ell_k}}\cdot \left.\partial_s^{n-3}\left(\Bar{u}'(s)e^{\Bar{u}(s)L_n}\right)\right|_{s=0}\\
    =&\frac{\cos(\pi (\frac4{\gamma^2}-1))}{\pi(\bar{R}(\gamma ; 1,1))^{n-2}}\cdot\frac{1}{\sqrt{\prod_{k=1}^n\ell_k}} \cdot e^{-\sqrt{\frac{\mu}{\sin(\frac{\pi\gamma^2}{4})}}\cdot L_n}\cdot\left(\sum_{j=0}^{n-3}A_j L_n^j\right).\label{areacompute2}
\end{align}
{Here}
\begin{align}
{A_j=\sum_{\substack{k_1,\ldots,k_{n-2}\ge0\\
    \sum_{i=1}^{n-2}ik_i=n-2\\
    \sum_{i=1}^{n-2}k_i=j+1}}
    \frac{(n-2)!}{\prod_{i=1}^{n-2}k_i!(i!)^{k_i}}
    \prod_{i=1}^{n-2}\bigl(\Bar u^{(i)}(0)\bigr)^{k_i}.}
\end{align}
}
\end{theorem}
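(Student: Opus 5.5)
Starting from~\eqref{areacompute}, we substitute~\eqref{midstep} and~\eqref{mid2step} to write $\E^\beta[\tau_{-L_2}\tau_{-L_n}^{n-3}\prod_i g(x_i)]$ explicitly. This gives
\[
\frac{L_2}{L_n}\,e^{-M L_n}\sum_{\sum ik_i=n-2}A((k_i)_i)\,L_n^{\sum k_i}\prod_{i=1}^{n-2}\Bigl[\int T(e)^iF(e)\,\ul N'(de)\Bigr]^{k_i},
\]
with $M=\sqrt{\mu/\sin(\pi\gamma^2/4)}$. Next we convert the $\ul N'$-moments to $\ul N$ via the time-change identity $\int T(e')^kF\,d\ul N'=G_\beta^{-k}\int T^kF\,d\ul N$. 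Since $\sum_i ik_i=n-2$, the powers of $G_\beta$ combine to the common factor $G_\beta^{-(n-2)}$, and each moment becomes $\bar u^{(i)}(0)$. Thus the sum equals $\sum_j A_jL_n^{j+1}$ with $j+1=\sum k_i$. Together with the prefactor $L_2/L_n$ we get $L_2\,e^{-ML_n}\sum_{j=0}^{n-3}A_jL_n^{j}$.

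For the prefactor, Proposition~\ref{thm-QA2-area} gives
\[
|\QA(\ell_1,\ell_2)|=\frac{\cos(\pi(4/\gamma^2-1))}{\pi}\,\frac{1}{\sqrt{\ell_1\ell_2}\,L_2}.
\]
Multiplying by $L_2$ cancels the $L_2$, and with $\prod_{j\ge3}\ell_j^{-1/2}$ we obtain $\prod_k\ell_k^{-1/2}$. The normalizations $R_\gamma^{-(n-2)}$ and $G_\beta^{-(n-2)}$ (together with the excursion-measure normalization constant of $\ul N'$ versus $\ul N$, if one arises) are collected into a single constant $\bar R(\gamma;1,1)^{-(n-2)}$. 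I expect this constant bookkeeping to be the main nuisance. I would check it against $n=3$, where the formula must reduce to part 2 of Proposition~\ref{thm-QA2-area}; this simultaneously yields $C_3(\gamma)$ via $\bar u'(0)=C^{(1)}_\gamma\mu^{(1-\beta)/2}$, consistent with $\mu^{1/4-2/\gamma^2}$. This gives~\eqref{areacompute2}.

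The first line follows from Fa\`a di Bruno's formula. With $\bar u(0)=-M$,
\[
\partial_s^{n-2}e^{L_n\bar u(s)}\big|_{s=0}=e^{-ML_n}\sum_{\sum ik_i=n-2}\frac{(n-2)!}{\prod k_i!(i!)^{k_i}}\prod_i\bigl(L_n\bar u^{(i)}(0)\bigr)^{k_i}=e^{-ML_n}\sum_j A_jL_n^{j+1}.
\]
Since $\partial_s^{n-2}e^{L_n\bar u}=L_n\,\partial_s^{n-3}(\bar u'e^{L_n\bar u})$, dividing by $L_n$ identifies $\partial_s^{n-3}(\bar u'(s)e^{\bar u(s)L_n})|_{s=0}$ with $e^{-ML_n}\sum_jA_jL_n^j$. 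Differentiation under the integral at $s=0$ is justified by the finiteness of $C^{(k)}_\gamma$ from the preceding lemma, using one-sided derivatives for $s\le0$ and monotone convergence. Finally, the statement holds for almost every $\boldsymbol\ell$, and its right side is continuous in $\boldsymbol\ell$, which fixes a canonical version.
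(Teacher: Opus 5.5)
Your route is the same as the paper's. You plug \eqref{midstep} and \eqref{mid2step} into \eqref{areacompute}, cancel $L_2$ against $|\QA(\ell_1,\ell_2)|$, and recognize the Bell sum as $\partial_s^{n-2}e^{L_n\Bar u}|_{s=0}=L_n\,\partial_s^{n-3}(\Bar u'e^{L_n\Bar u})|_{s=0}$ via Fa\`a di Bruno.

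The one thing to correct is your constant bookkeeping, which as written double counts.

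\begin{itemize}
\item Because $\Bar u$ is defined with $e^{T(e)s/\Gamma(-\beta)}$, the time-change identity gives exactly $\Bar u^{(i)}(0)=\Gamma(-\beta)^{-i}\int T(e)^iF\,d\ul N=\int T(e')^iF\,d\ul N'$. So once each moment has become $\Bar u^{(i)}(0)$, there is no leftover $G_\beta^{-(n-2)}$.
\item There is also no extra $\ul N'$-versus-$\ul N$ normalization. The factor $G_\beta^{-1/\beta}$ in $\ul N'$ is precisely the Jacobian of the duration rescaling, and it is already built into that identity.
\end{itemize}

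Hence the only constant is the $R_\gamma^{-(n-2)}$ coming from \eqref{areacompute}, so $\bar R(\gamma;1,1)$ has to be $R_\gamma$, matching Theorem~\ref{thm: 1}. Absorbing $G_\beta^{-(n-2)}$ into it would make the formula wrong by that factor.

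Your proposed $n=3$ sanity check cannot detect this. The constant $C_3(\gamma)$ in Proposition~\ref{thm-QA2-area} is not known independently; the paper determines it from this very theorem, so that check is circular.
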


\begin{proof}
{The time-change identity above gives $\Bar u^{(k)}(0)=\int T(e)^kF(e)\,\ul N'(de)$ for $k\ge1$. Moreover,
    \[
    \partial_s^{n-2}e^{L_n\Bar u(s)}
    =L_n\partial_s^{n-3}\!\left(\Bar u'(s)e^{L_n\Bar u(s)}\right).
    \]
    Combining this identity with~\eqref{midstep}, \eqref{mid2step}, and Proposition~\ref{thm-QA2-area} gives the first equality. The complete Bell-polynomial expansion gives the second equality and the displayed formula for $A_j$.}
\end{proof}
To compute $\QH_n(\ell_1,\ell_2,\ldots,\ell_n)[e^{-\mu A}]$, it suffices to determine the derivatives of $\Bar{u}$ at the origin. A direct evaluation of the corresponding excursion integrals does not appear to be available. In the next section, we instead derive a string equation for $\Bar{u}$ and use Lagrange inversion to determine these derivatives, equivalently the excursion integrals of all orders.

We next define $u(s)$ initially for $s\le 0$. The string equation derived below extends $u$ real-analytically to a neighborhood of the origin. All derivatives at $0$ are understood with respect to this continuation, and
$
u^{(k)}(0)=\Bar u^{(k)}(0), k\ge 0.
$

\begin{definition}
    For $s\leq 0$, we define 
    $$
    u(s):=\int \Big(e^{\frac{T(e)}{\Gamma(-\beta)}s} \prod_{x\in J_e} g(x)-1\Big) \ul N (d e)=\int \Big(e^{T(e)s} \prod_{x\in J_e} g(x)-1\Big) \ul N' (d e).
    $$
\end{definition}
{For $k\ge1$, differentiating for $s<0$ and then letting $s\uparrow0$ gives, by monotone convergence,
$u^{(k)}(0)=\Gamma(-\beta)^{-k}\int T(e)^k\prod_{x\in J_e}g(x)\,\ul N(de)$.
The analytic continuation obtained below has these same Taylor coefficients.  Thus the only remaining nontrivial point is to verify $u(0)=-\sqrt{\mu/\sin(\gamma^2\pi/4)}$; this is proved in Proposition~\ref{same of u}.}


\section{{Campbell's theorem and the string equation for $u(s)$}}\label{string equation}

{
It is predicted in physics that matrix models and Liouville conformal field theory give equivalent descriptions of two-dimensional quantum gravity. Formally, a matrix model provides a discrete description of maps coupled to a statistical-mechanics model, and its double-scaling limit is expected to describe a decorated random planar map converging to LQG coupled with CLE. The string equation {is predicted to determine} this double-scaling limit. In this section, we show that the function $u(s)$ defined above satisfies the corresponding string equation and thereby determines the genus-zero area transform.

We now carry out the required stable-L\'evy-excursion computations. The string equation can be viewed as a variant of Campbell's theorem, which we first recall.
}

\begin{lemma}[Campbell’s theorem]
{
  {Let $f:\mathbb X\to[0,\infty]$ be measurable and $s\ge0$. The Laplace functional of a Poisson point process $\Phi$ on $\mathbb X$ with intensity $\Lambda$ is}
$$
\mathbb{E}\left[e^{-s\sum_{{x\in\Phi}} f({x}) }\right]=e^{-\int_{\mathbb{X}}\left(1-e^{-sf(x)}\right) \Lambda(\mathrm{d} x)} .
$$
}
 
\end{lemma}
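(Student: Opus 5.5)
We will prove Campbell's formula in the standard way: first for simple functions, then for general measurable $f\ge0$ by monotone approximation.

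\emph{Simple functions.} Take $f=\sum_{k=1}^m c_k\mathbf 1_{B_k}$ with the $B_k$ pairwise disjoint measurable sets and $c_k\in[0,\infty]$. Then $\sum_{x\in\Phi}f(x)=\sum_k c_k\,\Phi(B_k)$. By the defining properties of a Poisson point process, the counts $\Phi(B_1),\dots,\Phi(B_m)$ are independent and Poisson with means $\Lambda(B_k)$; an infinite mean means the count is $+\infty$ almost surely. For a Poisson variable $N$ with mean $\lambda$ we have $\E[e^{-tN}]=e^{-\lambda(1-e^{-t})}$ for $t\in[0,\infty]$, with the conventions $e^{-\infty}=0$ and $0\cdot\infty=0$. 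Taking the product over $k$ gives
\[
\exp\Bigl(-\sum_k\Lambda(B_k)(1-e^{-sc_k})\Bigr)=\exp\Bigl(-\int(1-e^{-sf})\,d\Lambda\Bigr).
\]
This is the claim for simple $f$.

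\emph{General $f$.} Choose simple functions $0\le f_j\uparrow f$ pointwise. Then:
\begin{itemize}
\item $\sum_{x\in\Phi}f_j(x)\uparrow\sum_{x\in\Phi}f(x)$ by monotone convergence for sums, so $e^{-s\sum_{x\in\Phi} f_j(x)}\downarrow e^{-s\sum_{x\in\Phi} f(x)}$. Since these integrands are bounded by $1$, dominated convergence gives convergence of the left-hand side.
\item $1-e^{-sf_j}\uparrow 1-e^{-sf}$, so monotone convergence gives convergence of $\int(1-e^{-sf_j})\,d\Lambda$, possibly to $+\infty$. The exponential then converges, to $0$ in that case.
\end{itemize}
Passing to the limit in the simple-function identity proves the formula for every measurable $f\ge0$ and every $s\ge0$.

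\emph{Main care point.} The only delicate issue is $\sigma$-finiteness and infinite values. We use that $\Lambda$ is $\sigma$-finite, as are all intensities in this paper, for example $\lambda\otimes\ul N'$. This guarantees the independent-Poisson-count description on disjoint sets, including sets of infinite mass. It also ensures that when $\int(1-e^{-sf})\,d\Lambda=\infty$, the sum $\sum_{x\in\Phi} f(x)$ is almost surely infinite, consistent with both sides equal to $0$.
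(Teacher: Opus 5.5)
The paper gives no proof of this lemma. It quotes it as the classical Laplace-functional (Campbell) formula and uses it as a black box, for instance for the excursion point process of $\zeta'-I$, whose intensity $\mathbf 1_{[0,L_n]}(s)\,ds\,\underline N'(de)$ is $\sigma$-finite.

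Your argument is the standard textbook proof, and it is correct. The simple-function case reduces to independent Poisson counts on disjoint sets. The extension to general measurable $f\ge0$ by monotone approximation is handled properly on both sides, including the conventions $0\cdot\infty=0$, $e^{-\infty}=0$, coefficients $c_k=\infty$, and the trivial case $s=0$.

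Two small points could be made explicit:
\begin{enumerate}
\item Read $\sum_{x\in\Phi}f(x)$ as $\int f\,d\Phi$, counting multiplicities. Its measurability follows from the simple-function case together with monotone limits.
\item The claim that $\Phi(B)=\infty$ almost surely when $\Lambda(B)=\infty$ follows from $\sigma$-finiteness. Write $B$ as a disjoint union of finite-mass pieces and use that $\P(\mathrm{Poisson}(m)\le K)\to0$ as $m\to\infty$ for each fixed $K$.
\end{enumerate}

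Every intensity to which the paper applies the formula is $\sigma$-finite, so your standing hypothesis covers all of its uses.
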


{
Suppose $
\lexp =\frac{4}{\gamma^2}+\frac12 \in (\frac32,2)$. Recall the L\'evy process {$(\zeta_t)_{t\ge 0}$} with L\'evy measure $\Pi(dx)=\frac{1}{\Gamma(-\beta)}1_{\{x>0\}}x^{-\beta-1}\,dx$, {introduced above}. Let ${N}_1$ be the probability measure on excursions of duration 1 and ${N}_\ell$ the law of $(\ell^{1/\beta}b_{t/\ell}:0\le t\le\ell)$ for $b\sim N_1$. {Let $\Delta\zeta_t:=\zeta_t-\zeta_{t-}$ denote the jump at time $t$, and set $\tau_{-1}:=\inf\{t\ge0:\zeta_t=-1\}$.}
}

Now, let us demonstrate some calculations for explicit functionals related to L\'evy excursions before proceeding further.

\begin{proposition}[{\cite[Lemma 4.2]{nested-stat}}]\label{lem:palm-levy-excursion}
	For $\beta=\frac{4}{\gamma^2}+\frac{1}{2} \in (\frac{3}{2},2)$, let $G\colon [0,\infty)\to [0,\infty)$ be twice continuously differentiable with $G(0)=G'(0)=0$. Suppose that for all $\lambda>0$ there exists $\rho(\lambda)>0$ such that
	\begin{align}
		\label{eq:palm-key-statement}
		\lambda = \frac{1}{\Gamma(-\beta)} \int_0^\infty \frac{dh}{h^{1+\beta}}\,(e^{-\rho(\lambda) h - G(h)}-1+\rho(\lambda) h) \;.
	\end{align}
	Note that the integral above is finite for any $\rho(\lambda)>0$ because of the assumption $G(0)=G'(0)=0$. Then
	\begin{align}
		\label{eq44}
		\begin{split}
		-\rho(\lambda) &= \log \E\left( e^{-\lambda {\tau_{-1}}-\sum_{t< {\tau_{-1}}} G({\Delta\zeta_t})} \right) \\
		&= \frac{1}{\beta \Gamma(1-1/\beta)} \int_0^\infty \frac{d\ell}{\ell^{1+1/\beta}}\left( e^{-\lambda \ell}\,\E\left( e^{-\sum_{t\le 1} G(\ell^{1/\beta}\Delta b_t) } \right) - 1\right)\;.
		\end{split}
	\end{align}
\end{proposition}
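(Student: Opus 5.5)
The first equality in \eqref{eq44} comes from the strong Markov property and the subordinator structure of first-passage times. The second comes from the excursion decomposition below the running infimum, combined with Campbell's theorem. Three steps, in order.

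\textbf{Step 1 (exponential martingale).} Set $\Psi(\rho):=\frac{1}{\Gamma(-\beta)}\int_0^\infty h^{-1-\beta}\bigl(e^{-\rho h-G(h)}-1+\rho h\bigr)\,dh$. Since $\zeta$ is spectrally positive with no drift or Gaussian part in this normalization, the compensator formula gives a local martingale
\[
\mathcal M_t:=\exp\Bigl(-\rho\zeta_t-\sum_{s\le t}G(\Delta\zeta_s)-\Psi(\rho)t\Bigr).
\]
The assumptions $G(0)=G'(0)=0$ with $G\in C^2$ make the small-jump part of $\Psi$ integrable. The large-jump part is harmless because $G\ge0$ and $\rho>0$. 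Stop at $\tau_{-1}\wedge t$. On $[0,\tau_{-1}]$ we have $\zeta\ge-1$, so $e^{-\rho\zeta}\le e^{\rho}$. Choosing $\rho=\rho(\lambda)$ gives $\Psi(\rho)=\lambda>0$, so the stopped martingale is bounded. Also $\tau_{-1}<\infty$ a.s. because $\beta>1$ and there is no drift, so $\zeta$ oscillates. Optional stopping and dominated convergence, using $\zeta_{\tau_{-1}}=-1$ since there are no negative jumps, give
\[
\E\bigl[e^{-\lambda\tau_{-1}-\sum_{t<\tau_{-1}}G(\Delta\zeta_t)}\bigr]=e^{-\rho(\lambda)}.
\]
This is the first equality. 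I expect the main care to be needed in checking that $\mathcal M$ is a true martingale up to the stopping time, i.e.\ the integrability of the compensated jump sum. Boundedness of the stopped process handles this directly.

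\textbf{Step 2 (excursion representation).} Apply Campbell's theorem to the Poisson point process of excursions of $\zeta-I$ during local time $[0,1]$, which has intensity $ds\,\ul N(de)$. The duration $\tau_{-1}$ is $\sum_e T(e)$. The jumps before $\tau_{-1}$ are exactly the union of the excursion jump multisets, because jumps never occur at the infimum itself. Setting $f(e)=\lambda T(e)+\sum_{x\in J_e}G(x)$ gives
\[
\log\E[\cdots]=\int\bigl(e^{-\lambda T(e)-\sum_{x\in J_e}G(x)}-1\bigr)\ul N(de).
\]

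\textbf{Step 3 (disintegrate by duration).} Insert \eqref{excursion1}. Under $N_\ell$ an excursion is $\ell^{1/\beta}b_{t/\ell}$ with $b\sim N_1$, so its jumps are $\ell^{1/\beta}\Delta b_t$, $t\le1$. This yields the displayed $\ell$-integral with constant $\frac1{\beta\Gamma(1-1/\beta)}$. That constant must match the normalization of $\zeta$; this can be checked by taking $G\equiv0$. Then \eqref{eq:palm-key-statement} gives $\lambda=\rho^\beta$, since the Laplace exponent is $\rho^\beta$ in this normalization. Meanwhile $\int_0^\infty\ell^{-1-1/\beta}(e^{-\lambda\ell}-1)d\ell=-\Gamma(1-1/\beta)\beta\lambda^{1/\beta}$, consistent with $-\rho$.
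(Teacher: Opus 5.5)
Your proof is correct. The paper gives no argument of its own for this statement; it imports it verbatim from \cite[Lemma 4.2]{nested-stat}. So the comparison is with a citation rather than an in-paper proof.

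Your route is self-contained and uses only facts the paper records elsewhere.
\begin{itemize}
\item The first equality comes from the exponential martingale $\exp(-\rho\zeta_t-\sum_{s\le t}G(\Delta\zeta_s)-\Psi(\rho)t)$. Up to $\tau_{-1}$ it is bounded by $e^{\rho}$, since $\Psi(\rho(\lambda))=\lambda>0$ and $\zeta\ge-1$ there, so optional stopping applies.
\item The second equality comes from the Poisson point process of excursions indexed by $-I$, with intensity $ds\,\underline{N}(de)$, together with $\tau_{-1}=\sum T(e)$, Campbell's theorem, and the scaling form \eqref{excursion1} of $\underline{N}$.
\end{itemize}

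Your split makes one thing explicit. The hypothesis \eqref{eq:palm-key-statement} enters only through the martingale step. The excursion identity expressing $\log\E[\cdots]$ as the $\ell$-integral holds for every $\lambda\ge0$ with no reference to $\rho$. That is exactly what the paper needs in Proposition~\ref{same of u}, where it lets $\lambda\downarrow0$ and invokes ``the same argument as for its second equality.''

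Your Step 1 also yields, as a by-product, that $\rho(\lambda)$ is unique: two solutions would give two values for the same expectation. The paper instead gets uniqueness from a separate monotonicity argument before applying the proposition.

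Two small points are worth writing out:
\begin{itemize}
\item $\sum_{s\le t}G(\Delta\zeta_s)<\infty$ almost surely, because $G(h)=O(h^2)$ near $0$ and $\sum_{s\le t}(\Delta\zeta_s)^2<\infty$.
\item $\Delta\zeta_{\tau_{-1}}=0$: since $\zeta_{\tau_{-1}-}\ge-1$, a positive jump at $\tau_{-1}$ would force $\zeta_{\tau_{-1}}>-1$. Hence the sum over $t\le\tau_{-1}$ appearing in $\mathcal M_{\tau_{-1}}$ equals the sum over $t<\tau_{-1}$ in the statement.
\end{itemize}
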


Now we recall the function defined in Theorem{~}\ref{thm:area QHn}. To simplify notation, we set $M=\sqrt{\frac{\mu}{\sin(\frac{\gamma^2}{4}\pi)}}$ {and obtain}
\begin{align*}
		g(x) &= \QD^{\#}(1)[e^{-x^2\mu A}]= \frac{2}{\Gamma(\frac{4}{\gamma^2})}\,(\frac{Mx}{2})^{\frac{4}{\gamma^2}}\,K_{\frac{4}{\gamma^2}}(Mx),\quad\text{for $x>0$}{.}
\end{align*}
Note that $g(0)=1$. {Define $G(x)=-\log g(x)$. By the FZZ formula, $G$ is twice continuously differentiable and $G(0)=G'(0)=0$. Moreover, the right-hand side of~\eqref{eq:palm-key-statement}, viewed as a function of $\rho\ge0$, is continuous and strictly increasing, is non-positive at $\rho=0$, and tends to $+\infty$ as $\rho\to\infty$. Thus the required $\rho(\lambda)>0$ exists uniquely for every $\lambda>0$, so Proposition~\ref{lem:palm-levy-excursion} applies and~\eqref{eq44} gives}
\begin{align}
    -\rho(\lambda)&=\frac{1}{\beta \Gamma(1-1/\beta)} \int_0^\infty \frac{d\ell}{\ell^{1+1/\beta}}\left( e^{-\lambda \ell}\,{\E}\left(\prod_{t<1} g(\ell^{1/\beta}\Delta b_t)) \right) - 1\right)\\ 
    &= \int (e^{-T(e)\lambda}\prod_{x\in J_e} g(x)-1) \ul N (d e).\label{equ10}
\end{align}

\begin{proposition}\label{same of u}
{For $\lambda>0$, set $s(\lambda):=\rho(\lambda/\Gamma(-\beta))$. Then $s(\lambda)=-u(-\lambda)$, and $s$ extends continuously to $0$ with
$s(0)=\sqrt{\mu/\sin(\gamma^2\pi/4)}$. Consequently,
$u(0)=-\sqrt{\mu/\sin(\gamma^2\pi/4)}$.}
\end{proposition}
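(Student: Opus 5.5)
The identity $s(\lambda)=-u(-\lambda)$ is a direct comparison of \eqref{equ10} with the definition of $u$; the limit at $0$ takes a separate argument.

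\emph{The identity.} Apply \eqref{equ10} with $\lambda$ replaced by $\lambda/\Gamma(-\beta)$:
\[
-s(\lambda)=\int\Big(e^{-T(e)\lambda/\Gamma(-\beta)}\prod_{x\in J_e}g(x)-1\Big)\,\ul N(de).
\]
The right-hand side is exactly the first expression in the definition of $u(-\lambda)$. Hence $s(\lambda)=-u(-\lambda)$ for every $\lambda>0$.

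\emph{Existence of the limit at $0$.} As $\lambda\downarrow0$, the integrand $e^{-T\lambda/\Gamma(-\beta)}F(e)-1$ increases to $F(e)-1$, and it is bounded above by $0$ and below by $-1$. I would apply monotone convergence to the nonnegative quantities $1-e^{-T\lambda/\Gamma(-\beta)}F(e)$, which decrease to $1-F(e)$. Since they decrease, dominated convergence is needed, with integrable dominant $1-e^{-T(e)}F(e)$ at $\lambda=\Gamma(-\beta)$. This gives $u(-\lambda)\to u(0):=\int(F(e)-1)\,\ul N(de)$, with the value possibly $-\infty$ a priori.

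\emph{Identifying the value.} To evaluate the limit, I would pass to the limit in the defining equation \eqref{eq:palm-key-statement}. The map $\rho\mapsto\Psi(\rho):=\frac1{\Gamma(-\beta)}\int_0^\infty h^{-1-\beta}(e^{-\rho h-G(h)}-1+\rho h)\,dh$ is continuous and strictly increasing. Since $\rho(\lambda)$ is increasing in $\lambda$ and bounded below by $0$, the limit $\rho_0=\lim_{\lambda\downarrow0}\rho(\lambda)$ exists and satisfies $\Psi(\rho_0)=0$. Once uniqueness is settled below, it then suffices to check $\Psi(M)=0$ with $M=\sqrt{\mu/\sin(\pi\gamma^2/4)}$. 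For this, take $\lambda\downarrow0$ in the first line of \eqref{eq44}: by monotone convergence, $e^{-\rho_0}=\E[\prod_{t<\tau_{-1}}g(\Delta\zeta_t)]$. The L\'evy identity \eqref{mid2step} (i.e.\ \cite[Proposition 6.7]{Int-CLE}), after the time change $\zeta_{G_\beta t}=\zeta'_t$, which does not change the jump multiset before $\tau_{-1}$, gives $\E[\prod g(x_i)]=e^{-M}$. Hence $\rho_0=M$, so $s(0)=M$ and $u(0)=-M$. The continuity of $s$ at $0$ follows from the monotone limit.

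\emph{Main obstacle.} The main point needing care is the claim that $\Psi$ has a unique zero on $\rho\ge0$ when $\Psi(0)\le0$. If $\Psi(0)=0$ held, the limit might a priori be ambiguous. This is avoided because the limit is identified directly through \eqref{eq44} and the exponential identity rather than through uniqueness of zeros. Justifying the interchange of limit and expectation in \eqref{eq44} is immediate from $0\le g\le1$ and $e^{-\lambda\tau}\uparrow1$.
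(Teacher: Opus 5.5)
Your proposal is correct and takes essentially the paper's route: you get $s(\lambda)=-u(-\lambda)$ from \eqref{equ10}, pass to the limit $\lambda\downarrow0$ in the first line of \eqref{eq44} by monotone convergence, and identify the limit as $M$ via \cite[Proposition 6.7]{Int-CLE} after the time change. The one difference is that you get $u(0)=\lim_{\lambda\downarrow0}u(-\lambda)$ by dominated convergence on the excursion side, whereas the paper applies the exponential formula (the second equality of \eqref{eq44}) again at $\lambda=0$; your detour through $\Psi$ and the uniqueness of its zero is unnecessary, as you note yourself.
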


\begin{proof}
{For $\lambda>0$, equation~\eqref{equ10} gives $s(\lambda)=-u(-\lambda)$. Letting $\lambda\downarrow0$ in~\eqref{eq44}, using monotone convergence, and then applying the same argument as for its second equality gives}
 
\begin{align*}
    s(0)&= -\frac{1}{\beta \Gamma(1-1/\beta)} \int_0^\infty \frac{d\ell}{\ell^{1+1/\beta}}\left( \E\left(\prod_{{t\le1}} g(\ell^{1/\beta}\Delta b_t)\right) - 1\right)\\
    &=-\log \E\left(\prod_{t<\tau_{-1}} g({\Delta\zeta_t})\right)
    =\sqrt{\frac{\mu}{\sin(\frac{\gamma^2}{4}\pi)}}{.}
\end{align*}
{The third equality follows from the L\'evy-process identity in \cite[Proposition 6.7]{Int-CLE}.}
\end{proof}

{On every common domain where the two excursion integrals are finite, the functions $u$ and $\Bar u$ agree. Indeed, with $F(e)=\prod_{x\in J_e}g(x)$,}
\[
{u(r)-\Bar u(r)=\int(F(e)-1)\,\ul N(de)+M=u(0)+M=0.}
\]

By the definition of $s(\lambda)$, equation \eqref{eq:palm-key-statement} can be written as
\begin{align}\label{equ11}
    \lambda = \int_0^\infty \frac{dh}{h^{1+\beta}}\,(e^{-s(\lambda) h}\cdot g(h)-1+s(\lambda) h){.}
\end{align}


To simplify the computation{,} we introduce another parameter $\alpha$. For $\Re(\alpha)>\frac{4}{\gamma^2}$, we {define}
\begin{align}\label{analyticcontin}
    \lambda(\alpha){:=} \int_0^\infty \frac{dh}{h^{1+\beta}}\,e^{-s(\lambda) h}\cdot g(h)\cdot h^{\frac{1}{2}+\alpha}{.}
\end{align}
{For brevity, write $s=s(\lambda)$.  By the integral formula in Proposition~\ref{besselint} and Lemma~\ref{eulertrans}, for $\Re(\alpha)>4/\gamma^2$,}
{
\begin{align*}
&\int_0^{\infty} \ell^{\alpha-1} e^{-s \ell}
K_{\frac{4}{\gamma^2}}(M \ell)\,d\ell \\
&\quad=\sqrt{\frac{\pi}{2 M}}(M+s)^{1 / 2-\alpha}
\frac{\Gamma(\alpha+\frac{4}{\gamma^2})
\Gamma(\alpha-\frac{4}{\gamma^2})}{\Gamma(\alpha+1 / 2)}
{}_2F_1\!\left(\frac12+\frac{4}{\gamma^2},
\frac12-\frac{4}{\gamma^2};\frac12+\alpha;
\frac{M-s}{2M}\right).
\end{align*}
Consequently,
\begin{align}\label{hypergeo1}
\lambda(\alpha)
={}&\frac{2^{1-\frac{4}{\gamma^2}}M^{\frac{4}{\gamma^2}}}
{\Gamma(\frac{4}{\gamma^2})}\sqrt{\frac{\pi}{2M}}
(M+s)^{1/2-\alpha}\nonumber\\
&\quad\times
\frac{\Gamma(\alpha+\frac{4}{\gamma^2})
\Gamma(\alpha-\frac{4}{\gamma^2})}{\Gamma(\alpha+1/2)}
{}_2F_1\!\left(\frac12+\frac{4}{\gamma^2},
\frac12-\frac{4}{\gamma^2};\frac12+\alpha;
\frac{M-s}{2M}\right).
\end{align}
}

\begin{proposition}[{\cite[Volume 2, Section 7.7. (26)]{Bateman1953HigherTF}}]\label{besselint}
For $\operatorname{Re} \mu>|\operatorname{Re} \nu|$, {$\beta>0$}, and $\operatorname{Re}(\alpha+\beta)>0$, {with complex powers on their principal branches},
\begin{align}\label{bessel int formula}
\int_0^{\infty} x^{\mu-1} e^{-\alpha x} K_\nu(\beta x) d x=\frac{\sqrt{\pi}(2 \beta)^\nu}{(\alpha+\beta)^{\mu+\nu}} \frac{\Gamma(\mu+\nu) \Gamma(\mu-\nu)}{\Gamma\left(\mu+\frac{1}{2}\right)} {}_{2}F_{1}\left(\mu+\nu, \nu+\frac{1}{2} ; \mu+\frac{1}{2} ; \frac{\alpha-\beta}{\alpha+\beta}\right){.}
\end{align}
\end{proposition}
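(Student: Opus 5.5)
The plan is to prove the formula for $\operatorname{Re}\nu>-\tfrac12$ first, using an integral representation of $K_\nu$ that linearizes the $x$-dependence in the exponent. After that, the $x$-integral is a Gamma integral and the remaining integral is an Euler integral for ${}_2F_1$. I would use the classical representation (a variant of \cite[(10.32.8)]{NIST:DLMF}, equivalent to the Laplace-type formula for $K_\nu$)
\[
K_\nu(y)=\frac{\sqrt{\pi}\,(2y)^{\nu}e^{-y}}{\Gamma(\nu+\frac12)}\int_0^\infty e^{-2yt}\,t^{\nu-\frac12}(1+t)^{\nu-\frac12}\,dt,\qquad y>0,\ \operatorname{Re}\nu>-\tfrac12.
\]
If needed, it can be derived from~\eqref{eq-Kv} by the substitution $\cosh u=1+2t$ after an integration by parts.

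Substituting $y=\beta x$ gives a double integral over $(x,t)\in(0,\infty)^2$. I would justify swapping the integrals by Tonelli applied to the absolute values. Near $x=0$ the integrand is $O(x^{\operatorname{Re}(\mu+\nu)-1})$ before the $t$-integration. After the $t$-integration the behaviour is $O(x^{\operatorname{Re}\mu-|\operatorname{Re}\nu|-1})$, which is where the hypothesis $\operatorname{Re}\mu>|\operatorname{Re}\nu|$ enters. At $x=\infty$ there is exponential decay $e^{-\operatorname{Re}(\alpha+\beta)x}$, using $\operatorname{Re}(\alpha+\beta)>0$. Integrating out $x$ then gives
\[
\int_0^\infty x^{\mu+\nu-1}e^{-(\alpha+\beta+2\beta t)x}\,dx=\Gamma(\mu+\nu)\,(\alpha+\beta+2\beta t)^{-\mu-\nu}.
\]
This uses principal powers, which is legitimate since $\operatorname{Re}(\alpha+\beta+2\beta t)>0$.

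Next I substitute $t=s/(1-s)$ with $s\in(0,1)$. The key algebraic identity is
\[
\alpha+\beta+2\beta t=\frac{(\alpha+\beta)(1-zs)}{1-s},\qquad z=\frac{\alpha-\beta}{\alpha+\beta}.
\]
The $t$-integral then becomes
\[
(\alpha+\beta)^{-\mu-\nu}\int_0^1 s^{\nu-\frac12}(1-s)^{\mu-\nu-1}(1-zs)^{-(\mu+\nu)}\,ds.
\]
By Euler's integral, with $a=\mu+\nu$, $b=\nu+\frac12$, $c=\mu+\frac12$, this equals $(\alpha+\beta)^{-\mu-\nu}B(\nu+\frac12,\mu-\nu)\,{}_2F_1(\mu+\nu,\nu+\frac12;\mu+\frac12;z)$. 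The convergence conditions $\operatorname{Re}b>0$ and $\operatorname{Re}(c-b)>0$ are exactly $\operatorname{Re}\nu>-\frac12$ and $\operatorname{Re}\mu>\operatorname{Re}\nu$. Collecting the constants, the factor $\Gamma(\nu+\frac12)$ cancels and the right-hand side of~\eqref{bessel int formula} results.

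The main obstacle is branch bookkeeping. When $-\beta<\operatorname{Re}\alpha\le0$ one has $|z|\ge1$, so the hypergeometric series need not converge. I would handle this by noting that $(\alpha+\beta)(1-zs)=(\alpha+\beta)(1-s)+2\beta s$ has positive real part on $[0,1]$. Hence $z\notin[1,\infty)$, the Euler integral is the principal-branch continuation of ${}_2F_1$, and the complex powers stay on their principal branches consistently.

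To remove the restriction $\operatorname{Re}\nu>-\frac12$, I would use $K_{-\nu}=K_\nu$. If $\operatorname{Re}\nu\le-\frac12$, then $\operatorname{Re}(-\nu)>\frac12$ and $-\nu$ still satisfies the hypotheses, so the formula already proven holds with $\nu$ replaced by $-\nu$. To match the stated form I would apply Euler's transformation ${}_2F_1(a,b;c;z)=(1-z)^{c-a-b}{}_2F_1(c-a,c-b;c;z)$, with $1-z=2\beta/(\alpha+\beta)$, together with a check that the $\Gamma(\mu\pm\nu)$ factors are symmetric. Alternatively, both sides are analytic in $\nu$ on the strip $|\operatorname{Re}\nu|<\operatorname{Re}\mu$, so one can conclude by analytic continuation.
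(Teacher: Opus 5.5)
Your proposal is correct and follows essentially the same route as the paper's appendix proof. Both use the same Poisson-type integral for $K_\nu$ (the paper's $\int_1^\infty e^{-zt}(t^2-1)^{\nu-1/2}\,dt$ form becomes yours under $t\mapsto 1+2t$), then Fubini, the Gamma integral in $x$, and an Euler-type integral for ${}_2F_1$: you use the $[0,1]$ form via $t=s/(1-s)$, the paper the equivalent $(0,\infty)$ form. Both finish with $K_{-\nu}=K_\nu$ and analytic continuation to reach $\operatorname{Re}\nu\le-\tfrac12$.

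Your Tonelli and branch bookkeeping is more careful than the paper's. One aside is inaccurate: the representation of $K_\nu$ does not follow from~\eqref{eq-Kv} by the substitution $\cosh u=1+2t$ plus one integration by parts. Since the main argument rests on the standard quoted formula (DLMF 10.32.8, as the paper likewise quotes Bateman), this does not affect the proof.
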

\begin{proof}
    See Appendix.
\end{proof}
\begin{lemma}[{Pfaff's hypergeometric transformation}]\label{eulertrans}
   {For $c\notin\{0,-1,-2,\ldots\}$,}
    $${}_{2}F_{1}(a,b;c;z)=(1-z)^{-a}{}_{2}F_{1}\left(a,c-b;c;{\frac{z}{z-1}}\right).$$
   {This identity holds initially for compatible principal branches and elsewhere by analytic continuation away from the standard hypergeometric branch cut.}
\end{lemma}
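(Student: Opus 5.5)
The plan is to reduce Pfaff's transformation to Euler's integral representation of the hypergeometric function, and then pass to general parameters by analytic continuation. Throughout, take $\Re c>\Re b>0$ and $z\in\C\setminus[1,\infty)$. Euler's formula reads
\[
{}_2F_1(a,b;c;z)=\frac{\Gamma(c)}{\Gamma(b)\Gamma(c-b)}\int_0^1 t^{b-1}(1-t)^{c-b-1}(1-zt)^{-a}\,dt,
\]
with the principal branch of $(1-zt)^{-a}$. This branch is well defined because $1-zt\notin(-\infty,0]$ for $t\in[0,1]$ and $z\notin[1,\infty)$. The formula itself is verified for $|z|<1$ by expanding $(1-zt)^{-a}$ binomially and integrating termwise using Beta integrals. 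It then extends to the cut plane because both sides are holomorphic there.

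Next, substitute $t=1-s$ in the integral. This gives $1-zt=(1-z)+zs=(1-z)\bigl(1-\tfrac{z}{z-1}s\bigr)$. I then factor out $(1-z)^{-a}$. The point needing care is that the principal branch really does factor as $(1-z)^{-a}\bigl(1-\tfrac{z}{z-1}s\bigr)^{-a}$.

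To check this, I first take $z$ real with $z<1$, where both factors are positive and the identity is obvious. I then observe that $w=z/(z-1)$ maps $\C\setminus[1,\infty)$ onto itself, so both sides are holomorphic in $z$ on the cut plane, and the identity extends by uniqueness of continuation. The remaining integral is
\[
\int_0^1 s^{c-b-1}(1-s)^{b-1}\bigl(1-ws\bigr)^{-a}\,ds,
\]
which is Euler's representation of ${}_2F_1(a,c-b;c;w)$ with the roles of $b$ and $c-b$ swapped. The Gamma prefactor $\Gamma(c)/(\Gamma(b)\Gamma(c-b))$ is symmetric under this swap, so it matches exactly.

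Finally, I remove the restriction $\Re c>\Re b>0$. Both sides, divided by $\Gamma(c)$, are entire in $(a,b,c)$ for fixed $z$ in the cut plane. The stated identity then follows for all $c\notin\{0,-1,-2,\ldots\}$ by the identity theorem in the parameters.

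The only genuine obstacle is the branch bookkeeping: confirming that $(1-zt)^{-a}$ splits into the product of principal powers. I handle this through the real-$z$ check followed by continuation, as described. An alternative route is to compare power series near $z=0$ via Kummer's ODE argument, noting that both sides solve the hypergeometric equation with the same value and regular behavior at $0$, but I expect the integral route to be cleaner.
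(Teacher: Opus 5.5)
Your argument is correct. The paper gives no proof of this lemma: it quotes Pfaff's transformation as a classical identity and uses it only to pass from the Bessel integral of Proposition~\ref{besselint} to \eqref{hypergeo1}.

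Your route is the standard textbook derivation. It goes through Euler's integral, the substitution $t=1-s$, and a branch check: factorization on real $z<1$, extended via the fact that $w=z/(z-1)$ is an involution preserving $\C\setminus[1,\infty)$. It finishes with continuation in the parameters. This buys something concrete. It makes precise what ``compatible principal branches'' means, and it proves the identity on the whole cut plane rather than only where both series converge. That matters for the paper's application, where $z=(s-M)/(s+M)\in(-1,1)$ but $w=(M-s)/(2M)$ can drop below $-1$; this is the continuation along the negative real $w$-axis that the paper invokes.

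One small streamlining concerns your last step. It uses that ${}_2F_1(a,b;c;z)/\Gamma(c)$ is entire in $(a,b,c)$ for every fixed $z$ in the cut plane. That is true, but off the unit disk it is itself a nontrivial standard fact. You can avoid it by doing the parameter continuation on $\{|z|<1,\ \Re z<1/2\}$. There both $|z|<1$ and $|w|<1$, so both sides are power series whose coefficients, after dividing by $\Gamma(c)$, are visibly entire in the parameters. Then, for fixed $(a,b,c)$, extend in $z$ to the cut plane by uniqueness of analytic continuation.
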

{We now justify the continuation to $\alpha=-\frac12$. Put $q=4/\gamma^2$, $\beta=q+\frac12$, and $f_s(h)=e^{-s(\lambda)h}g(h)$. Since $g(0)=1$ and $g'(0)=0$,}
\[
{f_s(h)=1-s(\lambda)h+O(h^2)\qquad(h\downarrow0).}
\]
{Holding $s(\lambda)$ fixed during the continuation, split off these two Taylor terms on $(0,1)$. For $\Re\alpha>q$,}
\begin{align}
\lambda(\alpha)={}&{\int_0^\infty h^{\alpha-q-1}
\left(f_s(h)-\mathbf1_{\{h<1\}}(1-s(\lambda)h)\right)\,dh
+\frac1{\alpha-q}-\frac{s(\lambda)}{\alpha-q+1}.}\label{mellin-regularization}
\end{align}
{At $\alpha=-\frac12$, the two rational terms equal}
\[
{-\frac1\beta+\frac{s(\lambda)}{\beta-1}
=\int_1^\infty h^{-1-\beta}(-1+s(\lambda)h)\,dh.}
\]
{The remaining integral is analytic there: its integrand is $O(h^{1/2-q})$ at zero, which is integrable because $q<3/2$, and it decays exponentially at infinity. Consequently,}
\begin{align}
{\lambda(-\tfrac12)=\int_0^\infty h^{-1-\beta}
\left(e^{-s(\lambda)h}g(h)-1+s(\lambda)h\right)\,dh,}\label{mellin-finite-part}
\end{align}
{which is exactly~\eqref{equ11}, with no additional finite-part constant.}

\begin{proposition}[Hypergeometric Equation]\label{hypergeo}
{Put $w=(M-s(\lambda))/(2M)$.  For $|w|<1$, the series identity below holds; its final hypergeometric expression gives the analytic continuation along the negative real $w$-axis:}
\begin{align}
    \lambda &=\frac{2^{1-\frac{4}{\gamma^2}} M^{\frac{4}{\gamma^2}}}{\Gamma(\frac{4}{\gamma^2})}\sqrt{\frac{\pi}{2 M}} (M+s(\lambda))\cdot \frac{1}{(\frac{1}{4}-({\frac{4}{\gamma^2}})^{2})}\sum_{k=1}^{\infty} \frac{\Gamma(1 / 2+\frac{4}{\gamma^2}+k) \Gamma(1 / 2-\frac{4}{\gamma^2}+k)}{\Gamma(k)\Gamma(k+1) }\left(\frac{M-s(\lambda)}{2 M}\right)^k\nonumber\\
    &={\frac{2^{1-{\frac{4}{\gamma^2}}}}{\Gamma({\frac{4}{\gamma^2}})}\sqrt{2\pi}M^{{\frac{4}{\gamma^2}}+\frac{1}{2}} \frac{\pi}{\cos{(\frac{4}{\gamma^2}\pi)}} \frac{M-s(\lambda)}{2M} {}_{2}{F}_{1}\left(\frac{1}{2}-{\frac{4}{\gamma^2}},\frac{1}{2}+{\frac{4}{\gamma^2}};2;\frac{M-s(\lambda)}{2M}\right).}
\end{align}
\begin{proof}
  {Set $q=4/\gamma^2$, $A=\frac12+q$, $B=\frac12-q$, and $w=(M-s(\lambda))/(2M)$. The limit $\alpha\to-\frac12$ in~\eqref{hypergeo1}, justified by~\eqref{mellin-finite-part}, gives the first line coefficientwise. For $|w|<1$, reindexing the series and using $M+s(\lambda)=2M(1-w)$ gives
    \[
    \frac{M+s(\lambda)}{AB}\sum_{k\ge1}
    \frac{\Gamma(A+k)\Gamma(B+k)}{\Gamma(k)\Gamma(k+1)}w^k
    =2M(1-w)\Gamma(A)\Gamma(B)w\,{}_2F_1(A+1,B+1;2;w).
    \]
    Since $A+B=1$, Euler's transformation yields
    $(1-w){}_2F_1(A+1,B+1;2;w)={}_2F_1(A,B;2;w)$, and the reflection formula gives $\Gamma(A)\Gamma(B)=\pi/\cos(\pi q)$. This proves the second line with no extra minus sign. The series representation is valid for $|w|<1$; the final hypergeometric expression supplies its analytic continuation along the negative real axis.}
\end{proof}
\end{proposition}

{The above proposition shows that $s(\lambda)$ satisfies the hypergeometric equation. Define}
\begin{align}
{L_\mu(\gamma):=\frac{2^{1-{\frac{4}{\gamma^2}}}}{\Gamma({\frac{4}{\gamma^2}})}\sqrt{2\pi}M^{{\frac{4}{\gamma^2}}+\frac{1}{2}} \frac{\pi}{\cos(\frac{4\pi}{\gamma^2})}\frac{\gamma^4\pi^2}{8}.}\label{eq:L-mu}
\end{align}
{Here $\mu>0$ is fixed. Since $4/\gamma^2\in(1,3/2)$, $\cos(4\pi/\gamma^2)<0$ and hence $L_\mu(\gamma)<0$. The implicit relation in Proposition~\ref{hypergeo} has nonzero derivative with respect to $s$ at $(\lambda,s)=(0,M)$; henceforth $s$ denotes its unique real-analytic continuation to a neighborhood of $0$, with $s(0)=M$.}

\begin{theorem}[String Equation]\label{thm:string equ}
{Define $u_{\rm st}(\lambda):=-\frac{16}{\gamma^4\pi^2}\frac{M-s(\lambda L_\mu(\gamma))}{2M}$, using the preceding analytic continuation for $|\lambda|$ small. Then the equation in Proposition~\ref{hypergeo} becomes}
\begin{align}\label{simplehyp}
    -\lambda=\frac{u_{\rm st}(\lambda)}{2} {}_{2}{F}_{1}(\frac{1}{2}-{\frac{4}{\gamma^2}},\frac{1}{2}+{\frac{4}{\gamma^2}},2;-\frac{\gamma^4\pi^2}{16}u_{\rm st}(\lambda)){.}
\end{align}
\end{theorem}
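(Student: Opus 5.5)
The plan is a direct change of variables in the identity of Proposition~\ref{hypergeo}, followed by an analytic-continuation remark. Write $q=4/\gamma^2$ and
\[
K:=\frac{2^{1-q}}{\Gamma(q)}\sqrt{2\pi}\,M^{q+\frac12}\,\frac{\pi}{\cos(\pi q)},
\]
so that Proposition~\ref{hypergeo} reads $\lambda=K\,w\,{}_2F_1(\tfrac12-q,\tfrac12+q;2;w)$ with $w=(M-s(\lambda))/(2M)$. By \eqref{eq:L-mu}, $L_\mu(\gamma)=K\gamma^4\pi^2/8$.

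First, I will substitute $\lambda\mapsto\lambda L_\mu(\gamma)$ in this relation, which is legitimate for $|\lambda|$ small via the real-analytic continuation of $s$ fixed just before the theorem. This gives
\[
\lambda L_\mu(\gamma)=K\,w_\lambda\,{}_2F_1(\tfrac12-q,\tfrac12+q;2;w_\lambda),\qquad w_\lambda=\frac{M-s(\lambda L_\mu(\gamma))}{2M}.
\]
By the definition of $u_{\rm st}$, we have $w_\lambda=-\frac{\gamma^4\pi^2}{16}u_{\rm st}(\lambda)$. Substituting and dividing by $K\gamma^4\pi^2/8$ turns the left side into $\lambda$ and the prefactor $w_\lambda$ into $-\tfrac12u_{\rm st}(\lambda)$, which is exactly \eqref{simplehyp}. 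The constants $\frac{16}{\gamma^4\pi^2}$ and $\frac{\gamma^4\pi^2}{8}$ were chosen precisely so that their ratio produces the factor $\tfrac12$, so this step is bookkeeping.

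The only genuine point is validity of the hypergeometric identity on the relevant range. Proposition~\ref{hypergeo} holds for $\lambda>0$, where $s(\lambda)$ is defined probabilistically. Since $L_\mu(\gamma)<0$, the argument $\lambda L_\mu(\gamma)$ is positive for $\lambda<0$. For small such $\lambda$, $s$ is near $M$, so $w$ is near $0$ and the series form applies. At $(\lambda,w)=(0,0)$ the map $w\mapsto Kw\,{}_2F_1(\cdots;w)$ has derivative $K\neq0$, so the implicit function theorem yields a unique analytic germ. This germ agrees with the probabilistic $s$ for $\lambda>0$, using continuity at $0$ from Proposition~\ref{same of u}. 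Analyticity then extends the identity to a full neighborhood of $0$, including $u_{\rm st}(0)=0$ from $s(0)=M$.

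I expect the main obstacle, if any, to be confirming that the branch of ${}_2F_1$ is the principal one near $w=0$ throughout. The proof handles this because $w$ stays in the unit disk for small $|\lambda|$.
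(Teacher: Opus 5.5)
Your proposal is correct and matches the paper's (implicit) argument: the theorem is just the substitution $w=-\frac{\gamma^4\pi^2}{16}u_{\rm st}(\lambda)$ in Proposition~\ref{hypergeo}, followed by division by $L_\mu(\gamma)=K\gamma^4\pi^2/8$, using the implicit-function-theorem continuation of $s$ that the paper fixes just before the statement. Your extra step, which identifies this germ with the probabilistic $s$ via local uniqueness and the continuity $s(0)=M$ from Proposition~\ref{same of u}, only spells out what the paper asserts in one line.
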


\begin{proposition}\label{allderivative}
 Let \(c=\gamma^4\pi^2/8\). Then, for every $k\ge1$, the derivative 
 \begin{align}\label{exact-derivative-transform}
 u^{(k)}(0)=\Gamma(-\beta)^{-k}\int T(e)^k\prod_{x\in J_e}g(x)\,\ul N(de)
 =-Mc\left(-\frac1{L_\mu(\gamma)}\right)^k u_{\rm st}^{(k)}(0).
 \end{align}
 is given explicitly by~\eqref{lagrange-derivative-formula}.
\end{proposition}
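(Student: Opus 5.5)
The plan has two parts. First, relate the derivatives of $u$ to those of $u_{\rm st}$ by the chain rule. Second, compute $u_{\rm st}^{(k)}(0)$ explicitly by Lagrange inversion applied to the string equation~\eqref{simplehyp}.

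\textbf{Step 1 (chain rule).} By Proposition~\ref{same of u}, $u(-\lambda)=-s(\lambda)$ for $\lambda>0$. By the analytic continuation fixed after~\eqref{eq:L-mu}, this identity extends to a neighbourhood of $0$. Inverting the definition of $u_{\rm st}$ in Theorem~\ref{thm:string equ}, with $c=\gamma^4\pi^2/8$, gives
\[
s(\lambda L_\mu(\gamma))=M+Mc\,u_{\rm st}(\lambda).
\]
Hence
\[
u(r)=-s(-r)=-M-Mc\,u_{\rm st}\bigl(-r/L_\mu(\gamma)\bigr).
\]
Differentiating $k$ times at $r=0$ gives $u^{(k)}(0)=-Mc(-1/L_\mu(\gamma))^k u_{\rm st}^{(k)}(0)$. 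The first equality in~\eqref{exact-derivative-transform} is the monotone-convergence identity recorded after the definition of $u$. One point needs a check here. The one-sided derivatives of $u$ at $0$, obtained from $s<0$, must agree with the Taylor coefficients of the real-analytic continuation. I would argue this as follows. For $\lambda>0$ both functions coincide, and the continuation is analytic at $0$. The excursion integrals are finite: this is the finiteness established for $C^{(k)}_\gamma$ via Theorem~\ref{thm:area QHn}. So Abel/monotone convergence identifies the limits of the derivatives from the left with the derivatives of the analytic function.

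\textbf{Step 2 (Lagrange inversion).} Write~\eqref{simplehyp} as $\lambda=u_{\rm st}/\phi(u_{\rm st})$ with
\[
\phi(w):=-2\big/{}_2F_1\!\left(\tfrac12-\tfrac4{\gamma^2},\tfrac12+\tfrac4{\gamma^2};2;-\tfrac{c}{2}w\right),
\]
which is analytic and nonzero near $w=0$ with $\phi(0)=-2$. The Lagrange inversion theorem then gives
\[
u_{\rm st}^{(k)}(0)=\left.\frac{d^{k-1}}{dw^{k-1}}\phi(w)^k\right|_{w=0}.
\]
To make this explicit I would expand ${}_2F_1(A,B;2;z)=\sum_m \frac{(A)_m(B)_m}{(2)_m m!}z^m$, with $A=\tfrac12-\tfrac4{\gamma^2}$ and $B=\tfrac12+\tfrac4{\gamma^2}$. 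The $(k-1)$-th coefficient of $\phi^k=(-2)^k F^{-k}$ is then a finite sum over compositions of $k-1$, via $F^{-k}=\sum_j\binom{-k}{j}(F-1)^j$. This gives the closed formula~\eqref{lagrange-derivative-formula}, which I would state as
\[
u_{\rm st}^{(k)}(0)=(k-1)!\,(-2)^k\sum_{j}\binom{-k}{j}[w^{k-1}](F(-\tfrac c2w)-1)^j.
\]
Substituting into Step 1 gives the result.

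\textbf{Main obstacle.} The delicate point is Step 1's identification of the derivatives. Specifically, one must show that the excursion-side quantity $u^{(k)}(0)$ is the Taylor coefficient of the continued solution of the implicit equation, not merely a one-sided limit. I would handle it by noting that $u$ is absolutely monotone on $(-\infty,0]$, since its derivatives are positive integrals of $T^ke^{sT}F$. Then Bernstein-type arguments imply that $u$ is analytic on a disc around points near $0^-$, once the finite $k$-th moments bound its power series. Uniqueness of analytic continuation then closes the argument.
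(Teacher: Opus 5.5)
Your proposal is correct and follows the paper's route. The second equality in \eqref{exact-derivative-transform} is the chain rule applied to $u(r)=-M-Mc\,u_{\rm st}(-r/L_\mu(\gamma))$, which comes from Proposition~\ref{same of u} and the definition of $u_{\rm st}$, and Lagrange inversion of \eqref{simplehyp} with $\phi=-2/{}_2F_1$ gives exactly \eqref{lagrange-derivative-formula}. Your Step~1 argument already settles the Taylor-coefficient identification that the paper only asserts: by implicit-function uniqueness and continuity at $0$, the analytic solution agrees with the probabilistic $s$ for small $\lambda>0$, and the derivatives $\int T(e)^k e^{rT(e)}F(e)\,\ul N'(de)$ increase as $r\uparrow0$ to the excursion integrals while also converging to the derivatives of the analytic continuation, which even gives finiteness of the moments for free. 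The Bernstein detour in your last paragraph is therefore unnecessary, and finite moments alone would not give analyticity at $0$ in any case.
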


\begin{proof}
{By~\eqref{exact-derivative-transform}, it is enough to compute $u_{\rm st}^{(k)}(0)$ from~\eqref{simplehyp}.}
\begin{lemma}[Lagrange Inversion Theorem]
Suppose $z$ is defined as a function of $w$ of the form
$
z=f(w){,}
$
where $f$ is analytic at a point $a$ and $f^{\prime}(a) \neq 0$. Then it is possible to invert or solve the equation for $w$, expressing it in the form $w=g(z)$ given by a power series:
$$
g(z)=a+\sum_{n=1}^{\infty} g_n \frac{(z-f(a))^n}{n !}{,}
$$
where
$$
g_n=\lim _{w \rightarrow a} \frac{d^{n-1}}{d w^{n-1}}\left[\left(\frac{w-a}{f(w)-f(a)}\right)^n\right]{.}
$$\end{lemma}
{Define $\Phi(v):=-\frac v2\,{}_2F_1(\frac12-\frac4{\gamma^2},\frac12+\frac4{\gamma^2};2;-\frac{\gamma^4\pi^2}{16}v)$. Then~\eqref{simplehyp} is $\lambda=\Phi(v)$ with $v=u_{\rm st}(\lambda)$. Since $\Phi(0)=0$ and $\Phi'(0)=-\frac12\ne0$, Lagrange inversion gives, for every $k\ge1$,}
\begin{align}
{u_{\rm st}^{(k)}(0)
=\left.\frac{d^{k-1}}{dv^{k-1}}
\left[-\frac2{{}_2F_1(\frac12-\frac4{\gamma^2},\frac12+\frac4{\gamma^2};2;-\frac{\gamma^4\pi^2}{16}v)}\right]^k\right|_{v=0}.}\label{lagrange-derivative-formula}
\end{align}
\end{proof}

\section{LQG and $\gamma${-deformed Weil--Petersson volumes}}

{In this section, we relate $\gamma$-LQG to Weil--Petersson volumes using the area distribution of the quantum $n$-hole sphere. We first derive the joint law of an $n$-point quantum sphere coupled with CLE and then, subject to Proposition~\ref{reweightwelding}, define a $\gamma$-deformed Weil--Petersson volume whose suitably scaled algebraic continuation tends to the usual Weil--Petersson volume as $\gamma\to0$.}

{For $\kappa\in(\frac{8}{3},4)$, Theorem~\ref{thm: welding QSn} gives the following identity of measures on decorated quantum surfaces:}

\begin{align}
{\mathfrak C_n} =
 C\int_{{(0,\infty)^n}}\mathrm{Weld}(\QH_n(\ell_1,\ell_2,{\ldots},\ell_n),\QD_{1,0}(\ell_1),{\ldots},\QD_{1,0}(\ell_n))\prod_{j=1}^n\ell_j d\ell_j.
\end{align}
{Here $C$ is the constant depending only on $\gamma$ given in Proposition~\ref{disk-welding}.}

{Recall from Definition~\ref{def-QS-2} that a Liouville field describes the three-point quantum sphere $\QS_3$.}
 The corresponding $n$-point formula is as follows.
\begin{proposition}
  {For $n\ge3$, when $(\phi, z_4,\ldots,z_n)$ is sampled from $\int_{{\C}^{n-3}} \LF_\C^{(\gamma, z_i)_{i=1}^n} \prod_{i=4}^n d^2z_i$, the law of the decorated quantum surface $(\C, \phi,z_1,z_2,z_3,\ldots,z_n)/{\sim_\gamma}$ is
$\frac{2(Q-\gamma)^2}{\pi \gamma}\QS_n$.}
\end{proposition}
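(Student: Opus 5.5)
We will prove the statement by induction on $n$, starting from Definition~\ref{def-QS-2} and adding one bulk-typical point at a time via Definition~\ref{def-QS}. The base case $n=3$ is immediate: Definition~\ref{def-QS-2} states that the Liouville field $\frac{\pi\gamma}{2(Q-\gamma)^2}\LF_\C^{(\gamma,z_i)_{i=1}^3}$ embeds $\QS_3$ with $(z_1,z_2,z_3)=(0,1,e^{i\pi/3})$. This is exactly the claim with no integration variables.

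For the inductive step we use the standard ``adding a $\gamma$-insertion'' identity for Liouville fields, namely
\[
\int_\C \LF_\C^{(\alpha_i,z_i)_i}(d\phi)\,\mu_\phi(d^2z)\,\delta_z
=\int_\C \LF_\C^{(\alpha_i,z_i)_i,(\gamma,z)}(d\phi)\,\delta_z\, d^2z .
\]
This identity follows from the Girsanov theorem. The key steps are as follows.
\begin{enumerate}
\item Write $\mu_\phi(d^2z)$ as the limit of $\varepsilon^{\gamma^2/2}e^{\gamma\phi_\varepsilon(z)}d^2z$.
\item Apply Girsanov to the Gaussian part $h$, which shifts $h$ by $\gamma G_\C(\cdot,z)$.
\item Absorb $e^{\gamma\mathbf c}$ into the zero-mode density, so that the exponent of $e^{(\sum\alpha_i-2Q)c}$ increases by $\gamma$.
\item Check that the resulting prefactor $|z|_+^{-\gamma(2Q-\gamma)}e^{\gamma\sum_i\alpha_iG_\C(z,z_i)}$ reproduces exactly the extra factor in $C_\C^{(\alpha_i,z_i)_i,(\gamma,z)}$. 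This uses the variance $\E[h_\varepsilon(z)^2]=-\log\varepsilon+2\log|z|_+$, which cancels the factor $\varepsilon^{\gamma^2/2}$.
\end{enumerate}
Iterating this identity shows that $\int_{\C^{n-3}}\LF_\C^{(\gamma,z_i)_{i=1}^n}\prod_{i\ge4}d^2z_i$ is the law of $\phi$ sampled from $\LF_\C^{(\gamma,z_i)_{i=1}^3}$, decorated with $n-3$ additional points sampled from the product area measure $\mu_\phi^{\otimes(n-3)}$.

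It then remains to match this with Definition~\ref{def-QS}. Under that definition, $\QS_n$ is $\QS$ weighted by $\mu_h(\C)^n$, with $n$ independent points drawn from $\mu_h/\mu_h(\C)$; equivalently, $\QS$ decorated with $n$ points drawn from $\mu_h^{\otimes n}$. Likewise, $\QS_3$ is $\QS$ with three points drawn from $\mu_h^{\otimes 3}$. Therefore decorating $\QS_3$ with $n-3$ further points drawn from $\mu_h^{\otimes(n-3)}$ gives $\QS_n$. Since the area measure is intrinsic to the quantum surface, this decoration commutes with taking the embedding $(\C,\phi,z_1,z_2,z_3)$ fixed by the first three points. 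Multiplying by the constant $\frac{2(Q-\gamma)^2}{\pi\gamma}$ from Definition~\ref{def-QS-2} then yields the claim.

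The only delicate step is the Girsanov computation with the regularization. In particular, one must check that the insertion at $z$ is integrable, i.e.\ that $\gamma<Q$, and that the removable points where $z$ coincides with some $z_i$ form a null set. This is standard and appears in \cite[Lemma 2.31]{AHS-SLE-integrability}, which I would cite for the one-point version before iterating.
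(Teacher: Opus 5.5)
Your proposal is correct and follows essentially the same route as the paper. The paper's proof likewise starts from Definition~\ref{def-QS-2} and iterates the Girsanov area-insertion identity $\mu_\phi(dz)\,\LF_\C^{(\alpha_j,z_j)_j}(d\phi)=\LF_\C^{(\gamma,z),(\alpha_j,z_j)_j}(d\phi)\,d^2z$ from \cite{AHS-SLE-integrability} $n-3$ times, then identifies the result with the definition of $\QS_n$ with the constant unchanged; your explicit check of the $C_\C$ prefactor and of the regularization fills in details the paper leaves to the citation.
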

\begin{proof}
  {Use the rigorous area-insertion identity
   \[\mu_\phi(dz)\,\LF_{\C}^{(\alpha_j,z_j)_j}(d\phi)
   =\LF_{\C}^{(\gamma,z),(\alpha_j,z_j)_j}(d\phi)\,d^2z,
   \]
   obtained by regularization and Girsanov's theorem; see \cite{AHS-SLE-integrability}. Iterating it $n-3$ times biases $\QS_3$ by $\mu_\phi(\C)^{n-3}$ and then samples $z_4,\ldots,z_n$ independently from normalized quantum area. This is precisely the definition of $\QS_n$, and the multiplicative factor is unchanged.}
\end{proof}

A corresponding welding formula can be derived by coupling the
Liouville field with the conformal radii of the CLE loops.
Let $\mathsf m_n(d\boldsymbol\eta\mid\boldsymbol z)$ denote the conditional kernel~\ref{def:mn} of the mutually outermost CLE loops associated with the prescribed points, restricted to the contractible configuration. Define the reweighted measure
\begin{align}
\mathsf{m}^{{(\alpha_i)}_{i}}_n({d\boldsymbol\eta\mid\boldsymbol z}):=\prod_{i=1}^n\left(\frac{1}{2} \mathrm{CR}\left(\eta_i, z_i\right)\right)^{2\Delta_{\alpha_i}-2}\mathsf{m}_n({d\boldsymbol\eta\mid\boldsymbol z}){.}
\end{align}
{Here $\Delta_\alpha:=\frac\alpha2(Q-\frac\alpha2)$.}

{
\begin{proposition}\label{reweightwelding}
Let $n\ge3$ and let $\alpha_i\in(Q-\frac\gamma4,Q)$ for $1\le i\le n$.
\begin{equation}\label{eq:reweighted-welding}
\begin{split}
&\int_{\C^{n-3}}\LF_\C^{(\alpha_i,z_i)_{i=1}^n}
 \otimes\mathsf m_n^{(\alpha_i)_{i=1}^n}
 \prod_{i=4}^n d^2z_i\\
&\quad=C_0\frac{2(Q-\gamma)^2}{\pi\gamma}
\int_{(0,\infty)^n}
\operatorname{Weld}\!\left(
\prod_{i=1}^n\mathcal M_1^{\rm disk}(\alpha_i;\ell_i),
\QH_n(\ell_1,\ldots,\ell_n)\right)
\prod_{i=1}^n\ell_i\,d\ell_i,
\end{split}
\end{equation}
where
\[
C_0=C\left(\frac{\gamma}{2\pi(Q-\gamma)^2}\right)^n{,}
\]
and $C$ is the constant in Proposition~\ref{disk-welding}.
\end{proposition}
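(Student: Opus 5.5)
We start from the $\gamma$-insertion case, which is already available. By the $n$-point proposition preceding the statement, $\int_{\C^{n-3}}\LF_\C^{(\gamma,z_i)_{i=1}^n}\prod_{i\ge4}d^2z_i$ embeds $\frac{2(Q-\gamma)^2}{\pi\gamma}\QS_n$. Decorate it by an independent $\CLE_\kappa^\C$ and restrict to the contractible-outermost-loop event. This gives $\frac{2(Q-\gamma)^2}{\pi\gamma}\mathfrak C_n$, with the loop marginal given by $\mathsf m_n(d\boldsymbol\eta\mid\boldsymbol z)$. Theorem~\ref{thm: welding QSn} then writes this measure as $C\int\Weld(\QH_n(\boldsymbol\ell),\QD_{1,0}(\ell_1),\ldots,\QD_{1,0}(\ell_n))\prod\ell_i\,d\ell_i$. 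Next we use Definition~\ref{def-QD-alpha}, which gives $\QD_{1,0}(\ell)=\frac{\gamma}{2\pi(Q-\gamma)^2}\cM_1^\disk(\gamma;\ell)$. This produces exactly the constant $C_0\frac{2(Q-\gamma)^2}{\pi\gamma}$ when $\alpha_i=\gamma$ for all $i$. Here $2\Delta_\gamma-2=0$, so no conformal-radius weight appears.

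To pass to general $\alpha_i$, we change the insertion at each point separately, following the standard argument of \cite{AHS-SLE-integrability,Int-CLE} for the one-loop case. Fix an embedding with the loops $\eta_i$ and the marked points $z_i$. Let $\psi_i:\D\to D_{\eta_i}$ be the conformal map with $\psi_i(0)=z_i$, so that $|\psi_i'(0)|=\mathrm{CR}(\eta_i,z_i)$.

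The key steps are as follows.
\begin{enumerate}
\item Restricted to $D_{\eta_i}$, the field decomposes as the field of $\cM_1^\disk(\gamma;\ell_i)$ pulled back by $\psi_i$, plus $Q\log|\psi_i'|$.
\item Replacing $\gamma$ by $\alpha_i$ at $z_i$ amounts, via Girsanov (formally, the factor $e^{(\alpha_i-\gamma)\phi(z_i)}$ suitably renormalized), to reweighting by $\varepsilon^{(\alpha_i^2-\gamma^2)/2}e^{(\alpha_i-\gamma)\phi_\varepsilon(z_i)}$.
\item Compute this weight on each side. On the disk side, in the chart $\D$, it turns $\cM_1^\disk(\gamma;\ell)$ into $\cM_1^\disk(\alpha_i;\ell)$, with the convention matching Lemma~\ref{lem:field-disk}: the $\mathbf i\in\bbH$ normalization corresponds to the factor $1/2$ in $\mathrm{CR}/2$ after mapping $\bbH\to\D$. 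Changing coordinates from $\D$ to $D_{\eta_i}$ produces $|\psi_i'(0)|^{(\alpha_i-\gamma)Q-(\alpha_i^2-\gamma^2)/2}$, since the $\varepsilon$-circle at $z_i$ corresponds to an $\varepsilon/|\psi_i'(0)|$-circle in $\D$.
\item Check the exponent: $(\alpha-\gamma)Q-\frac{\alpha^2-\gamma^2}{2}=2\Delta_\alpha-2\Delta_\gamma=2\Delta_\alpha-2$, because $\Delta_\gamma=1$. This matches the weight defining $\mathsf m_n^{(\alpha_i)}$.
\item On the sphere side, the same reweighting of $\LF_\C^{(\gamma,z_i)}$ gives $\LF_\C^{(\alpha_i,z_i)}$, since the Liouville fields are built by exactly such Girsanov shifts with the factors $C_\C^{(\alpha_i,z_i)_i}$.
\item Carry out the change for all $i$ simultaneously. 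The reweightings are independent because the $D_{\eta_i}$ are disjoint, and $\QH_n$ is untouched.
\end{enumerate}

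The main obstacle is making step 2 rigorous. The regularized weight is not measurable with respect to the welding pieces, so the $\varepsilon\to0$ limit has to be justified as in \cite[Section~7]{Int-CLE}. The plan there is to use circle averages inside $D_{\eta_i}$, which for small $\varepsilon$ see only the disk piece, and then pass to the limit using uniform integrability. The constraint $\alpha_i\in(Q-\gamma/4,Q)$ should enter here: it keeps $\cM_1^\disk(\alpha_i;\ell)$ finite (FZZ requires $\alpha<Q$). It also controls the integrability of $\mathrm{CR}^{2\Delta_\alpha-2}$ against the CLE nesting law, since the exponent $2\Delta_\alpha-2$ is negative and the CLE conformal-radius moment must converge; the lower bound $Q-\gamma/4$ should match the CLE moment threshold. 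I would first verify this moment finiteness and leave the Girsanov limit to the cited argument.
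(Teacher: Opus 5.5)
Your proposal is correct and follows the same route as the paper, which just cites \cite[Theorem 8.7]{Int-CLE}: first obtain the $\alpha_i=\gamma$ case from the $n$-point Liouville description of $\QS_n$, Theorem~\ref{thm: welding QSn} and $\QD_{1,0}(\ell)=\frac{\gamma}{2\pi(Q-\gamma)^2}\mathcal M_1^{\rm disk}(\gamma;\ell)$, then change each insertion by the regularized Girsanov weight, which produces the factor $(\mathrm{CR}(\eta_i,z_i)/2)^{2\Delta_{\alpha_i}-2}$ with the constants you describe. One small slip: $\Delta_\alpha$ is increasing on $(-\infty,Q)$ with $\Delta_\gamma=1$ and $\gamma\in(Q-\frac\gamma4,Q)$, so $2\Delta_\alpha-2$ is negative only for $\alpha<\gamma$; the small-conformal-radius integrability issue (whose threshold is indeed $\alpha>Q-\frac\gamma4$, by the Schramm--Sheffield--Wilson CLE moment formula) therefore arises only on $(Q-\frac\gamma4,\gamma)$.
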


\begin{proof}
The proof is the same as~\cite[Theorem 8.7]{Int-CLE}. We omit it here.
\end{proof}
we then take
the area Laplace transform on both sides of~\eqref{eq:reweighted-welding}.

\begin{proposition}\label{prop:gamma-wp}
Let $n\ge3$ and $\alpha_i\in(Q-\frac\gamma4,Q)$ for $1\le i\le n$.
Set
\[
\nu_i=\frac2\gamma(Q-\alpha_i),\qquad
\lambda_i=-\frac{2i}{\gamma}(Q-\alpha_i),
\]
so that $\nu_i=i\lambda_i$, and put
$A_\gamma=\gamma^4\pi^2/8$.  Define
\begin{align}\label{eq:def-gamma-wp}
&\int_{\C^{n-3}}\LF_\C^{(\alpha_i,z_i)_{i=1}^n}[e^{-\mu A}]
\,{\big|\mathsf m_n^{(\alpha_i)_{i=1}^n}(\cdot\mid\boldsymbol z)\big|}
\prod_{i=4}^n d^2z_i\nonumber\\
&\qquad=C_{\rm WP}(\gamma,\boldsymbol\alpha,\mu,n)
\prod_{i=1}^n\frac1{\cosh(\pi\lambda_i)}
V_{0,n}^{\rm WP}(\boldsymbol\lambda),
\end{align}
where
\begin{align}\label{cwp}
C_{\rm WP}={}&C_0\frac{2(Q-\gamma)^2}{\pi\gamma}C_2(\gamma,n)
\left(\prod_{i=1}^nC_1(\alpha_i,\gamma)\right)
\left(\frac M2\right)^{\sum_i\nu_i}M^{1-n/2}\nonumber\\
&\quad\times(-1)^{n-2}\frac{\gamma^4\pi^2}{4}
\left(\frac{(2\pi)^{3/2}}4\right)^n
L_\mu(\gamma)^{-(n-2)},
\end{align}
with
\[
C_1(\alpha,\gamma)=\frac2\gamma\textcolor{red}{\,}2^{-\alpha^2/2}\ol U(\alpha)
\frac2{\Gamma(\frac2\gamma(Q-\alpha))},\qquad
C_2(\gamma,n)=\frac{\cos(\pi(\frac4{\gamma^2}-1))}
{\pi(\bar R(\gamma;1,1))^{n-2}}.
\]
Then
\begin{equation}\label{eq:corrected-gamma-wp}
V_{0,n}^{\rm WP}(\boldsymbol\lambda)
=-\frac12\left.\partial_x^{n-3}\left[
u_{\rm st}'(x)\prod_{i=1}^n
P_{-1/2-i\lambda_i}\bigl(1+A_\gamma u_{\rm st}(x)\bigr)
\right]\right|_{x=0}.
\end{equation}
The Bessel-transform identity used in the proof is first valid for real $\lambda_i$ and then extends
meromorphically to $|\operatorname{Im}\lambda_i|<\frac12$; in particular,
it applies to the displayed physical values.  We use the branch of
$P_\nu$ analytic at $1$ and normalized by $P_\nu(1)=1$.
\end{proposition}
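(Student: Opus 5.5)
We take the area Laplace transform of both sides of the reweighted welding identity \eqref{eq:reweighted-welding} in Proposition~\ref{reweightwelding}. Under the welding, the total area is the sum of the areas of the $n$ disks $\mathcal M_1^{\rm disk}(\alpha_i;\ell_i)$ and of the $n$-hole sphere. Conditionally on the boundary lengths, these pieces are independent, so the integrand factorizes. The right-hand side then becomes
\[
C_0\frac{2(Q-\gamma)^2}{\pi\gamma}\int_{(0,\infty)^n}\QH_n(\boldsymbol\ell)[e^{-\mu A}]\prod_{i=1}^n\mathcal M_1^{\rm disk}(\alpha_i;\ell_i)[e^{-\mu A}]\,\ell_i\,d\ell_i .
\]
The left-hand side is, by definition, the quantity in \eqref{eq:def-gamma-wp}. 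We substitute the FZZ formula \eqref{eq:malpha} for each disk factor; this gives $C_1(\alpha_i,\gamma)\ell_i^{-1}(M/2)^{\nu_i}K_{\nu_i}(M\ell_i)$. We substitute Theorem~\ref{area-ufunc} for $\QH_n$, which gives $C_2(\gamma,n)\prod_i\ell_i^{-1/2}\,\partial_s^{n-3}[\bar u'(s)e^{L_n\bar u(s)}]|_{s=0}$. The $\ell_i$ weights combine to $\ell_i^{-1/2}K_{\nu_i}(M\ell_i)$. Because $e^{L_n\bar u}=\prod_i e^{\ell_i\bar u}$, we exchange $\partial_s^{n-3}$ with the $\ell$-integrals. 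This exchange is justified for $s$ in a small real neighborhood of $0$ using the analytic continuation from Section~\ref{string equation}, together with dominated convergence: $-\bar u(s)$ stays near $M>0$, so the exponential decay of $K_\nu$ dominates. The integral then factorizes into one-dimensional integrals
\[
\int_0^\infty \ell^{-1/2}e^{\ell\bar u(s)}K_{\nu}(M\ell)\,d\ell .
\]

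Each such integral is evaluated with Proposition~\ref{besselint}, taking Mellin exponent $1/2$ and Laplace parameter $-\bar u(s)$. In this case $\Gamma(\frac12+\nu)\Gamma(\frac12-\nu)=\pi/\cos(\pi\nu)=\pi/\cosh(\pi\lambda)$, since $\nu=i\lambda$. The remaining ${}_2F_1(\frac12+\nu,\frac12+\nu;1;\cdot)$ is converted by Pfaff's transformation (Lemma~\ref{eulertrans}) and the standard identity $P_{-1/2+\nu}(z)={}_2F_1(\frac12-\nu,\frac12+\nu;1;\frac{1-z}{2})$ into a Legendre function. Its argument is $z=-\bar u(s)/M$, and the prefactor $(\ldots)^{-1/2}$ combines with the $M$-powers. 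Concretely we expect
\[
\int_0^\infty \ell^{-1/2}e^{-a\ell}K_\nu(M\ell)\,d\ell=\sqrt{\pi/(2M)}\,\frac{\pi}{\cosh\pi\lambda}\,P_{-1/2+\nu}(a/M),
\]
with $P_{-1/2+\nu}=P_{-1/2-\nu}$. This is where the constants $(2\pi)^{3/2}/4$ and $M^{-n/2}$ in \eqref{cwp} should come from. The identity is first valid for real $\lambda_i$ and then extends meromorphically to $|\operatorname{Im}\lambda_i|<\frac12$, which covers the stated range of $\alpha_i$.

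Finally we change variables from $s$ to $x$. Proposition~\ref{allderivative} and Theorem~\ref{thm:string equ} give $\bar u(s)=u(s)=-M(1+A_\gamma u_{\rm st}(x))$ with $s=-xL_\mu(\gamma)$; here $c=A_\gamma$ and $u_{\rm st}=-\frac{16}{\gamma^4\pi^2}\frac{M+u}{2M}$. Hence $-\bar u/M=1+A_\gamma u_{\rm st}(x)$ and $\bar u'(s)=M A_\gamma L_\mu^{-1}u_{\rm st}'(x)$. Each $\partial_s$ becomes $-L_\mu^{-1}\partial_x$, which produces the factors $(-1)^{n-2}L_\mu^{-(n-2)}$ and $\gamma^4\pi^2/4=2A_\gamma$; together with the leading $-\frac12$ these reassemble the stated normalization. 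After collecting $\prod_i 1/\cosh(\pi\lambda_i)$ we read off \eqref{eq:corrected-gamma-wp}.

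The main obstacle is bookkeeping rather than analysis. It consists of verifying the exact Bessel–Legendre identity with the correct branch and prefactor, and matching every constant against \eqref{cwp}. The only genuinely analytic point is justifying the interchange of $\partial_s^{n-3}$ with the integrals near $s=0$. For that we would first work for real $s\le0$, where the formulas are defined directly, and then pass to $s=0$ by real-analyticity, since both sides are analytic in $s$ on a neighborhood of $0$.
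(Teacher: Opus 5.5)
Your proposal is correct and follows the paper's proof essentially step for step. It takes the area transform of the reweighted welding identity, substitutes the FZZ formula and Theorem~\ref{area-ufunc}, factorizes $e^{L_n\bar u(s)}$ into one-dimensional Bessel integrals equal to $\frac{(2\pi)^{3/2}}{4\cosh(\pi\lambda)}P_{-1/2-i\lambda}$, and applies the chain rule through $u(s)=-M\bigl(1+A_\gamma u_{\rm st}(-s/L_\mu(\gamma))\bigr)$. The only minor differences are that you derive the Bessel--Legendre identity from Proposition~\ref{besselint} plus Pfaff's transformation rather than from the paper's quadratic-argument form, and you justify exchanging $\partial_s^{n-3}$ with the $\ell$-integrals more explicitly by dominated convergence.
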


\begin{proof}
Proposition~\ref{reweightwelding}, Theorem~\ref{thm-FZZ}, and
Theorem~\ref{area-ufunc} give the left-hand side of
\eqref{eq:def-gamma-wp} as
\begin{align}
&C_0\frac{2(Q-\gamma)^2}{\pi\gamma}C_2(\gamma,n)
\left(\prod_iC_1(\alpha_i,\gamma)\right)
\left(\frac M2\right)^{\sum_i\nu_i}M^{1-n/2}\,\mathcal J,
\end{align}
where, after setting $x_i=M\ell_i$ and $\widetilde u={u}/M$,
\begin{equation}
\mathcal J=\int_{(0,\infty)^n}
\left.\partial_s^{n-3}\left[
\widetilde u'(s)e^{\widetilde u(s)\sum_i x_i}\right]\right|_{s=0}
\prod_{i=1}^nx_i^{-1/2}K_{i\lambda_i}(x_i)\,dx_i.
\end{equation}
For $t\le-1$, the required Bessel integral is
\begin{align}\label{eq:corrected-bessel-legendre}
\int_0^\infty x^{-1/2}K_{i\lambda}(x)e^{tx}\,dx
&=\frac{(2\pi)^{3/2}}{4\cosh(\pi\lambda)}
{}_2F_1\!\left(\frac14+\frac{i\lambda}{2},
\frac14-\frac{i\lambda}{2};1;1-t^2\right)\nonumber\\
&=\frac{(2\pi)^{3/2}}{4\cosh(\pi\lambda)}
P_{-1/2-i\lambda}(-t).
\end{align}
{Both sides are analytic in $t$ in a neighborhood of $-1$, so the identity extends to that neighborhood.}
It follows, initially in the common convergence strip, that
\begin{align}
\mathcal J={}&\left(\frac{(2\pi)^{3/2}}4\right)^n
\prod_{i=1}^n\frac1{\cosh(\pi\lambda_i)}\nonumber\\
&\times\left.\partial_s^{n-3}\left[
\widetilde u'(s)
\prod_{i=1}^nP_{-1/2-i\lambda_i}(-\widetilde u(s))
\right]\right|_{s=0}.
\end{align}
By Proposition~\ref{same of u} and Theorem~\ref{thm:string equ},
\[
\widetilde u(s)=-\left(1+A_\gamma
u_{\rm st}\left(-\frac{s}{L_\mu(\gamma)}\right)\right).
\]
Writing $m=n-3$ and applying the chain rule gives
\begin{align}
&\left.\partial_s^m\left[
\widetilde u'(s)
\prod_iP_{-1/2-i\lambda_i}(-\widetilde u(s))
\right]\right|_{s=0}\nonumber\\
&\qquad=(-1)^{n-2}\frac{\gamma^4\pi^2}{4}
L_\mu(\gamma)^{-(n-2)}V_{0,n}^{\rm WP}(\boldsymbol\lambda).
\end{align}
Substitution proves~\eqref{cwp} and~\eqref{eq:corrected-gamma-wp}.
The meromorphic continuation follows because only a finite jet at
$s=0$ is used.
\end{proof}
}
{
\begin{corollary}
Under the algebraic continuation specified above, for $n\ge3$, after algebraically continuing the explicit expression
\eqref{eq:corrected-gamma-wp} from the LQG range $\gamma^2\in(8/3,4)$
to a punctured neighborhood of $\gamma=0$,
\begin{equation}
V_{0,n}^{\rm WP}\!\left(\frac{2b_1}{\pi\gamma^2},\ldots,
\frac{2b_n}{\pi\gamma^2}\right)
=\sum_{s=0}^{n-3}\ \sum_{\substack{\rho\vdash s\\\ell(\rho)\le n}}
C_{\rho,n}(\gamma)\,m_\rho(b_1^2,\ldots,b_n^2),
\end{equation}
where $m_\rho$ is the monomial symmetric polynomial indexed by the
partition $\rho$, and $C_{\rho,n}(\gamma)\in\mathbb Q[\pi^2,\gamma^2]$.
\end{corollary}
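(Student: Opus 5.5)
We start from formula~\eqref{eq:corrected-gamma-wp} and make both its $x$-dependence and its $\lambda_i$-dependence explicit as finite Taylor data. Write $A_\gamma=\gamma^4\pi^2/8$ and $q=4/\gamma^2$.

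\emph{Expansion of $u_{\rm st}$.} By the Lagrange inversion formula~\eqref{lagrange-derivative-formula}, each Taylor coefficient $u_{\rm st}^{(k)}(0)$ is a polynomial in the Taylor coefficients of ${}_2F_1(\frac12-q,\frac12+q;2;-\frac{\gamma^4\pi^2}{16}v)$ at $v=0$. Each such coefficient has the form $\frac{(\frac12-q)_j(\frac12+q)_j}{(2)_j j!}\left(-\frac{\gamma^4\pi^2}{16}\right)^j$.

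Since $(\frac12-q)_j(\frac12+q)_j=\prod_{m<j}\bigl((m+\frac12)^2-q^2\bigr)$, multiplying by $\gamma^{4j}$ gives a polynomial in $\gamma^2$ (and in $\gamma^4$), with rational coefficients times $\pi^{2j}$. Hence every $u_{\rm st}^{(k)}(0)$ lies in $\mathbb Q[\pi^2,\gamma^2]$. The same expression makes sense for $\gamma$ near $0$, and this defines the algebraic continuation.

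\emph{Expansion of the Legendre factors.} Next expand $P_{-1/2-i\lambda}(1+w)={}_2F_1(\frac12+i\lambda,\frac12-i\lambda;1;-w/2)$ in powers of $w$. The $j$th coefficient is
\[
\frac{\prod_{m<j}\bigl((m+\frac12)^2+\lambda^2\bigr)}{(j!)^2}\left(-\frac12\right)^j,
\]
which is a polynomial in $\lambda^2$ of degree $j$. Substitute $w=A_\gamma u_{\rm st}(x)$ and $\lambda_i=\frac{2b_i}{\pi\gamma^2}$. Then $\lambda_i^2A_\gamma=\frac{b_i^2\gamma^4\pi^2}{8}\cdot\frac{4}{\pi^2\gamma^4}=\frac{b_i^2}{2}$, so $\lambda^2A_\gamma$ carries no $\gamma$. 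Each factor $(m+\frac12)^2A_\gamma$ is an element of $\mathbb Q[\pi^2,\gamma^2]$ (indeed it is divisible by $\gamma^4$). Therefore the $j$th coefficient of $P_{-1/2-i\lambda_i}(1+A_\gamma v)$ in $v$ is a polynomial in $b_i^2$ of degree at most $j$, with coefficients in $\mathbb Q[\pi^2,\gamma^2]$.

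\emph{Assembling.} Now apply $\partial_x^{n-3}$ at $x=0$ to $u_{\rm st}'(x)\prod_i P(\cdot)$. Because $u_{\rm st}(0)=0$, only Legendre coefficients with total order $\sum_i j_i\le n-3$ contribute. Each contribution is a product of such coefficients with Taylor coefficients of $u_{\rm st}$, which we expand via Faà di Bruno. The result is a polynomial in $b_1^2,\dots,b_n^2$ of total degree at most $n-3$, with coefficients in $\mathbb Q[\pi^2,\gamma^2]$; the overall factor $-\frac12$ is harmless.

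The expression is symmetric under permutations of the $b_i$, because it is a product over $i$ of the same function of $b_i^2$. Hence it expands in the monomial symmetric basis $m_\rho(b^2)$ with $|\rho|=s\le n-3$. Partitions with $\ell(\rho)>n$ do not occur, since there are only $n$ variables.

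\emph{Main obstacle.} The hard part is bookkeeping rather than analysis: we must check that no negative powers of $\gamma$ survive after the rescaling $\lambda=2b/(\pi\gamma^2)$. The key cancellation is $\lambda^2A_\gamma=b^2/2$. We must also verify, via the Pochhammer product form, that the $u_{\rm st}$ coefficients involve $\gamma^4q^2=16$ and not bare $q$. Finally, we must say precisely what ``algebraic continuation'' means, namely continuing the coefficients as polynomials in $\gamma^2$. Once this is in place, the corollary follows from the finiteness of the jet.
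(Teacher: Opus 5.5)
Your proposal is correct and follows the same route as the paper: Taylor-expand each Legendre factor about $1$, use $u_{\rm st}(0)=0$ to truncate to total order at most $n-3$, obtain the jet of $u_{\rm st}$ by Lagrange inversion of \eqref{simplehyp}, and substitute $\lambda_i=2b_i/(\pi\gamma^2)$. Your added bookkeeping is exactly what the paper's terse proof leaves implicit when it asserts that no negative powers of $\gamma$ appear and that $C_{\rho,n}(\gamma)\in\mathbb Q[\pi^2,\gamma^2]$: namely the cancellation $\lambda_i^2A_\gamma=b_i^2/2$, and the Pochhammer product in which $\gamma^4q^2=16$ appears in the ${}_2F_1$ coefficients.
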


\begin{proof}
Near $t=0$,
\begin{equation}
P_{-1/2-i\lambda}(1+t)
=\sum_{r=0}^\infty
\frac{(\frac12+i\lambda)_r(\frac12-i\lambda)_r}{(r!)^2}
\left(-\frac t2\right)^r.
\end{equation}
The coefficient of $t^r$ is a polynomial of degree $r$ in $\lambda^2$.
Only terms of total $t$-degree at most $n-3$ survive the derivative in
\eqref{eq:corrected-gamma-wp}.  Lagrange inversion applied to
\eqref{simplehyp}, followed by
$\lambda_i=2b_i/(\pi\gamma^2)$, gives the stated finite symmetric
polynomial.  A coefficient indexed only by the total degree would be
insufficient, since different partitions of the same degree generally
have different coefficients.
\end{proof}

For example, for the unscaled spectral variables $b_i$,
\begin{align*}
V_{0,4}^{\rm WP}(b_1,b_2,b_3,b_4)
&=2\pi^2+\frac{6\pi^2}{(8/\gamma^2)^2}
+\frac12\sum_{i=1}^4\left(\frac{\pi\gamma^2}{2}b_i\right)^2,
\end{align*}
and
\begin{align*}
V_{0,5}^{\rm WP}(b_1,\ldots,b_5)
&=10\pi^4+\frac{56\pi^4}{(8/\gamma^2)^2}
+\frac{104\pi^4}{(8/\gamma^2)^4}\\
&\quad+\left(3\pi^2+\frac{10\pi^2}{(8/\gamma^2)^2}\right)
\sum_i\left(\frac{\pi\gamma^2}{2}b_i\right)^2\\
&\quad+\frac12\sum_{i<j}
\left(\frac{\pi\gamma^2}{2}b_i\right)^2
\left(\frac{\pi\gamma^2}{2}b_j\right)^2
+\frac18\sum_i\left(\frac{\pi\gamma^2}{2}b_i\right)^4.
\end{align*}

The Weil--Petersson volume of $\mathcal M_{g,n}$ is
\[
\operatorname{Vol}_{\rm WP}(\mathcal M_{g,n})
=\frac1{(3g-3+n)!}\int_{\overline{\mathcal M}_{g,n}}
\omega_{\rm WP}^{3g-3+n}.
\]
For geodesic boundary lengths $L_1,\ldots,L_n$, symplectic reduction
gives \cite{mirzakhani2015towards}
\[
\operatorname{Vol}_{\rm WP}(\mathcal M_{g,n}(L_1,\ldots,L_n))
=\frac1{(3g-3+n)!}\int_{\overline{\mathcal M}_{g,n}}
\left(\omega_{\rm WP}+\sum_{i=1}^n\frac{L_i^2}{2}\psi_i\right)^{3g-3+n}.
\]

\begin{theorem}\label{gamma 0 wp}
Under the algebraic continuation specified above, for $n\ge3$,
\begin{equation}
\lim_{\gamma\to0}V_{0,n}^{\rm WP}\!\left(
\frac{2b_1}{\pi\gamma^2},\ldots,
\frac{2b_n}{\pi\gamma^2}\right)
=\operatorname{Vol}_{\rm WP}(\mathcal M_{0,n}(b_1,\ldots,b_n)).
\end{equation}
The convergence is coefficientwise as a polynomial in
$b_1^2,\ldots,b_n^2$, and hence locally uniform in $\boldsymbol b$.
\end{theorem}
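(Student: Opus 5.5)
The plan is to pass to the limit $\gamma\to0$ directly in the finite-jet formula~\eqref{eq:corrected-gamma-wp}, coefficient by coefficient. The limit should reproduce the known genus-zero string-equation representation of Weil--Petersson volumes recalled in Appendix~A (Zograf, Kaufmann--Manin--Zagier, Itzykson--Zuber). Write $q=4/\gamma^2\to\infty$. Then
\[
A_\gamma=\frac{\gamma^4\pi^2}{8}=\frac{2\pi^2}{q^2},\qquad
\lambda_i=\frac{2b_i}{\pi\gamma^2}=\frac{q\,b_i}{2\pi}.
\]
Since only the $(n-3)$-jet at $x=0$ enters, it suffices to show two things. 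First, the Taylor coefficients of $u_{\rm st}$ at $0$ up to order $n-2$ converge. Second, the coefficients of $t^r$, $r\le n-3$, of $P_{-1/2-i\lambda_i}(1+A_\gamma t)$ converge, as polynomials in $b_i^2$ whose coefficients lie in $\mathbb Q[\pi^2,\gamma^2]$. The derivative in~\eqref{eq:corrected-gamma-wp} is a fixed polynomial in these finitely many coefficients, so coefficientwise convergence of $V^{\rm WP}_{0,n}$ in $b_1^2,\ldots,b_n^2$ follows. Local uniformity in $\boldsymbol b$ is then automatic for polynomials of bounded degree.

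\emph{Step 1: limit of the string equation.} The $k$-th term of the hypergeometric function in~\eqref{simplehyp} is
\[
\frac{(\tfrac12-q)_k(\tfrac12+q)_k}{(2)_k\,k!}\left(-\frac{\pi^2u}{q^2}\right)^k .
\]
Here $(\tfrac12-q)_k(\tfrac12+q)_k=\prod_{j<k}\bigl((j+\tfrac12)^2-q^2\bigr)=(-q^2)^k\bigl(1+O(q^{-2})\bigr)$, and the error is a polynomial in $q^{-2}$. Hence the $k$-th coefficient tends to $\pi^{2k}u^k/(k!(k+1)!)$. The limiting string equation is therefore
\[
-x=\frac u2\sum_{k\ge0}\frac{(\pi^2u)^k}{k!(k+1)!}=\frac{\sqrt u}{2\pi}\,I_1(2\pi\sqrt u).
\]
Its derivative at $0$ is still $-\tfrac12\neq0$, uniformly in $\gamma$. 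Lagrange inversion~\eqref{lagrange-derivative-formula} expresses each $u_{\rm st}^{(k)}(0)$ as a polynomial in finitely many of these coefficients, divided by a power of $-\tfrac12$. So each $u_{\rm st}^{(k)}(0)$ converges to the corresponding coefficient for the limiting equation.

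\emph{Step 2: limit of the boundary factors.} Use the series for $P_{-1/2-i\lambda}(1+t)$ from the preceding corollary with $t=A_\gamma u$. The coefficient
\[
(\tfrac12+i\lambda)_r(\tfrac12-i\lambda)_r=\prod_{j<r}\bigl((j+\tfrac12)^2+\lambda^2\bigr)
\]
equals $\lambda^{2r}$ plus lower powers of $\lambda^2$. Substituting $\lambda^2=q^2b^2/(4\pi^2)$ and $t=2\pi^2u/q^2$, the $r$-th term becomes $\frac{(-b^2u/4)^r}{(r!)^2}\bigl(1+O(q^{-2})\bigr)$. The $O(q^{-2})$ correction is a polynomial in $q^{-2}$ and $b^2$ of bounded degree. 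The limit is $\sum_r(-b^2u/4)^r/(r!)^2=J_0(b\sqrt u)$. Hence
\[
\lim_{\gamma\to0}V^{\rm WP}_{0,n}=-\tfrac12\,\partial_x^{n-3}\Bigl[u'(x)\prod_i J_0\bigl(b_i\sqrt{u(x)}\bigr)\Bigr]_{x=0},
\]
where $u$ solves the limiting equation of Step~1.

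\emph{Step 3: identification.} I expect this to be the main obstacle. It consists of matching this expression with the genus-zero formula for $\operatorname{Vol}_{\rm WP}(\mathcal M_{0,n}(\boldsymbol b))$ in Appendix~A, including the normalizations and signs. The substitution $u\mapsto -u$ converts $I_1\leftrightarrow J_1$ and $J_0\leftrightarrow I_0$, and it changes the prefactor by $(-1)^{n-2}$; this should reconcile the two conventions. I would first check the case $n=4$. From Steps~1--2, $u_{\rm st}'(0)=-2$ and $u_{\rm st}''(0)=2\pi^2u_{\rm st}'(0)^2/\ldots$; carrying this through should give $2\pi^2+\tfrac12\sum b_i^2$, which agrees with the example preceding the theorem after setting $\gamma=0$. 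After fixing the normalization this way, the general case follows from the cited theorem that genus-zero WP volumes satisfy this string-equation/KdV representation (equivalently, from $\psi$-class intersection numbers $\langle\tau_{d_1}\cdots\tau_{d_n}\kappa_1^m\rangle_0$ via Mirzakhani's symplectic reduction formula). Both sides are determined by the same generating function, so they agree for every $n$.
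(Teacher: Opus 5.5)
Your Steps 1 and 2 are the paper's argument. For each fixed order, the coefficients of \eqref{simplehyp} converge to those of
\[
-x=\sum_{j\ge1}\frac{\pi^{2j-2}}{2\,j!\,(j-1)!}\,U_0(x)^j .
\]
Every fixed jet of $u_{\rm st}$ then converges; you get this from Lagrange inversion, the paper from the analytic implicit-function theorem. Finally, the Legendre coefficients converge to those of $\mathcal J_0(b\sqrt{U_0})=\sum_r(-b^2U_0/4)^r/(r!)^2$.

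The gap is Step 3, which you leave as an expectation, and whose bookkeeping would not close as written.
\begin{itemize}
\item The substitution $u\mapsto-u$ alone does not turn the limiting equation into \eqref{eq:wp-string}: the $\ell$-th coefficients still differ by a factor $2^{\ell}$, which is exactly a missing rescaling by $2$.
\item A constant rescaling of $u$ multiplies $U_0'\prod_i\mathcal J_0(\cdot)$ by that single constant, so no factor $(-1)^{n-2}$ arises.
\item If an $n$-dependent discrepancy were present, matching a normalization at $n=4$ could not remove it.
\end{itemize}

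The correct change of variables is $U_0=-2u_0$, which is what the paper uses, and it is exact:
\begin{enumerate}
\item[(i)] It maps $\frac{\pi^{2\ell-2}}{2\,\ell!(\ell-1)!}U_0^\ell$ to $\frac{(-1)^\ell(2\pi^2)^{\ell-1}}{\ell!(\ell-1)!}u_0^\ell$, so $u_0$ solves \eqref{eq:wp-string}.
\item[(ii)] It gives $-\tfrac14 b^2U_0=\tfrac12 b^2u_0$, so $\mathcal J_0(b\sqrt{U_0})=\mathcal I_0(b\sqrt{2u_0})$.
\item[(iii)] It gives $-\tfrac12U_0'=u_0'$, which absorbs the prefactor $-\tfrac12$.
\end{enumerate}
Hence
\[
-\tfrac12\,\partial_x^{n-3}\Bigl[U_0'\prod_{i=1}^n\mathcal J_0\bigl(b_i\sqrt{U_0}\bigr)\Bigr]_{x=0}
=\partial_x^{n-3}\Bigl[u_0'\prod_{i=1}^n\mathcal I_0\bigl(b_i\sqrt{2u_0}\bigr)\Bigr]_{x=0}.
\]
The right-hand side is \eqref{eq:wp-volume-bessel}, i.e.\ Proposition~\ref{gwp} at the Weil--Petersson couplings. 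This holds simultaneously for all $n\ge3$, with no normalization left to fix. Your $n=4$ check, once completed, is consistent with this: $U_0'(0)=-2$ and $U_0''(0)=-4\pi^2$ give $2\pi^2+\tfrac12\sum_i b_i^2$ directly, with no sign adjustment.
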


\begin{proof}
Write $U_\gamma=u_{\rm st}$.  For every fixed order, the coefficients in
\eqref{simplehyp} converge as $\gamma\to0$ to those of
\begin{equation}\label{eq:classical-string-limit}
-x=\sum_{j=1}^\infty
\frac12\frac{\pi^{2j-2}}{j!(j-1)!}U_0(x)^j.
\end{equation}
The analytic implicit-function theorem therefore gives convergence of
every fixed jet of $U_\gamma$ at zero.  Moreover, for each fixed $r$,
\[
\frac{(\frac12+i\frac{2b}{\pi\gamma^2})_r
(\frac12-i\frac{2b}{\pi\gamma^2})_r}{(r!)^2}
\left(-\frac{A_\gamma}{2}\right)^r
\longrightarrow\frac{(-b^2/4)^r}{(r!)^2}.
\]
Because only a finite jet occurs, we pass termwise to the limit in
\eqref{eq:corrected-gamma-wp}; the result is
\[
-\frac12\left.\partial_x^{n-3}\left[
U_0'(x)\prod_{i=1}^n\mathcal J_0(b_i\sqrt{U_0(x)})
\right]\right|_{x=0},
\]
where $\mathcal J_0(z)=\sum_{r\ge0}(-1)^r(z/2)^{2r}/(r!)^2$.
For the Weil--Petersson specialization in Proposition~\ref{gwp}, the
string solution satisfies $u_0(x)=-U_0(x)/2$.  Hence
$\mathcal I_0(b_i\sqrt{2u_0})=\mathcal J_0(b_i\sqrt{U_0})$ and
$u_0'=-U_0'/2$.  Proposition~\ref{gwp} identifies the last display with
the claimed Weil--Petersson volume.
\end{proof}
}

\appendix
\section{Topological gravity and Weil--Petersson volume}

{This appendix gives a closed formula for the
Weil--Petersson volume of a sphere with $n$ boundary components.}
{Let $\Sigma_{g,n}$ be a closed genus-$g$ Riemann surface
with marked points $p_1,\ldots,p_n$, and let $\mathcal M_{g,n}$ be its
moduli space.  For $n>0$, the tautological line bundle $\mathcal L_i$ on
$\overline{\mathcal M}_{g,n}$ has fiber $T_{p_i}^*C$ over
$(C,p_1,\ldots,p_n)$, and
$\psi_i=c_1(\mathcal L_i)\in H^2(\overline{\mathcal M}_{g,n};\mathbb Q)$.
For the forgetful map
$p:\overline{\mathcal M}_{g,n+1}\to\overline{\mathcal M}_{g,n}$, set
$\kappa_1=p_*(\psi_{n+1}^2)$.}

The class $\kappa_1$ is proportional to the Weil--Petersson K\"ahler form $\omega_\text{WP}$ 
\begin{equation} \label{eq:WPKahler}
  [\omega_\text{WP}] = 2 \pi^2 \kappa_1 \ .
\end{equation}
{Their intersection numbers are denoted by}
\begin{equation} \label{topologicalgravitycorrelationfunctions}
  \left\langle \kappa_1^\ell\tau_{d_1} \ldots \tau_{d_n} \right\rangle_{g,n} = \int_{\overline{\mathcal{M}}_{g,n}}\kappa_1^\ell \psi_1^{d_1} \ldots \psi_{n}^{d_n} \ ,
  \quad \ell,d_1,\ldots,d_n \in \mathbb{Z}_{\ge 0} \ ,
\end{equation}  
{where $\tau_{d_i}$ is Witten's notation for the insertion
$\psi_i^{d_i}$.  The correlator can be nonzero only if
$\ell+d_1+\cdots+d_n=3g-3+n$.  All moduli spaces and correlators are
taken in the stable range $2g-2+n>0$; unstable correlators appearing
formally in the generating series below are set to zero.}

{Introduce the generating functions}
\begin{equation} \label{generatingfunction}
  F(\{t_k\}) = \sum_{g=0}^{+\infty} g_s^{2g} \left\langle e^{\sum_{d=0}^{\infty} t_d \tau_d} \right\rangle_g 
   =\sum_{g=0}^{+\infty} g_s^{2g} \sum_{\{n_d\}} \left(\prod_{d=0}^{\infty}\frac{t_d^{n_d}}{n_d !} \right) \left\langle \tau_0^{n_0} \tau_1^{n_1}  \ldots \right\rangle_g \ ,
\end{equation}  
and 
\begin{multline}\label{eq:G}
  G(s,\{t_k\}) = \sum_{g=0}^{+\infty} g_s^{2g} \left\langle e^{s \kappa_1 + \sum_{d=0}^{\infty} t_d \tau_d} \right\rangle_g 
   =\sum_{g=0}^{+\infty} \sum_{m=0}^{+\infty}\frac{g_s^{2g} s^m}{m!}\sum_{\{n_d\}} \left(\prod_{d=0}^{\infty} \frac{t_d^{n_d}}{n_d !} \right)
   \left\langle \kappa_1^m \tau_0^{n_0} \tau_1^{n_1}  \ldots \right\rangle_g \ ,
\end{multline}  
{where $g_s$ is the genus-expansion parameter.  The two
generating functions are related as follows.}
\begin{proposition}[{\cite[formula 2.7]{mirzakhani2015towards}}]
    \begin{equation}\label{eq:gammak}
  G(s,\{t_k\}) = F(\{t_k + \gamma_k s^{k-1} \}) \ , \qquad \gamma_0 = \gamma_1 = 0 \ , \quad \gamma_k = \frac{(-1)^k}{(k-1)!}\textcolor{red}{,\quad} k\geq 2
\end{equation}
\end{proposition}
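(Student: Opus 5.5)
The plan is to derive the identity from the Arbarello--Cornalba pushforward formula for $\kappa$-classes along forgetful maps. The formula converts every $\kappa_1$ insertion into extra $\tau_k$ insertions with $k\ge2$. We then match the generating functions $F$ in \eqref{generatingfunction} and $G$ in \eqref{eq:G} coefficient by coefficient.

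First, let $\pi_m:\overline{\mathcal M}_{g,n+m}\to\overline{\mathcal M}_{g,n}$ forget the last $m$ points, and fix integers $a_1,\ldots,a_m\ge1$. The Arbarello--Cornalba formula states that
\[
\pi_{m*}\Bigl(\prod_{i=1}^n\psi_i^{d_i}\prod_{j=1}^m\psi_{n+j}^{a_j+1}\Bigr)
=\prod_{i=1}^n\psi_i^{d_i}\sum_{\sigma\in S_m}\prod_{c\in\mathrm{cyc}(\sigma)}\kappa_{\sum_{j\in c}a_j}.
\]
Here the $\psi_i$ on the left are taken on the larger space. The correction terms $\psi_i-\pi^*\psi_i$ are boundary divisors, and they are killed because of the $\psi_{n+j}^{a_j+1}$ factors with $a_j\ge1$; this is the standard step in their proof, and we would cite it. Integrating gives
\[
\bigl\langle \tau_{a_1+1}\cdots\tau_{a_m+1}\textstyle\prod_i\tau_{d_i}\bigr\rangle_g
=\sum_{\sigma\in S_m}\bigl\langle \textstyle\prod_{c}\kappa_{|a|_c}\prod_i\tau_{d_i}\bigr\rangle_g .
\]

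Second, we pass to generating series. Take formal variables $q_1,q_2,\ldots$, sum the identity against $\prod_j q_{a_j}/m!$, and use the exponential formula for permutations decomposed into cycles. A cycle of length $r$ carrying labels $a$ contributes weight $(r-1)!/r!=1/r$. This yields
\[
\Bigl\langle \exp\Bigl(\sum_{a\ge1}q_a\tau_{a+1}\Bigr)\textstyle\prod_i\tau_{d_i}\Bigr\rangle_g
=\Bigl\langle \exp\Bigl(\sum_{b\ge1}p_b\kappa_b\Bigr)\textstyle\prod_i\tau_{d_i}\Bigr\rangle_g ,
\qquad
\sum_b p_bx^b=-\log\Bigl(1-\sum_a q_ax^a\Bigr).
\]
To obtain $e^{s\kappa_1}$ we need $p_1=s$ and $p_b=0$ for $b\ge2$. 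This forces $1-\sum_a q_ax^a=e^{-sx}$, so
\[
q_a=\frac{(-1)^{a+1}s^a}{a!},\qquad a\ge1.
\]
Setting $k=a+1$ gives the coefficient of $\tau_k$ as $(-1)^k s^{k-1}/(k-1)!=\gamma_k s^{k-1}$ for $k\ge2$. No shift is needed for $k=0,1$, which matches $\gamma_0=\gamma_1=0$.

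Third, we multiply by $\exp(\sum_d t_d\tau_d)$ and sum over genus with weights $g_s^{2g}$. Collecting all insertions shows $G(s,\{t_k\})=F(\{t_k+\gamma_ks^{k-1}\})$ as formal power series. The degree constraint $\ell+\sum d_i=3g-3+n$ is compatible on both sides, since each $\tau_{a+1}$ adds one point and $a+1$ to the degree, which is the net effect of $\kappa_a$. Unstable terms vanish on both sides by the stated convention. Before summing over $n$ and $m$, we would check that the multinomial symmetry factors $1/n_d!$ in \eqref{generatingfunction} agree with the $1/m!$ used above.

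The main obstacle is the first step: justifying the Arbarello--Cornalba formula, in particular that the $\psi_i$ comparison terms drop out, and tracking the cycle combinatorics exactly. The rest is formal series bookkeeping, where the sign of $\gamma_k$ is the only delicate point.
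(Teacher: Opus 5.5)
Your proposal is correct. The paper gives no proof of this identity and only quotes it from \cite{mirzakhani2015towards}. Your argument supplies the standard derivation behind that formula: the Arbarello--Cornalba pushforward of $\prod_j\psi_{n+j}^{a_j+1}$, then the exponential formula over cycle decompositions giving $\sum_b p_bx^b=-\log(1-\sum_a q_ax^a)$, then solving $1-\sum_a q_ax^a=e^{-sx}$. Your signs and normalizations come out right. For example, your formula gives $\langle\kappa_1^2\tau_0^5\rangle_0=\langle\tau_0^5\tau_2^2\rangle_0-\langle\tau_0^5\tau_3\rangle_0=6-1=5$, which matches $\mathrm{Vol}_{\rm WP}(\mathcal M_{0,5})=10\pi^4$.

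One correction to your bookkeeping. Consider the terms on the $F$ side with $2g-2+n\le 0$ but $m\ge1$ forgotten points, namely $\langle\prod_i\tau_{d_i}\prod_j\tau_{a_j+1}\rangle_g$. These do not vanish by the stated convention, because they live on the stable space $\overline{\mathcal M}_{g,n+m}$. The Arbarello--Cornalba formula also does not apply to them, since its target $\overline{\mathcal M}_{g,n}$ is unstable. They vanish instead by a degree count: the degree is at least $2m$, while $\dim\overline{\mathcal M}_{g,n+m}=3g-3+n+m<2m$ for $(g,n)\in\{(0,0),(0,1),(0,2),(1,0)\}$.

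With that fixed, the coefficientwise matching is a finite check. Each shift $\gamma_ks^{k-1}$ carries at least one power of $s$, so only finitely many forgotten points contribute to a given coefficient.
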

{Denote by $F_g$ and $G_g$ the coefficients of $g_s^{2g}$.}
{The genus-zero partition function
$F_0(t_0,t_1,\ldots)$ has the following closed form
\cite[Lemma 4]{itzykson1992combinatorics}.  Introduce}
\begin{align}
    I_n\left(v,\left\{t_k\right\}\right):=\sum_{\ell=0}^{\infty} t_{n+\ell} \frac{v^{\ell}}{\ell !} \quad(n \geq 0)
\end{align}
{and set $u_0:=\partial_0^2F_0$, where
$\partial_k:=\partial_{t_k}$.}
\begin{proposition}[{\cite[Lemma 4]{itzykson1992combinatorics}}]\label{F_0}
{We have}
\begin{align}
   &F_0=\frac{1}{2}\int_0^{u_0}(I_0(v,\{t_k\})-v)^2dv\\
    &I_0(u_0,\{t_k\})=u_0\\
    &\partial_k u_0=\partial_0\frac{u_0^{k+1}}{(k+1)!}
\end{align}
\end{proposition}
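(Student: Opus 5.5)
The plan is to derive everything from the genus-zero structure of $F_0$, namely the string equation together with the genus-zero topological recursion relation (TRR). I will first prove the flow equations for $u_0=\partial_0^2F_0$. Then I will show that the implicit solution of $I_0(w)=w$ satisfies the same flows with the same initial data. Finally I will check the integral formula for $F_0$ by differentiating it. Throughout, all objects are formal power series in $t_0,t_1,\ldots$ around $t=0$, with $\partial_k=\partial_{t_k}$ as in the statement.

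\emph{Step 1: the flows for $u_0$.} In genus zero the only nonvanishing three-point correlator with $\ell+\sum d_i=0$ is $\langle\tau_0^3\rangle_0=1$. The genus-zero TRR reads $\langle\langle\tau_{k+1}\tau_a\tau_b\rangle\rangle_0=\langle\langle\tau_k\tau_0\rangle\rangle_0\langle\langle\tau_0\tau_a\tau_b\rangle\rangle_0$. Taking $a=b=0$ gives
\[
\partial_{k+1}u_0=(\partial_k\partial_0F_0)\,\partial_0u_0 .
\]
I will prove by induction on $k$ that $\partial_k\partial_0F_0=u_0^{k+1}/(k+1)!$. The case $k=0$ is the definition of $u_0$. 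For the inductive step, the display gives
\[
\partial_0\bigl(\partial_{k+1}\partial_0F_0\bigr)=\frac{u_0^{k+1}}{(k+1)!}\,\partial_0u_0=\partial_0\frac{u_0^{k+2}}{(k+2)!}.
\]
Hence the difference $\partial_{k+1}\partial_0F_0-u_0^{k+2}/(k+2)!$ is independent of $t_0$. I will show it vanishes at $t_0=0$ by the dimension count $\sum d_i=n-3$ for $\ell=0$: every nonzero genus-zero correlator contains at least three $\tau_0$ insertions. Therefore $F_0$ is $O(t_0^3)$ and $u_0=O(t_0)$, so both terms vanish at $t_0=0$. This establishes the third displayed identity.

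\emph{Step 2: $I_0(u_0)=u_0$.} Let $w$ be the unique formal solution of $w=I_0(w,\{t_k\})$ with $w|_{t=0}=0$. It exists by the formal implicit function theorem, since $\partial_w(I_0(w)-w)=I_1(w)-1$ equals $-1$ at $t=0$. Differentiating gives $(1-I_1(w))\,\partial_kw=w^k/k!$, and in particular $\partial_0w=(1-I_1(w))^{-1}$. Hence
\[
\partial_kw=\frac{w^k}{k!}\,\partial_0w=\partial_0\frac{w^{k+1}}{(k+1)!},
\]
which is the same system as in Step 1. The initial data also agree: on the slice $t_1=t_2=\cdots=0$ we have $w=t_0$, and $u_0=t_0$ there as well, because only $\langle\tau_0^3\rangle_0$ contributes. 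Each flow $\partial_k$ determines the $t_k$-Taylor coefficients recursively from the data at $t_k=0$, so the two formal series coincide and $I_0(u_0)=u_0$. I regard this uniqueness step, and the TRR input used in Step 1, as the only genuinely non-formal part of the argument. For TRR I will cite the standard genus-zero result rather than reprove it.

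\emph{Step 3: the formula for $F_0$.} Set $\Phi:=\frac12\int_0^{u_0}(I_0(v)-v)^2\,dv$. Because $I_0(u_0)=u_0$, the boundary terms from differentiating the upper limit vanish. Since $\partial_kI_0(v)=v^k/k!$, we get
\[
\partial_k\Phi=\int_0^{u_0}(I_0(v)-v)\frac{v^k}{k!}\,dv,\qquad
\partial_0\partial_k\Phi=\frac{u_0^{k+1}}{(k+1)!}=\partial_0\partial_kF_0 .
\]
Thus $\partial_0(F_0-\Phi)$ is constant, and it vanishes at $t=0$. It follows that $F_0-\Phi$ is independent of $t_0$, so it equals its value at $t_0=0$. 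At $t_0=0$ the formal solution is $u_0=0$, so $\Phi=0$, and $F_0=0$ by the three-$\tau_0$ count from Step 1. Hence $F_0=\Phi$, which is the first displayed identity.
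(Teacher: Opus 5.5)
The paper gives no proof of this proposition; it only quotes Itzykson--Zuber. Your argument is correct, and apart from the genus-zero TRR it is self-contained.

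The usual derivation has two ingredients. First, it takes the flows $\partial_k u_0=\partial_0\bigl(u_0^{k+1}/(k+1)!\bigr)$ from the genus-zero (dispersionless) KdV hierarchy. Second, it gets $I_0(u_0)=u_0$ directly from the string equation $\partial_0F_0=\frac{t_0^2}{2}+\sum_{k\ge0}t_{k+1}\partial_kF_0$. Applying one more $\partial_0$ and inserting $\partial_k\partial_0F_0=u_0^{k+1}/(k+1)!$ gives $u_0=t_0+\sum_{k\ge0}t_{k+1}u_0^{k+1}/(k+1)!=I_0(u_0)$ at once.

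Your route differs in both places:
\begin{itemize}
\item You obtain the flows from TRR with $a=b=0$. The $t_0$-independent ambiguity is removed by the dimension count, which forces every nonzero genus-zero correlator to have at least three $\tau_0$ insertions, so both sides vanish at $t_0=0$.
\item You replace the string equation by a uniqueness argument for the flow system with the slice data $u_0|_{t_1=t_2=\cdots=0}=t_0$. This uniqueness is valid. Written out, it is an induction over the slices $\{t_j=0:\ j\ge K\}$. Restriction to such a slice commutes with $\partial_0,\ldots,\partial_{K-1}$, and the $t_K$-flow recovers each $t_K$-Taylor coefficient from the lower ones. So only finitely many variables are involved at each stage, which disposes of any worry about infinitely many variables.
\end{itemize}
Step 3 is a routine verification and is correct as written: both upper-limit terms vanish because $I_0(u_0)=u_0$, and $\partial_0I_0(v)=1$.

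What each route buys: yours needs only genus-zero geometry, namely the dimension constraint, $\langle\tau_0^3\rangle_0=1$, and the boundary relation behind TRR, with no KdV input. The string-equation route is shorter for the middle step. You could adopt it there without loss, since the string equation is equally elementary.
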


Define the formal differential operator
\[
V(b)=\sum_{k=0}^\infty\frac{b^{2k}}{2^k k!}\partial_k
\]
and, for $\mathbf b=(b_1,\ldots,b_n)$,
\begin{align}
V_{g,n}^{\rm GWP}(\mathbf b,\{t_k\})
&:=V(b_1)\cdots V(b_n)F_g(\{t_k\})\nonumber\\
&=\left\langle e^{\sum_{d\ge0}t_d\tau_d}
\prod_{i=1}^ne^{b_i^2\psi_i/2}\right\rangle_{{g}}.
\end{align}
\begin{remark}
By~\eqref{eq:WPKahler} and~\eqref{eq:gammak}, the ordinary
Weil--Petersson specialization is
\[
t_0=t_1=0,\qquad
t_k=\gamma_k(2\pi^2)^{k-1}\textcolor{red}{,\quad} k\ge2.
\]
With these couplings,
$V_{g,n}^{\rm GWP}(\mathbf b,\{t_k\})=V_{g,n}^{\rm WP}(\mathbf b)$.
\end{remark}

Set $g_s=1$.  {For $\beta>0$,} introduce
\begin{align}
B(\beta)&:=\int_0^\infty
\frac{b e^{-b^2/(2\beta)}}{\sqrt{2\pi\beta}}V(b)\,db
=\frac1{\sqrt{2\pi}}\sum_{k=0}^\infty
\beta^{k+1/2}\partial_k.
\end{align}
\begin{proposition}
For $n\ge3$ {and $\beta_1,\ldots,\beta_n>0$},
\begin{align}
\prod_{i=1}^nB(\beta_i)F_0
&=\sqrt{\frac{\prod_{i=1}^n\beta_i}{(2\pi)^n}}
\frac{\partial_0^{n-2}e^{(\sum_i\beta_i)u_0}}
{\sum_i\beta_i}\nonumber\\
&=\sqrt{\frac{\prod_{i=1}^n\beta_i}{(2\pi)^n}}
\partial_0^{n-3}\left[(\partial_0u_0)e^{(\sum_i\beta_i)u_0}\right].
\end{align}
\end{proposition}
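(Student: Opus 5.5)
The plan is to strip off the Gaussian prefactors and reduce everything to the genus-zero multipoint correlators of Proposition~\ref{F_0}, which we will compute in closed form. Write
\[
D(\beta):=\sum_{k\ge0}\beta^k\partial_k ,
\qquad\text{so that}\qquad
B(\beta)=(2\pi)^{-1/2}\beta^{1/2}D(\beta).
\]
Then $\prod_iB(\beta_i)F_0=\sqrt{\prod_i\beta_i/(2\pi)^n}\,\prod_iD(\beta_i)F_0$. It therefore suffices to show, with $S_m:=\beta_1+\cdots+\beta_m$, that for every $m\ge3$
\[
\prod_{i=1}^mD(\beta_i)F_0=\partial_0^{m-3}\bigl[(\partial_0u_0)e^{S_mu_0}\bigr]=\frac{\partial_0^{m-2}e^{S_mu_0}}{S_m}.
\]
The second equality here is just $\partial_0e^{Su_0}=S(\partial_0u_0)e^{Su_0}$. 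All identities are understood as identities of formal power series in $\{t_k\}$; in particular, at $t=0$ we have $I_0(v,0)=0$, so $u_0=0$ there.

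\textbf{Step 1 (one- and two-point functions).} Differentiate $F_0=\frac12\int_0^{u_0}(I_0(v)-v)^2dv$ in $t_a$. The boundary term carries the factor $(I_0(u_0)-u_0)^2=0$, and $\partial_aI_0(v)=v^a/a!$, so
\[
\partial_aF_0=\int_0^{u_0}(I_0(v)-v)\frac{v^a}{a!}\,dv .
\]
Differentiating once more in $t_b$, the boundary term again vanishes because $I_0(u_0)=u_0$, and we get
\[
\partial_a\partial_bF_0=\frac{u_0^{a+b+1}}{a!\,b!\,(a+b+1)} .
\]
As a consistency check, $a=b=0$ recovers $u_0$.

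\textbf{Step 2 (three-point function).} Apply $\partial_c$ to the formula of Step 1 and use $\partial_cu_0=\partial_0\frac{u_0^{c+1}}{(c+1)!}=\frac{u_0^c}{c!}\partial_0u_0$ from Proposition~\ref{F_0}. This gives
\[
\partial_a\partial_b\partial_cF_0=\frac{u_0^{a+b+c}}{a!\,b!\,c!}\,\partial_0u_0 .
\]
Summing against $\beta_1^a\beta_2^b\beta_3^c$ yields $D(\beta_1)D(\beta_2)D(\beta_3)F_0=(\partial_0u_0)e^{S_3u_0}$, which is the case $m=3$.

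\textbf{Step 3 (induction on $m$).} The key identity is
\[
D(\beta)u_0=\sum_k\beta^k\partial_0\frac{u_0^{k+1}}{(k+1)!}=\frac{\partial_0e^{\beta u_0}}{\beta},
\]
so that $D(\beta)$ acting on any function of $u_0$ is a chain-rule derivative. Since $D(\beta)$ commutes with $\partial_0$,
\[
D(\beta_{m+1})\,\partial_0^{m-2}\frac{e^{S_mu_0}}{S_m}=\partial_0^{m-2}\Bigl[e^{S_mu_0}\,\frac{\partial_0e^{\beta_{m+1}u_0}}{\beta_{m+1}}\Bigr]=\partial_0^{m-2}\bigl[(\partial_0u_0)e^{S_{m+1}u_0}\bigr]=\frac{\partial_0^{m-1}e^{S_{m+1}u_0}}{S_{m+1}},
\]
which closes the induction. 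Taking $m=n$ and restoring the prefactor $\sqrt{\prod_i\beta_i/(2\pi)^n}$ gives both displayed forms.

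\textbf{Main obstacle.} The only delicate point is justifying the termwise manipulations, namely differentiating under the integral and exchanging $\sum_k$ with $\partial_0$. I would handle this by working throughout in the ring of formal power series in $\{t_k\}$ with $\beta$ as a formal parameter, where each coefficient involves only finitely many terms. Beyond that, one must make sure the vanishing of the boundary terms in Step 1, via $I_0(u_0)=u_0$, is used correctly at both differentiations.
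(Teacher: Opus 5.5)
Your proof is correct and takes essentially the paper's route: strip off the Gaussian prefactor, use $\partial_k u_0=\partial_0\bigl(u_0^{k+1}/(k+1)!\bigr)$ so that $\sum_k\beta^k\partial_k u_0=\partial_0 e^{\beta u_0}/\beta$, and induct on the number of insertions. You also derive the two- and three-point functions explicitly from the closed form of $F_0$, with the boundary terms vanishing because $I_0(u_0)=u_0$; this supplies the base case $n=3$, which the paper's one-line proof leaves implicit.
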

\begin{proof}
Expand each $B(\beta_i)$ and repeatedly use
$\partial_k u_0=\partial_0(u_0^{k+1}/(k+1)!)$ from
Proposition~\ref{F_0}.  Induction on the number of insertions gives the
first equality.  The second follows from
$\partial_0e^{(\sum_i\beta_i)u_0}
=(\sum_i\beta_i)(\partial_0u_0)e^{(\sum_i\beta_i)u_0}$.
\end{proof}

\begin{proposition}\label{gwp}
For $n\ge3$,
\begin{equation}
V_{0,n}^{\rm GWP}(\mathbf b,\{t_k\})
=\partial_0^{n-3}\left[
(\partial_0u_0)\prod_{i=1}^n\mathcal I_0(b_i\sqrt{2u_0})\right],
\end{equation}
where
$\mathcal I_0(x)=\sum_{r=0}^\infty(x/2)^{2r}/(r!)^2$.
\end{proposition}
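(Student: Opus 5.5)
The plan is to combine the preceding $B(\beta)$ identity with the fact that $B$ is a Laplace-type transform of $V$, and then invert that transform.

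First, for $n\ge3$, regard $V_{0,n}^{\rm GWP}(\mathbf b,\{t_k\})=V(b_1)\cdots V(b_n)F_0$ as a formal power series in $b_1^2,\ldots,b_n^2$ whose coefficients are derivatives of $F_0$. The definition of $B(\beta)$ gives
\[
\prod_i B(\beta_i)F_0=\int_{(0,\infty)^n}\prod_i\frac{b_ie^{-b_i^2/(2\beta_i)}}{\sqrt{2\pi\beta_i}}\,V_{0,n}^{\rm GWP}(\mathbf b,\{t_k\})\,d\mathbf b .
\]
This integral is coefficientwise. The monomial $b^{2k}$ is sent to $2^kk!\,\beta^{k+1/2}/\sqrt{2\pi}$, and this map is injective on monomials. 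So a formal series in the $b_i^2$ is determined by its image.

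Second, compute the same transform of the proposed right-hand side
\[
W:=\partial_0^{n-3}\Bigl[(\partial_0u_0)\prod_i\mathcal I_0(b_i\sqrt{2u_0})\Bigr].
\]
Expand $\mathcal I_0(b\sqrt{2u_0})=\sum_r b^{2r}u_0^r/(2^r(r!)^2)$. The transform sends $b^{2r}$ to $2^rr!\,\beta^{r+1/2}/\sqrt{2\pi}$, so $\mathcal I_0(b\sqrt{2u_0})$ is sent to
\[
\sqrt{\beta/(2\pi)}\sum_r(\beta u_0)^r/r!=\sqrt{\beta/(2\pi)}\,e^{\beta u_0}.
\]
The operator $\partial_0^{n-3}[(\partial_0u_0)\,\cdot\,]$ does not involve $b$, so it commutes with the coefficientwise transform. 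The image of $W$ is therefore
\[
\sqrt{\textstyle\prod_i\beta_i/(2\pi)^n}\;\partial_0^{n-3}\bigl[(\partial_0u_0)e^{(\sum_i\beta_i)u_0}\bigr].
\]
By the preceding proposition, this equals $\prod_iB(\beta_i)F_0$.

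Third, compare coefficients. Both sides are formal series in $\beta_i^{1/2}$, and injectivity on monomials gives $V_{0,n}^{\rm GWP}=W$ coefficient by coefficient in $b_1^2,\ldots,b_n^2$.

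The main obstacle is justifying this term-by-term formal manipulation. Everything should be treated as formal power series in $\{t_k\}$ and $b_i^2$, with $\partial_0$ acting formally. I would check that $\partial_0^{n-3}[(\partial_0u_0)\,\cdot\,]$ applied to each fixed $b$-monomial is well defined and matches the corresponding term of $\prod V(b_i)F_0$. An alternative, more direct route avoids the transform entirely: expand $V(b_i)=\sum_k b_i^{2k}\partial_k/(2^kk!)$ and compare coefficients of $\prod b_i^{2k_i}$. Both routes reduce to the identity
\[
\partial_{k_1}\cdots\partial_{k_n}F_0=\partial_0^{n-3}\Bigl[(\partial_0u_0)\prod_i\frac{u_0^{k_i}}{k_i!}\Bigr],
\]
which the $B$-identity already encodes after matching powers of $\beta_i$.
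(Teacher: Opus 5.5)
Your proposal is correct and follows essentially the same argument as the paper: compute the Gaussian transform of $\mathcal I_0(b\sqrt{2u_0})$ as $\sqrt{\beta/(2\pi)}\,e^{\beta u_0}$, identify the transformed right-hand side with the preceding $B(\beta)$ proposition, and conclude by coefficientwise injectivity of $b^{2k}\mapsto 2^k k!\,\beta^{k+1/2}/\sqrt{2\pi}$. Your explicit monomial computations, together with the equivalent direct identity $\partial_{k_1}\cdots\partial_{k_n}F_0=\partial_0^{n-3}\bigl[(\partial_0u_0)\prod_i u_0^{k_i}/k_i!\bigr]$, supply exactly the details the paper leaves implicit.
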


\begin{proof}
The elementary transform identity
\[
\int_0^\infty
\frac{b e^{-b^2/(2\beta)}}{\sqrt{2\pi\beta}}
\mathcal I_0(b\sqrt{2u})\,db
=\sqrt{\frac\beta{2\pi}}e^{\beta u}
\]
shows that the $n$ Gaussian transforms of the proposed right-hand side
equal
\[
\sqrt{\frac{\prod_i\beta_i}{(2\pi)^n}}
\partial_0^{n-3}\left[(\partial_0u_0)e^{(\sum_i\beta_i)u_0}\right].
\]
The preceding proposition gives the same transforms for
$V_{0,n}^{\rm GWP}$; coefficientwise injectivity of the transform on the
formal power series in $b_1^2,\ldots,b_n^2$ proves the claim.
\end{proof}

For the Weil--Petersson specialization, set
\[
t_0=x,\qquad t_1=0,\qquad
t_k=\gamma_k(2\pi^2)^{k-1}\quad(k\ge2).
\]
Then
\begin{equation}\label{eq:wp-string}
\sum_{\ell=1}^\infty
\frac{(-1)^\ell(2\pi^2)^{\ell-1}u_0(x)^\ell}
{\ell!(\ell-1)!}=-x,
\end{equation}
and
\begin{equation}\label{eq:wp-volume-bessel}
V_{0,n}^{\rm WP}(\mathbf b)
=\left.\partial_x^{n-3}\left[
u_0'(x)\prod_{i=1}^n\mathcal I_0(b_i\sqrt{2u_0(x)})
\right]\right|_{x=0}.
\end{equation}

\section{{Proof of the integral formula~\eqref{bessel int formula}}}

{We use the following two quoted integral formulas.}
\begin{lemma}[{\cite[Volume 2, Section 7.3. (15)]{Bateman1953HigherTF}}]
For $\Re(\nu)>-\frac{1}{2}$, $\Re(z)>0$,
    $$ K_\nu(z)=\frac{\pi^{1 / 2}}{\Gamma(\nu+\frac{1}{2})} (\frac{z}{2})^\nu \int_1^{\infty} e^{-z t}\left(t^2-1\right)^{\nu-1 / 2} d t $$
\end{lemma}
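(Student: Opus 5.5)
The plan is to verify that the right-hand side solves the modified Bessel equation, use decay at infinity to identify it with a multiple of $K_\nu$, and fix the multiple from the leading asymptotics. Throughout, $\Re\nu>-\frac12$ and $\Re z>0$.

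\textbf{Step 1 (the ODE).} Set
\[
F(z):=z^\nu\int_1^\infty e^{-zt}(t^2-1)^{\nu-1/2}\,dt .
\]
The factor $(t^2-1)^{\nu-1/2}$ is integrable at $t=1$ because $\Re\nu>-\frac12$, and the integral converges exponentially at $\infty$ because $\Re z>0$. The same two bounds justify differentiating under the integral sign, uniformly on compact subsets of $\{\Re z>0\}$, so $F$ is holomorphic there.

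Write $F=z^\nu\Phi$. The equation $z^2F''+zF'-(z^2+\nu^2)F=0$ is then equivalent to
\[
z\Phi''+(2\nu+1)\Phi'-z\Phi=0 .
\]
Substituting the integral for $\Phi$ gives
\[
\int_1^\infty e^{-zt}\bigl[z(t^2-1)-(2\nu+1)t\bigr](t^2-1)^{\nu-1/2}\,dt .
\]
The bracketed integrand equals $-\frac{d}{dt}\bigl[e^{-zt}(t^2-1)^{\nu+1/2}\bigr]$. The boundary term vanishes at $t=1$ since $\Re(\nu+\frac12)>0$, and at $\infty$ by exponential decay. Hence $F$ solves the modified Bessel equation.

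\textbf{Step 2 (identification with $K_\nu$).} The solution space is spanned by $I_\nu$ and $K_\nu$. Along the positive real axis $I_\nu$ grows like $e^{z}/\sqrt{2\pi z}$. By contrast, $|F(z)|\le |z|^{\Re\nu}e^{-\Re z}\int_1^\infty e^{-\Re z\,(t-1)}|t^2-1|^{\Re\nu-1/2}\,dt$ decays exponentially. Therefore $F=cK_\nu$ for some constant $c=c(\nu)$.

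\textbf{Step 3 (the constant).} Restrict to real $z\to\infty$ and apply Watson's lemma near $t=1$, using $(t^2-1)^{\nu-1/2}\sim(2(t-1))^{\nu-1/2}$. This gives
\[
\int_1^\infty e^{-zt}(t^2-1)^{\nu-1/2}\,dt\sim e^{-z}\,2^{\nu-1/2}\,\Gamma(\nu+\tfrac12)\,z^{-\nu-1/2}.
\]
Comparing with $K_\nu(z)\sim\sqrt{\pi/(2z)}\,e^{-z}$ yields
\[
c=\frac{2^\nu\,\Gamma(\nu+\frac12)}{\sqrt\pi},
\]
which is exactly the claimed normalization. Both sides are holomorphic in $z$ on $\Re z>0$, and in $\nu$ on $\Re\nu>-\frac12$, so the identity extends from real $z$ to the full range by analytic continuation.

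\textbf{Main obstacle.} The only delicate step is the endpoint analysis at $t=1$: the integrability used in Step 1, the vanishing of the boundary term, and the Watson's lemma constant in Step 3. All of these rely on $\Re\nu>-\frac12$.

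\textbf{Alternative route.} Substituting $t=\cosh s$ and comparing with~\eqref{eq-Kv} after an integration by parts would also work for real $\nu$, but the ODE argument above handles complex $\nu$ uniformly.
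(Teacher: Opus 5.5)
Your proof is correct, and it takes a different route from the paper only in the trivial sense that the paper gives no proof at all. The lemma is quoted from the Bateman project (Vol.~2, \S7.3, (15)) and used as a black box in the proof of Proposition~\ref{besselint}.

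Your argument is self-contained. You show that $z^\nu\int_1^\infty e^{-zt}(t^2-1)^{\nu-1/2}\,dt$ solves the modified Bessel equation, since the integrand is the exact derivative $-\frac{d}{dt}\bigl[e^{-zt}(t^2-1)^{\nu+1/2}\bigr]$. You then exclude the $I_\nu$ component by exponential decay on the positive axis. Finally, you fix the constant by Watson's lemma against $K_\nu(x)\sim\sqrt{\pi/(2x)}\,e^{-x}$. This handles complex $\nu$ with $\Re\nu>-\frac12$ and complex $z$ with $\Re z>0$ uniformly. It relies only on the standard facts that $\{I_\nu,K_\nu\}$ is a fundamental system on the half-plane and on their large-$x$ asymptotics. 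The citation buys brevity; your derivation buys independence from the handbook.

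Two small remarks.
\begin{enumerate}
\item In Step~2, the bound $|z^\nu|\le|z|^{\Re\nu}$ fails for complex $z$ when $\Im\nu\neq0$, since $|z^\nu|=|z|^{\Re\nu}e^{-\Im\nu\,\arg z}$. You only use the bound on the positive real axis, where it is an equality, so nothing breaks.
\item The final appeal to analytic continuation in $z$ and $\nu$ is unnecessary. Step~2 already gives $F=c(\nu)K_\nu$ on the whole half-plane. Step~3 then computes $c(\nu)$ directly for each admissible $\nu$, because Watson's lemma applies to the complex exponent $\nu-\frac12$ with real part greater than $-1$.
\end{enumerate}
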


\begin{lemma}[{\cite[Volume 1, Section 2.12. (5)]{Bateman1953HigherTF}}]
For $\Re(c)>\Re(b)>0$, $|{\arg}(z)|<\pi$,
\begin{equation}
{}_{2}F_{1}(a, b ; c ; 1-z)=\frac{\Gamma(c)}{\Gamma(b) \Gamma(c-b)} \int_0^{\infty} s^{b-1}(1+s)^{a-c}(1+s z)^{-a} d s
\end{equation}
\end{lemma}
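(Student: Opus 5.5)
The plan is to reduce the formula to Euler's integral representation of ${}_2F_1$ by a Möbius change of variables, and then extend from a disk to the full range $|\arg z|<\pi$ by analytic continuation in $z$.

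\emph{Step 1 (Euler's representation).} For $\Re(c)>\Re(b)>0$ and $|w|<1$ we have
\[
{}_2F_1(a,b;c;w)=\frac{\Gamma(c)}{\Gamma(b)\Gamma(c-b)}\int_0^1 t^{b-1}(1-t)^{c-b-1}(1-wt)^{-a}\,dt .
\]
To prove it, expand $(1-wt)^{-a}=\sum_{k\ge0}\frac{(a)_k}{k!}w^kt^k$. This series converges uniformly for $t\in[0,1]$ when $|w|<1$, so it may be integrated termwise. Each term is then a Beta integral:
\[
\int_0^1 t^{b+k-1}(1-t)^{c-b-1}\,dt=\frac{\Gamma(b+k)\Gamma(c-b)}{\Gamma(c+k)}.
\]
Using $\Gamma(b+k)/\Gamma(b)=(b)_k$ and $\Gamma(c+k)/\Gamma(c)=(c)_k$, the termwise sum reproduces the hypergeometric series.

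\emph{Step 2 (substitution).} Put $w=1-z$ and $t=s/(1+s)$, so $s\in(0,\infty)$. Then
\[
dt=\frac{ds}{(1+s)^2},\qquad 1-t=\frac{1}{1+s},\qquad 1-wt=\frac{1+sz}{1+s}.
\]
Collecting the powers of $(1+s)$ gives $-(b-1)-(c-b-1)+a-2=a-c$. The integrand therefore becomes exactly $s^{b-1}(1+s)^{a-c}(1+sz)^{-a}$, with the prefactor unchanged. This proves the identity for $|1-z|<1$. In that region $\Re z>0$, so $1+sz\neq0$ and the principal branch is the one obtained by continuity from $z=1$.

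\emph{Step 3 (continuation).} Both sides are holomorphic in $z$ on the slit plane $|\arg z|<\pi$.
\begin{itemize}
\item The left side is holomorphic there because ${}_2F_1(a,b;c;\cdot)$ is analytic off $[1,\infty)$, and $1-z\in[1,\infty)$ exactly when $z\in(-\infty,0]$.
\item For the right side, on a compact set $K$ of the slit plane, $|1+sz|\ge c_K(1+s)$ for some $c_K>0$, since $1+sz$ never hits zero or winds around it. The integrand is then dominated by $C\,s^{\Re b-1}(1+s)^{\Re(a-c)-\Re a}=C\,s^{\Re b-1}(1+s)^{-\Re c}$. This is integrable at $0$ because $\Re b>0$, and at $\infty$ because its decay is $s^{\Re(b-c)-1}$ with $\Re(c-b)>0$.
\item Holomorphy of the right side then follows from Morera's theorem together with dominated convergence.
\end{itemize}
The identity theorem extends the equality from the disk $|1-z|<1$ to all of $|\arg z|<\pi$.

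The only delicate point is the uniform lower bound $|1+sz|\gtrsim 1+s$ on compacts, which also shows that the principal branch of $(1+sz)^{-a}$ varies continuously. The bound holds because $\arg(1+sz)$ lies between $0$ and $\arg z$, so it stays in a compact subinterval of $(-\pi,\pi)$ uniformly in $s$.
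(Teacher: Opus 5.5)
Your proof is correct. The paper gives no proof of this lemma and simply quotes it from Bateman. Your route is the standard derivation behind that citation: Euler's integral representation, then the substitution $t=s/(1+s)$ turning $1-(1-z)t$ into $(1+sz)/(1+s)$, then analytic continuation in $z$ from $|1-z|<1$ to $\mathbb{C}\setminus(-\infty,0]$.

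One small bookkeeping point in Step 3. Write $|(1+sz)^{-a}|=|1+sz|^{-\Re a}e^{\Im(a)\arg(1+sz)}$. When $\Re a<0$, the domination uses the trivial upper bound $|1+sz|\le\max(1,|z|)(1+s)$, not your lower bound. The phase factor is bounded by $e^{\pi|\Im a|}$. With these two remarks, the dominating function $C s^{\Re b-1}(1+s)^{-\Re c}$ is justified for every $a$.
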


\begin{proof}[{Proof of Prop. \ref{besselint}}]
\begin{align*}
    \int_0^{\infty} x^{\mu-1} e^{-\alpha x} K_\nu(\beta x) d x&=\frac{\pi^{1/2}}{\Gamma(\nu+\frac{1}{2})}(\frac{\beta}{2})^\nu\int_{1}^{\infty}\Big(\int_{0}^\infty x^{\mu+\nu-1}e^{-(\alpha+\beta t)x}dx\Big)(t^2-1)^{\nu-\frac{1}{2}}dt\\
    &=\frac{\pi^{1/2}}{\Gamma(\nu+\frac{1}{2})}(\frac{\beta}{2})^\nu\Gamma(\mu+\nu)\int_{1}^\infty\frac{(t^2-1)^{\nu-\frac{1}{2}}}{(\alpha+\beta t)^{\mu+\nu}}dt\\
    &=\frac{\pi^{1/2}}{\Gamma(\nu+\frac{1}{2})}(\frac{\beta}{2})^\nu\frac{\Gamma(\mu+\nu)}{(\alpha+\beta)^{\mu+\nu}}\int_{1}^\infty\frac{(t^2-1)^{\nu-\frac{1}{2}}}{(1 +\frac{2\beta}{\alpha+\beta} \frac{t-1}{2})^{\mu+\nu}}dt\\
     &{=\frac{\pi^{1/2}}{\Gamma(\nu+\frac{1}{2})}(\frac{\beta}{2})^\nu\frac{\Gamma(\mu+\nu)}{(\alpha+\beta)^{\mu+\nu}}\int_{0}^\infty
     [4s(1+s)]^{\nu-\frac12}
     \left(1+\frac{2\beta}{\alpha+\beta}s\right)^{-\mu-\nu}2\,ds}\\
     &{=\frac{\pi^{1/2}}{\Gamma(\nu+\frac{1}{2})}(2\beta)^\nu\frac{\Gamma(\mu+\nu)}{(\alpha+\beta)^{\mu+\nu}}\int_{0}^\infty s^{\nu-\frac{1}{2}}(1+s)^{\nu-\frac{1}{2}}\left(1+\frac{2\beta}{\alpha+\beta}s\right)^{-\mu-\nu}ds}\\
     &=\frac{\sqrt{\pi}(2 \beta)^\nu}{(\alpha+\beta)^{\mu+\nu}} \frac{\Gamma(\mu+\nu) \Gamma(\mu-\nu)}{\Gamma\left(\mu+\frac{1}{2}\right)} {}_{2}F_{1}\left(\mu+\nu, \nu+\frac{1}{2} ; \mu+\frac{1}{2} ; \frac{\alpha-\beta}{\alpha+\beta}\right)
\end{align*}
The calculation is initially justified when
$\Re\nu>-\frac12$, $\Re(\mu\pm\nu)>0$, $\beta>0$, and
$\Re(\alpha+\beta)>0$.  The identity on the full domain stated in
Proposition~\ref{besselint} follows from $K_\nu=K_{-\nu}$ and analytic
continuation in the parameters, with all complex powers on their principal
branches.
\end{proof}

\bibliographystyle{alpha}
\bibliography{cibib}

\end{document}